\def\R {\mathbb{R}}
\def\N{\mathbb{N}}
\def\eps{\varepsilon}
\def\Sph{\mathbb{S}^{2}}
\def\SphLap{\Delta_{\Sph}}
\def\SphGrad{\nabla_{\Sph}}
\def\Ceta{\mathcal{C}_\eta}
\def\Leta{\mathcal{L}_\eta}
\def\Spt{\operatorname{spt}}
\def\Rot{\operatorname{U}}
\def\ddt{\frac{\partial}{\partial x_3}}
\def\HPP{\{x_n=0\}}
\def\Hpp{\{x_3=0\}}
\def\SPDO{\mathcal{S}(p,d,1)}
\def\ptil{\tilde{p}}
\def\pbar{\overline{p}}
\def\pe{p_{ext}}
\def\hu{\hat{u}}
\def\ST{\frac{7}{2}}
\def\uS{u_{\frac{7}{2}}}
\def\uF{u_{\frac{5}{2}}}
\def\uT{u_{\frac{3}{2}}}
\def\uO{u_{\frac{1}{2}}}
\def\vF{v_{\frac{5}{2}}}
\def\vT{v_{\frac{3}{2}}}
\def\vO{v_{\frac{1}{2}}}
\def\vNO{v_{-\frac12}}
\def\vNT{v_{-\frac32}}
\def\wF{w_{\frac{5}{2}}}
\def\wT{w_{\frac{3}{2}}}
\def\wO{w_{\frac{1}{2}}}
\def\hem{\hspace{0.5em}}
\newtheorem{prop}{Proposition}[section]
\newtheorem{thm}{Theorem}[section]
\newtheorem{cor}{Corollary}[section]
\newtheorem{lem}{Lemma}[section]
\theoremstyle{definition}
\newtheorem{defi}{Definition}[section]
\newtheorem{rem}{Remark}[section]
\numberwithin{equation}{section}
\title[Half-space solutions with 7/2 frequency]{Half-space solutions with 7/2 frequency in the thin obstacle problem} 
\author{Ovidiu Savin}
\address{Department of Mathematics,	Columbia University, New York, USA}
\email{savin@math.columbia.edu}
\author{Hui Yu}
\address{Department of Mathematics,	National University of Singapore, Singapore}
\email{ huiyu@nus.edu.sg}
\thanks{O.~S.~is supported by  NSF grant DMS-1800645.}
\begin{document}

\begin{abstract}
For the thin obstacle problem  in $\R^3$, we show that half-space solutions form an isolated family in the space of $\ST$-homogeneous solutions. For a general solution with one blow-up profile in this family, we establish the rate of convergence to this profile. As a consequence, we obtain regularity of the free boundary near such contact points.
\end{abstract}

\maketitle
\section{Introduction}
Motivated by applications in linear elasticity \cite{Sig} and reverse osmosis \cite{DL}, the \textit{thin obstacle problem} studies minimizers of the Dirichlet energy  over functions that lie above a lower-dimensional obstacle. In the most basic formulation, a minimizer satisfies the following system
\begin{equation}\label{IntroTOP}
\begin{cases}
\Delta u\le 0 &\text{ in $B_1$,}\\
u\ge 0 &\text{ in $B_1\cap\{x_n=0\}$,}\\
\Delta u=0 &\text{ in $B_1\cap(\{u>0\}\cup\{x_n\neq0\})$.}
\end{cases}
\end{equation} 
Here $B_1$ is the unit ball in the Euclidean space $\R^n$. The coordinate of this space is decomposed as $x=(x',x_n)$ with $x'\in\R^{n-1}$ and $x_n\in\R.$ Note that the odd part of the solution, $(u(x',x_n)-u(x',-x_n))/2$, is  harmonic and vanishes along the hyperplane $\{x_n=0\}$. By removing it, we assume that the solution is \textit{even with respect to $\{x_n=0\}$}. 

\begin{rem}\label{Normalization}
The thin obstacle problem enjoys several invariances. For instance, if $u$ is a solution, then rotations of $u$ around the $x_n$-axis  also solve the problem. The same happens for positive multiples of $u$. For simplicity, we identify two solutions $u$ and $v$  \textit{up to a normalization} if a rotation of $u$ around the $x_n$-axis equals a positive multiple of $v$.
\end{rem}

After works by Richardson \cite{R} and Uraltseva \cite{U}, Athanasopoulos and Caffarelli obtained the optimal regularity of the solution $u$ \cite{AC}, namely,  
$$u\in C^{0,1}_{loc}(B_1)\cap C^{1,\frac{1}{2}}_{loc}(B_1\cap\{x_n\ge 0\}).$$ 
The next step is to address the regularity of the \textit{contact set} $\Lambda(u):=\{u=0\}\cap\{x_n=0\}$ and the \textit{free boundary} $\partial_{\R^{n-1}}\Lambda(u).$ To this end, we need precise information about the solution near a contact point.  

Applying Almgren's monotonicity formula \cite{Alm}, Athanasopoulos-Caffarelli-Salsa \cite{ACS} showed that for each $q\in\Lambda(u)$, there is a constant $\lambda_q$, called the \textit{frequency of the solution at $q$}, such that 
$$\|u\|_{L^2(\partial B_r(q))}\sim r^{\frac{n-1}{2}+\lambda_q}$$ as $r\to 0.$ 
Moreover, \textit{along a subsequence} of $r\to 0$, the normalized solution converges to \textit{a blow-up profile at $q$},  that is, 
\begin{equation}\label{BlowingUp}u_{q,r}:=r^{\frac{n-1}{2}}\frac{u(r\cdot+q)}{\|u\|_{L^2(\partial B_r(q))}}\to u_0.\end{equation}
The limit $u_0$ is a $\lambda_q$-homogeneous solution to \eqref{IntroTOP}, also known as a \textit{$\lambda_q$-cone.} 
 
This opened up two interesting directions of research.  
The first concerns the space of homogeneous solutions, and the goal is to classify admissible frequencies and cones, namely, to classify $$\Phi:=\{\lambda\in\R: \text{there is a non-trivial $\lambda$-homogeneous solution to \eqref{IntroTOP}}\},$$ 
and 
$$\mathcal{P}_\lambda:=\{u: \text{$u$ solves \eqref{IntroTOP} with $x\cdot\nabla u=\lambda u$}\}$$
 for each $\lambda\in\Phi$. The second direction concerns the regularity of the contact set $\Lambda(u)$ for a general solution. Here the central issue is to quantify the rate of convergence in \eqref{BlowingUp}, as this leads to uniqueness of the blow-up profile as well as regularity of the contact set. This often requires sorting contact points into 
 \begin{equation}\label{ContPtWithFreq}
 \Lambda_\lambda(u):=\{q\in\Lambda(u):\lambda_q=\lambda\}.
 \end{equation}
 \subsection{Admissible frequencies and homogeneous solutions}
The program along the first direction is complete when $n=2$. See, for instance, Petrosyan-Shahgholian-Uraltseva \cite{PSU}. In this case, it is known that 
$$\Phi=\N\cup\{2k-\frac{1}{2}:k\in\N\}.$$

Corresponding to integer frequencies, the homogeneous solutions are (even reflections of) polynomials. To be precise, we have
\begin{equation}\label{OddIntegerSolutionIn2D}\mathcal{P}_{2k-1}=\{a(-1)^k \operatorname{Re}(|x_2|+ix_1)^{2k-1}:a\ge 0\} \end{equation}
and
\begin{equation}\label{EvenIntegerSolutionIn2D}\mathcal{P}_{2k}=\{a \operatorname{Re}(x_1+ix_2)^{2k}:a\ge0\},\end{equation}
where $\operatorname{Re}(\cdot)$ denotes the real part of a complex number. In particular, all $(2k-1)$-cones vanish along the line $\{x_2=0\}$, and $2k$-cones are harmonic in the entire space. 

On the other hand, homogeneous solutions with $(2k-\frac{1}{2})$ frequencies vanish along half-lines. Up to a normalization, they satisfy $$\operatorname{spt}(\Delta u)=\Lambda(u)=\{x_1\le 0, x_2=0\},$$where we denote by  $\operatorname{spt}(\cdot)$ the support of a measure. Up to a normalization, the $(2k-\frac 12)$-cone is given by
\begin{equation}\label{HalfIntegerSolutionIn2D}
u_{2k-\frac{1}{2}}(r,\theta):=r^{2k-\frac{1}{2}}\cos((2k-\frac{1}{2})\theta),
\end{equation}
where $r\ge0$ and $\theta\in(-\pi,\pi]$ are the polar coordinates of the plane. 

In general dimensions,  the classification of admissible frequencies and cones remains incomplete. By extending the solutions from $\R^2$, we see that 
$$
\Phi\supset\N\cup\{2k-\frac{1}{2}:k\in\N\}.
$$
Thanks to Focardi-Spadaro \cite{FoS1, FoS2}, we know that $\cup_{\lambda\in\N\cup\{2k-\frac 12:k\in\N\}}\Lambda_\lambda(u)$ makes up most of the contact points, in the sense that its complement in $\Lambda(u)$ has dimension at most $(n-3)$.

Athanasopoulos-Caffarelli-Salsa classified the lowest three frequencies \cite{ACS}, namely, 
$$
\Phi\subset\{1,\frac{3}{2}\}\cup [2,+\infty).
$$
Colombo-Spolaor-Velichkov \cite{CSV} and Savin-Yu \cite{SY1} showed the existence of  a frequency gap around each integer,  that is,  for each $m\in\N$, there is $\alpha_m>0$, depending only on $m$ and $n$,  such that 
$$
\Phi\cap(m-\alpha_m,m+\alpha_m)=\{m\}.
$$

For the classification of cones, most results center around frequencies in $\{\frac 32\}\cup\N$. By Athanasopoulos-Caffarelli-Salsa \cite{ACS}, it is known that 
$$
\mathcal{P}_{3/2}=\{\text{Normalizations of } u_{3/2} \text{ as in \eqref{HalfIntegerSolutionIn2D}}\footnote{In $\R^n$, the pair $(r,\theta)$ is understood as the polar coordinates of the $(x_{n-1},x_n)$-plane.\label{Footnote1}}\}.
$$ 
Note that $u_{\frac 32}$ is monotone along any direction in  $\{x_n=0\}$, a fact used extensively for the classification of $\frac 32$-cones as well as free boundary regularity near points with $\frac 32$ frequency.

Extensions of \eqref{OddIntegerSolutionIn2D} and \eqref{EvenIntegerSolutionIn2D} to general dimensions were obtained by Figalli-Ros-Oton-Serra \cite{FRS} and Garofalo-Petrosyan \cite{GP}, respectively. Similar to their counterparts in $\R^2$, all $(2k-1)$-cones vanish in the hyperplane $\{x_n=0\}$, and $2k$-cones are harmonic in the entire space. Consequently, if we let $v$ denote a solution to the linearized equation around an integer-frequency cone,  then either $v|_{\HPP}$ or $\Delta v|_{\HPP}$  has a sign. The vanishing property of $(2k-1)$-cones implies that $v|_{\HPP}\ge 0$. The harmonicity of $2k$-cones implies that  $\Delta v|_{\HPP}\le 0$. These are the key observations behind the regularity of contact points with integer frequencies \cite{SY2}.

\subsection{Regularity of the contact set}
By the classification of $\frac 32$-cones, if $q\in\Lambda_{\frac{3}{2}}(u)$, then after a normalization, we have $u_{q,r}\to u_{\frac 32}$ along a subsequence of $r\to 0$. Here we are using the notations from \eqref{BlowingUp} and \eqref{HalfIntegerSolutionIn2D}. With the monotone property of $u_{\frac 32}$, Athanasopoulos-Caffarelli-Salsa proved that the blow-up profile is independent of the subsequence of $r\to 0$, and that  $\Lambda_{\frac 32}(u)$ is locally a $(n-2)$-dimensional $C^{1,\alpha}$-manifold in $\HPP$ \cite{ACS}. Recently,  this manifold has been shown to be smooth in  \cite{DS1} and analytic in \cite{KPS}. 

For points in $\Lambda_{2k}(u)$, uniqueness of the blow-up profile was established by Garofalo-Petrosyan \cite{GP}, who also showed that $\Lambda_{2k}(u)$ is contained in countably many $C^{1}$-manifolds. Regularity of the covering manifolds was improved to $C^{1,\log}$ by Colombo-Spolaor-Velichkov \cite{CSV}. For points in $\Lambda_{2k-1}(u)$, uniqueness of the blow-up profile was obtained by Figalli, Ros-Oton and Serra \cite{FRS}. Recently, a unified approach was developed to quantify the rate of convergence in \eqref{BlowingUp} at points in $\Lambda_{2k-1}(u)$ and $\Lambda_{2k}(u)$ \cite{SY2}. In particular, we proved that $\Lambda_{2k-1}(u)$ is locally covered by $C^{1,\alpha}$-manifolds.  

On a different note, Fern\'andez-Real and Ros-Oton showed that for generic boundary data, the free boundary is smooth outside a set of dimension at most $(n-3)$ \cite{FeR}. In general,  the free boundary is always countably $(n-2)$-rectifiable, a result by Focardi-Spadaro \cite{FoS1, FoS2}.

\subsection{Main results}
In this paper, we study contact points with $\ST$ frequency in $\R^3$. The example $u_{\ST}$ from \eqref{HalfIntegerSolutionIn2D} illustrates that these points can make up   the entire free boundary as well as the entire line $\{r=0\}$. Unfortunately, not much is known about them in terms of the classification of $\ST$-cones and the regularity of $\Lambda_{\ST}(u)$.

Unlike $\frac 32$-cones, homogeneous solutions with $\frac 72$ frequency are not monotone  along directions in $\HPP$.  On top of that, for a solution, $v$, to the linearized equation around $u_\ST$,   neither $v|_{\HPP}\ge 0$ nor $\Delta v|_{\HPP}\le 0$ is necessarily true. Thus the observation behind the study of integer-frequency points  is no longer applicable. As a result, it requires  new ideas to study contact points with $\ST$ frequency. 

With the full classification of $\ST$-cones seemingly out of reach, we focus on the family of \textit{half-space cones}. Up to a rotation in $\{x_n=0\}$, these are homogeneous solutions satisfying 
$$
\text{either }\Spt(\Delta u)\subset\{x_{n-1}\le 0, x_n=0\},\text{ or }\Spt(\Delta u)\supset\{x_{n-1}\le 0, x_n=0\}.
$$ 

With notations from \eqref{HalfIntegerSolutionIn2D} and footnote \ref{Footnote1},  half-space $\ST$-cones in $\R^3$ belong to,  up to a normalization, the following family
\begin{equation}\label{NormFam}
\mathcal{F}_{1}:=\{u_{\ST}+a_1x_1u_{\frac{5}{2}}+a_2(x_1^2-\frac{1}{5}r^2)u_{\frac{3}{2}}: 0\le a_2\le 5, \text{ and }a_1^2\le\Gamma(a_2) \}.
\end{equation} 
where
\begin{equation}\label{GammaFunction}
\Gamma(a_2):=\min \{4a_2(1-\frac{1}{5}a_2),\hem  \frac{24}{25}a_2(\ST -\frac{3}{10}a_2)\}.
\end{equation}
The subscript in $\mathcal{F}_1$ is to indicate that the coefficient of $u_\ST$ is $1$. 
The parameters $a=(a_1,a_2)$ lie in the region 
$$\mathcal{A}=E_1\cap E_2,$$
where $E_1$ and $E_2$ are two ellipses   
\begin{equation}\label{Ellipses}
E_1=\{\frac{a_1^2}{5}+\frac{(a_2-5/2)^2}{25/4}\le 1\}\text{ and  }E_2=\{\frac{a_1^2}{49/5}+\frac{(a_2-35/6)^2}{(35/6)^2}\le 1\}.
\end{equation} 
Their boundaries intersect at $(0,0)$ and $(\pm \frac{\sqrt{15}}{2}, 5/4)$. See Figure \ref{RegionForParameter}.

\begin{figure}[h]
\includegraphics[width=0.5\linewidth]{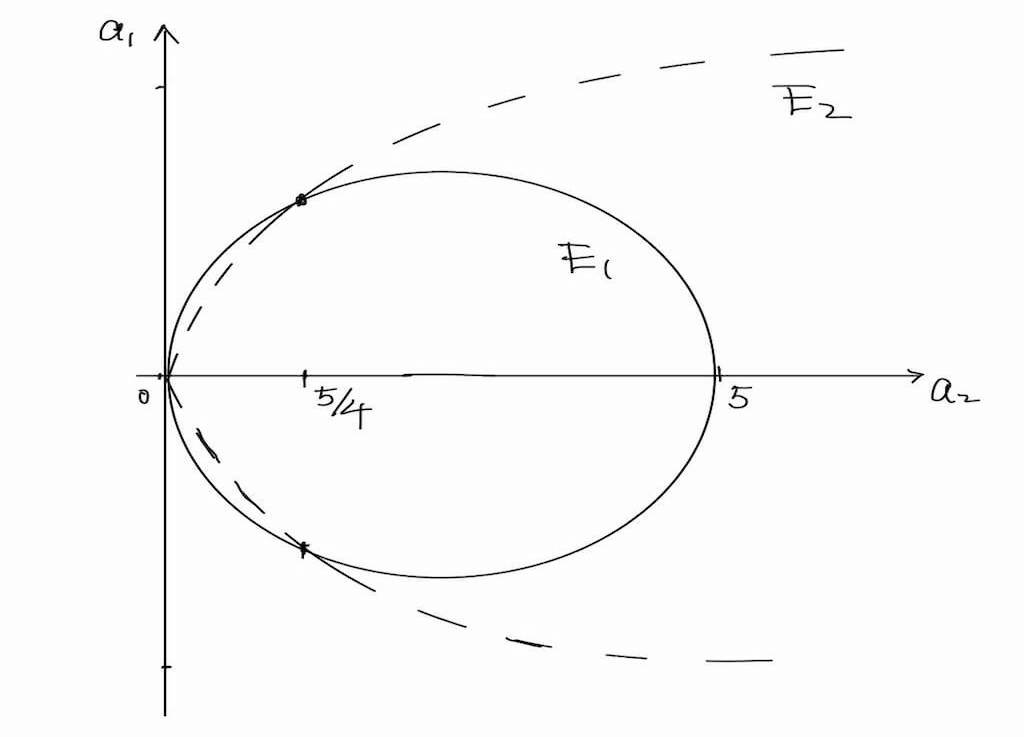}
\caption{Range of $(a_1,a_2)$ in $\mathcal{F}_1$ is $\mathcal{A}=E_1\cap E_2$.}
\label{RegionForParameter}
\end{figure}

\begin{rem}
Up to a normalization, this family $\mathcal{F}_{1}$ contains all examples of $\ST$-cones currently known. 
\end{rem}

For future reference, we divide $\mathcal{A}$ further into three subregions $$\mathcal{A}=\mathcal{A}_1\cup\mathcal{A}_2\cup\mathcal{A}_3,$$ according to the location of $a=(a_1,a_2)$ relative to $\partial\mathcal{A}$. See Figure \ref{Subregion}.
 
Let $\mu>0$ be a small parameter\footnote{The parameter $\mu$ is chosen in  Sections 5. See Remark \ref{UniversalChoice}.}. These subregions are defined as:
\begin{align}\label{SubRegions}
\mathcal{A}_1&:=\{a\in\mathcal{A}: a_2\ge\mu, \quad a_1^2<\Gamma(a_2)\};\nonumber\\
\mathcal{A}_2&:=\{a\in\mathcal{A}: a_2\ge \mu,\quad a_1^2=\Gamma(a_2)\}; \text{ and }\\
\mathcal{A}_3&:=\{a\in\mathcal{A}: a_2\le 2\mu\}.\nonumber
\end{align} 

With an abuse of notation, we also write 
\begin{equation}\label{pSubRegions}
p\in\mathcal{A}_j \text{ for $j=1,2,3$}
\end{equation}
when the  coefficients of $p$ belong to the corresponding region. 

 \begin{figure}[h]
\includegraphics[width=0.5\linewidth]{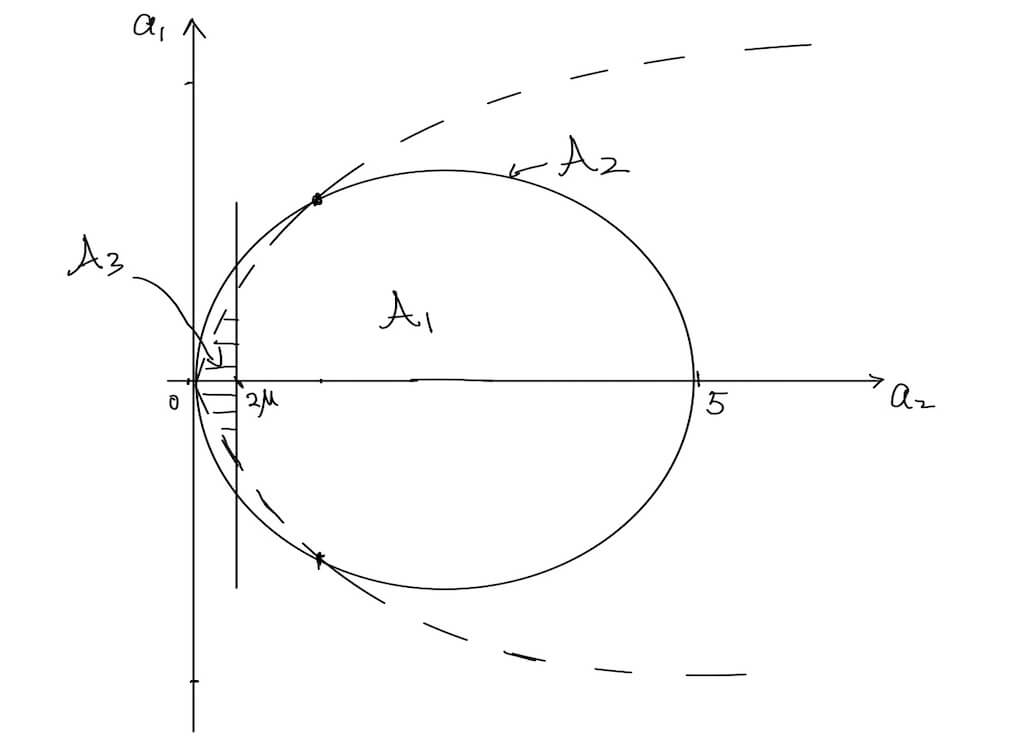}
\caption{$\mathcal{A}=\mathcal{A}_1\cup\mathcal{A}_2\cup\mathcal{A}_3$}
\label{Subregion}
\end{figure}

We now describe the main results of this work.

Although there could be other $\ST$-cones, they cannot be connected to the half-space cones. This is the content of our first result. 
\begin{thm}\label{Isolation}
Suppose that $u$ is a $\ST$-homogeneous solution to \eqref{IntroTOP} in $\R^3$ with 
$$|u-p|\le d  \text{ in } B_1$$ for some $p\in\mathcal{F}_1$. 

There is a universal constant $d_0>0$ such that if $d<d_0$, then up to a normalization, we have
$$u\in\mathcal{F}_{1}.$$
\end{thm}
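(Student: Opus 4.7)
\emph{Strategy: compactness, linearization, classification.} Suppose the statement fails. Then there exist $\frac{7}{2}$-homogeneous solutions $u_k$ of \eqref{IntroTOP}, elements $p_k\in\mathcal{F}_1$, and $d_k\searrow 0$ with $|u_k-p_k|\le d_k$ in $B_1$, yet no normalization of $u_k$ lies in $\mathcal{F}_1$. Compactness of the parameter region $\mathcal{A}$ lets us pass to a subsequence with $p_k\to p_\infty\in\mathcal{F}_1$, and the subsequent analysis splits according to whether $p_\infty$ lies in $\mathcal{A}_1$, $\mathcal{A}_2$ or $\mathcal{A}_3$.

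Consider the normalized differences $v_k=(u_k-p_k)/d_k$, bounded in $B_1$ and $\frac{7}{2}$-homogeneous. Standard regularity and compactness for the thin obstacle problem furnish a subsequential limit $v$ solving the linearization around $p_\infty$: $\Delta v=0$ off the hyperplane $\Hpp$, $v=0$ in the interior of $\Lambda(p_\infty)$, $\partial_{x_3}v=0$ on $\Hpp\setminus\Lambda(p_\infty)$, together with the appropriate matching condition across the free boundary of $p_\infty$. By construction $v$ is also $\frac{7}{2}$-homogeneous.

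The crucial step is to classify all such $v$. Separation of variables reduces the problem to an eigenvalue problem on $\mathbb{S}^2$ with mixed boundary conditions determined by $\Lambda(p_\infty)$, and the admissible homogeneous solutions are the eigenfunctions corresponding to the eigenvalue attached to the frequency $\frac{7}{2}$. Four explicit linearized solutions come for free from symmetries and admissible variations of $\mathcal{F}_1$ at $p_\infty$: the scaling generator $p_\infty$, the rotation about the $x_3$-axis applied to $p_\infty$, and the tangent directions $\partial_{a_1}p_\infty$ and $\partial_{a_2}p_\infty$. For $p_\infty\in\mathcal{A}_1$ one aims to show that these span the whole space of $\frac{7}{2}$-homogeneous linearized solutions; for $p_\infty\in\mathcal{A}_2$ the equality $a_1^2=\Gamma(a_2)$ is active, so only one-sided variations compatible with $\mathcal{A}$ are permitted; for $p_\infty\in\mathcal{A}_3$ one must further treat the degeneracy as $a_2\to 0$, where $p_\infty$ is close to the purely two-dimensional $u_{\frac{7}{2}}$ and the transversal direction in parameter space becomes singular.

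Once the classification is in hand, choose $R_k\in\mathrm{SO}(2)$ and $c_k>0$ with $R_k\to\mathrm{Id}$, $c_k\to 1$, so that the normalized $\tilde u_k=c_kR_ku_k$ kills the scaling and rotation components of $v$; the residual limit is then tangential to $\mathcal{F}_1$ at $p_\infty$, producing $\tilde p_k\in\mathcal{F}_1$ with $|\tilde u_k-\tilde p_k|=o(d_k)$. Iterating this improvement of flatness, or invoking the classification a second time at $\tilde p_k$, forces $\tilde u_k\in\mathcal{F}_1$ for all large $k$, contradicting the assumption. The main obstacle is the classification step: as emphasized in the introduction, for $\frac{7}{2}$-cones one has neither $v|_{\Hpp}\ge 0$ nor $\Delta v|_{\Hpp}\le 0$, so the sign-based arguments used for integer-frequency cones must be substantially revised, with extra care needed on $\mathcal{A}_2$ and $\mathcal{A}_3$ where the free-boundary structure of $p_\infty$ is singular.
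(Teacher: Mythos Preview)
Your strategy is natural and, for $p_\infty\in\mathcal{A}_1$, it essentially coincides with the paper's argument (Lemma~\ref{DichotomyA}): the linearized limit is a $\tfrac72$-homogeneous harmonic function in the slit domain, hence lies in the four-dimensional space $\mathcal{H}_{7/2}$, and Step~3 of that proof shows how to absorb it into rotations and parameter moves within $\mathcal{F}_1$. The homogeneity of $u$ makes the Weiss-energy alternative vacuous, so only the flatness improvement occurs, and iteration gives $u\in\mathcal{F}_1$.

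The genuine gap is in $\mathcal{A}_2$ and $\mathcal{A}_3$. You state the classification step as an aim and acknowledge it is ``the main obstacle,'' but do not carry it out; and the assertion that at a boundary point ``only one-sided variations compatible with $\mathcal{A}$ are permitted'' is not justified. Concretely: when $p_\infty\in\mathcal{A}_2$, the contact set of a nearby solution is no longer contained in a slit (see Lemma~\ref{ContLocBB}), so the linearized limit need not lie in $\mathcal{H}_{7/2}$; and even if it did, there is no mechanism forcing the $v_{1/2}$-component (or an outward $\partial_{a_1}$-component) to vanish, so $p+dv$ can exit $\mathcal{A}$ and you have no candidate $\tilde p_k\in\mathcal{F}_1$. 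For $p_\infty$ near $u_{7/2}$ the situation is worse: both constraints degenerate along $\{r=0\}$ and a naive linearization does not see the obstruction.

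The paper circumvents the classification entirely. It never proves that linearized solutions are spanned by the four symmetry directions; instead it allows the new profile $p'$ to leave $\mathcal{F}_1$, replaces it by a ``solution with boundary layer'' $\tilde p'$ or $\bar p'$ (Definitions~\ref{ReplacementB}, \ref{ReplacementC}), and shows that the resulting error $\kappa$ has a fixed-size projection onto $\mathcal{H}_{7/2}$ (Lemmas~\ref{NonOrthB}, \ref{NonOrthC}). This projection competes with the Weiss-energy drop in a dichotomy (trichotomy near $u_{7/2}$), and for an exactly $\tfrac72$-homogeneous $u$ the Weiss drop is zero, so flatness always improves. The case near $u_{7/2}$ requires an additional reduction to a two-dimensional thin obstacle problem with data at infinity (Appendix~\ref{2DProblem}, in particular Lemma~\ref{SymSolIn2D}), which supplies the missing rigidity. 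These constructions are precisely the ``substantial revision'' you allude to but do not provide.
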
 

A \textit{universal constant} is a constant whose value is independent of the particular solution under consideration.

The following two results address the behavior of the solution near a contact point where at least one blow-up belongs to $\mathcal{F}_1$. For brevity, let us denote these points by $\Lambda_{\ST}^{HS}(u)$, that is,
$$
\Lambda_{\ST}^{HS}(u):=\{q\in\Lambda(u): \textit{Up to a normalization, one blow-up profile at $q$ is in $\mathcal{F}_1$}\}.
$$

The next result quantifies the rate of convergence in \eqref{BlowingUp} at a point in $\Lambda_{\ST}^{HS}(u)$:

\begin{thm}\label{RateOfConvergence}
Let $u$ be a solution to \eqref{IntroTOP} in $B_1\subset\R^3$ with $0\in\Lambda_\ST^{HS}(u).$  
Then up to a normalization,  we have the following two possibilities:
\begin{enumerate}
\item{either 
$$
|u-u_{\ST}|\le O(r^{\ST}|\log(r)|^{-c_0}) \text{ in $B_r$ for all small $r$;}
$$}
\item{or
$$
|u-p|\le O(r^{\ST+c_0}) \text{ in $B_r$ for all small $r$}
$$
for some $p\in\mathcal{F}_1\backslash\{u_\ST\}$.}
\end{enumerate}
The parameter $c_0>0$ is universal. 
\end{thm}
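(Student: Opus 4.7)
The plan is to run a scaling-and-projection iteration on dyadic scales, using Theorem \ref{Isolation} to confine the approximating half-space cone to the family $\mathcal{F}_1$, and then to distinguish a non-degenerate regime (where the projection sits bounded away from $u_\ST$) from a degenerate regime (where it collapses toward $u_\ST$). Denote by $p_r \in \mathcal{F}_1$ an $L^2(\partial B_1)$-best approximation to the rescaling $u_{0,r}$ from \eqref{BlowingUp}, and set
$$e(r):=\|u_{0,r}-p_r\|_{L^2(\partial B_1)}, \qquad \delta(r):=\|u_{0,r}-u_\ST\|_{L^2(\partial B_1)}.$$
Almgren's monotonicity together with the hypothesis that one blow-up at $0$ lies in $\mathcal{F}_1$ forces $e(r)\to 0$, and Theorem \ref{Isolation} then ensures that the coefficients $(a_1^{(r)},a_2^{(r)})$ of $p_r$ stay inside $\mathcal{A}$ for all small $r$, with the associated normalization depending continuously on $r$.

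The engine is a one-step decay lemma. Writing $w_r := u_{0,r} - p_r$ and expanding $w_r$ in eigenfunctions of the operator obtained by linearizing \eqref{IntroTOP} around $p_r$, modes of spherical degree strictly above $\ST$ contract under $r \mapsto r/2$, modes below $\ST$ are excluded by the Almgren frequency being exactly $\ST$, and the $\ST$-eigenspace corresponds either to infinitesimal motions within the cone family or to a few transverse Signorini solutions that are ruled out by the minimality of $p_r$. If $p_r \in \mathcal{A}_1 \cup \mathcal{A}_2$ (the non-degenerate regime $a_2 \ge \mu$), the $\ST$-mode of $w_r$ can be absorbed by updating $p_r$ to $p_{r/2}$ along the smooth portion of $\mathcal{F}_1$, yielding a geometric decay
$$e(r/2) \le \rho\, e(r), \qquad \rho < 1,$$
universal for $\mu$ fixed. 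Summing over dyadic scales gives $e(r) \le C r^{c_0}$, and the projection $p_r$ stabilizes at some $p \in \mathcal{F}_1 \setminus \{u_\ST\}$; pointwise estimates follow from interior regularity of $u-p_r$ away from the common contact set together with boundary Harnack. This establishes alternative (2).

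The delicate situation, which I expect to be the \emph{main obstacle}, is when $p_r \in \mathcal{A}_3$ persists, so that $(a_1^{(r)}, a_2^{(r)}) \to 0$ and $\delta(r) \to 0$. The crucial geometric fact is that $\mathcal{A}$ has a cusp at the origin: the constraint $a_1^2 \le \Gamma(a_2)$ with $\Gamma(a_2) \sim a_2$ near $0$ forces $|a_1| \lesssim a_2^{1/2}$. Consequently the tangent cone to $\mathcal{F}_1$ at $u_\ST$ is only one-dimensional (along the $a_2$-axis), even though the formal $\ST$-eigenspace of the linearization at $u_\ST$ is two-dimensional, generated by $x_1 \uF$ and $(x_1^2 - \tfrac{1}{5} r^2) \uT$. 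The $x_1 \uF$-component of $w_r$ can only be absorbed inside $\mathcal{F}_1$ at the price of a quadratic penalty in the $a_2$-direction, producing a Weiss-type energy expansion of the schematic form
$$\delta(r/2)^2 \le \delta(r)^2 - c\, \delta(r)^4 + \text{(lower order)},$$
whose solution yields $\delta(r) \le |\log r|^{-c_0}$ and hence alternative (1). The technical heart of this step will be a fourth-order expansion of the Weiss functional about $u_\ST$, identifying the cusp direction as the precise quadratic obstruction inside $\mathcal{F}_1$ and showing that the residual modes of $w_r$, including the transverse Signorini corrections on $\Lambda(p_r)$ whose geometry need not match that of $\Lambda(u_\ST)$, generate no cancellation at the same order.
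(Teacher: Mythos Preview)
Your high-level outline---iterate an improvement-of-flatness step and separate the regime where the approximating cone stays away from $u_\ST$ from the regime where it collapses toward $u_\ST$---matches the paper's strategy. But two of the steps you treat as routine are in fact where all the work lies, and the mechanisms you propose would not go through as stated.

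\textbf{The boundary regime $\mathcal{A}_2$.} You assert that the $\ST$-mode of $w_r$ ``can be absorbed by updating $p_r$ to $p_{r/2}$ along the smooth portion of $\mathcal{F}_1$.'' When $(a_1,a_2)\in\partial\mathcal{A}$ this update generically pushes the coefficients outside $\mathcal{A}$, so the new profile fails one of the constraints $p'|_{\{x_3=0\}}\ge 0$ or $\Delta p'|_{\{x_3=0\}}\le 0$ and is no longer an admissible comparison function; in particular the $L^1\!\to\!L^\infty$ estimate you implicitly use (the analogue of Lemma~\ref{OneToInftyA}) collapses. The paper repairs this by replacing $p'$ with the solution $\tilde p'$ of a boundary-layer thin obstacle problem in small caps around the degeneracy points $A^\pm_{p}$, and then shows (Lemma~\ref{NonOrthB}) that the gluing error has a projection onto $\mathcal{H}_{\ST}$ comparable to its full size, so it can be absorbed into the Weiss-drop alternative of the dichotomy. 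Your sketch contains no analogue of this step.

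\textbf{The degenerate regime $\mathcal{A}_3$.} Your proposed mechanism---a fourth-order expansion of the Weiss functional about $u_\ST$ yielding $\delta(r/2)^2\le\delta(r)^2-c\,\delta(r)^4$---does not exist in the form you describe. The obstruction is not merely that the tangent cone to $\mathcal{F}_1$ at $u_\ST$ is cusped; it is that the linearised Signorini problem around $u_\ST$ is genuinely degenerate along the entire axis $\{r=0\}$, and neither $v|_{\{x_3=0\}}\ge 0$ nor $\Delta v|_{\{x_3=0\}}\le 0$ holds for the linearised corrector (the paper makes exactly this point in the introduction). What the paper actually does is: (i) augment $p$ by two \emph{singular} harmonic functions $\eps^4 v_{-1/2}$ and $\eps^5 v_{-3/2}$ to build an extended profile $p_{ext}$ accurate to order $\eps^6$ in caps near the poles (Lemma~\ref{ChoosingTheRightCoeff}); (ii) observe that the mismatch between the two half-sphere extensions is measured by $|b_1^+-b_1^-|\eps^4+|b_2^+-b_2^-|\eps^5$ and has a non-trivial $\mathcal{H}_{\ST}$-projection (Lemma~\ref{NonOrthC}); and (iii) when this mismatch is small, invoke a \emph{rigidity} result for the two-dimensional thin obstacle problem with data at infinity (Lemma~\ref{SymSolIn2D}, proved by factoring the degree-five polynomial $(\partial_1 u-i\partial_2 u)^2$) to conclude that $p$ is, up to rotation, close to an honest half-space cone. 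This 2D rigidity is the non-perturbative input that makes the iteration close; it is not recoverable from any finite-order Taylor expansion of the Weiss energy. The logarithmic rate then comes from the double-sequence Lemma~\ref{Sequences} applied with exponent $\gamma=1/5$ (since $W\lesssim d^{6/5}$ in this regime), not from an ODE of the type you wrote.

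In short: your geometric intuition about the cusp is correct, but the analytic route you propose (spectral decomposition of $w_r$ plus a quartic energy expansion) cannot handle the degeneracy at $\{r=0\}$; the paper's argument is built instead on boundary-layer replacements, singular extended profiles, and a 2D complex-analytic rigidity lemma, combined through a Weiss-drop/flatness-improvement dichotomy rather than a direct recursion on~$\delta$.
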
 

In particular, blow-up profiles at points in $\Lambda_{\ST}^{HS}(u)$ are independent of the subsequence $r\to0$. 

Theorem \ref{RateOfConvergence} also leads to a stratification result concerning $\Lambda_{\ST}^{HS}(u)$, we have

\begin{thm}\label{Stratification}
Let $u$ be a solution to \eqref{IntroTOP} in $B_1\subset\R^3$. Then we have the decomposition
$$
\Lambda_{\ST}^{HS}(u)\cap B_1=\Sigma_0\cup\Sigma_1,
$$
where $\Sigma_0$ is locally discrete, and $\Sigma_1$ is locally covered by a $C^{1,\log}$-curve.
\end{thm}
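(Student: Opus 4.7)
The plan is to use Theorem~\ref{RateOfConvergence} to decompose $\Lambda_\ST^{HS}(u)\cap B_1=\Sigma_0\cup\Sigma_1$, where $\Sigma_0$ is the set of points at which case~(2) of that theorem holds (blow-up in $\mathcal{F}_1\setminus\{\uS\}$, polynomial rate $r^{c_0}$) and $\Sigma_1$ consists of points at which case~(1) holds (blow-up equal to $\uS$, logarithmic rate $|\log r|^{-c_0}$); the two sets are disjoint since the blow-up is uniquely determined. The target structure --- $\Sigma_0$ discrete, $\Sigma_1$ on a log-H\"older curve --- matches the two convergence rates.

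To show $\Sigma_0$ is locally discrete, I would fix $q\in\Sigma_0$ with blow-up $p_q\in\mathcal{F}_1\setminus\{\uS\}$, so that $a_1\ne 0$ or $a_2\ne 0$. A direct computation shows the only $\ST$-frequency contact point of $p_q$ is the origin: at a free-boundary point $(q'_1,0,0)$ with $q'_1\ne 0$ the leading $r$-expansion of $p_q$ is $a_2(q'_1)^2\,\uT$ (when $a_2\ne 0$) or $a_1 q'_1\,\uF$ (when $a_2=0$), giving frequency $3/2$ or $5/2$; at an interior contact point $(q'_1,q'_2,0)$ with $q'_2<0$ the frequency is $1$. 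If $q_k\in\Lambda_\ST^{HS}(u)\setminus\{q\}$ converged to $q$, set $s_k:=|q_k-q|$ and pass to a subsequence with $e_k:=(q_k-q)/s_k\to e$. The polynomial rate in case~(2) of Theorem~\ref{RateOfConvergence} gives the uniform expansion $u(q+x)=p_q(x)+O(|x|^{\ST+c_0})$; a blow-up/compactness argument then forces the frequency of $u$ at $q_k$ to agree, in the limit, with the frequency of $p_q$ at $e$, which is strictly less than $\ST$ --- a contradiction.

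To show $\Sigma_1$ is locally covered by a $C^{1,\log}$ curve, I would attach to each $q\in\Sigma_1$ the normalizing rotation $R_q\in SO(2)$ around the $x_3$-axis so that $|u(q+\cdot)-\uS\circ R_q^{-1}|\leq C r^\ST|\log r|^{-c_0}$ on $B_r$. Since $\uS$ is invariant under $e_1$-translations, the unit vector $\nu(q):=R_q(e_1)\in\{x_3=0\}$ is the only candidate tangent direction to $\Sigma_1$ at $q$. For $q,q'\in\Sigma_1$ with $s:=|q-q'|$ small and $v:=(q'-q)/s$, applying the log estimate at both $q$ and $q'$ on a common ball of radius $O(s)$ and rescaling by $s$ yields
\[
\bigl|\uS\circ R_q^{-1}(y)-\uS\circ R_{q'}^{-1}(y-v)\bigr|\leq C|\log s|^{-c_0},\qquad y\in B_1.
\]
A quantitative non-degeneracy argument, in the spirit of the linearization analysis behind Theorems~\ref{Isolation} and \ref{RateOfConvergence}, should upgrade this into
\[
|R_q-R_{q'}|+\operatorname{dist}(v,\nu(q))\leq C|\log s|^{-c_0},
\]
using that the only translation symmetries of $\uS$ are along $e_1$. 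This is the log modulus of continuity for $\nu$ (and for the chord direction) on $\Sigma_1$ that Whitney's extension theorem needs to produce, near each $q\in\Sigma_1$, a $C^{1,\log}$ curve covering $\Sigma_1$.

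The hardest step will be this quantitative non-degeneracy: converting a $|\log s|^{-c_0}$ bound on the difference of two translated-rotated copies of $\uS$ into a same-order bound on the rotation mismatch $|R_q-R_{q'}|$ and the chord direction error $\operatorname{dist}(v,\nu(q))$ separately, with the $e_1$-translation invariance of $\uS$ factored out so that the modulus is preserved. The structural information on solutions close to $\uS$ developed in earlier sections should supply the needed compactness and openness. Once $\nu$ has a log modulus, the Whitney-type covering construction is standard.
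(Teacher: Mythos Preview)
Your decomposition and overall strategy match the paper's: split by the two cases of Theorem~\ref{RateOfConvergence}, use a frequency-gap argument for $\Sigma_0$, and invoke Whitney extension (as in \cite{GP,CSV}) for $\Sigma_1$. Two points are worth correcting or streamlining.

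First, a minor error in your $\Sigma_0$ analysis: the case $a_2=0$, $a_1\ne 0$ cannot occur for $p\in\mathcal{F}_1$, since $\Gamma(0)=0$ forces $a_1=0$. So $p\in\mathcal{F}_1\setminus\{\uS\}$ always has $a_2>0$. More substantively, you only check the frequency of $p_q$ along $\{r=0\}$ and at interior contact points, but for profiles on the boundary of $\mathcal{A}$ (e.g.\ $p_{dc}$) the free boundary of $p_q$ also contains the degenerate rays $R^\pm_p$ from \eqref{RaysOfDeg}, and the rescaled accumulation points $e_k$ are forced toward $R^\pm_p\cup\{r=0\}$ by the contact-set localization Lemma~\ref{ContLocBB}. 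The paper checks that along $R^\pm_p\cup\{r=0\}$ the profile has frequency in $\{1,\tfrac32,2\}$; this is the complete case analysis you need.

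Second, for $\Sigma_1$ you correctly identify the ``quantitative non-degeneracy'' step as the crux, but the paper bypasses it entirely. Instead of comparing two rotated-translated copies of $\uS$, the paper feeds the bound $|u_{(q,r)}-\uS|\le C|\log r|^{-c_0}$ directly into the contact-set localization Lemma~\ref{ContLocA} (or Lemma~\ref{ContLocCC}). This traps the free boundary in $B_r(q)$ between two parallel lines at distance $O(r|\log r|^{-c_0})$, which is already the $C^{1,\log}$ datum Whitney extension needs. This geometric route avoids the delicate separation of rotation and translation errors you describe.
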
 

\begin{rem}\label{MorePreciseStrat}
Suppose $0\in\Lambda_{\ST}^{HS}(u)$, we actually have regularity of the entire $\Lambda_{\ST}(u)$ near $0$ (instead of just $\Lambda_{\ST}^{HS}(u)$). If $u_\ST$ is a blow-up profile at $0$, then the free boundary $\partial_{\R^{n-1}}\Lambda(u)$ is $C^{1,\log}$ at $0$. If $u$ blows up to some $p\in\mathcal{F}_1\backslash \{u_\ST\}$, then $\Lambda_{\ST}(u)\cap B_\rho(0)=\{0\}$ for some small $\rho>0.$
\end{rem} 

These results are proven through an improvement-of-flatness argument. Roughly,  if the solution $u$ is approximated in $B_1$ by a profile $p$ with error $d$, then we need to reduce the error  at a smaller scale, say in $B_{\rho},$ by picking another profile $p'$. The natural candidate is 
\begin{equation}\label{Candidate}
p'=p+dv,
\end{equation} 
where $v$ is the solution to the linearized problem around $p$. 

 This strategy has been successful in many free boundary problems, for instance, the Bernoulli problem \cite{D}, the obstacle problem \cite{SY3} and the triple membrane problem \cite{SY4}. In these problems, the solutions have a fixed  homogeneity at free boundary points. This is not the case for the thin obstacle problem. Consequently, we cannot  always reduce the error in our problem. When this happens,  however, we can `improve the homogeneity' in terms of the Weiss energy functional \cite{W}. 
 
 This is the content of the main lemma of this work:
\begin{lem}\label{IntroDichotomy}
There are constants, $\tilde{d}$, $\rho$, $c$ small, and $C$ big, such that 

If $u\in\mathcal{S}(p,d,1)$ with $p\in\mathcal{F}_1$  and $d<\tilde{d}$, then we have the following dichotomy: 

a) either $$W_{\ST}(u;1)-W_{\ST}(u; \rho)\ge cd^2,$$ and $$u\in\mathcal{S}(p,Cd,\rho);$$

b) or $$u\in\mathcal{S}(p',\frac12d,\rho),$$ where $p'\in\mathcal{F}_1$ up to a normalization, and $$\|p'-p\|_{L^\infty(\Sph)}\le Cd.$$
\end{lem}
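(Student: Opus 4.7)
\textbf{The approach} is a compactness/contradiction argument combined with linearization around each profile in $\mathcal{F}_1$. Suppose the dichotomy fails, so there exist sequences $d_k\to 0$, profiles $p_k\in\mathcal{F}_1$, and solutions $u_k\in\mathcal{S}(p_k,d_k,1)$ such that, for any fixed universal choice of the constants $\rho$, $c$, $C$, both (a) and (b) fail. Since the parameter region $\mathcal{A}$ is compact, we may pass to a subsequence along which $p_k\to\bar{p}\in\mathcal{F}_1$. Set $v_k:=(u_k-p_k)/d_k$, so $\|v_k\|_{L^\infty(B_1)}\le 1$. Standard regularity for Signorini-type problems (cf.~\cite{SY2,SY3}) yields $v_k\to v$ locally uniformly, where $v$ solves the \emph{linearized problem} around $\bar{p}$: $v$ is harmonic in $\R^3\setminus\Lambda(\bar{p})$, even in $x_3$, and satisfies Signorini-type complementarity conditions on $\Lambda(\bar{p})$ inherited from the obstacle constraints on $u_k$.

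\textbf{The heart of the argument} is to classify the linearized solutions $v$ whose Almgren frequency at the origin does not exceed $\ST$. Using the explicit form \eqref{NormFam}, one shows that the space of such $v$ is finite-dimensional and spanned by infinitesimal symmetries of $\mathcal{F}_1$ at $\bar{p}$: the rescaling direction $\bar{p}$, the in-plane rotation direction $x_1\partial_{x_2}\bar{p}-x_2\partial_{x_1}\bar{p}$, and the parameter-variation directions $\partial_{a_1}\bar{p}$ and $\partial_{a_2}\bar{p}$. When $\bar{p}\in\mathcal{A}_2$ the parameter variations must additionally be tangential to $\partial\mathcal{A}$, since an outward variation would violate the Signorini sign condition on the part of $\Lambda(\bar{p})$ that becomes active on the boundary of the admissible set --- this is precisely where the constraint $a_1^2\le\Gamma(a_2)$ enters. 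The classification is performed separately on $\mathcal{A}_1$, $\mathcal{A}_2$, $\mathcal{A}_3$ via spherical-harmonic analysis on $\Sph\setminus\Lambda(\bar{p})|_{\Sph}$.

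\textbf{Closing the contradiction.} Decompose $v=v^\flat+v^\sharp$, where $v^\flat$ is the projection onto the finite-dimensional ``trivial'' subspace above, and $v^\sharp$ contains only frequency modes strictly above $\ST$. Set $p_k':=p_k+d_k v^\flat$; after absorbing the rescaling and rotation components of $v^\flat$ into a normalization, $p_k'$ belongs to $\mathcal{F}_1$ up to a normalization, and $\|p_k'-p_k\|_{L^\infty(\Sph)}\le Cd_k$. If $v^\sharp$ were suitably small on $B_\rho$ for a universally chosen $\rho$, then $u_k\in\mathcal{S}(p_k',\tfrac12 d_k,\rho)$ for large $k$, contradicting the failure of (b). Hence $\|v^\sharp\|$ is bounded below by a universal constant. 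Weiss's monotonicity formula now gives
\[
W_\ST(u_k;1)-W_\ST(u_k;\rho)\ \ge\ cd_k^2,
\]
since the increment of $W_\ST$ is controlled from below by the $L^2$-mass of the frequency modes of $v_k$ strictly exceeding $\ST$; this contradicts the failure of (a).

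\textbf{The main obstacle} is the classification step on $\mathcal{A}_2$, where $\bar{p}$ lies on the curved portion of $\partial\mathcal{A}$ and $\Lambda(\bar{p})$ has a structure sensitive to which of the two ellipses in \eqref{Ellipses} is tight at $\bar{p}$. On one arc of $\Lambda(\bar{p})$ the linearized complementarity degenerates into an equality, while on its complement it remains a strict inequality, and this distinction governs which parameter variations are admissible and must be tracked with care. A secondary difficulty arises on $\mathcal{A}_3$ (near $u_\ST$), where the rotational symmetry of $u_\ST$ enlarges the naive tangent space, and logarithmic corrections accompanying the degenerate frequency modes must be absorbed by exploiting the frequency gap around $\ST$ that motivates the choice of the small parameter $\mu$ in \eqref{SubRegions}.
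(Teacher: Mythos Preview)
Your high-level strategy (contradiction + compactness, linearization, Taylor expansion of the limit) is the right one, and for $p\in\mathcal{A}_1$ the paper's Lemma~\ref{DichotomyA} follows essentially this outline. However, two points in your argument are either inverted or glossed over in a way that hides the real content of the lemma.

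\textbf{The frequency bookkeeping is backwards.} The Weiss drop $W_\ST(u;1)-W_\ST(u;\rho)$ measures deviation from $\ST$-homogeneity; via \eqref{ChangeInRadial} it controls $u_{(\rho)}-u_{(2\rho)}$, and hence the \emph{low}-frequency components $h_{1/2},h_{3/2},h_{5/2}$ of the limit $v$, not the high-frequency tail. In the paper the logic is: assume (a) fails, so the Weiss drop is small, which forces $h_{1/2},h_{3/2},h_{5/2}$ to be small, leaving $v\approx h_{7/2}\in\mathcal{H}_{7/2}$; then $p':=p+dh_{7/2}$ (after killing the $\vO$-component by a rotation) gives (b). Your claim that ``$v^\sharp$ large $\Rightarrow$ Weiss drop large'' is not how the monotonicity formula is used; the high-frequency part is automatically $O(\rho^{9/2})$ in $B_\rho$ and plays no role in the dichotomy. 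Relatedly, your classification step (``linearized solutions with frequency $\le\ST$ are spanned by symmetries of $\mathcal{F}_1$'') is false as stated on $B_1$: generic bounded slit-harmonic functions carry $h_{1/2},h_{3/2},h_{5/2}$ components, and it is precisely the failure of (a) that kills them.

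\textbf{The passage from $v$ to an admissible $p'$ is the whole difficulty on $\mathcal{A}_2\cup\mathcal{A}_3$.} You write $p_k'=p_k+d_kv^\flat$ and assert this lies in $\mathcal{F}_1$ up to normalization. For $p\in\mathcal{A}_1$ this is fine, but on $\partial\mathcal{A}$ the profile $p$ is degenerate (either $p|_{\Hpp}$ or $\Delta p|_{\Hpp}$ vanishes along a ray), and a generic $h_{7/2}$-perturbation violates one of the constraints; the resulting $p'$ is not a solution, the $L^1\!\to\!L^\infty$ estimate (Lemma~\ref{OneToInftyA}) fails for $u-p'$, and the argument does not close. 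The paper handles this by \emph{replacing} $p$ with a function $\ptil$ (Definitions~\ref{ReplacementB}, \ref{ReplacementC}) that solves the thin obstacle problem in spherical caps around the degenerate points, and then carefully tracks the gluing error via the auxiliary quantities $\Phi_p,\varphi_p,\kappa_p$. The key estimate (Lemma~\ref{NonOrthB}, Lemma~\ref{NonOrthC}) is that this error has a definite projection onto $\mathcal{H}_{7/2}$, so it too can be absorbed by the Weiss-drop alternative. For $p\in\mathcal{A}_3$ the degeneracy occurs at the poles $\{r=0\}$, the analogous projection estimate fails, and Lemma~\ref{IntroDichotomy} must in fact be replaced by a \emph{trichotomy} (Lemma~\ref{Trichotomy}) whose third alternative $d\le\eps_p^5$ triggers a separate analysis via the two-dimensional problem in Appendix~\ref{2DProblem}. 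None of this machinery appears in your sketch.
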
 
The space $\mathcal{S}(p,d,\rho)$ consists of $d$-approximated solutions at scale $\rho$, and $W_{\ST}(\cdot)$ is the Weiss energy functional, defined in \eqref{Weiss}.  A similar lemma was established for integer-frequency points in \cite{SY2}. 

The proof of Lemma \ref{IntroDichotomy} is divided into three cases, corresponding to the three subregions of $\mathcal{A}$ as in Figure \ref{Subregion}. 

When $p\in\mathcal{A}_1$,   the modification $p'$ from \eqref{Candidate} solves the thin obstacle problem for small $d$. In this case, Lemma \ref{IntroDichotomy} follows from a standard compactness argument. 

Extra care is needed when $p\in\mathcal{A}_2\cup\mathcal{A}_3$. Here the profile $p$ could become degenerate at certain points. The same  $p'$ might violate the constraints $p'|_{\Hpp}\ge 0$ and $ \Delta p'\le 0$. 

For  $p\in\mathcal{A}_2$, there are two possibilities. If the coefficients $(a_1,a_2)$ are  bounded away from $(\pm\sqrt{15}/2,5/4)$, the intersection of $\partial E_1$ and $\partial E_2$,  then only one of the two constraints might fail. If they are very close to $(\pm\sqrt{15}/2,5/4)$, both constraints can fail, but the locations of failure are well-separated from $\{r=0\}$.  In both cases, we replace $p'$ by solving a \textit{boundary-layer problem }around the place where the constraints fail. This is the same strategy adapted to study integer-frequency points \cite{SY2}.

New challenges arise when $p\in\mathcal{A}_3$. When $p$ is very close to $u_\ST$,  both constraints  might fail along $\{r=0\}$. Indeed Lemma \ref{IntroDichotomy} needs to be modified to be a trichotomy. See Lemma \ref{Trichotomy}. For this,  we need to study an `inner problem' in small spherical caps near $\{r=0\}$, which reduces to  the thin obstacle problem in $\R^2$ with data at infinity. This is the main reason why we  restrict to three dimension in this work. 

Although this restriction to three dimension seems crucial, we hope similar ideas would work for half-space solutions with higher frequencies. 

This paper is organized as follows: In Section 2, we collect some preliminary results. In Sections 3, we establish Lemma \ref{IntroDichotomy} when $p\in\mathcal{A}_1$. The same lemma is proved in Section 4 for $p$ near $\mathcal{A}_2$. In Section 5, a modified lemma is proved for profiles in $\mathcal{A}_3$. This is the most involved part of this paper, and requires several technical preparations that are left to the Appendices. Finally in Section 6, all these are combined to show the main results Theorem \ref{Isolation}, Theorem \ref{RateOfConvergence} and Theorem \ref{Stratification}.

\section{Preliminaries}
In this section, we gather some useful notations and results.

Unless otherwise specified, in this paper we denote by $u$ a solution to  the thin obstacle problem \eqref{IntroTOP} in some domain in $\R^3$. For this space, we have the standard coordinate system 
$\R^3=\{(x_1,x_2,x_3): x_j\in\R\},$ decomposed as 
$$
x=(x',x_3) \text{ where $x'=(x_1,x_2)$}.
$$
A subset of $\R^3$ is decomposed  as $E=E^{+}\cup E'\cup E^-$, where 
\begin{equation}\label{SlicingSet}
  E'=E\cap\Hpp, \text{ and } E^{\pm}=E\cap\{\pm x_3>0\}.
\end{equation}
With this notation,  the contact set is $\Lambda(u)=\{u=0\}'.$ 

Recall that the solution $u$ is assumed to be even with respect to $\Hpp.$ As such it may fail to be differentiable  in the $x_3$-direction at points in $\Lambda(u)$. Nevertheless, it is still differentiable from either side of the domain. In this paper, for a function $w\in C^{1}(B_1\cap\{x_3\ge 0\})$, we use $\ddt w (x)$ to denote its one-sided derivative at $x\in B_1'$, namely, 
\begin{equation}\label{OneSidedD}
\ddt w(x)=\lim_{t\to0^+}\frac{w(x',t)-w(x',0)}{t}.
\end{equation} 
In general, if $\Omega$ is a domain in $\R^3$, we denote by $\nu$ the inner unit normal along $\partial\Omega$. For $w\in C^1(\overline{\Omega})$ and $x\in\partial\Omega$, we denote by $w_\nu(x)$ the one-sided normal derivative at $x_0$ with respect to $\Omega$, that is, 
\begin{equation}\label{OneSidedDGeneral}
w_\nu(x)=\lim_{t\to 0^+}\frac{w(x+t\nu)-w(x)}{t}.
\end{equation} 

To utilize the rotational symmetry of the problem, we introduce the \textit{rotation operator }with respect to the $x_n$-axis . For $\tau\in(-\pi,\pi)$, this operator $\operatorname{U}_\tau$ acts on points, sets, and functions in the following manner:
\begin{align}\label{Rotation}
&\Rot_\tau(x)=(x_1\cos(\tau)-x_2\sin(\tau),x_1\sin(\tau)+x_2\cos(\tau),x_3), \nonumber\\
&\Rot_\tau (E)=\{x:\Rot_{-\tau}x\in E\}, \\
 &\Rot_\tau(f)(x)=f(\Rot_{-\tau}x).\nonumber 
\end{align}

The problem is also scaling invariant. For $\rho>0$ and $q\in\Lambda_{\ST}(u)$, defined as in \eqref{ContPtWithFreq}, the the \textit{rescaled function} 
\begin{equation}\label{Rescaling}
u_{(q,\rho)}(x):=u(q+\rho x)/\rho^{\ST}
\end{equation} solves the problem in a rescaled domain with $0\in\Lambda_{\ST}(u_{(q,\rho)}).$ When $q=0$, we simplify the notation by  $$u_{(\rho)}:=u_{(0,\rho)}.$$

\subsection{Weiss monotonicity formula and consequences}
The Weiss monotonicity formula was used by Weiss to treat the obstacle problem \cite{W}, and was adapted to the thin obstacle problem  by Garofalo-Petrosyan \cite{GP}. Its decay is used in this paper to quantify an `improvement of homogeneity' between scales. 

Since we are concerned with contact points with $\ST$ frequency in $\R^3$, we include here only the \textit{$\ST$-Weiss energy functional} in $3d$
\begin{equation}\label{Weiss}
W_\ST(u;\rho)=\frac{1}{\rho^{8}}\int_{B_\rho}|\nabla u|^2-\frac{7}{2\rho^{9}}\int_{\partial B_\rho}u^2.
\end{equation} 
We collect some of its properties in the following lemma. For its proof, see Theorem 1.4.1 and Theorem 1.5.4 in \cite{GP}.

\begin{lem}\label{PropertyWeiss}
Suppose that $u$ solves the thin obstacle problem in $B_1\subset\R^3$. Then for $\rho\in(0,1)$, we have
\begin{equation}\label{DerOfWeiss}
\frac{d}{d\rho}W_\ST(u;\rho)=\frac{2}{\rho}\int_{\partial B_1}(\nabla u_{(\rho)}\cdot\nu-\ST u_{(\rho)})^2.
\end{equation}
In particular, $\rho\mapsto W_\ST(u;\rho)$ is non-decreasing. 

If $0\in\Lambda_\ST(u)$, then $\lim_{\rho\to 0}W_\ST(u;\rho)=0.$
\end{lem}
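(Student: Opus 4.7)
The proof splits into establishing the identity (with monotonicity as an immediate corollary) and verifying that $W_\ST(u;\rho)\to 0$ at a $\ST$-contact point. For the identity I would pass to scale-invariant form on $B_1$ and differentiate directly; for the limit I would combine a blow-up argument, Euler's identity for homogeneous functions, and the monotonicity just proved.

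Changing variables $x=\rho y$ and using that the exponent $\ST$ matches the scaling built into $u_{(\rho)}$, the Weiss functional takes the form
\begin{equation*}
W_\ST(u;\rho) \;=\; \int_{B_1}|\nabla u_{(\rho)}|^2\,dy \;-\; \ST\int_{\partial B_1}u_{(\rho)}^2\,d\sigma.
\end{equation*}
A direct computation gives
\begin{equation*}
v_\rho \;:=\; \frac{\partial}{\partial\rho}u_{(\rho)} \;=\; \frac{1}{\rho}\bigl(y\cdot\nabla u_{(\rho)} - \ST\, u_{(\rho)}\bigr),
\end{equation*}
whose trace on $\partial B_1$ is exactly $\frac{1}{\rho}(\nabla u_{(\rho)}\cdot\nu - \ST u_{(\rho)})$. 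Differentiating under the integral sign and applying Green's identity to $\int_{B_1}\nabla u_{(\rho)}\cdot\nabla v_\rho$ yields a boundary term together with the volume integral $-2\int_{B_1}v_\rho\,\Delta u_{(\rho)}$. The volume integral vanishes: $\Delta u_{(\rho)}$ is supported on $\Lambda(u_{(\rho)})\subset\{y_3=0\}$, where $u_{(\rho)}=0$, the tangential gradient vanishes because $u_{(\rho)}$ attains its minimum there, and the normal component $\partial_3 u_{(\rho)}$ vanishes by even reflection; hence $v_\rho\equiv 0$ on $\Spt(\Delta u_{(\rho)})$. Combining the two surviving boundary contributions produces the announced identity, and non-negativity of its integrand is the claimed monotonicity.

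For the limit, suppose $0\in\Lambda_\ST(u)$ and pick any sequence $\rho_k\to 0$. By Almgren's frequency formula together with the standard compactness theory for the thin obstacle problem, a subsequence of $u_{(\rho_k)}$ converges strongly in $W^{1,2}(B_1)$, with trace convergence on $\partial B_1$, to a $\ST$-homogeneous solution $u_0$. Euler's identity $y\cdot\nabla u_0 = \ST u_0$ combined with the divergence theorem (using that $\Delta u_0$ is supported on $\{u_0=0\}$) gives $\int_{B_1}|\nabla u_0|^2 = \ST\int_{\partial B_1}u_0^2$, i.e.\ $W_\ST(u_0;1)=0$. Passing the scale-invariant form of $W_\ST$ to the limit yields $W_\ST(u;\rho_k)=W_\ST(u_{(\rho_k)};1)\to 0$ along this subsequence, and monotonicity forces the full limit to equal $0$.

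\textbf{Main obstacle.} The only real subtlety is making the integration by parts rigorous across the non-smooth thin manifold $\{y_3=0\}$. I plan to split $B_1$ into upper and lower halves: on the non-contact part of $\{y_3=0\}$ the function $u_{(\rho)}$ is harmonic and $\partial_3 u_{(\rho)}=0$ by evenness, so the fluxes from the two halves cancel across that portion; on the contact set $v_\rho=0$ as noted above, so the singular part of $\Delta u_{(\rho)}$ contributes nothing either. Everything else is routine once the identity is in place.
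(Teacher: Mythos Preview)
Your argument is correct and is essentially the standard derivation that the paper defers to (it does not give a proof but cites Theorems~1.4.1 and~1.5.4 of Garofalo--Petrosyan). One small imprecision worth fixing: on the contact set the one-sided derivative $\partial_{x_3} u_{(\rho)}$ need \emph{not} vanish---indeed the measure $\Delta u_{(\rho)}$ is precisely carried by its jump---so the correct reason the normal contribution to $y\cdot\nabla u_{(\rho)}$ drops out of $v_\rho$ is simply that $y_3=0$ on $\Spt(\Delta u_{(\rho)})$; your ``Main obstacle'' paragraph, where you split into half-balls and pair the fluxes, already handles this correctly.
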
 
The rescaling $u_{(\rho)}$ is defined as in \eqref{Rescaling}.

Under the same assumptions as in Lemma \ref{PropertyWeiss}, we can integrate \eqref{DerOfWeiss} and apply H\"older's inequality to get
\begin{equation}\label{ChangeInRadial}
\int_{\partial B_1}|u_{(\rho_1)}-u_{(\rho_2)}|\le (\log(\rho_1/\rho_2))^{\frac 12}[W_\ST(u;\rho_1)-W_\ST(u;\rho_2)]^{\frac 12}
\end{equation} for $0<\rho_2<\rho_1<1.$

\subsection{Harmonic functions in slit domains and half-space cones}
Motivated by  thin free boundary problems, harmonic functions in slit domains were studied in great detail by De Silva-Savin \cite{DS1, DS2}. In this work, we only need certain basic elements when the slit is flat. 

Let $(r,\theta)$ denote the polar coordinate for the $(x_2,x_3)$-domain with $r\ge 0$ and $\theta\in(-\pi,\pi]$. The \textit{slit} is defined as 
\begin{equation}\label{Slit}
\mathcal{S}:=\{\theta=\pi\}=\{x_2\le 0, x_3=0\}.
\end{equation} 
For a subset of $\R^3$, we decompose it relative to the slit as $E=\widehat{E}\cup\widetilde{E},$ where
\begin{equation}\label{SlitSet}
\widehat{E}=E\backslash\mathcal{S}, \text{ and } \widetilde{E}=E\cap\mathcal{S}.
\end{equation} 

Given a domain $\Omega\subset\R^3$, a \textit{harmonic function in the slit domain $\widehat{\Omega}$} 
is a continuous  function that is even with respect to $\Hpp$ and satisfies 
\begin{equation}\label{HarmFuncInSlit}
\begin{cases}
\Delta v=0 &\text{ in $\widehat{\Omega}$,}\\
v=0 &\text{ in $\widetilde{\Omega}$.}
\end{cases}
\end{equation} 

As is the case for regular domains, homogeneous solutions play an important role. Given a non-negative integer $m$, let's define the following space
\begin{equation}\label{7/2HarmFunc}
\mathcal{H}_{m+\frac 12}:=\{v: v \text{  is a harmonic function in }\widehat{\R^3}, \quad x\cdot\nabla v=(m+\frac 12)v\}.
\end{equation} 
Functions in $\mathcal{H}_{m+\frac 12}$ satisfy
\begin{equation}\label{7/2HarmFuncSph}
\begin{cases}
(\SphLap+\lambda_{m+\frac 12}) v=0 &\text{ in $\widehat{\Sph}$,}\\
v=0 &\text{ in $\widetilde{\Sph},$}
\end{cases}
\end{equation}
where 
\begin{equation}\label{Eigenvalue}
\SphLap \text{ is the spherical Laplacian, and } 
\lambda_{m+\frac 12}=(m+\frac 12)(m+\frac 32). 
\end{equation} 

These functions are the basic building blocks for general solutions to \eqref{HarmFuncInSlit}. For instance, we have the following theorem from \cite{DS1}:

\begin{thm}[Theorem 4.5 from \cite{DS1}]\label{TaylorExpansion}
Let $v$ be a solution to \eqref{HarmFuncInSlit} with $\Omega=B_1$ and $\|v\|_{L^\infty(B_1)}\le 1$. 

Given $m\ge 0$, we can find  $v_{k+\frac 12}\in\mathcal{H}_{k+\frac 12}$ for $k=0,1,\dots, m$, such that $$\|v_{k+\frac 12}\|_{L^\infty(B_1)}\le C$$ and
$$
|v-\sum_{k=0}^{m}v_k|(x)\le C|x|^{m+1}u_{\frac 12} \text{ for $x\in B_{\frac 12}$.}
$$Here $u_{\frac 12}$ is defined as in \eqref{HalfIntegerSolutionIn2D}, and $C$ depends only on $m$.
\end{thm}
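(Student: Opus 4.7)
The plan is to argue by induction on $m$, peeling off one homogeneous component at a time via a compactness and contraction argument, modeled on the classical Taylor expansion for harmonic functions but adapted to the $u_{1/2}$-type decay at the slit. For the base case I would first establish that any $v$ satisfying the hypotheses obeys $|v(x)|\le Cu_{1/2}(x)$ in $B_{1/2}$. This boundary-behavior estimate follows from a barrier argument using $u_{1/2}$ itself, which is harmonic in $\widehat{\R^3}$ and vanishes on $\widetilde{\R^3}$, combined with standard interior estimates away from the slit; in fact $v/u_{1/2}$ extends to a H\"older-continuous function on $\overline{\widehat{B_{1/2}}}$ with universal bounds.

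For the inductive step, suppose the expansion has been produced up to order $m-1$ and set $w := v - \sum_{k=0}^{m-1}v_{k+\frac 12}$, so that $|w(x)|\le C|x|^{m}u_{1/2}(x)$ in $B_{1/2}$. The goal is to extract $v_{m+\frac 12}\in\mathcal{H}_{m+\frac 12}$ such that $|w - v_{m+\frac 12}|(x)\le C|x|^{m+1}u_{1/2}(x)$. Consider the rescalings $w_\rho(x) := w(\rho x)/\rho^{m+\frac 12}$; they solve the slit equation on $B_{1/(2\rho)}$ and are uniformly bounded by a multiple of $u_{1/2}$. Using the slit-domain regularity estimates, a subsequence $w_{\rho_j}$ converges locally uniformly to a global harmonic function $w_0$ in $\widehat{\R^3}$, vanishing on $\widetilde{\R^3}$, with $|w_0(x)|\le C|x|^{m}u_{1/2}(x)$.

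The central step is to conclude $w_0\in\mathcal{H}_{m+\frac 12}$. Separation of variables on the sphere yields an expansion
$$w_0(r,\sigma)=\sum_k\bigl(\alpha_k r^{k+\frac 12}+\beta_k r^{-(k+\frac 32)}\bigr)\varphi_{k+\frac 12}(\sigma),$$
where $\{\varphi_{k+\frac 12}\}$ diagonalizes $\SphLap$ on $\widehat{\Sph}$ with Dirichlet data on $\widetilde{\Sph}$ and eigenvalues $\lambda_{k+\frac 12}$ as in \eqref{Eigenvalue}. The polynomial growth on $\widehat{\R^3}$ forces $\beta_k = 0$ and $\alpha_k = 0$ for $k > m$, while the vanishing of the lower-frequency components of $w$ (inherited from the induction, since those pieces were already subtracted and survive in the blow-down limit) forces $\alpha_k = 0$ for $k < m$. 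Hence $w_0\in\mathcal{H}_{m+\frac 12}$, giving the candidate Taylor coefficient at this order.

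Finally, I would make the construction quantitative via a standard dichotomy: at each scale $\rho_j$, subtracting an appropriately rescaled $v_{m+\frac 12}$ either reduces the remainder relative to $|x|^{m+1}u_{1/2}$ by a fixed factor, or a contradiction against the compactness limit shows the remainder already satisfies the improved bound. Iterating scale by scale produces a Cauchy sequence of candidate Taylor terms whose limit serves as $v_{m+\frac 12}$, and summing the resulting geometric series yields the estimate with $C=C(m)$. The step I expect to be the main obstacle is the spectral identification underlying the Liouville statement: verifying that the Dirichlet spectrum of $\SphLap$ on $\widehat{\Sph}$ is precisely $\{\lambda_{k+\frac 12}\}_{k\ge 0}$, that each eigenspace is finite-dimensional and spanned by restrictions of $\mathcal{H}_{k+\frac 12}$, and that the $u_{1/2}$-type growth rigorously excludes the singular $r^{-(k+\frac 32)}$ branches near the slit edge $\{r=0\}$ rather than merely along the interior of the slit.
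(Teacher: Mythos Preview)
The paper does not prove this theorem: it is quoted verbatim from De~Silva--Savin \cite{DS1} (Theorem~4.5 there) and used as a black box. So there is no ``paper's own proof'' to compare against.

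That said, your sketch is essentially the standard route to such results and would go through. A few small corrections are worth noting. In your spectral expansion for $w_0$ you write the radial variable as $r$, but in this paper $r=\sqrt{x_2^2+x_3^2}$ is the distance to the slit edge, not $|x|$; the separation of variables is in the radial variable $|x|$, and the expansion should read $w_0=\sum_k \alpha_k|x|^{k+\frac12}\varphi_{k+\frac12}(x/|x|)$ (the singular branches $|x|^{-(k+\frac32)}$ are ruled out because $|w_0|\le C|x|^{m+\frac12}\to 0$ at the origin, not by growth at infinity). Also, your stated reason for $\alpha_k=0$ when $k<m$ (``inherited from the induction, since those pieces were already subtracted'') is not quite the argument: the correct reason is the quantitative bound $|w_0(x)|\le C|x|^m u_{\frac12}(x)\le C|x|^{m+\frac12}$ near the origin, which after testing against $\varphi_{k+\frac12}$ on $\partial B_\rho$ and sending $\rho\to 0$ kills each $\alpha_k$ with $k<m$. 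With these adjustments the Liouville step is clean, and the improvement-of-flatness iteration you describe is routine.
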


The functions from \eqref{HalfIntegerSolutionIn2D} are homogeneous harmonic functions in  $\widehat{\R^3}.$
The following proposition states that, in some sense,  these functions generate all homogeneous harmonic functions. Its elementary proof is left to the reader. 

\begin{prop}\label{GeneratingHarm}
If $v\in\mathcal{H}_{m+\frac 12}$, then we have the following expansion
$$
v=a_0u_{m+\frac 12}+p_1(x_1,r)u_{m-\frac 12}+\dots+p_k(x_1,r)u_{m+\frac 12-k}+\dots+p_m(x_1,r)u_{\frac 12},
$$where $a_0\in\R$, and $p_k$ is a $k$-homogeneous polynomial in $(x_1,r)$.
\end{prop}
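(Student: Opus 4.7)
The plan is a Fourier decomposition in the angular variable $\theta$, followed by lifting each Fourier coefficient to an axially symmetric harmonic function in a higher-dimensional Euclidean space.

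Using polar coordinates $(r,\theta)$ in the $(x_2,x_3)$-plane, the evenness of $v$ in $\theta$ together with its vanishing on the slit $\{\theta=\pm\pi\}$ makes $\{\cos((j+\tfrac12)\theta)\}_{j\ge 0}$ the natural complete basis for its angular dependence, so
$$v(x_1,r,\theta)=\sum_{j\ge 0}A_j(x_1,r)\cos\bigl((j+\tfrac12)\theta\bigr).$$
Writing $\Delta$ in cylindrical coordinates and projecting onto each Fourier mode forces
$$\partial_{x_1}^2A_j+\partial_r^2A_j+\tfrac{1}{r}\partial_rA_j-\tfrac{(j+1/2)^2}{r^2}A_j=0\quad\text{on }\{r>0\}.$$
The factorization $A_j(x_1,r)=r^{j+1/2}q_j(x_1,r)$, so that $v=\sum_j q_j(x_1,r)\,u_{j+\frac12}(r,\theta)$, transforms this, by a direct computation, into
$$\partial_{x_1}^2q_j+\partial_r^2q_j+\tfrac{2(j+1)}{r}\partial_rq_j=0,$$
which is precisely the Laplace equation in $\R^{2j+4}$ for functions $q_j(x_1,|y|)$ axially symmetric about the $x_1$-axis (with $r=|y|$, $y\in\R^{2j+3}$).

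Next I use the $(m+\tfrac12)$-homogeneity of $v$, which forces each $A_j$ to be $(m+\tfrac12)$-homogeneous and hence each $q_j$ to be $(m-j)$-homogeneous in $(x_1,r)$. For $j>m$, a nonzero solution of strictly negative homogeneity would blow up at the origin of $(x_1,y)$-space, contradicting the boundedness of $v$; thus $q_j\equiv 0$ there. For $0\le j\le m$, $q_j$ is a non-negative-degree homogeneous axially symmetric harmonic function on $\R^{2j+4}$. A standard Liouville-type argument (equivalently, an expansion in Gegenbauer polynomials) then shows that $q_j$ is a polynomial in $(x_1,|y|^2)=(x_1,r^2)$ of degree $m-j$, and in particular a $(m-j)$-homogeneous polynomial in $(x_1,r)$. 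Setting $k:=m-j$ and $p_k:=q_{m-k}$ yields
$$v=a_0\,u_{m+\frac12}+\sum_{k=1}^m p_k(x_1,r)\,u_{m+\frac12-k},$$
as claimed.

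The main subtlety lies in justifying that each $q_j$ with $0\le j\le m$ is a genuine polynomial, rather than merely a formal homogeneous solution on the slit domain; this requires controlling $q_j$ across the $x_1$-axis, which lies on the slit. I would close this gap either by a direct Taylor expansion argument on the lifted function in $\R^{2j+4}$ in the spirit of Theorem \ref{TaylorExpansion}, or by observing that the non-negative integer homogeneity together with the axisymmetric Laplace equation rules out any singularity along the symmetry axis, so that $q_j$ extends smoothly to all of $\R^{2j+4}$ and is therefore polynomial by homogeneity.
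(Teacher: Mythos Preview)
The paper leaves this proof to the reader, so there is no paper-proof to compare against. Your Fourier-in-$\theta$ decomposition followed by lifting each coefficient to an axisymmetric harmonic function in $\R^{2j+4}$ is a natural and essentially correct route; an alternative the surrounding text suggests is a dimension count (exhibit $m+1$ linearly independent functions of the claimed form, as done explicitly for $m=3$ in \eqref{BasisA} and for the singular analogues in Appendix~A, and bound $\dim\mathcal{H}_{m+1/2}$ from above).

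One step in your argument needs correction. Your dismissal of the modes $j>m$ (``a nonzero solution of strictly negative homogeneity would blow up at the origin of $(x_1,y)$-space, contradicting the boundedness of $v$'') does not work as written: the product $A_j=r^{j+1/2}q_j$ is $(m+\tfrac12)$-homogeneous and hence vanishes at the origin of $\R^3$ regardless of how singular $q_j$ is, so boundedness of $v$ gives no contradiction. The correct argument goes through the very removability step you flag for $0\le j\le m$. From $|v|\le C\,u_{1/2}$ near the edge $\{r=0\}$ (boundary Harnack in slit domains, implicit in Theorem~\ref{TaylorExpansion}) one gets $q_j=O(r^{-j})$; since $-j>2-(2j+4)$ this growth is sub-critical for the codimension-$(2j+3)$ axis, so $Q_j$ extends harmonically across the axis for \emph{every} $j\ge0$. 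For $j>m$ the extension is then harmonic on $\R^{2j+4}\setminus\{0\}$ with homogeneity $m-j$ in the gap $\bigl(2-(2j+4),\,0\bigr)$, where no nonzero homogeneous harmonic function exists, forcing $q_j\equiv0$. For $0\le j\le m$ the extension is entire and $(m-j)$-homogeneous, hence a harmonic polynomial, and axisymmetry makes it a polynomial in $(x_1,r^2)$.

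In short, the gap you identify at the end is real and is the \emph{only} nontrivial input; once you feed in $v=O(u_{1/2})$ at the edge, it closes both the polynomiality for $j\le m$ and the vanishing for $j>m$ simultaneously.
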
 

The following orthogonality follows from standard argument:
\begin{prop}\label{Orthogonal}
Suppose $m\neq n$ are two non-negative integers, then we have the following:

a) If $p$ and $q$ are polynomials of $(x_1,r),$ then
$$
\int_{\Sph}pu_{m+\frac 12}\cdot qu_{n+\frac 12}=0.
$$

2) If $v\in\mathcal{H}_{m+\frac 12}$ and $w\in\mathcal{H}_{n+\frac 12}$, then 
$$
\int_{\Sph}v\cdot w=0.
$$

\end{prop}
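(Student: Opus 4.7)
For part (a), the plan is to integrate out the azimuthal angle around the $x_1$-axis. Parametrize $\Sph$ by the polar angle $\phi\in[0,\pi]$ from the $x_1$-axis and the azimuthal angle $\theta\in(-\pi,\pi]$ in the $(x_2,x_3)$-plane, so that $x_1=\cos\phi$, $r=\sin\phi$, and $d\sigma=\sin\phi\,d\phi\,d\theta$. Since $p$ and $q$ depend only on $(x_1,r)$, they depend only on $\phi$; since $u_{k+\frac12}=r^{k+\frac12}\cos((k+\frac12)\theta)$ factors cleanly, the integrand separates, and a product-to-sum identity reduces the $\theta$-integral to
\[
\tfrac12\int_{-\pi}^{\pi}\bigl[\cos((m+n+1)\theta)+\cos((m-n)\theta)\bigr]\,d\theta,
\]
which vanishes when $m\ne n$ because $m+n+1$ and $m-n$ are then both nonzero integers.

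For part (b), the plan is to use $L^2$-orthogonality of eigenfunctions of the Dirichlet spherical Laplacian on $\widehat{\Sph}$, via the weak formulation. By \eqref{7/2HarmFuncSph}, $v|_\Sph$ and $w|_\Sph$ are classical eigenfunctions of $-\SphLap$ on $\widehat{\Sph}$ with eigenvalues $\lambda_{m+\frac12}$ and $\lambda_{n+\frac12}$, vanishing continuously on $\widetilde{\Sph}$. Once one verifies $v,w\in H^1_0(\widehat{\Sph})$, testing the weak eigenvalue equation for $v$ against $w$, and vice versa, gives
\[
\int_\Sph\SphGrad v\cdot\SphGrad w\,d\sigma
=\lambda_{m+\frac12}\int_\Sph vw\,d\sigma
=\lambda_{n+\frac12}\int_\Sph vw\,d\sigma,
\]
and since $j\mapsto j(j+1)$ is strictly increasing on $j\ge 0$, the two eigenvalues differ, forcing $\int_\Sph vw\,d\sigma=0$.

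The main obstacle, and the one routine calculation the proof requires, is the $H^1$-integrability of $v$ and $w$ near the slit, in particular near its endpoints $(\pm 1,0,0)$ on $\Sph$. The worst case comes from the $u_{\frac12}$-content supplied by Proposition~\ref{GeneratingHarm}: a direct computation yields $|\SphGrad u_{\frac12}|^2=\frac{1}{4\sin\phi}\bigl[\cos^2\phi\cos^2(\theta/2)+\sin^2(\theta/2)\bigr]$, and the $1/\sin\phi$ singularity is exactly killed by the $\sin\phi$ factor in $d\sigma$. For $u_{l+\frac12}$ with $l\ge 1$, the analogous formula carries a bounded $\sin^{2l-2}\phi$ and poses no difficulty. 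Continuity of all $u_{l+\frac12}$ across $\widetilde{\Sph}$ then supplies vanishing trace, and a standard cutoff approximation (using that $u_{\frac12}$ vanishes linearly in the transverse distance to interior slit points and like $\sqrt\rho$ near the endpoints) places $v,w$ in $H^1_0(\widehat{\Sph})$, so the weak formulation above applies without any further boundary analysis.
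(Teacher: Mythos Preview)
Your proof is correct. The paper omits the proof entirely, declaring it a ``standard argument,'' so there is no approach to compare against; your treatment supplies exactly the kind of justification the paper skips. One small arithmetic slip: for $u_{l+\frac12}$ with $l\ge 1$ the squared spherical gradient carries a factor $(\sin\phi)^{2l-1}$, not $(\sin\phi)^{2l-2}$, but this is harmless since either exponent is nonnegative for $l\ge 1$ and the area element contributes an extra $\sin\phi$.

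As an alternative for part (b) that sidesteps the $H^1_0$ verification, one can work in $B_1\subset\R^3$: since $v,w$ are harmonic in $\widehat{B_1}$ and vanish on $\widetilde{B_1}$, Green's identity gives $\int_{\partial B_1}(v\,\partial_r w - w\,\partial_r v)=0$, while homogeneity gives $\partial_r v=(m+\tfrac12)v$ and $\partial_r w=(n+\tfrac12)w$ on $\partial B_1$, so the left side equals $(n-m)\int_{\Sph}vw$. This is likely the ``standard argument'' the authors had in mind, and it avoids the careful boundary analysis near the slit endpoints, though your variational route is equally valid and makes the $H^1$ structure explicit.
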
 

In this work, we are most interested in the space of harmonic functions with $\ST$ homogeneity, namely, $\mathcal{H}_{\ST}$. Following Proposition \ref{GeneratingHarm}, we see that this space is spanned by the following four functions:
\begin{equation}\label{BasisA}
u_{\ST}=r^{\ST}\cos(\ST\theta),\quad 
v_{\frac 52}:=x_1u_{\frac 52},\quad 
v_{\frac 32}:=(x_1^2-r^2/5)u_{\frac 32}, \text{ and } 
v_{\frac 12}:=(x_1^3-x_1r^2)u_{\frac 12}.
\end{equation} 
The same space is also spanned by $u_{\ST}$ and its first three rotational derivatives. Using the notation from \eqref{Rotation}, they are
\begin{equation}\label{BasisB}
u_{\ST}, \quad w_{\frac 52}:=\frac{d}{d\tau}|_{\tau=0}\Rot_\tau (u_\ST),\quad
w_{\frac 32}:=\frac{d}{d\tau}|_{\tau=0}\Rot_\tau (\wF), \text{ and }
w_{\frac 12}:=\frac{d}{d\tau}|_{\tau=0}\Rot_\tau (\wT).
\end{equation} 

\begin{rem}\label{RelatingBases}
These two bases are related by 
\begin{equation*}
w_{\frac 52}=\ST v_{\frac 52}, \hem w_{\frac 32}=\frac{35}{4}v_{\frac 32}-\frac{7}{4}u_{\frac 72}, \text{ and } w_{\frac 12}=\frac{105}{8}v_{\frac 12}-\frac{133}{8}v_{\frac 52}.
\end{equation*}
\end{rem} 

With these preparations, we classify half-space solutions to the thin obstacle problem in $\R^3$ that are $\ST$-homogeneous:
\begin{prop}\label{HalfSpaceCones}
Suppose that $u$ is a nontrivial $\ST$-homogeneous solution to \eqref{IntroTOP} in $\R^3$. The followings are equivalent:
\begin{enumerate}
\item{$\Spt(\Delta u)\subset\mathcal{S}$;}
\item{$\Spt(\Delta u)\supset\mathcal{S}$;}
\item{$u\in\mathcal{F}_1$ up to a normalization.}
\end{enumerate}
\end{prop}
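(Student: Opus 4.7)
The plan is to prove the four implications $(3)\Rightarrow(1)$, $(3)\Rightarrow(2)$, $(1)\Rightarrow(3)$, and $(2)\Rightarrow(1)$; combined with $(1)\Rightarrow(3)$, the last yields $(2)\Rightarrow(3)$ and closes the loop.

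For $(3)\Rightarrow(1)$, I observe that every $p\in\mathcal{F}_1$ is a linear combination of $u_{\ST}$, $\vF$, and $\vT$, each of which lies in $\mathcal{H}_{\ST}$ and is therefore harmonic in the slit domain $\widehat{\R^3}$ by Proposition~\ref{GeneratingHarm}. Hence $\Spt(\Delta p)\subset\mathcal{S}$. For $(3)\Rightarrow(2)$, I would compute $\partial_{x_3}p|_{x_3=0^+}$ on $\mathcal{S}$ via $\partial_{x_3}=-\tfrac{1}{r}\partial_\theta$ at $\theta=\pi$ and obtain
$$
\partial_{x_3}p|_{x_3=0^+}=-r^{1/2}\left[\bigl(\tfrac{7}{2}-\tfrac{3a_2}{10}\bigr)r^2-\tfrac{5a_1}{2}x_1 r+\tfrac{3a_2}{2}x_1^2\right],
$$
which, by the jump formula across $\mathcal{S}$, is half the distributional density of $\Delta p$ along $\mathcal{S}$. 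For $(a_1,a_2)\in\mathcal{A}$ this quadratic form in $(x_1,r)$ is positive semidefinite with $r^2$-coefficient $\ge 2$, so it vanishes along at most a single ray; the density of $\Delta p$ is therefore nonzero on a dense subset of $\mathcal{S}$, giving $\Spt(\Delta p)=\mathcal{S}$.

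For $(1)\Rightarrow(3)$, the hypothesis makes $u$ harmonic in $\widehat{\R^3}$ and $\ST$-homogeneous, so $u\in\mathcal{H}_{\ST}$. Proposition~\ref{GeneratingHarm} then gives the expansion $u=c_0u_{\ST}+a_1\vF+a_2\vT+a_3\vO$. Its restriction to $\Hpp\setminus\mathcal{S}=\{x_2>0\}\cap\Hpp$ factors as $x_2^{3/2}P(x_1/x_2)$ with $P$ a cubic in $t=x_1/x_2$; the thin obstacle constraint $u\ge 0$ forces $P\ge 0$ on $\R$. A cubic is non-negative on all of $\R$ only if its leading coefficient vanishes, so $a_3=0$, and the resulting non-negative quadratic yields the ellipse $E_1$. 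The dual constraint $\Delta u\le 0$, read off from the computation above, is exactly $(a_1,a_2)\in E_2$. Non-triviality forces $c_0\ne 0$, and normalizing $c_0=1$ places $u\in\mathcal{F}_1$.

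The main obstacle is $(2)\Rightarrow(1)$. I plan to pass to the sphere. Let $\phi=u|_{\Sph}$; by hypothesis $\phi$ vanishes on the slit arc $\mathcal{S}\cap\Sph$. The $\ST$-homogeneity gives $\Delta u=r^{3/2}(\SphLap+\lambda_{\ST})\phi$, so $f:=(\SphLap+\lambda_{\ST})\phi$ is a non-positive distribution on $\Sph$ whose support lies in $\Lambda(u)\cap\Sph$, hence in the equator $\Sph\cap\Hpp$. The key step is to test against $\psi:=u_{\ST}|_{\Sph}\in\mathcal{H}_{\ST}|_{\Sph}$: both $\phi$ and $\psi$ vanish on $\mathcal{S}\cap\Sph$, so the boundary contributions cancel when applying Green's identity twice on the slit sphere $\widehat{\Sph}$, and using $(\SphLap+\lambda_{\ST})\psi=0$ on $\widehat{\Sph}$ one obtains
$$
\int_{\Sph}f\,\psi\,d\sigma=0.
$$
On the equator, $\psi$ is strictly positive on the complement of the slit arc and vanishes on the slit arc, while $f\le 0$ as a measure; the vanishing of the integral forces $f\equiv 0$ on the equator off the slit arc, so $\Spt f\subset\mathcal{S}\cap\Sph$ and therefore $\Spt(\Delta u)\subset\mathcal{S}$, which is $(1)$. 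Combined with $(1)\Rightarrow(3)$, this completes the argument. The chief technical point will be justifying the two integrations by parts when $\phi$ and $\psi$ have only H\"older regularity at the slit and $f$ is a measure; this should follow from a routine cutoff/approximation argument exploiting that $\phi=\psi=0$ on the slit arc, so that the boundary terms vanish in the limit.
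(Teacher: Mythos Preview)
Your argument for $(1)\Rightarrow(3)$ has a genuine gap. From $\Spt(\Delta u)\subset\mathcal{S}$ you correctly deduce that $u$ is harmonic in $\widehat{\R^3}$, but you then assert $u\in\mathcal{H}_{\ST}$ without justification. By definition, membership in $\mathcal{H}_{\ST}$ requires $u=0$ on the slit $\mathcal{S}$, and this does not follow from $(1)$ alone: the complementarity in the thin obstacle problem gives $u=0$ only on $\Spt(\Delta u)$, which under $(1)$ could be a strict subset of $\mathcal{S}$. Without $u|_{\mathcal{S}}=0$ you cannot invoke Proposition~\ref{GeneratingHarm}, and the rest of the step collapses.

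The fix is precisely the Green's identity you already use for $(2)\Rightarrow(1)$, run in the dual direction --- and this is exactly what the paper does. From
\[
\int_{B_1}\bigl(u_{\ST}\,\Delta u - u\,\Delta u_{\ST}\bigr)=0,
\]
hypothesis $(1)$ gives $u_{\ST}=0$ on $\Spt(\Delta u)$, so the first term vanishes and $\int_{B_1} u\,\Delta u_{\ST}=0$. Since $\Delta u_{\ST}$ is a strictly negative measure on $\mathcal{S}$ and $u\ge 0$ there, this forces $u\equiv 0$ on $\mathcal{S}$, after which your expansion and constraint analysis go through. In short, the same testing-against-$u_{\ST}$ idea is needed in \emph{both} nontrivial implications; you placed it only in $(2)\Rightarrow(1)$. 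Once this step is inserted, your proof coincides with the paper's.
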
 
Recall the definition of $\mathcal{F}_1$ from \eqref{NormFam}. See also Remark \ref{Normalization} for the notion of normalization. 

\begin{proof}
By definition of $\mathcal{F}_1$, statement (3) implies the other two. Here we show that statement (1) implies statement (3). A similar argument gives the implication  (2)$\implies$(3).

By Green's formula and homogeneity of the functions involved, we have 
$$\int_{B_1}u_\ST\Delta u-u\Delta u_\ST=\ST\int_{\Sph} u_\ST u-uu_\ST=0.$$By statement (1), $u_\ST=0$ on $\Spt(\Delta u)$, thus 
$\int_{B_1}u\Delta u_{\ST}=0.$

Since $u\ge 0$ on $\mathcal{S}=\Spt(\Delta u_\ST)$, this implies $u=0 \text{ on $\mathcal{S}$.}$
 With statement (1), we see that $u$ is a $\ST$-homogeneous harmonic function in $\widehat{\R^3}$.  Consequently, it is a linear combination of functions from \eqref{BasisA}, that is, 
$$u=a_0u_{\ST}+a_1v_{\frac 52}+a_2v_{\frac 32}+a_3v_{\frac 12}$$
for $a_j\in\R.$
Such a function satisfies the constraints $u|_{\Hpp}\ge0$ and $\Delta u|_{\Hpp}\le 0$ if and only if $a_0>0$ and $u/a_0\in\mathcal{F}_1.$
\end{proof} 

For our purpose, we also need homogeneous harmonic functions in slit domains with singularities. In $\R^2$, typical examples are given by
\begin{equation}\label{2dSingularHarmFunc}
u_{-n+\frac 12}(r,\theta):=r^{-n+\frac 12}\cos((n-\frac 12)\theta) \text{ for }n\in\N.
\end{equation} Each $u_{-n+\frac 12}$ is $(-n+\frac 12)$-homogeneous and harmonic in $\widehat{\R^2}.$

In $\R^3$, we will need the following two functions 
\begin{equation}\label{SingularHarmFunc}
v_{-\frac 12}:=(x_1^4-6x_1^2r^2-r^4)\cdot u_{-\frac 12},  \text{ and }v_{-\frac 32}:=(x_1^5+10x_1^3r^2-15x_1r^4)\cdot u_{-\frac 32}.
\end{equation}Both are $\ST$-homogeneous functions in $\R^3$ and harmonic in $\widehat{\R^3}$. Near the poles $\Sph\cap\{r=0\}$, they have a singularity of order $-\frac 12$ and $-\frac 32$ respectively.

Correspondingly, we have 
\begin{equation}\label{SingularHarmFuncB}
w_{-\frac12}:=\frac{d}{d\tau}|_{\tau=0}\Rot_\tau (\wO),
\text{ and }
w_{-\frac 32}:=\frac{d}{d\tau}|_{\tau=0}\Rot_\tau (w_{-\frac12}),
\end{equation} which are also $\ST$-homogeneous and harmonic in $\widehat{\R}^3$.

\subsection{A double-sequence lemma}
We conclude this section with a lemma dealing with two numerical sequences. It is a slight modification of Lemma 5.1  from \cite{SY2}. 

\begin{lem}\label{Sequences}
Let $(w_n)$ and $(e_n)$ be two sequences of real numbers between $0$ and $1$. Suppose that for some constants, $A$ big, $a$ small and $\gamma\in(0,1]$, we have $$w_{n+1}\le Ae_n^{1+\gamma} \quad\forall n\in\N$$ and  the following dichotomy:
\begin{itemize}
\item{ either $w_{n+1}\le w_n-ae_n^2$ and $e_{n+1}=Ae_n$; }
\item{ or $w_{n+1}\le w_n$ and $e_{n+1}=\frac{1}{2}e_{n}$.}
\end{itemize}
Then we have 
\begin{equation}\label{StaysSmall}e_n\le Ce_1^{\frac{1+\gamma}{2}}\end{equation} for all $n\in\N,$ and  $$\sum e_n<+\infty.$$

Moreover, we have 
\begin{equation}\label{OddSum}
\sum_{n\ge N}e_n\le C(w_N+e_N^2)^{\frac12} \text{ if $\gamma=1$;}
\end{equation} 
and 
\begin{equation}\label{EvenSum}
\sum_{n\ge 2^N}e_n \le C2^{\frac{-\gamma}{1-\gamma}N}\text{ if $\gamma\in(0,1)$.}
\end{equation}
Here $c\in(0,1)$ and $C$ are constants depending only on $A$, $a$ and $\gamma.$
\end{lem}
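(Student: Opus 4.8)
The plan is to treat the two regimes $\gamma=1$ and $\gamma\in(0,1)$ in parallel once we have extracted the basic consequences of the dichotomy, and to organize the argument around the ``good'' (first-alternative) steps, which are the only ones carrying information. First I would fix notation: call a step $n$ \emph{of type A} if $w_{n+1}\le w_n-ae_n^2$ and $e_{n+1}=Ae_n$, and \emph{of type B} if $w_{n+1}\le w_n$ and $e_{n+1}=\frac12 e_n$. Since $w_n\in[0,1]$ and $w$ is non-increasing along the whole sequence, summing the type-A inequalities over all $n$ gives $a\sum_{n\ \text{type A}} e_n^2\le w_1\le 1$, so $\sum_{n\ \text{type A}} e_n^2<\infty$; in particular $e_n\to0$ along type-A steps. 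The key structural observation is that between two consecutive type-A steps $e$ only shrinks (by factors of $\tfrac12$), and at a type-A step it grows by the fixed factor $A$, so the whole sequence $(e_n)$ is controlled by its values at type-A steps. Concretely, if $n_1<n_2<\cdots$ enumerates the type-A indices, then on the block $(n_j,n_{j+1}]$ we have a geometric decay $e_{n_j+i}=2^{-i}e_{n_j+1}=2^{-i}Ae_{n_j}$, hence $\sum_{n_j< n\le n_{j+1}} e_n\le 2A e_{n_j}$, and likewise the initial block before $n_1$ contributes a geometric tail $\le 2e_1$. Thus bounding $\sum e_n$ reduces to bounding $\sum_j e_{n_j}$, i.e.\ the type-A values.

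Next I would exploit the standing bound $w_{n+1}\le A e_n^{1+\gamma}$ to close a recursion on the type-A values. For consecutive type-A indices $n_j<n_{j+1}$, the $w$-decrease at step $n_{j+1}$ reads $w_{n_{j+1}+1}\le w_{n_{j+1}}-ae_{n_{j+1}}^2\le w_{n_j+1}-ae_{n_{j+1}}^2$ (using monotonicity of $w$ across the intervening type-B steps), while the standing bound gives $w_{n_j+1}\le Ae_{n_j}^{1+\gamma}$. Combining, $ae_{n_{j+1}}^2\le w_{n_j+1}\le Ae_{n_j}^{1+\gamma}$, so
\begin{equation*}
e_{n_{j+1}}\le \Big(\frac{A}{a}\Big)^{1/2} e_{n_j}^{\frac{1+\gamma}{2}}.
\end{equation*}
This is the engine of the whole lemma. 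When $\gamma=1$ the exponent $\tfrac{1+\gamma}2=1$, so $e_{n_{j+1}}\le (A/a)^{1/2}e_{n_j}$ only if the contraction factor were $<1$; since it need not be, I instead iterate it together with the summed bound $\sum_j e_{n_j}^2<\infty$ to get summability: from $ae_{n_{j+1}}^2\le Ae_{n_j}^2$ one sees the type-A values can only decrease geometrically once they are small, and $\sum e_{n_j}^2<\infty$ forces $e_{n_j}\to0$, after which $ae_{n_{j+1}}^2\le Ae_{n_j}^2$ combined with $e_{n_j}\le$ (something small) yields a true geometric decay; more cleanly, for $\gamma=1$ I would bound $\sum_{n\ge N}e_n$ directly: every term is, up to the block argument above, controlled by a type-A value $e_{n_j}$ with $n_j\ge N$ (or the one just before $N$), and $e_{n_j}^2\le \tfrac1a w_{n_j+1}\le \tfrac1a(w_N + \text{nothing})$ is too crude — instead use that $w_{n_{j}+1}\le w_N$ and $e_{n_j}^2\le \tfrac1a w_{n_j}\le\tfrac1a w_N$, giving each block sum $\lesssim (w_N)^{1/2}$ and, since the blocks also carry the geometric factor $e_{n_{j+1}}\le (A/a)^{1/2}e_{n_j}$ in the range where it contracts, the geometric series in $j$ sums to $\lesssim e_{n_1}\lesssim (w_N+e_N^2)^{1/2}$, which is \eqref{OddSum}. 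The initial smallness bound \eqref{StaysSmall} comes from the same recursion: $e_{n_1}\le (A/a)^{1/2}e_1^{(1+\gamma)/2}$ and the pre-$n_1$ block is $\le 2e_1$, then all later values are smaller, so $\sup_n e_n\le C e_1^{(1+\gamma)/2}$ (adjusting for the trivial case $e_1$ not small, where the bound $e_n\le1$ already suffices after renaming $C$).

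For $\gamma\in(0,1)$ the recursion $e_{n_{j+1}}\le (A/a)^{1/2}e_{n_j}^{(1+\gamma)/2}$ has exponent $(1+\gamma)/2>\tfrac12$; setting $\theta:=\tfrac{1+\gamma}2\in(\tfrac12,1)$ and $K:=(A/a)^{1/2}$, the fixed point of $e\mapsto Ke^\theta$ is $e_*=K^{1/(1-\theta)}$, and below a suitable threshold the map is a genuine contraction with superlinear (doubly-exponential) convergence: $e_{n_j}\le C\,\kappa^{\,\theta^{\,j}}$ for some $\kappa<1$, in particular $\sum_j e_{n_j}<\infty$ geometrically fast, which with the block estimate gives $\sum e_n<\infty$. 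To get the explicit rate \eqref{EvenSum} I would count type-B steps: up to index $2^N$, either there have been many type-B steps — each dividing $e$ by $2$ — or many type-A steps — each time, by the recursion, $e$ at the next type-A index is raised to power $\theta$. A clean way is to bound $e_n\le 2^{-(\#\text{type-B steps before }n)}\cdot A^{(\#\text{type-A steps before }n)}\cdot e_1$ and argue that the only way to reach index $n$ without $e_n$ being tiny is to have $\gtrsim n$ type-A steps, but each type-A step multiplies by $A$ and is preceded by enough type-B steps (forced by $e\le1$ and the growth) to more than compensate; quantitatively each A-step $n_j\to n_{j+1}$ costs at least $\log_2(A e_{n_j}/e_{n_{j+1}})\ge \log_2(A e_{n_j}^{1-\theta}/K)$ type-B steps in between, and feeding $e_{n_j}\le Ce_1^{\theta^{j-1}}$ and summing shows that after $j$ type-A steps the index has advanced by $\gtrsim 2^{cj}$ for the relevant constant, while $e_{n_j}\lesssim \kappa^{\theta^j}$; solving $2^{cj}\sim 2^N$ gives $j\sim N/c$ and then $\sum_{n\ge 2^N}e_n\lesssim e_{n_j}+ (\text{later, smaller}) \lesssim 2^{-\frac{\gamma}{1-\gamma}N}$ after matching constants. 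The main obstacle I anticipate is precisely this bookkeeping in the $\gamma<1$ case — converting the abstract ``type-A values decay doubly-exponentially in $j$, type-B steps are interleaved'' picture into the clean exponent $\tfrac{\gamma}{1-\gamma}$ in \eqref{EvenSum} — and, to a lesser degree, handling the borderline $\gamma=1$ recursion where the contraction is not automatic and one must lean on the summability $\sum e_{n_j}^2\le a^{-1}w_1<\infty$ to force eventual smallness before the geometric decay kicks in. Everything else (the block/geometric-tail reductions, monotonicity of $w$, the standing bound) is routine and should be quoted or adapted verbatim from Lemma 5.1 of \cite{SY2}.
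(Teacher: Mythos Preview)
Your $\gamma=1$ argument has a genuine gap. The recursion you derive between consecutive type-A indices, $a e_{n_{j+1}}^2\le A e_{n_j}^2$, gives only $e_{n_{j+1}}\le (A/a)^{1/2}e_{n_j}$ with $(A/a)^{1/2}>1$, and this is \emph{never} a contraction, regardless of how small $e_{n_j}$ is. The sentences ``the type-A values can only decrease geometrically once they are small'' and ``the blocks also carry the geometric factor $e_{n_{j+1}}\le (A/a)^{1/2}e_{n_j}$ in the range where it contracts'' are simply false as stated: there is no such range. Knowing $\sum_j e_{n_j}^2\le a^{-1}w_N$ tells you the $\ell^2$-tail of the type-A values is controlled, but your block argument needs the $\ell^1$-tail $\sum_j e_{n_j}$, and you have provided no mechanism to pass from one to the other.

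The paper closes this gap (following Lemma~5.1 of \cite{SY2}) by working with the combined quantity $\alpha_n:=w_n+\mu e_n^2$ for a small $\mu>0$. The point is that $\alpha_n$ contracts geometrically under \emph{both} alternatives once one uses the standing bound $w_{n}\le C e_{n}^2$ (which itself follows from $w_{n+1}\le Ae_n^2$ together with $e_{n}\in\{Ae_{n-1},\tfrac12 e_{n-1}\}$). On a type-A step, the drop $w_{n+1}\le w_n-ae_n^2$ absorbs the growth $\mu e_{n+1}^2=\mu A^2 e_n^2$ provided $\mu<a/A^2$; on a type-B step, the halving $e_{n+1}=\tfrac12 e_n$ gives $\mu e_{n+1}^2=\tfrac14\mu e_n^2$, and the standing bound converts a small fraction of this saving into a contraction of the $w_n$ part as well. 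The upshot is $\alpha_{n+1}\le(1-c)\alpha_n$, hence $\sum_{n\ge N}\alpha_n^{1/2}\le C\alpha_N^{1/2}$, and since $e_n\le \mu^{-1/2}\alpha_n^{1/2}$ you get \eqref{OddSum} immediately. Your type-A/type-B decomposition is a reasonable way to organize the $\gamma<1$ case (where the recursion exponent $(1+\gamma)/2<1$ genuinely gives superlinear decay along type-A indices), but for $\gamma=1$ you need an additional idea like this one; the block reduction alone is not enough.
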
 

\begin{proof}
The only modification from Lemma 5.1 in \cite{SY2} is the right-hand side of \eqref{OddSum}. To see this, let $\alpha_n:=w_n+\mu e_n^2.$ For $\mu>0$ small, it was shown in \cite{SY2} that $\alpha_n\le (1-c)\alpha_{n-1}$, which gives 
$$\sum_{n\ge N}\alpha_n^{1/2}\le C\alpha_N^{1/2}.$$
From here, we simply note that $e_n\le\alpha_n^{1/2}$.
\end{proof}

\section{Dichotomy for $p\in\mathcal{A}_1$}
In this section, we prove Lemma \ref{IntroDichotomy} when $p\in\mathcal{A}_1$. See \eqref{SubRegions}, \eqref{pSubRegions} and Figure \ref{Subregion}. 

Starting with such a profile $p$, the natural modification $p'=p+dv$ from \eqref{Candidate} solves the thin obstacle problem if $d$ is small. Consequently, the improvement-of-flatness result follows by a classical argument.  

Nevertheless, we include the argument here. Readers less familiar with the subject might take this section as a roadmap for the strategy. Contrasting  this section with the next two, we hope to illustrate the challenges that arise in each different case. 

\subsection{Well-approximated solutions}\label{SubsecWellApproxSolnA}
Throughout this section, we consider profiles $p=a_0u_{\ST}+a_1v_{\frac 52}+a_2v_{\frac 32}$ with 
$$a_0\in[1/2,2], \text{ and } 
(a_1/a_0,a_2/a_0)\in\mathcal{A}_1,$$ 
that is, for  a small parameter $\mu>0,$
\begin{equation}\label{ConditionA}
1/2\le a_0\le 2, \quad \mu\le a_2/a_0\le 5, \text{ and }
(a_1/a_0)^2<\Gamma(a_2/a_0). 
\end{equation} 
Recall the basis $\{u_{\ST}, v_{\frac 52}, v_{\frac 32}, v_{\frac 12}\}$ from \eqref{BasisA}, and the function $\Gamma$ from \eqref{GammaFunction}.  

To simplify our discussions, let us denote 
\begin{equation}\label{MuP}\mu_p:=\Gamma(a_2/a_0)-(a_1/a_0)^2.
\end{equation}

The space of well-approximated solutions is defined as

\begin{defi}
Suppose that  the coefficients of $p$ satisfy \eqref{ConditionA}.

For $d,\rho\in(0,1]$, we say that $u$ is a \textit{solution $d$-approximated by $p$ at scale $\rho$} if $u$ solves the thin obstacle problem \eqref{IntroTOP} in $B_\rho$, and
$$|u-p|\le d\rho^\ST \text{ in $B_\rho$.}$$ 
In this case, we write 
$$u\in\mathcal{S}(p,d,\rho).$$
\end{defi} 

Being well-approximated implies the localization of the contact set:
\begin{lem}\label{ContLocA}
Suppose that $u\in\mathcal{S}(p,d,1)$ with $d$ small.  

We have
$$\Delta u=0 \text{ in } \widehat{B_1}\cap\{r>Cd^{\frac{2}{7}}\},
\text{ and }
\quad u=0 \text{ in } \widetilde{B_{\frac 78}}\cap\{r>Cd^{\frac{2}{15}}\},$$
where $C$  depends only on $\mu$ and $\mu_p$ from \eqref{MuP}.
\end{lem}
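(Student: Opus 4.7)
The strategy is to extract two quantitative forms of non-degeneracy of $p$ on $\mathcal{A}_1$, one for each part of the statement. Writing $p$ in the basis \eqref{BasisA}, a direct computation on the positive side of the thin plane gives
$$
p\big|_{\{x_3=0,\,x_2>0\}} \;=\; r^{3/2}\bigl[a_2 x_1^2+a_1 x_1 r+(a_0-a_2/5)r^2\bigr],
$$
a quadratic in $(x_1,r)$ whose discriminant is exactly the first branch $4a_2(1-a_2/5)$ of $\Gamma$ in \eqref{GammaFunction}. Similarly, the one-sided derivative at a point $q=(q_1,-r_q,0)$ of the slit takes the form $\partial_{x_3}p(q^+)=\tfrac12 r_q^{1/2}\,Q(q_1,r_q)$, with $Q$ a quadratic in $q_1$ whose discriminant matches the second branch $(24/25)a_2(7/2-3a_2/10)$ of $\Gamma$. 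The strict inequality $\mu_p>0$ defining $\mathcal{A}_1$, together with $a_2/a_0\ge\mu$, then upgrades these to
$$
p \;\ge\; c(\mu,\mu_p)\,r^{7/2}\ \text{on }\{x_3=0,\,x_2>0\},\qquad \partial_{x_3}p(q^+)\;\le\;-c(\mu,\mu_p)\,r_q^{5/2}\ \text{on }\mathcal{S}.
$$

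Part (1) follows at once. Off the plane, $u$ is automatically harmonic, so the only issue is ruling out contact on the positive side $\{x_3=0,\,x_2>0\}$. The first bound together with $|u-p|\le d$ gives $u\ge c\,r^{7/2}-d>0$ whenever $r>C(\mu,\mu_p)\,d^{2/7}$, and $u>0$ forces $\Delta u=0$ there by the complementarity in \eqref{IntroTOP}.

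For part (2) I would argue by contrapositive. Suppose $q\in\widetilde{B_{7/8}}$ has $u(q)>0$. By continuity, $u>0$ in a full $\R^3$-neighborhood of $q$, so \eqref{IntroTOP} kills the singular part of $\Delta u$ there, and $u$ is classically harmonic around $q$; evenness in $x_3$ then forces $\partial_{x_3}u(q)=0$. Combined with the slit bound,
$$
a\;:=\;\partial_{x_3}(u-p)(q^+)\;\ge\;c(\mu,\mu_p)\,r_q^{5/2}.
$$
The Athanasopoulos--Caffarelli optimal regularity gives $\|u-p\|_{C^{1,1/2}(\overline{B_{7/8}^+})}\le C_0$ universal (using $p\in\mathcal{F}_1$ to bound $\|p\|_{L^\infty(B_1)}$), so Taylor expansion in $x_3$ at $q$ yields
$$
(u-p)(q+te_3)\;\ge\;(u-p)(q)+at-C_0 t^{3/2}\qquad\text{for small }t\ge 0.
$$
Optimizing in $t$ produces a point at which $(u-p)\ge c\,a^3/C_0^2$. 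Combined with $|u-p|\le d$, this forces $a^3\le Cd$, hence $r_q\le C(\mu,\mu_p)\,d^{2/15}$, which is the desired contrapositive.

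No serious obstacle is expected inside $\mathcal{A}_1$, because $\mu_p$ is bounded below and both quantitative bounds above are effective. The real difficulty, to be met in the next sections, is the degenerate regime: on $\mathcal{A}_2$ one of the two branches of $\Gamma$ saturates, and on $\mathcal{A}_3$ the coefficient $a_2$ itself becomes small, so that the constants $c(\mu,\mu_p)$ collapse and the simple Taylor/positivity estimates above no longer close.
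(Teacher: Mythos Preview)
Your argument is correct, and for the first conclusion it coincides with the paper's: both use the quantitative positivity $p\ge c(\mu,\mu_p)r^{7/2}$ on $\{\theta=0\}$ together with $u\ge p-d$.

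For the second conclusion you take a genuinely different route. The paper constructs an explicit barrier: for $x_0\in\widetilde{B_{7/8}}\cap\{r>Ad^{2/15}\}$ it sets $\varphi(x',x_3)=(|x'-x_0|^2-2x_3^2)/d^{1/3}$, checks that $\varphi$ solves the thin obstacle problem, and uses the one-sided derivative bound $\partial_{x_3}p\le -c r^{5/2}$ (plus the $C^{1,1/2}$ regularity of $p$ alone) to verify $\varphi\ge p+d$ on $\partial\{|x'-x_0|<d^{2/3},\,|x_3|<d^{2/3}\}$; comparison then forces $u(x_0)\le\varphi(x_0)=0$. Your contrapositive instead invokes the Athanasopoulos--Caffarelli $C^{1,1/2}$ estimate on $u-p$ and a one-variable Taylor expansion in $x_3$, turning the derivative gap $a\ge c\,r_q^{5/2}$ into a pointwise excess of order $a^3$, which against $|u-p|\le d$ gives $r_q\le Cd^{2/15}$. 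One small point to tighten: the optimal $t^*\sim a^2/C_0^2$ is not a priori small, so you should either cap $t$ at a fixed fraction of the distance to $\partial B_1$ (and note that when $t^*$ exceeds this cap, $a$ is bounded below by a universal constant, which already contradicts $|u-p|\le d$ for $d$ small), or simply evaluate at $t=\epsilon_0 a^2$ with $\epsilon_0$ a small universal constant.

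Your approach is arguably more conceptual and avoids designing a barrier by hand, at the cost of importing the optimal regularity theorem as a black box. The paper's barrier method is more self-contained (it only uses the explicit regularity of $p$) and, more to the point, it is the template reused verbatim in the later degenerate cases (Lemmas \ref{ContLocB}, \ref{ContLocBB}, and \ref{RoughBoundC}), so there is some economy in establishing it here in its simplest form.
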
 
Recall the notations for slit domains from \eqref{SlitSet}, and that $(r,\theta)$ denotes the polar coordinate of the $(x_2,x_3)$-plane.

\begin{proof}
Using \eqref{ConditionA} and direct computations, we have 
$$
p\ge c_{\mu,\mu_p}r^{\frac 72} \text{ in } \{\theta=0\}.
$$
With $u\ge p-d$ in $B_1$, it follows $u>0$ in $\{\theta=0,r>Cd^{\frac 27}\}\cap B_1.$ This gives the first conclusion. 

To see the second conclusion, we note that
\begin{equation}\label{Tempo1}
\ddt p\le -c_{\mu,\mu_p}r^{\frac 52} \text{ in } \{\theta=\pi\}.
\end{equation}Recall our convention from \eqref{OneSidedD}.

Now for some large $A$ to be chosen, let $x_0\in\widetilde{B_{7/8}}\cap\{r>Ad^{\frac{2}{15}}\}$, and $\Omega:=\{|x'-x_0|<d^{\frac 23},|x_3|<d^{\frac 23}\}. $

With \eqref{Tempo1} and the $C^{1,\frac 12}$-regularity of $p$, we have 
$$
\ddt p\le -\frac{1}{2}c_{\mu,\mu_p}A^{\frac 52}d^{\frac 13} \text{ in } \Omega^+
$$if $A$ is large, depending only on $\mu$ and $\mu_p$.

Define the barrier $\varphi(x',x_3)=(|x'-x_0|^2-2x_3^2)/d^{\frac 13},$ then $\varphi$ is a solution to the thin obstacle problem. Inside $\Omega^+$, we have
\begin{align*}
\varphi-p&\ge |x'-x_0|^2/d^{\frac 13}-2x_3^2/d^{\frac 13}+\frac{1}{2}c_{\mu,\mu_p}A^{\frac 52}d^{\frac 13}\cdot x_3\\
&\ge  |x'-x_0|^2/d^{\frac 13}+\frac{1}{4}c_{\mu,\mu_p}A^{\frac 52}d^{\frac 13}\cdot x_3
\end{align*}
for $A$ large. It follows from even symmetry that 
$$\varphi\ge p+d \text{ along } \partial\Omega$$ for $A$ large. 

Together with $u\le p+d$  in $B_1$, this implies $u\le\varphi$ in $\Omega$. The second conclusion follows. \end{proof} 

Since the profile $p$ solves the thin obstacle problem, by the maximum principle and Cacciopolli's estimate, we have the following:
\begin{lem}\label{OneToInftyA}
Suppose that $u$ solves \eqref{IntroTOP} in $B_1$. Then 
$$\|u-p\|_{L^\infty(B_{1/2})}+\|u-p\|_{H^1(B_{ 1/2})}\le C\|u-p\|_{L^1(B_1)}$$ for a universal constant $C$.
\end{lem}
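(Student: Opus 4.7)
The plan is to show that $|u-p|$ is subharmonic in $B_1$, after which both estimates follow from classical mean-value and Caccioppoli inequalities for non-negative subharmonic functions.

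Since $p$ satisfies \eqref{ConditionA} it belongs to $\mathcal{F}_1$ and is itself a solution of the thin obstacle problem \eqref{IntroTOP}. Hence both $\Delta u$ and $\Delta p$ are non-positive measures supported on $\Lambda(u),\Lambda(p)\subset\Hpp$, and both $u$ and $p$ are harmonic off $\Hpp$. Set $w:=u-p$. On $\{w>0\}\cap\Hpp$ one has $u>p\ge 0$, so $u>0$ and therefore $\Delta u=0$ at such points; off $\Hpp$ both Laplacians vanish. This gives $\Delta w=-\Delta p\ge 0$ as a measure on $\{w>0\}$, and the symmetric computation yields $\Delta w\le 0$ on $\{w<0\}$. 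Kato's inequality then produces $\Delta|w|\ge 0$ in $\mathcal{D}'(B_1)$.

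With $|w|$ now non-negative and subharmonic, the mean-value inequality applied to balls $B_{1/4}(x)\subset B_1$ for $x\in B_{1/2}$ gives
$$
\|w\|_{L^\infty(B_{1/2})}\le C\|w\|_{L^1(B_1)}.
$$
For the energy estimate, choose a cutoff $\eta\in C_c^\infty(B_{3/4})$ with $\eta\equiv 1$ on $B_{1/2}$, test $\Delta|w|\ge 0$ against the non-negative function $|w|\eta^2$, and absorb the cross term via Cauchy-Schwarz. Since $|\nabla|w||=|\nabla w|$ almost everywhere, one obtains the Caccioppoli bound
$$
\int_{B_{1/2}}|\nabla w|^2\le C\int_{B_{3/4}} w^2\le C\|w\|_{L^\infty(B_{3/4})}\,\|w\|_{L^1(B_1)},
$$
and inserting the previous $L^\infty$ bound finishes the argument.

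The only non-routine point is the sign analysis of $\Delta w$ on $\{\pm w>0\}$, a standard Kato-type computation for obstacle problems; no step presents a serious difficulty, and no $\ST$-frequency-specific input is needed. I therefore do not expect a genuine obstacle in carrying out the argument.
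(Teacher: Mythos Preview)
Your argument is correct and matches the paper's approach (the paper only writes ``by the maximum principle and Cacciopolli's estimate'' without further detail): subharmonicity of $|u-p|$ via the sign analysis you give, then the mean-value inequality and Caccioppoli. One trivial fix: your final chain uses $\|w\|_{L^\infty(B_{3/4})}$, so run the mean-value step with centers $x\in B_{3/4}$ (the balls $B_{1/4}(x)$ still lie in $B_1$) rather than only $x\in B_{1/2}$.
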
 

Recall  the Weiss energy functional from \eqref{Weiss}. This energy is controlled for well-approximated solutions:
\begin{lem}\label{WeissControlA}
Suppose that $u\in\SPDO,$ then 
$$
W_\ST(u;3/4)\le Cd^2
$$for a universal constant $C$.
\end{lem}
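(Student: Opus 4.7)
\medskip

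The plan is to compare $W_\ST(u;3/4)$ to $W_\ST(p;3/4)$, which vanishes because $p$ is $\ST$-homogeneous and solves the thin obstacle problem: by the $\ST$-homogeneity, $p_\nu=\tfrac{\ST}{\rho}p$ on $\partial B_\rho$, and using $\int|\nabla p|^2=\int p\,p_\nu$ (valid since $p=0$ on $\operatorname{spt}\Delta p$) one checks $W_\ST(p;\rho)\equiv 0$.

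First I would promote the $L^\infty$-closeness to an $H^1$-bound. By Lemma~\ref{OneToInftyA} applied on balls of radius $1/8$ centered in $B_{3/4}$, together with a covering argument, one gets $\|v\|_{H^1(B_{3/4})}\le Cd$ where $v:=u-p$. Expanding
\[
W_\ST(u;3/4)-W_\ST(p;3/4)=\frac{1}{(3/4)^8}\!\int_{B_{3/4}}\!\!\bigl(2\nabla p\cdot\nabla v+|\nabla v|^2\bigr)-\frac{7}{2\,(3/4)^9}\!\int_{\partial B_{3/4}}\!\!\bigl(2pv+v^2\bigr),
\]
all pure-$v$ (quadratic) contributions are immediately bounded by $C\|v\|_{H^1}^2\le Cd^2$. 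For the cross (linear in $v$) piece, integration by parts on $\int\nabla p\cdot\nabla v$, together with $p_\nu=\tfrac{\ST}{\rho}p$ and the algebraic identity $2\ST=7$, causes the boundary cross-terms $\int pv$ to cancel exactly, leaving only the interior contribution
\[
-\frac{2}{(3/4)^8}\!\int_{B_{3/4}} v\,\Delta p\;=\;\frac{2}{(3/4)^8}\!\int_{\Lambda(p)\cap B_{3/4}} u\,d|\mu_p|,
\]
since $p=0$ and thus $v=u$ on $\Lambda(p)$, while $\Delta p=-|\mu_p|$ there.

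The main obstacle is showing this remaining term is $O(d^2)$. By Lemma~\ref{ContLocA}, $u$ vanishes on $\Lambda(p)$ outside the thin tube $\{r\le Cd^{2/15}\}$, and on the tube we only know $0\le u\le d$. Combined with the universal lower bound $|\ddt p|\gtrsim r^{5/2}$ used in Lemma~\ref{ContLocA}, this alone would yield only a sub-optimal $O(d^{1+7/15})$ bound. To reach $Cd^2$, I would exploit the non-degeneracy of $p\in\mathcal{A}_1$: the explicit formula for $\ddt p$ on $\{\theta=\pi\}$ for $p\in\mathcal{F}_1$ factors as
\[
\ddt p|_{\theta=\pi}=-r^{1/2}\bigl[(a_0\ST-\tfrac{3}{10}a_2)r^2-\tfrac{5}{2}a_1x_1r+\tfrac{3}{2}a_2x_1^2\bigr],
\]
whose bracket is a positive-definite quadratic form in $(x_1,r)$ precisely when $(a_1,a_2)$ lies strictly inside $E_2$, i.e., exactly when $p\in\mathcal{A}_1$. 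Hence $|\ddt p|\ge c_{\mu_p}r^{1/2}(x_1^2+r^2)$, and rerunning the barrier in Lemma~\ref{ContLocA} with this sharper bound gives, on the strip $|x_1|\ge\epsilon$, the refined localization $u=0$ for $r>C\epsilon^{-4}d^{2/3}$.

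Choosing $\epsilon\sim d^{1/3}$ then splits $\int_{\Lambda(p)\cap B_{3/4}}u\,d|\mu_p|$ into a bulk contribution (where $|x_1|\ge\epsilon$ and the tube shrinks to $\{r\le Cd^{2/3}\}$, giving $|\mu_p|$-mass of order $d$ times the pointwise bound $u\le d$) and a thin-strip contribution (where $|x_1|<\epsilon$, whose area is small enough that even the weaker localization from Lemma~\ref{ContLocA} suffices). Both pieces are $O(d^2)$, which combined with the quadratic-in-$v$ terms produces the claimed $W_\ST(u;3/4)\le Cd^2$.
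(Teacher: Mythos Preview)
Your decomposition into quadratic and cross terms is correct, and you correctly identify that the whole difficulty sits in bounding
\[
\int_{\Lambda(p)\cap B_{3/4}} u\,d|\mu_p|\,.
\]
However, the final step does not close. With $\epsilon\sim d^{1/3}$, your own refined localization threshold is $r>C\epsilon^{-4}d^{2/3}=Cd^{-2/3}$, which tends to infinity as $d\to 0$; so on the bulk $\{|x_1|\ge\epsilon\}$ the tube does \emph{not} shrink to $\{r\le Cd^{2/3}\}$ --- you have silently dropped the $\epsilon^{-4}$ factor. More generally, for \emph{any} choice of $\epsilon$ one of the two pieces fails to be $O(d^2)$: carrying the $\epsilon^{-4}$ through, the bulk contributes $C\epsilon^{-6}d^2$, while the thin strip contributes at best $C\epsilon\, d^{6/5}$, and these cannot be balanced to give $O(d^2)$.

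In fact the obstruction is structural, not just arithmetic. Even using the optimal localization $\{(x_1^2+r^2)r^{1/2}\le Cd^{1/3}\}$ (which your positive--definite quadratic form yields) together with the crude pointwise bound $0\le u\le d$, the best one obtains is
\[
\int u\,d|\mu_p|\;\lesssim\; d\cdot\int_{\{(x_1^2+r^2)r^{1/2}\le Cd^{1/3}\}}(x_1^2+r^2)r^{1/2}\,dr\,dx_1\;\sim\; d^{8/5},
\]
still short of $d^2$. So bounding the cross term by ``$u\le d$ times localized $|\mu_p|$-mass'' cannot succeed; this route also produces constants depending on $\mu_p$, whereas the lemma asserts a universal $C$.

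The paper does not estimate $\int u\,d|\mu_p|$ at all. After recording $W_{\ST}(p;1)=0$ (which uses only that $p$ is $\ST$-homogeneous and that $p\Delta p=0$), it invokes the argument of Lemma~2.7 in \cite{SY2}, which handles the Weiss energy of a solution near \emph{any} $\ST$-homogeneous solution $p$ of the thin obstacle problem without isolating or bounding the cross term via contact-set localization. That argument uses the variational/minimality structure of both $u$ and $p$ together with the homogeneity of $p$, and gives a universal constant; your barrier-based control of $u$ on $\Lambda(p)$ is not the mechanism.
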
 

\begin{proof}
Homogeneity of $p$ implies
\begin{align}\label{NoWForp}
W_\ST(p;1)&=\int_{B_1}|\nabla p|^2-\ST\int_{\partial B_1}p^2 \nonumber\\
&=\int_{B_1}-p\Delta p-\int_{\partial B_1}(pp_\nu+\ST p^2)\\
&=\int_{B_1}-p\Delta p=0.\nonumber
\end{align}Recall from \eqref{OneSidedDGeneral} that $p_\nu$ denotes the inner normal derivative along $\partial B_1$. For the last equality, we used the fact that $p$ is harmonic in $\widehat{\R^3}$.

The rest of the proof is identical to the case for integer-frequency points. See Lemma 2.7 in \cite{SY2}. 
\end{proof}

\subsection{The dichotomy}
With these preparations, we state the main lemma of this section:
\begin{lem}\label{DichotomyA}
Suppose that $u\in\SPDO$ with $p$ satisfying \eqref{ConditionA}.

There is small $\tilde{\delta}>0$, depending only on $\mu$ and $\mu_p$, such that if $d<\tilde{\delta}$, then we have the following dichotomy:
\begin{enumerate}
\item{either 
$$W_\ST(u;1)-W_\ST(u;\rho_0)\ge c_0^2d^2$$ 
and 
$$u\in\mathcal{S}(p,Cd,\rho_0);$$
}
\item{or $$u\in\mathcal{S}(p',\frac{1}{2}d,\rho_0)$$ 
for some 
$$p'=\Rot_\tau[a_0'\uS+a_1'\vF+a_2'\vT]$$ 
with $|\tau|+\sum|a_j'-a_j|\le Cd$.}
\end{enumerate}
The constants $c_0$, $\rho_0$ and $C$ depend only on $\mu$. 
\end{lem}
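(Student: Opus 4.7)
The plan is a standard compactness and contradiction argument. Fix universal small constants $c_0,\rho_0>0$ to be chosen below. Suppose the lemma fails: there exist $u_n\in\mathcal{S}(p_n,d_n,1)$ with $d_n\to 0$, $p_n$ satisfying \eqref{ConditionA} uniformly in the fixed $\mu,\mu_p$, and both alternatives failing. Normalize $w_n:=(u_n-p_n)/d_n$; Lemma \ref{OneToInftyA} gives $\|w_n\|_{L^\infty(B_{1/2})}\le C$. Since $p\in\mathcal{A}_1$ strictly, $p>0$ on $\{x_3=0\}\setminus\mathcal{S}$ and $\Delta p<0$ on $\mathcal{S}\setminus\{0\}$, so Lemma \ref{ContLocA} confines $\Lambda(u_n)$ to a neighborhood of the slit shrinking to $\mathcal{S}$ as $n\to\infty$. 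Hence $w_n$ is harmonic in $\widehat{B_{1/2}}\cap\{r>\varepsilon_n\}$ and vanishes on $\widetilde{B_{1/2}}\cap\{r>\varepsilon_n\}$ with $\varepsilon_n\to 0$. Up to subsequences, $p_n\to p$ (still admissible) and $w_n\to w$ locally uniformly, where $w$ is a harmonic function in the slit domain $\widehat{B_{1/2}}$ as in \eqref{HarmFuncInSlit}.

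Next, the Weiss formula constrains the frequency content of $w$. Since $p_n$ is $\ST$-homogeneous, $\nabla p_n\cdot\nu=\tfrac72 p_n$ on $\partial B_1$, so \eqref{DerOfWeiss} applied to $u_n=p_n+d_n w_n$ reduces to
\[
W_{\ST}(u_n;1)-W_{\ST}(u_n;\rho_0)=2d_n^2\int_{\rho_0}^1\frac{1}{\rho}\int_{\partial B_1}\bigl(\nabla (w_n)_{(\rho)}\cdot\nu-\tfrac72(w_n)_{(\rho)}\bigr)^2 d\rho.
\]
Negating alternative (1), dividing by $d_n^2$, and taking limits gives
\[
\int_{\rho_0}^1\frac{1}{\rho}\int_{\partial B_1}\bigl(\nabla w_{(\rho)}\cdot\nu-\tfrac72 w_{(\rho)}\bigr)^2 d\rho\le c_0^2/2.
\]
Decomposing $w=\sum_\lambda w^{(\lambda)}$ via Theorem \ref{TaylorExpansion} with $w^{(\lambda)}\in\mathcal{H}_\lambda$, and using Proposition \ref{Orthogonal} together with the homogeneity identity $\nabla h\cdot\nu-\tfrac72 h=(\lambda-\tfrac72)h$ on $\partial B_1$ for $h\in\mathcal{H}_\lambda$, the integrand separates into $\sum_\lambda\rho^{2\lambda-7}(\lambda-\tfrac72)^2\|w^{(\lambda)}\|_{L^2(\Sph)}^2$. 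For $\lambda\in\{\tfrac12,\tfrac32,\tfrac52\}$ the radial integral diverges like $\rho_0^{-(7-2\lambda)}$, forcing $\|w^{(\lambda)}\|_{L^\infty(\Sph)}\le C c_0 \rho_0^{(7-2\lambda)/2}$. Combined with $\|w^{(>\ST)}\|_{L^\infty(B_{\rho_0})}\le C\rho_0^{9/2}$ from Theorem \ref{TaylorExpansion}, this yields
\[
\|w-w^{(\ST)}\|_{L^\infty(B_{\rho_0})}\le C(c_0+\rho_0)\rho_0^{\ST}.
\]

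Finally, absorb $w^{(\ST)}$ into a modified profile. By Proposition \ref{GeneratingHarm}, $\mathcal{H}_{\ST}$ is $4$-dimensional with basis $\{u_{\ST},\vF,\vT,\vO\}$. A short computation using Remark \ref{RelatingBases} gives $\partial_\tau|_{\tau=0}\Rot_\tau \vT=\tfrac{3}{2}\vO-\tfrac{6}{5}\vF$, so the $\vO$-component of the rotation generator $\partial_\tau|_{\tau=0}\Rot_\tau p$ equals $\tfrac32 a_2\neq 0$ whenever $a_2\ge\mu$; hence the four linearization directions (varying $a_0,a_1,a_2,\tau$) span $\mathcal{H}_{\ST}$, with inverse bounded in terms of $\mu$ alone. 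Write $w^{(\ST)}$ in this basis with coefficients $(\delta a_0,\delta a_1,\delta a_2,\delta\tau)$ bounded by a universal constant, and define
\[
p'_n:=\Rot_{d_n\delta\tau}\bigl[(a_0^n+d_n\delta a_0)\uS+(a_1^n+d_n\delta a_1)\vF+(a_2^n+d_n\delta a_2)\vT\bigr].
\]
Then $p'_n-p_n=d_n w^{(\ST)}+O(d_n^2)$, and using $w_n\to w$ uniformly on $B_{\rho_0}$,
\[
\|u_n-p'_n\|_{L^\infty(B_{\rho_0})}\le d_n\|w_n-w^{(\ST)}\|_{L^\infty(B_{\rho_0})}+Cd_n^2\le d_n\bigl[C(c_0+\rho_0)\rho_0^{\ST}+o_n(1)\bigr].
\]
Choosing $\rho_0,c_0$ universally small so that $C(c_0+\rho_0)\le\tfrac14$ and taking $n$ large, we obtain $u_n\in\mathcal{S}(p'_n,\tfrac12 d_n,\rho_0)$, contradicting alternative (2). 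The main technical point is the compactness of $(w_n)$ up to the edge $\{r=0\}$ of the slit domain together with the validity of the Taylor expansion at the poles; both are guaranteed here because the strict interior condition $p\in\mathcal{A}_1$ (in particular $a_2\ge\mu>0$) prevents the linearization from inheriting obstacle constraints on $\mathcal{S}$ and ensures the non-degeneracy of the $4$-parameter family, in contrast to the cases $\mathcal{A}_2$ and $\mathcal{A}_3$ treated in the next sections.
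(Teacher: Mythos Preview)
Your overall strategy matches the paper's: contradiction, compactness of $w_n=(u_n-p_n)/d_n$ to a slit-harmonic limit $w$, an almost-homogeneity step showing the sub-$\ST$ modes of $w$ are $O(c_0+\rho_0)$, and absorption of $w^{(\ST)}\in\mathcal{H}_{\ST}$ into a rotated profile using $a_2\ge\mu>0$. Your route to almost homogeneity---plugging $u_n=p_n+d_nw_n$ into \eqref{DerOfWeiss}, passing to the limit by weak $H^1$-lower semicontinuity, and reading off mode-by-mode bounds via Proposition~\ref{Orthogonal}---is a clean variant of the paper's use of \eqref{ChangeInRadial} to compare $u_{(\rho)}$ and $u_{(2\rho)}$; both yield the same estimate $\|w-w^{(\ST)}\|_{L^\infty(B_{\rho_0})}\le C(c_0+\rho_0)\rho_0^{\ST}$. (A minor point: your $\rho$-integral should be cut off at $1/2$ rather than $1$, since the $H^1$-bound from Lemma~\ref{OneToInftyA} is only on $B_{1/2}$; this does not affect the divergence argument for the low modes.)

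There is, however, a genuine gap in your final step. You invoke ``$w_n\to w$ uniformly on $B_{\rho_0}$'' to get $\|u_n-p'_n\|_{L^\infty(B_{\rho_0})}\le d_n[\,C(c_0+\rho_0)\rho_0^{\ST}+o_n(1)\,]$, but uniform convergence of $w_n$ up to the edge $\{r=0\}$ is not established: Lemma~\ref{ContLocA} localizes the contact set only away from a tube $\{r\le Cd_n^{\alpha}\}$ around $\{r=0\}$, and inside that tube you have no barrier preventing $|w_n|$ from staying of order~$1$. The paper circumvents this by carrying the estimate in $L^1$ (where convergence follows from local uniform convergence away from $\{r=0\}$ together with the uniform bound $|w_n|\le1$ and the zero capacity of $\{r=0\}$), obtaining
\[
\|u_n-\Rot_\tau p'\|_{L^1(B_{2\rho_0})}\le Cd_n\rho_0^{13/2}[\,(\rho_0+c_0)+o_n(1)\,],
\]
and only then upgrading to $L^\infty$ via Lemma~\ref{OneToInftyA}. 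That last step is exactly where the strict interior hypothesis $\mu_p>0$ enters: it guarantees that the perturbed profile $p'$ still solves the thin obstacle problem, so Lemma~\ref{OneToInftyA} applies to $u-p'$. Your closing remark alludes to this non-degeneracy, but you never actually use it; routing the final estimate through $L^1$ and Lemma~\ref{OneToInftyA} closes the gap.
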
 
Recall the rotation operator $\Rot$ from \eqref{Rotation}, and the basis $\{u_\ST,v_{\frac 52},v_{\frac 32}, v_{\frac 12}\}$ from \eqref{BasisA}. 

\begin{proof}Let $c_0$ and $\rho_0$ be small constants to be chosen. 

Note that for any $u\in\SPDO$, we always have $u\in\mathcal{S}(p,\rho_0^{-\ST}d,\rho_0)$.

Suppose, on the contrary, that the conclusion is false. Then we find a sequence $(u_n,p_n,d_n)$ satisfying $$\liminf\mu_{p_n}>0$$ and 
$$
u_n\in\mathcal{S}(p_n,d_n,1)\text{ with }  d_n\to 0,
$$ 
but 
\begin{equation}\label{AlmostHomA}
W_\ST(u_n;1)-W_\ST(u_n;\rho_0)\le c_0^2d_n^2 \text{ for all } n, 
\end{equation} 
and
\begin{equation}\label{ToBeContra}
u_n\not\in\mathcal{S}(p',\frac12 d_n,\rho_0)
\end{equation} for any $p'$ satisfying the properties as in alternative (2) from the lemma. 

\text{ }

\textit{Step 1: Compactness.}

Define $\hu_n=\frac{u_n-p_n}{d_n}$. Then $|\hu_n|\le 1$ in $B_1$.

With Lemma \ref{ContLocA}, we have 
$$\Delta\hu_n=0 \text{ in } \widehat{B_1}\cap\{r>Cd_n^{\frac{2}{7}}\}, \text{ and }\hu_n=0 \text{ in } \widetilde{B_{7/8}}\cap\{r>Cd_n^{\frac{2}{15}}\}.$$
As a result, up to a subsequence, the functions $\hu_n$ converge locally uniformly in $B_{7/8}\backslash\{r=0\}$ to some $\hu_\infty$. The limit $\hu_\infty$ is  a harmonic function in the slit domain $\widehat{B_{7/8}}$, defined as in \eqref{HarmFuncInSlit}. Since the set $\{r=0\}$ has zero capacity, we have 
\begin{equation}\label{Tempo2}
\|\hu_n-\hu_\infty\|_{L^2(B_{7/8})}=o(1) \text{ as } n\to\infty.
\end{equation}

With Theorem \ref{TaylorExpansion}, for $k=0,1,2,3$, we find $h_{k+\frac 12}$, a $(k+\frac 12)$-homogeneous harmonic function in $\widehat{\R^3}$,  
such that 
\begin{equation}\label{Tempo3}
|\hu_\infty-(h_{\frac 12}+h_{\frac 32}+h_{\frac 52}+h_{\frac 72})|(x)\le C|x|^{\frac 92} \text{ for }x\in B_{7/8}.
\end{equation}
Moreover, each $\|h_{k+\frac 12}\|_{L^\infty(B_1)}$ is universally  bounded.

In the remaining of this proof, we omit the subscripts in $u_n, p_n$, $\hu_n$ and $d_n$. 

\text{ }

\textit{Step 2: Almost homogeneity.}

With \eqref{Tempo2}, we find $\rho\in[\rho_0,4\rho_0]$ such that 
$$
\|\hu-\hu_\infty\|_{L^2(\partial B_{\rho})}+\|\hu-\hu_\infty\|_{L^2(\partial B_{2\rho})}=o(1).
$$
Combined with \eqref{Tempo3}, this implies
$$
\|\hu-(h_{\frac 12}+h_{\frac 32}+h_{\frac 52}+h_{\frac 72})\|_{L^2(\partial B_{\rho})}+\|\hu-(h_{\frac 12}+h_{\frac 32}+h_{\frac 52}+h_{\frac 72})\|_{L^2(\partial B_{2\rho})}\le C\rho^{\frac{11}{2}}+o(1).
$$
As a result, we have
$$
\|[\hu-(h_{\frac 12}+h_{\frac 32}+h_{\frac 52}+h_{\frac 72})]_{(\frac 12)}-[\hu-(h_{\frac 12}+h_{\frac 32}+h_{\frac 52}+h_{\frac 72})]\|_{L^2(\partial B_\rho)}\le C\rho^{\frac{11}{2}}+o(1),
$$
where $f_{(\frac{1}{2})}$ denotes the rescaling of the function $f$ as in \eqref{Rescaling}.
With the homogeneity of $p$ and $h_{k+\frac 12}$, this gives
\begin{equation}\label{A}
\|\frac{1}{d}[u_{(\frac 12)}-u]-(7h_{\frac 12}+3h_{\frac 32}+h_{\frac 52})\|_{L^2(\partial B_\rho)}\le C\rho^{\frac{11}{2}}+o(1).
\end{equation}

Meanwhile, applying \eqref{ChangeInRadial} together with \eqref{AlmostHomA}, we have
\begin{align*}
\int_{\partial B_1}|u_{(\rho)}-u_{(2\rho)}|&\le\sqrt{\log(2)}\sqrt{W_\ST(u;2\rho)-W_\ST(u;\rho)}\\
&\le C\sqrt{W_\ST(u;1)-W_\ST(u;\rho_0)}\\
&\le Cc_0d
\end{align*} 
for a universal constant $C$. Note that we used our choice of $\rho\in[\rho_0,4\rho_0]$. 

This implies, by the maximum principle, that 
$|u_{(\rho)}-u_{(2\rho)}|\le Cc_0d$ in $B_{1/2}$. As a result, 
$$
\|u_{(\frac 12)}-u\|_{L^2(\partial B_\rho)}\le C\rho^{\frac 92}\|u_{(\rho)}-u_{(2\rho)}\|_{L^2(\partial B_{\frac 12})}\le Cc_0d\rho^{\frac 92}. 
$$
Together with \eqref{A}, this gives
$$
\|7h_{\frac 12}+3h_{\frac 32}+h_{\frac 52}\|_{L^2(\partial B_\rho)}\le C(\rho^{\frac{11}{2}}+c_0\rho^{\frac 92}+o(1)).
$$
Using Proposition \ref{Orthogonal} and homogeneity of the functions involved, we have
\begin{align*}
\|h_{\frac 12}\|_{L^\infty(B_1)}&\le C(\rho^4+c_0\rho^3+o(1)),\\
\|h_{\frac 32}\|_{L^\infty(B_1)}&\le C(\rho^3+c_0\rho^2+o(1)),\\
\|h_{\frac 52}\|_{L^\infty(B_1)}&\le C(\rho^2+c_0\rho+o(1)).
\end{align*}
With \eqref{Tempo2} and \eqref{Tempo3}, we have
$$
\|\hu-h_{\frac 72}\|_{L^1(B_{2\rho_0})}\le C(\rho_0+c_0)\rho_0^{\frac{13}{2}}+o(1),
$$since $\rho\in[\rho_0,4\rho_0].$

\text{ }

\textit{Step 3: Improvement of flatness.}

The last estimate from the previous step gives 
$$\|u-(p+dh_{\frac 72})\|_{L^1(B_{2\rho_0})}\le Cd[(\rho_0+c_0)\rho^{\frac{13}{2}}_0+o(1)].$$

We temporarily switch to the basis $\{u_\ST,w_{\frac 52}, w_{\frac 32}, w_{\frac 12}\}$ from \eqref{BasisB}. Suppose, in this basis, we have
$$
p=b_0u_{\ST}+b_1w_{\frac 52}+b_2w_{\frac 32}, \text{ and } h_{\ST}=\alpha_0u_\ST+\alpha_1w_{\frac 52}+\alpha_2w_{\frac 32}+\alpha_3w_{\frac 12}.
$$
Thus
$$
p+dh_\ST=(b_0+d\alpha_0)\uS+(b_1+d\alpha_1)\wF+(b_2+d\alpha_2)\wT+d\alpha_3\wO.
$$
Using Remark \ref{RelatingBases} and \eqref{ConditionA}, we have lower bounds:
\begin{equation}\label{LowerBoundForb2}
b_0+d\alpha_0\ge \frac 12-Cd, \text{ and }b_2+d\alpha_2\ge\frac{2}{35}\mu-Cd.
\end{equation} 

Now we let $(\beta_1,\beta_2,\tau)$ be the solution to the following system 
$$\begin{cases}
&\beta_1+(b_0+d\alpha_0)\tau=b_1+d\alpha_1\\
&\beta_2+\beta_1\tau+\frac12(b_0+d\alpha_0)\tau^2=b_2+d\alpha_2\\
&\beta_2\tau+\frac12\beta_1\tau^2+\frac16(b_0+d\alpha_0)\tau^3=d\alpha_3
\end{cases}$$
Using  \eqref{LowerBoundForb2}, it is elementary that this system has a solution when $d$ is small. Moreover, we have
\begin{equation}\label{ChangeInCoeff}
|\tau|+|\beta_1-(b_1+\alpha_1d)|+|\beta_2-(b_2+\alpha_2d)|\le C|\frac{\alpha_3d}{b_2+d\alpha_2}|\le Cd.
\end{equation}

Using Taylor's Theorem and the integrability of $\frac{d}{d\tau}\Rot_\tau (w_{\frac 12})$, we have
$$
\|(p+dh_\ST)-[(b_0+\alpha_0d)u_\ST+\beta_1\wF+\beta_2\wT](\Rot_\tau\cdot)\|_{L^1(\Sph)}\le Cd^2
$$for $C$ depending on $\mu$.
Switching back to the basis $\{u_\ST, \vF, \vT, \vO\}$, we have
$$
\|(p+dh_\ST)-\Rot_{-\tau}[a_0'u_\ST+a_1'\vF+a_2'\vT]\|_{L^1(\Sph)}\le Cd^2,
$$
with
$|a_j'-a_j|\le Cd$ by \eqref{ChangeInCoeff}.
By homogeneity, we have 
$$
\|(p+dh_\ST)-\Rot_{-\tau}[a_0'u_\ST+a_1'\vF+a_2'\vT]\|_{L^1(B_{2\rho_0})}\le Cd^2\rho_0^{\frac{13}{2}}.
$$

Combining this with the first estimate in this step, we have
$$
\|u-\Rot_{-\tau}[a_0'u_\ST+a_1'\vF+a_2'\vT]\|_{L^1(B_{2\rho_0})}\le Cd\rho_0^{\frac{13}{2}}[\rho_0+c_0+o(1)].
$$
Since $p$ lies in the interior of $\mathcal{A}$ and $|a_j'-a_j|\le Cd$, we see that $a_0'u_\ST+a_1'\vF+a_2'\vT$ solves the thin obstacle problem when $d$ is small, depending on $\mu_p$ from \eqref{MuP}. As a result, we can apply Lemma \ref{OneToInftyA} to get 
$$
\|u-\Rot_{-\tau}[a_0'u_\ST+a_1'\vF+a_2'\vT]\|_{L^\infty(B_{\rho_0})}\le Cd\rho_0^{\frac 72}[\rho_0+c_0+o(1)].
$$

Consequently, if we choose $\rho_0$ and $c_0$ small, depending only on $\mu$, such that $C(\rho_0+c_0)<\frac{1}{4}$, then 
$$
\|u-\Rot_{-\tau}[a_0'u_\ST+a_1'\vF+a_2'\vT]\|_{L^\infty(B_{\rho_0})}\le d\rho_0^{\frac 72}(\frac 14+C o(1))<\frac 12d\rho_0^\ST
$$ eventually. This contradicts \eqref{ToBeContra}.
\end{proof}

\section{Dichotomy for $p$ near $\mathcal{A}_2$}\label{SectionB}

In this section, we focus on profiles near $\mathcal{A}_2$ from \eqref{SubRegions}. 

To illustrate the ideas, let's take $p=\uS+a_1\vF+a_2\vT\in\mathcal{A}_2$ with
$$\mu\le a_2\le 5 \text{ and }a_1^2=\Gamma(a_2)$$
for a small parameter $\mu>0$, to be chosen in Section 5. See Remark \ref{UniversalChoice}. The function $\Gamma$ was defined in \eqref{GammaFunction}. Recall also the basis $\{\uS,\vF,\vT,\vO\}$ from \eqref{BasisA} for the space $\mathcal{H}_\ST$ from \eqref{7/2HarmFunc}.

 We further assume $$a_1\ge 0.$$ The other case is symmetric.  

Although $p$ solves the thin obstacle problem, the two constraints $p|_{\Hpp}\ge0$ and $\Delta p|_{\Hpp}\le 0$ become degenerate as
\begin{enumerate}
\item{When $a_2\le\frac54$, 
$$
\Delta p=0 \text{ along $R^+_p$};
$$}
\item{When $a_2\ge\frac54$, 
$$
p=0 \text{ along $R^-_p$},
$$}
\end{enumerate}
where 
\begin{equation}\label{RaysOfDeg}
R^+_p:=\{t\cdot (1,\frac{-5a_1}{14-\frac 65a_2},0):t\ge 0\} \text{ and }
R^-_p:=\{t\cdot(-1,\frac{a_1}{2-\frac 25a_2},0):t\ge 0\}\footnote{This ray $R^-_p$ is understood to be $\{(0,s,0):s\ge0\}$ if $a_2=5.$}.
\end{equation}
Let's denote by $A_p^{\pm}$ the intersections of these two rays with the sphere
\begin{equation}\label{Apm}
\{A_p^\pm\}=R^\pm_p\cap\Sph.
\end{equation}
It is crucial that both points are  bounded away from $\{r=0\}$ with 
\begin{equation}\label{GapFromAxis}
\operatorname{dist}(A_p^{\pm},\{r=0\})\ge c_\mu>0,
\end{equation}
where $c_\mu$ depends only on $\mu.$

Due to the degeneracy of $p$, the modified  $p'=p+dv$ as in \eqref{Candidate} may fail to solve the thin obstacle problem, and is no longer a suitable profile to approximate our solution (for instance, a result similar to Lemma \ref{OneToInftyA} is not necessarily true). 

We tackle this issue by solving the thin obstacle problem in small spherical caps around $A_p^{\pm}$, and replace $p'$ with this solution. Along the boundary of the caps, this procedure creates an error. With \eqref{GapFromAxis}, we show that  this error has a significant projection into  $\mathcal{H}_\ST$ from \eqref{7/2HarmFunc}. This allows us to control the error in terms of the decay of the Weiss energy. 

In most part of this section, we deal with profiles near the `doubly critical' profile 
\begin{equation}\label{DoublyCritical}
p_{dc}:=\uS+\frac{\sqrt{15}}{2}\vF+\frac 54\vT.
\end{equation}
This is the only profile in $\mathcal{A}_2$ for which both $\Delta p_{dc}(A^+)$ and $p_{dc}(A^-)$ vanish. As a result, for profiles nearby, we need to find replacements in spherical caps near both $A^\pm.$ 

For other profiles $p\in\mathcal{A}_2$, only one of the two constraints is degenerate. The treatment is more straightforward, and is only sketched near the end of this section.

\subsection{The boundary layer problem around $p_{dc}$}\label{SubsectionBoundaryLayerB}
We study homogeneous harmonic functions near $p_{dc}$. For a small universal constant $\delta>0$, suppose  
$$p=a_0\uS+a_1\vF+a_2\vT+a_3\vO$$ 
satisfy
\begin{equation}\label{ConditionB}
\frac 12\le a_0\le 2, \text{ and } 
|\frac{a_1}{a_0}-\frac{\sqrt{15}}{2}|+|\frac{a_2}{a_0}-\frac 54|+|\frac{a_3}{a_0}|\le\delta.
\end{equation}  

Recall from \eqref{Apm} that
$A^+=(\sqrt{5/8},-\sqrt{3/8},0)$ and $A^-=(-\sqrt{3/8},\sqrt{5/8},0)$ are the points of degeneracy for $p_{dc}.$ 
For a universal small $\eta>0$, define two spherical caps
\begin{equation*}
\Ceta^+:=\{x\in\Sph:|x-A^+|<\eta\}, \text{ and } \Ceta^-:=\{x\in\Sph:|x-A^-|<\eta\}.
\end{equation*} 
Thanks to \eqref{GapFromAxis}, both are  bounded away from $\{r=0\}$ for small $\eta$.
The same notations are used to denote the cones generated by the two caps.  See Figure \ref{CapPdc}.

\begin{figure}[h]
\includegraphics[width=0.5\linewidth]{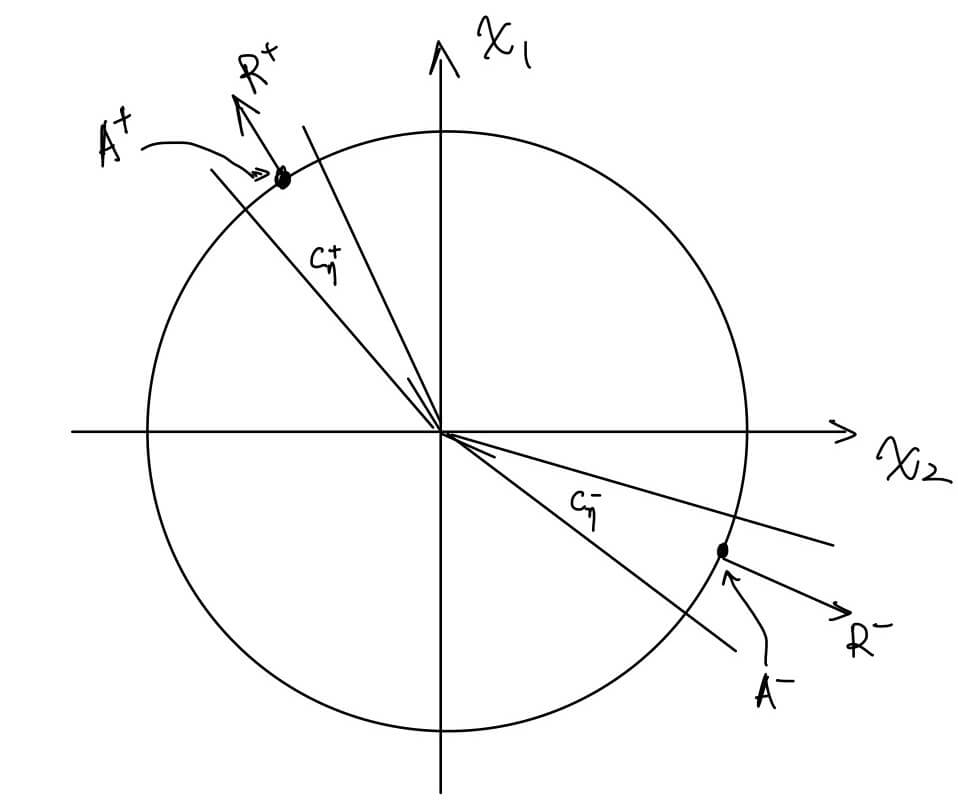}
\caption{Boundary layers for $p_{dc}$ in $(x_1,x_2)$-plane.}
\label{CapPdc}
\end{figure}

In general, for $\ell>0$ we define 
\begin{equation}\label{GeneralCaps}
\mathcal{C}_\ell^\pm:=\{x\in\Sph:|x-A^\pm|<\ell\}.
\end{equation}

Inside the caps $\Ceta^\pm$, we solve the thin obstacle problem for the operator $(\Delta_{\Sph}+\lambda_\ST)$ from \eqref{Eigenvalue} with $p$ as boundary data:
\begin{equation}\label{SphTOPB}\begin{cases}
(\Delta_{\Sph}+\lambda_\ST)v_p^{\pm}\le 0 &\text{ in $\Ceta^\pm$,}\\
v_p^\pm\ge 0 &\text{ on $\Ceta^\pm\cap\Hpp$,}\\
(\Delta_{\Sph}+\lambda_\ST)v_p^{\pm}=0 &\text{ in $\{x_3\neq 0\}\cup\{v_p^\pm>0\}$,}\\
v_p^\pm=p &\text{ along $\partial\Ceta^\pm.$}
\end{cases}\end{equation}
Note that when $\eta$ is universally small, the maximum principle holds for $(\SphLap+\lambda_\ST)$ in $\Ceta^\pm$, and problem \eqref{SphTOPB} is well-posed. 

The maximum principle also implies
\begin{equation}\label{OnTopOfpB}
v_p^\pm\ge p \text{ in $\Ceta^\pm$.}
\end{equation} 
With the symmetry of $p$, the solutions $v_p^\pm$ are even with respect to $\Hpp.$

\begin{defi}\label{ReplacementB}
Given $p$ satisfying \eqref{ConditionB}, our \textit{replacement for $p$}, to be denoted by $\tilde{p}$, is the following function 
\begin{equation*}\tilde{p}=\begin{cases}
p &\text{outside $\Ceta^\pm$,}\\
v_p^\pm &\text{in $\Ceta^\pm.$}
\end{cases}
\end{equation*}
Equivalently, the replacement $\ptil$ is the unique minimizer of 
$$
v\mapsto\int_{\Sph}|\SphGrad v|^2-\lambda_\ST v^2
$$ over $$\{v: v=p \text{ outside $\Ceta^\pm$, and } v\ge 0 \text{ on $\Hpp$}\}.$$

Here $\SphGrad$ denotes the tangential gradient on $\Sph$.

We also denote the $\ST$-homogeneous extension of $\tilde{p}$ by the same notation. 
\end{defi}

Recall the notations for slit domains from \eqref{SlitSet}. We have, by definition, 
\begin{equation}\label{EqnForReplacementB}\begin{cases}
(\SphLap+\lambda_\ST)\ptil=f_p^\pm dH^1|_{\partial\Ceta^\pm}+g_p^\pm dH^1|_{\Ceta^\pm\cap\Hpp} &\text{ in $\Sph\backslash(\widetilde{\Sph}\backslash\Ceta^+)$,}\\
\ptil=0 &\text{ on $\widetilde{\Sph}\backslash\Ceta^+$.}
\end{cases}\end{equation}
Here $f_p^\pm$ arise from the gluing of $v_p^\pm$ and $p$ along $\partial\Ceta^\pm$, and $g_p^\pm$ are a consequence of the thin obstacle problem \eqref{SphTOPB}. In particular, following our convention for one-sided derivatives \eqref{OneSidedDGeneral}, we have
\begin{equation}\label{RHSisDer}f_p^\pm=(\ptil-p)_\nu|_{\partial\Ceta^\pm}\end{equation}
and 
$$
g_p^\pm\le 0 \text{ is supported in } \{\ptil=0\}\cap\Hpp.
$$

\begin{rem}\label{BadNearR0}
The replacement $\ptil$ does not necessarily satisfy the two constraints $\ptil|_{\Hpp}\ge 0$ and $\Delta\ptil_{\Hpp}\le 0$ outside $\Ceta^\pm.$ This is due to the possible presence of $\vO$ in $p$, which becomes dominant near $\{r=0\}$.

On the other hand, suppose that $p=a_0\uS+a_1\vF+a_2\vT+a_3\vO$ satisfies \eqref{ConditionB} with  $a_3=0$, then  $p$ satisfies $p|_{\Hpp}\ge0$ and $\Delta p|_{\Hpp}\le 0$ outside $\Ceta^\pm,$ and the same holds for $\ptil.$ 
\end{rem} 

One essential ingredient of this section is that  $f_p^\pm$ have significant projections into $\mathcal{H}_\ST$ from \eqref{7/2HarmFunc}. To measure this, we introduce some auxiliary functions. 

Let $\varphi_p:\Sph\to\R$ denote the projection of $f_p^\pm$ into $\mathcal{H}_\ST$ from \eqref{7/2HarmFunc}, namely, 
\begin{equation}\label{phiB}
\varphi_p:=c_{\ST}\uS+c_{\frac 52}\vF+c_{\frac 32}\vT+c_{\frac 12}\vO,
\end{equation}
where $$c_{\ST}=\frac{1}{\|\uS\|_{L^2(\Sph)}}\cdot\int_{\Sph}\uS(f_p^+dH^1|_{\partial\Ceta^+}+f_p^-dH^1|_{\partial\Ceta^-})$$
and
$$c_{m+\frac 12}= \frac{1}{\|v_{m+\frac 12}\|_{L^2(\Sph)}}\cdot\int_{\Sph}v_{m+\frac 12}(f_p^+dH^1|_{\partial\Ceta^+}+f_p^-dH^1|_{\partial\Ceta^-})$$
for $m=0,1,2.$
It follows that $$(f_p^+dH^1|_{\partial\Ceta^+}+f_p^-dH^1|_{\partial\Ceta^-}-\varphi_p)\perp\mathcal{H}_\ST.$$
By Fredholm alternative, there is a unique function $H_p:\Sph\to\R$ satisfying 
$$\begin{cases}
(\SphLap+\lambda_\ST)H_p=f_p^+dH^1|_{\partial\Ceta^+}+f_p^-dH^1|_{\partial\Ceta^-}-\varphi_p &\text{ on $\widehat{\Sph},$}\\
H_p=0 &\text{ on $\widetilde{\Sph}$.}
\end{cases}$$

The natural extensions of $f^\pm_p$, $g^\pm_p$, $\varphi_p$ and $H_p$ into $\R^3$ are denoted by the same symbols. 

With this convention, we define 
\begin{equation}\label{PhiB}
\Phi_p:=H_p(\frac{x}{|x|})|x|^\ST+\frac 18\varphi_p(\frac{x}{|x|})|x|^\ST\log(|x|),
\end{equation}
which satisfies
\begin{equation}\label{EqnForPhiB}\begin{cases}
\Delta\Phi_p=f_p^\pm dH^2|_{\partial\Ceta^\pm} &\text{ in $\widehat{\R^3}$,}\\
\Phi_p=0 &\text{ on $\widetilde{\R^3},$}
\end{cases}\end{equation}
where we have used the notations for slit domains from \eqref{SlitSet}.

Finally, let's denote 
\begin{equation}\label{KappaB}\kappa_p:=\|\Phi_p\|_{L^\infty(B_1)},
\end{equation}which measures the size of the error coming from the gluing procedure along $\partial\Ceta^\pm.$
 
For all the functions and constants defined so far,  the subscript $p$ is often omitted when there is no ambiguity. 
 
We collect some properties of the replacement $\ptil$ from Definition \ref{ReplacementB}.

We have the following localization of the contact set of $\ptil$:
\begin{lem}\label{ContLocB}
For $p$ satisfying \eqref{ConditionB}, we have
$$
\ptil>0 \text{ in }(\Ceta^-\backslash\mathcal{C}^-_{C\delta^\frac12})', \text{ and }\ptil=0 \text{ in } (\Ceta^+\backslash\mathcal{C}_{C\delta^\frac12}^+)',
$$
where $C$ is a universal constant.
\end{lem}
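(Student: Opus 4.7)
\emph{Proof proposal.} The plan is to exploit the double-root structure at $A^\pm$ of the doubly critical profile $p_{dc}$, combined with the pointwise bound $\ptil\ge p$ from \eqref{OnTopOfpB}, to pin down where each constraint of the obstacle problem \eqref{SphTOPB} can bind.

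\emph{First claim.} Since $A^-\in\{x_2>0,x_3=0\}$ is bounded away from the axis $\{r=0\}$ by \eqref{GapFromAxis}, and $\eta$ is universally small, the cap $\Ceta^-$ sits entirely in $\{x_2>0\}$. Setting $s=x_1/x_2$, the trace $p|_\Hpp/x_2^{\ST}=:q_p(s)$ is a polynomial of degree at most three in $s$; a direct computation gives
\[
q_{dc}(s)=\tfrac{5}{4}(s-s^-)^2,\qquad s^-=-\sqrt{3/5},
\]
which is precisely the $s$-value at $A^-$. By \eqref{ConditionB} we have $|q_p-q_{dc}|\le C\delta$ uniformly on $\Ceta^-$, hence
\[
q_p(s)\ge\tfrac{5}{4}(s-s^-)^2-C\delta,
\]
so $q_p>0$ once $|s-s^-|>C\delta^{1/2}$. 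Translating $|s-s^-|$ into spherical distance to $A^-$ (comparable thanks to \eqref{GapFromAxis}) and invoking $\ptil\ge p$ from \eqref{OnTopOfpB}, the first conclusion follows.

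\emph{Second claim.} Now $A^+\in\{x_2<0,x_3=0\}$ lies on the slit $\mathcal{S}$, so for $\eta$ small the slice $(\Ceta^+)'$ is entirely contained in $\mathcal{S}$. Every basis function in \eqref{BasisA} vanishes on $\mathcal{S}$, so $p\equiv 0$ on $(\Ceta^+)'$. I would analyze the density $m_p$ of $(\SphLap+\lambda_\ST)p$ against $dH^1|_\mathcal{S}$: the jump of $\partial_{x_3}$ across the slit together with the substitution $t=x_1/r$ produces $m_p=r^{\frac{5}{2}}Q_p(t)$, where
\[
Q_{dc}(t)=-\tfrac{15}{4}(t-t^+)^2,\qquad t^+=\sqrt{5/3},
\]
the $t$-value at $A^+$. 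The perturbation bound from \eqref{ConditionB} then yields
\[
m_p\le -\tfrac{15}{4}r^{\frac{5}{2}}(t-t^+)^2+C\delta\, r^{\frac{5}{2}}\quad\text{on }\mathcal{S}\cap\Ceta^+,
\]
hence $m_p\le -c r^{\frac{5}{2}}$ on $\mathcal{S}\cap(\Ceta^+\setminus\mathcal{C}^+_{C\delta^{1/2}})$. To upgrade this strict super-solution property of $p$ into a contact statement for $\ptil$, I would construct a super-solution $\Phi$ of \eqref{SphTOPB} satisfying $\Phi=p$ on $\partial\Ceta^+$, $\Phi\ge 0$ on $(\Ceta^+)'$, $(\SphLap+\lambda_\ST)\Phi\le 0$ in $\Ceta^+$, and $\Phi\equiv 0$ on $\mathcal{S}\cap(\Ceta^+\setminus\mathcal{C}^+_{C\delta^{1/2}})$. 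Comparison with the minimal super-solution $\ptil$ above the obstacle $0$ then forces $\ptil\le\Phi=0$ on the target set, completing the lemma.

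\emph{Main obstacle.} The first claim is a direct pointwise comparison and should be essentially a calculation. The substantive work is the barrier construction for the second claim. The natural candidate is $\Phi=p+\psi$, with $\psi\ge 0$ supported in $\mathcal{C}^+_{C\delta^{1/2}}$, even across $\mathcal{S}$ and $(\SphLap+\lambda_\ST)$-harmonic off $\mathcal{S}$, whose slit density dominates the (at most $O(\delta)$) positive part of $m_p$ concentrated in $\mathcal{C}^+_{C\delta^{1/2}}$. Producing such a $\psi$ while maintaining $\Phi=p$ on $\partial\Ceta^+$ and $\Phi\ge 0$ on $(\Ceta^+)'$ is the technical core of the argument, and requires localized estimates for harmonic functions in slit domains around the interior slit point $A^+$, in the spirit of Proposition \ref{GeneratingHarm} and Theorem \ref{TaylorExpansion}.
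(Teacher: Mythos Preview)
Your treatment of the first claim is correct and essentially matches the paper's argument.

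For the second claim, your route is workable but you are making life harder than necessary, and the obstacle you flag is avoidable. The paper's key observation is that $p_{dc}$ \emph{itself} solves the spherical thin obstacle problem \eqref{SphTOPB} in $\Ceta^+$ (since $p_{dc}\in\mathcal{F}_1$). Since $\ptil$ solves the same problem with boundary data $p$, and $p\le p_{dc}+C\delta$ on $\partial\Ceta^+$ by \eqref{ConditionB}, the comparison principle for the obstacle problem gives $\ptil\le p_{dc}+C\delta$ throughout $\Ceta^+$. Now you are exactly in the setting of Lemma~\ref{ContLocA}: you have a solution bounded above by a fixed profile plus a small constant, and the profile has a quantified (here, quadratically degenerate at $A^+$) normal derivative on the slit. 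The paraboloid barrier from that proof, suitably scaled, then forces $\ptil=0$ on $(\Ceta^+\setminus\mathcal{C}^+_{C\delta^{1/2}})'$.

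Your plan instead tries to patch $p$ into a supersolution $\Phi=p+\psi$ by absorbing the positive part of $m_p$ with a corrector $\psi$ supported in the small cap. Note first that ``$\psi$ compactly supported and $(\SphLap+\lambda_\ST)$-harmonic off $\mathcal S$'' forces a nonzero singular measure on $\partial\mathcal{C}^+_{C\delta^{1/2}}$ (else unique continuation kills $\psi$); you would need to check its sign to keep $\Phi$ a supersolution there. This can be made to work, but the construction and verification you describe as the ``technical core'' are simply unnecessary once you compare with $p_{dc}$ instead of with $p$. The moral: do not build a supersolution near $p$; the model profile $p_{dc}$ already is one.
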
 
Recall our notations from \eqref{SlicingSet} and \eqref{GeneralCaps}.

\begin{proof}
With \eqref{OnTopOfpB}, we have $\ptil\ge p\ge p_{dc}-C\delta$ in $\Ceta^-$. The first statement follows from direct computation. 

Note that $p_{dc}$ and $\ptil$ both solve \eqref{SphTOPB} in $\Ceta^+$ with $\ptil=p\le p_{dc}+C\delta$ along $\partial\Ceta^+$, it follows from the maximum principle that $p\le p_{dc}+C\delta$ in $\Ceta^+$. The second conclusion follows from a barrier argument similar to the proof for Lemma \ref{ContLocA}.
\end{proof} 

The next lemma controls the change in $\ptil$ when $p$ is modified:
\begin{lem}\label{CommutatorB}
Suppose that $p$ satisfies \eqref{ConditionB}, and take $q=\alpha_0\uS+\alpha_1\vF+\alpha_2\vT+\alpha_3\vO$ with $|\alpha_j|\le 1$ for $j=0,1,2,3$.

Then we can find a universal modulus of continuity, $\omega(\cdot)$, such that 
$$
\|\widetilde{p+dq}-(\ptil+dq)\|_{L^\infty(\Ceta^+)}\le\omega(\delta+d)\cdot d,
$$
$$
\|\widetilde{p+dq}-(\ptil+dq)\|_{L^\infty(\Ceta^-\backslash\mathcal{C}^-_{\eta/2})}\le\omega(\delta+d)\cdot d,
$$
and
$$
\|\widetilde{p+dq}-(\ptil+dq)\|_{L^1(\Sph)}\le\omega(d+\delta)\cdot d.
$$
\end{lem}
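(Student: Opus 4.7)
My approach is a compactness/contradiction argument for each of the two $L^\infty$ bounds, combined afterwards to yield the $L^1$ bound. Outside $\Ceta^+ \cup \Ceta^-$ the difference $\widetilde{p+dq} - (\ptil + dq)$ vanishes identically since both terms equal $p+dq$ there, so attention reduces to the two caps, where the functions share the boundary data $p+dq$ on $\partial\Ceta^\pm$ and satisfy the inequality $(\SphLap+\lambda_\ST)v \le 0$. The key structural fact is that $q \in \mathcal{H}_\ST$ yields $(\SphLap+\lambda_\ST)q = 0$, so the naive competitor $\ptil + dq$ already solves the correct equation off the coincidence set of $\ptil$. The lemma thus asserts that the map $p \mapsto \ptil$ is quantitatively differentiable along $\mathcal{H}_\ST$ directions, with trivial linearization.

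For the $L^\infty$ bound on $\Ceta^+$, I would suppose for contradiction that it fails, producing sequences $(p_n, q_n, d_n)$ with $\delta_n + d_n \to 0$ but
\[ \|w_n\|_{L^\infty(\Ceta^+)} \ge \omega_0, \qquad w_n := \frac{\widetilde{p_n+d_n q_n} - \ptil_n - d_n q_n}{d_n}. \]
The comparison principle for the thin obstacle problem, together with a barrier argument of the flavor used in Lemma \ref{ContLocA}, yields $\|w_n\|_{L^\infty(\Ceta^+)} \le C$ uniformly. Passing to subsequences, $p_n \to p_{dc}$, $q_n \to q_\infty \in \mathcal{H}_\ST$, and $w_n \to w_\infty$ locally uniformly on $\Ceta^+ \setminus \mathcal{S}$. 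Since $p_{dc}$ is itself a half-space cone, $\widetilde{p_{dc}} = p_{dc}$ with coincidence set $\mathcal{S} \cap \Ceta^+$, and the coincidence sets of both $\ptil_n$ and $\widetilde{p_n+d_n q_n}$ collapse onto this set as $n \to \infty$. Hence $w_n$ eventually satisfies $(\SphLap+\lambda_\ST) w_n = 0$ outside a shrinking neighborhood of $\mathcal{S}\cap\Ceta^+$, so $w_\infty$ is a harmonic function in the slit domain $\widehat{\Ceta^+}$ in the sense of Section 2.2. The Dirichlet data vanish in the limit: $w_n \equiv 0$ on $\partial\Ceta^+$, and on $\mathcal{S}\cap\Ceta^+$ every $u_{k+\frac12}$ vanishes (so $q_n$ does), while $\widetilde{p_n+d_n q_n}$ and $\ptil_n$ each vanish on their respective contact sets, forcing $w_n \to 0$ uniformly on $\mathcal{S}\cap\Ceta^+$. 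Uniqueness for harmonic functions in the slit domain with zero data then yields $w_\infty \equiv 0$, contradicting the assumption.

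The bound on $\Ceta^- \setminus \mathcal{C}^-_{\eta/2}$ follows by the same template, simplified because by Lemma \ref{ContLocB} both coincidence sets lie inside $\mathcal{C}^-_{C\delta_n^{1/2}}$ and hence well inside $\mathcal{C}^-_{\eta/2}$; thus $w_n$ satisfies $(\SphLap+\lambda_\ST)w_n = 0$ on $\Ceta^-\setminus \mathcal{C}^-_{\eta/2}$ with zero data on $\partial\Ceta^-$ and zero in the limit on $\partial\mathcal{C}^-_{\eta/2}$ by a parallel compactness step inside the smaller cap. For the $L^1(\Sph)$ bound, I would combine the two $L^\infty$ estimates (which integrate trivially over bounded-volume regions to $\omega(d+\delta) \cdot d$) with the identical vanishing outside the caps and a similar compactness argument on $\mathcal{C}^-_{\eta/2}$ itself, producing a possibly weaker but still vanishing modulus that suffices after integration. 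The principal technical obstacle will be establishing uniform quantitative convergence of the coincidence sets of $\ptil_n$ and $\widetilde{p_n+d_n q_n}$ to their common limit, since this is what forces $w_n \to 0$ on the relevant inner boundaries and closes the contradiction; the worst-case rate $\delta^{1/2}$ from Lemma \ref{ContLocB} must be shown to be enough to suppress the boundary contribution without any cancellation being exploited.
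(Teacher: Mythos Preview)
Your compactness approach is a natural alternative to the paper's direct barrier argument, but it has a genuine gap that you yourself flag at the end: the convergence $w_n \to w_\infty = 0$ is only locally uniform away from the degeneracy points $A^\pm$, and you give no mechanism to rule out $\|w_n\|_{L^\infty}$ staying bounded below via concentration at $A^+$ (or, in your reduction for the second estimate, at $A^-$). Passing to a limit that is harmonic in the slit domain and vanishes on its boundary does not by itself control the supremum over the shrinking region where the coincidence sets of $\ptil_n$ and $\widetilde{p_n+d_nq_n}$ may differ; a bounded sequence of harmonic functions on a slit cap with data supported on a shrinking arc can certainly keep unit sup norm while converging to zero locally uniformly.

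The paper resolves exactly this point via a two-scale maximum principle rather than compactness. In $\Ceta^+$ it first compares $\widetilde{p+dq}-(\ptil+dq)$ against a barrier $w$ solving $(\SphLap+\lambda_\ST)w=0$ in $\Ceta^+\cap\{x_3>0\}$ with $w=1$ on $(\mathcal{C}^+_{C\sqrt{\delta+d}})'$ and $w=0$ elsewhere on the boundary, obtaining $|\widetilde{p+dq}-(\ptil+dq)|\le Cd\cdot w$. Then, for an intermediate radius $\ell$, it drops the $dq$ term and uses $|q|\le C\ell$ on $\partial\mathcal{C}^+_\ell$ (since $q$ vanishes on $(\Ceta^+)'$) to get
\[
|\widetilde{p+dq}-\ptil|\le Cd\,(\ell+\sup_{\partial\mathcal{C}^+_\ell}w)\quad\text{on }\partial\mathcal{C}^+_\ell.
\]
The step you are missing is that \emph{both} $\widetilde{p+dq}$ and $\ptil$ solve the thin obstacle problem in $\mathcal{C}^+_\ell$, so the comparison principle propagates this bound on their difference (not on $w_n$) into the interior of $\mathcal{C}^+_\ell$; adding back $dq$ costs only $O(\ell d)$ there. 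One then chooses $\ell$ small and uses $\sup_{\partial\mathcal{C}^+_\ell}w\to 0$ as $\delta+d\to 0$. For $\Ceta^-$ the argument is analogous but simpler (no slit through the cap), and the $L^1$ bound follows by shrinking $\ell$ so that $|\mathcal{C}^-_\ell|$ absorbs the remaining piece. Your compactness route could be salvaged by inserting this same inner comparison step near $A^\pm$, but once you do that the compactness wrapper is no longer doing the essential work.
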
 

\begin{proof}
The distinction between the estimates in  $\Ceta^+$ and $\Ceta^-$ is due to the fact that $q=0$ in $\Ceta^+\cap\Hpp$, while $q\neq0$ in $\Ceta^-\cap\Hpp.$

\text{ }

\textit{Step 1: The estimate in $\Ceta^+$.}

Let $\Omega:=\Ceta^+\cap\{x_3>0\}$. We build a barrier by solving the following system
$$\begin{cases}
(\SphLap+\lambda_\ST)w=0 &\text{ in $\Omega$,}\\
w=1 &\text{ in $(\mathcal{C}^+_{C\sqrt{\delta+d}})'$,}\\
w=0 &\text{ in $\partial\Omega\backslash(\mathcal{C}^+_{C\sqrt{\delta+d}})'$,}
\end{cases}$$
where $C$ is the universal constant from Lemma \ref{ContLocB}. We extend $w$ to $\Ceta^+\cap\{x_3<0\}$ by evenly reflecting it with respect to $\Hpp.$

It follows from the maximum principle and the second statement in Lemma \ref{ContLocB} that 
$$
\widetilde{p+dq}-(\ptil+dq)\le Cd\cdot w \text{ in }\Ceta^+.
$$

For $\ell>C(\delta+d)^\frac12$ to be chosen, it follows
$$
\widetilde{p+dq}-\ptil\le Cd\cdot (\ell+\sup_{\partial\mathcal{C}^+_\ell}w) \text{ along } \partial\mathcal{C}^+_\ell,
$$
where we used the Lipschitz regularity of $q$ and $q=0$ along $\Ceta^+\cap\Hpp.$
From here the maximum principle implies 
$
\widetilde{p+dq}-\ptil\le Cd\cdot (\ell+\sup_{\partial\mathcal{C}^+_\ell}w) \text{ in } \mathcal{C}^+_\ell,
$ and consequently,
$$
\widetilde{p+dq}-(\ptil+dq)\le Cd\cdot (\ell+\sup_{\partial\mathcal{C}^+_\ell}w) \text{ in } \mathcal{C}^+_\ell.
$$

Using Lemma \ref{ContLocB},  for small $d+\delta$, it follows from the maximum principle in $\Ceta^+\backslash\mathcal{C}_\ell^+$ that
$$
\widetilde{p+dq}-(\ptil+dq)\le Cd\cdot (\ell+\sup_{\partial\mathcal{C}^+_\ell}w) \text{ in $\Ceta^+$.}
$$
A symmetric argument gives $$
\widetilde{p+dq}-(\ptil+dq)\ge -Cd\cdot (\ell+\sup_{\partial\mathcal{C}^+_\ell}w) \text{ in $\Ceta^+$.}
$$
By choosing $\ell$ small, and noting that $\sup_{\partial\mathcal{C}^+_\ell}w\le\omega_\ell(d+\delta)$ for a modulus of continuity depending only on $\ell$, we get the desired estimate in $\Ceta^+$.

\text{ }

\textit{Step 2: The estimate in $\Ceta^-\backslash\mathcal{C}^-_{\eta/2}$.}

The main difference with the previous case is that $q$ no longer vanishes along $\Hpp$ in the cap $\Ceta^-$.

We build a barrier by solving 
$$\begin{cases}
(\SphLap+\lambda_\ST)w=0 &\text{ in $\Ceta^-\backslash(\mathcal{C}^-_{C\sqrt{\delta+d}})'$,}\\
w=1 &\text{ in $(\mathcal{C}^-_{C\sqrt{\delta+d}})'$,}\\
w=0 &\text{ in $\partial\Ceta^-$.}
\end{cases}$$
With the first statement in Lemma \ref{ContLocB}, it follows from the maximum principle that 
$$
\widetilde{p+dq}-(\ptil+dq)\le Cd\cdot w \text{ in }\Ceta^-.
$$

In particular, we have 
$$
\widetilde{p+dq}-(\ptil+dq)\le Cd\cdot \sup_{\partial\mathcal{C}_{\eta/2}}w \text{ in }\Ceta^-\backslash\mathcal{C}^-_{\eta/2}.
$$
A symmetric argument gives $\widetilde{p+dq}-(\ptil+dq)\ge -Cd\cdot \sup_{\partial\mathcal{C}_{\eta/2}}w \text{ in }\Ceta^-\backslash\mathcal{C}^-_{\eta/2}.$

Note that $\sup_{\partial\mathcal{C}_{\eta/2}}w\to 0$ as $d+\delta\to0$, the previous two estimates gives the desired results in $\Ceta^-\backslash\mathcal{C}^-_{\eta/2}$.

\text{ }

\textit{Step 3: The $L^1(\Sph)$ estimate.}

For $\ell>0$, with the same barrier from Step 2, we have for small $d+\delta$
$$
|\widetilde{p+dq}-(\ptil+dq)|\le Cd\cdot \sup_{\partial\mathcal{C}_{\ell}}w \text{ in }\Ceta^-\backslash\mathcal{C}^-_{\ell}.
$$
Thus
$$
\|\widetilde{p+dq}-(\ptil+dq)\|_{L^1(\Ceta^-)}\le Cd\cdot\sup_{\partial\mathcal{C}_{\ell}}w+Cd\ell^2.
$$
By choosing $\ell$ small, and noting $\sup_{\partial\mathcal{C}_{\ell}}w\to0$ as $d+\delta\to 0$, we have
$$
\|\widetilde{p+dq}-(\ptil+dq)\|_{L^1(\Ceta^-)}\le \omega(d+\delta)\cdot d.
$$A similar estimate in $\Ceta^+$ follows directly from the conclusion in Step 1. Since $\widetilde{p+dq}-(\tilde{p}+dq)=0$ outside $\Ceta^\pm$, the $L^1(\Sph)$ estimate follows.
\end{proof} 

As a consequence, we can control the change of $\kappa_p$, defined in \eqref{KappaB}, when $p$ is modified:
\begin{cor}\label{ChangeInRHSB}
Under the same assumption as in Lemma \ref{CommutatorB}, we have
$$\kappa_{p+dq}\le\kappa_p+\omega(\delta+d)\cdot d.$$
\end{cor}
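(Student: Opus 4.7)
The plan is to use the linearity of the construction $p\mapsto\Phi_p$ in the gluing datum $f_p^\pm$ and reduce the estimate on $\kappa_{p+dq}$ to an $L^\infty$ bound on the differenced datum $f_{p+dq}^\pm - f_p^\pm$, supplied by Lemma \ref{CommutatorB}. Since the $L^2$-projection \eqref{phiB} defining $\varphi_p$ and the Fredholm inverse producing $H_p$ are both linear in the source $f_p^+dH^1|_{\partial\Ceta^+}+f_p^-dH^1|_{\partial\Ceta^-}$, the difference $\Phi_{p+dq}-\Phi_p$ is again a function of the form \eqref{PhiB}, built from $f_{p+dq}^\pm-f_p^\pm$. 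By subadditivity of the sup norm it then suffices to prove
$$\|\Phi_{p+dq}-\Phi_p\|_{L^\infty(B_1)}\le\omega(\delta+d)\cdot d.$$

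Next, I would identify the differenced datum with an interior quantity: by \eqref{RHSisDer} applied to both $p$ and $p+dq$, together with the smoothness of $dq$ across the fixed curves $\partial\Ceta^\pm$,
$$f_{p+dq}^\pm-f_p^\pm=E_\nu|_{\partial\Ceta^\pm},\qquad E:=\widetilde{p+dq}-(\ptil+dq).$$
Lemma \ref{CommutatorB} already supplies $\|E\|_{L^\infty(\Ceta^+)}+\|E\|_{L^\infty(\Ceta^-\setminus\mathcal{C}^-_{\eta/2})}\le\omega(\delta+d)\cdot d$, and $E$ vanishes on $\partial\Ceta^\pm$ by construction. Since $\partial\Ceta^\pm$ is a smooth closed curve lying at positive universal distance both from the slit $\mathcal{S}$ and from the possibly-degenerate region $\mathcal{C}^-_{\eta/2}$, a standard barrier/Hopf argument sandwiching $E$ between $\pm C\|E\|_{L^\infty}$ times a fixed harmonic profile vanishing on $\partial\Ceta^\pm$ yields the pointwise bound
$$\|E_\nu\|_{L^\infty(\partial\Ceta^\pm)}\le C\,\omega(\delta+d)\cdot d.$$

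Finally, the linear operator sending a bounded function on $\partial\Ceta^\pm$ to the corresponding $\Phi$-object via \eqref{phiB}--\eqref{PhiB} is bounded into $L^\infty(B_1)$: the projection onto the four-dimensional space spanned by $\{\uS,\vF,\vT,\vO\}$ is plainly continuous, the Fredholm inverse on the orthogonal complement of $\mathcal{H}_\ST$ is a bounded operator into $L^\infty(\Sph)$ by standard elliptic estimates for $(\SphLap+\lambda_\ST)$ with smooth boundary sources, and both $|x|^\ST$ and $|x|^\ST\log|x|$ are bounded on $B_1$. Combining this with the previous step gives $\|\Phi_{p+dq}-\Phi_p\|_{L^\infty(B_1)}\le C\,\omega(\delta+d)\cdot d$, which after absorbing the universal constant $C$ into $\omega$ proves the corollary. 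The main technical obstacle is the normal-derivative bound in the second step: the contact sets of $\ptil$ and $\widetilde{p+dq}$ inside $\Ceta^\pm$ may extend to $\partial\Ceta^\pm$, so the barrier must be constructed so as to only use the smoothness of $\partial\Ceta^\pm$ and its separation from $\{r=0\}$, not any non-degeneracy of the obstacle problem up to $\partial\Ceta^\pm$.
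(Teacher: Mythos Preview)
Your proposal is correct and follows essentially the same approach as the paper: identify $f_{p+dq}^\pm-f_p^\pm$ as the normal derivative of $E=\widetilde{p+dq}-(\ptil+dq)$ along $\partial\Ceta^\pm$, bound $E_\nu$ via boundary regularity using the $L^\infty$ control on $E$ from Lemma \ref{CommutatorB}, and conclude by the linear boundedness of $f\mapsto\Phi$.

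One remark: the ``main technical obstacle'' you flag at the end is not actually an obstacle. By Lemma \ref{ContLocB}, for $\delta$ and $d$ small the contact sets of both $\ptil$ and $\widetilde{p+dq}$ are confined to $\mathcal{C}^\pm_{C(\delta+d)^{1/2}}$, hence well inside $\mathcal{C}^\pm_{\eta/2}$. Consequently $E$ satisfies $(\SphLap+\lambda_\ST)E=0$ in the fixed annular region $\Ceta^-\setminus\mathcal{C}^-_{\eta/2}$ (and in $(\Ceta^+\setminus\mathcal{C}^+_{\eta/2})\cap\{x_3>0\}$, with $E=0$ on the hyperplane portion since $q$ vanishes on the slit). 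This is exactly how the paper proceeds: standard interior-to-boundary elliptic regularity in these fixed annuli gives $\|E_\nu\|_{L^\infty(\partial\Ceta^\pm)}\le C\|E\|_{L^\infty}$ directly, with no need for a barrier tailored to avoid the obstacle structure.
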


\begin{proof}
Define $w_p=\ptil-p$ and $w_{p+dq}=\widetilde{p+dq}-(p+dq)$, then $w_{p+dq}-w_p$ vanishes along $\partial\Ceta^-$, and satisfies $(\SphLap+\lambda_\ST)(w_{p+dq}-w_p)=0$ in $\Ceta^-\backslash\mathcal{C}^-_{\eta/2}$.

Boundary regularity estimate gives 
$$(w_{p+dq}-w_p)_\nu\le C\|w_{p+dq}-w_p\|_{L^\infty(\Ceta^-\backslash\mathcal{C}^-_{\eta/2})}\text{ along }\partial\Ceta^-.$$

Similarly, with $w_{p+dq}-w_p$ vanishing along $\partial\Ceta^+\cup(\Ceta^+\backslash\mathcal{C}^+_{\eta/2})'$, and  $(\SphLap+\lambda_\ST)(w_{p+dq}-w_p)=0$ in $(\Ceta^+\backslash\mathcal{C}^+_{\eta/2})\cap\{x_3>0\},$ we have 
$$(w_{p+dq}-w_p)_\nu\le C\|w_{p+dq}-w_p\|_{L^\infty(\Ceta^+\backslash\mathcal{C}^+_{\eta/2})}\text{ along }\partial\Ceta^+.$$

The conclusion follows from \eqref{RHSisDer} and Lemma \ref{CommutatorB}.
\end{proof} 

The following lemma is the key estimate of this section:
\begin{lem}\label{NonOrthB}
Suppose that $p$ satisfies \eqref{ConditionB} with $\delta>0$ universally small. Then 
$$\|\varphi\|_{L^\infty(\Sph)}\ge c\kappa$$
for a universal $c>0$.
\end{lem}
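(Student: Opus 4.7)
I would argue by compactness and contradiction. Suppose the claim fails, so there is a sequence of profiles $(p_n)$ satisfying \eqref{ConditionB} with $\|\varphi_{p_n}\|_{L^\infty(\Sph)}/\kappa_{p_n} \to 0$. Normalize by $\hat\Phi_n := \Phi_{p_n}/\kappa_n$, $\hat H_n := H_{p_n}/\kappa_n$, $\hat\varphi_n := \varphi_{p_n}/\kappa_n$, and $\hat f_n^\pm := f_{p_n}^\pm/\kappa_n$, so that $\|\hat\Phi_n\|_{L^\infty(B_1)} = 1$ and $\hat\varphi_n \to 0$ in $L^\infty(\Sph)$. Since the log-correction in $\Phi_n$ vanishes on $\Sph$, $\|\hat H_n\|_{L^\infty(\Sph)} \le 1 + o(1)$. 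By compactness of the parameters and since the caps $\Ceta^\pm$ do not depend on $p$, extract $p_n \to p_\infty$ also satisfying \eqref{ConditionB}; standard elliptic theory for $(\SphLap+\lambda_\ST)$ away from the slit, combined with the uniform bound on $\hat H_n$, yields $\hat H_n \to \hat H_\infty$ uniformly on compact subsets of $\widehat\Sph$ and $\hat f_n^\pm \to \hat f_\infty^\pm$ uniformly on $\partial\Ceta^\pm$ (which are bounded away from $\{r=0\}$ by \eqref{GapFromAxis}).

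\textbf{Properties of the limit.} The function $\hat H_\infty$ satisfies
\[
(\SphLap+\lambda_\ST)\hat H_\infty = \hat f_\infty^+\, dH^1|_{\partial\Ceta^+} + \hat f_\infty^-\, dH^1|_{\partial\Ceta^-} \text{ on } \widehat\Sph, \quad \hat H_\infty = 0 \text{ on } \widetilde\Sph,
\]
together with: (i) $\hat H_\infty \perp \mathcal{H}_\ST$ in $L^2(\Sph)$, inherited from the Fredholm construction of each $H_{p_n}$; (ii) $\hat f_\infty^\pm \ge 0$, inherited from $f_{p_n}^\pm = (\ptil_n - p_n)_\nu \ge 0$ (since $\ptil_n \ge p_n$ with $\nu$ the inner normal to $\Ceta^\pm$); (iii) $(\hat f_\infty^+\, dH^1|_{\partial\Ceta^+} + \hat f_\infty^-\, dH^1|_{\partial\Ceta^-})\perp \mathcal{H}_\ST$, from $\hat\varphi_n \to 0$; and (iv) $\hat H_\infty \not\equiv 0$, since $\|\hat\Phi_\infty\|_{L^\infty(B_1)} = 1$.

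\textbf{The contradiction.} I would construct a test function $\psi\in \mathcal{H}_\ST$ that is non-negative on $\partial\Ceta^+$ and strictly positive on $\partial\Ceta^-$. Since $A^+$ lies on the slit, every $\psi\in\mathcal{H}_\ST$ vanishes there, and a first-order expansion gives $\psi \approx \beta(\psi)\,|x_3|$ near $A^+$, where $\beta$ is a linear functional on $\mathcal{H}_\ST$; since $A^-$ is off the slit, $\psi\mapsto \psi(A^-)$ is another linear functional. Computing on the basis pair $\{u_\ST, v_{5/2}\}$ one finds the determinant proportional to $(5/8)(5/2) - (3/8)(7/2) = 1/4 \ne 0$, so the two functionals are linearly independent, and we may pick $\psi$ with $\beta(\psi) > 0$ and $\psi(A^-) > 0$. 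For $\eta$ universally small, this $\psi$ satisfies $\psi\ge 0$ on $\partial\Ceta^+$ (with equality only at the two slit-crossings) and $\psi > 0$ on $\partial\Ceta^-$. Testing (iii) against $\psi$ gives
\[
\int_{\partial\Ceta^+}\hat f_\infty^+ \,\psi\, dH^1 + \int_{\partial\Ceta^-}\hat f_\infty^-\, \psi\, dH^1 = 0,
\]
and since both integrands are non-negative and $\psi$ is strictly positive almost everywhere on $\partial\Ceta^\pm$, we conclude $\hat f_\infty^\pm \equiv 0$. The equation for $\hat H_\infty$ then becomes homogeneous, placing $\hat H_\infty \in \mathcal{H}_\ST$, which combined with (i) forces $\hat H_\infty \equiv 0$, contradicting (iv). The principal obstacle is the explicit construction of $\psi$, especially the verification of the linear independence of the two functionals via the first-order expansions of basis elements in $\mathcal{H}_\ST$ near $A^\pm$; passing the sign condition $\hat f_n^\pm \ge 0$ to the limit also requires $C^0$-convergence of the normal traces, which follows from boundary regularity of the thin obstacle solutions $v_{p_n}^\pm$ on $\partial\Ceta^\pm$.
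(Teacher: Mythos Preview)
Your central construction --- a test function $\psi = c_1 \uS + c_2 \vF \in \mathcal{H}_{\ST}$ with $\ddt\psi(A^+) > 0$ and $\psi(A^-) > 0$ --- is exactly the paper's $q = \uS + \beta \vF$; your determinant check is equivalent to their inequality $\tfrac{7}{5}\sqrt{3/5} < \sqrt{5/3}$. The paper, however, argues \emph{directly}: boundary Harnack in the annuli $\Ceta^\pm \setminus \mathcal{C}^\pm_{\eta/4}$ shows that $f^\pm$ is pointwise comparable to a fixed profile $w_\nu^\pm$ times a scalar $c^\pm$, and testing against $q$ (which satisfies $q \ge c|x_3|$ in $\Ceta^+$ and $q \ge c$ in $\Ceta^-$) gives $\int_{\partial\Ceta^\pm} f^\pm q \ge c\,\|f^\pm\|_{L^\infty(\partial\Ceta^\pm)}$, hence $\|\varphi\| \ge c\|f^\pm\|_{L^\infty} \ge c'\kappa$ in one stroke.

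Your compactness route has a genuine gap at step (iv). To conclude $\hat H_\infty \not\equiv 0$ from $\|\hat H_n\|_{L^\infty(\Sph)} = 1$ you need convergence in $L^\infty(\Sph)$, but you only establish it on compact subsets of $\widehat{\Sph}$ away from $\partial\Ceta^\pm$. Your stated justification for uniform convergence of $\hat f_n^\pm$ --- boundary regularity of $v_{p_n}^\pm$ --- yields $\|f_n^\pm\|_{C^\alpha(\partial\Ceta^\pm)} \le C$, not $\|f_n^\pm\|_{L^\infty} \le C\kappa_n$, and it is the latter that boundedness of $\hat f_n^\pm$ requires. Without it, $\hat f_n^+$ may concentrate at the two points $P_1, P_2 \in \partial\Ceta^+ \cap \widetilde{\Sph}$: since the Green's function of the slit domain degenerates as its pole approaches $\widetilde{\Sph}$, a density of the form $n\chi_{[0,1/n]}$ on a curve meeting $\widetilde{\Sph}$ transversally at $P_1$ produces potentials with $\|\hat H_n\|_{L^\infty}$ bounded away from zero while $\hat H_n \to 0$ locally uniformly --- and your test function $\psi$ cannot exclude this because $\psi(P_1) = \psi(P_2) = 0$. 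Ruling out this scenario amounts to controlling the profile of $(\ptil_n - p_n)_\nu$ along $\partial\Ceta^\pm$ uniformly after normalization by $\kappa_n$, and the natural tool is precisely the boundary Harnack principle that the paper invokes. Once that estimate is available, one already has $\|f^\pm\|_{L^\infty} \le C\|\varphi\|$, which is the desired conclusion without any compactness argument.
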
 
Recall the definition of $\varphi$ and $\kappa$ from \eqref{phiB} and \eqref{KappaB} respectively.

\begin{proof}
Define $\Omega=(\Ceta^+\backslash\mathcal{C}^+_{\eta/4})\cap\{x_3>0\}$, and let  $w$ denote the solution to 
$$\begin{cases}
(\SphLap+\lambda_\ST)w=0 &\text{ in $\Omega$,}\\
w=1 &\text{ on $\partial\mathcal{C}^+_{\eta/4}\cap\{x_3>0\}$,}\\
w=0 &\text{ on $\partial\Omega\backslash\partial\mathcal{C}^+_{\eta/4}.$}
\end{cases}$$
For $\delta>0$ small, Lemma \ref{ContLocB} implies that $(\ptil-p)$ solves the same equation as $w$ in $\Omega$, and both vanish long $\partial\Omega\backslash\partial\mathcal{C}^+_{\eta/4}$. 

With \eqref{OnTopOfpB}, we can apply boundary Harnack principle  to get 
$$
c\cdot\frac{\ptil-p}{w}(A^++\frac{\eta}{2}e_3)\le\frac{\ptil-p}{w}(x)\le C\cdot \frac{\ptil-p}{w}(A^++\frac{\eta}{2}e_3)
$$ for any $x\in(\Ceta^+\backslash\mathcal{C}^+_{\eta/2})\cap\{x_3>0\}.$ Here we denoted by $(A^++\frac{\eta}{2}e_3)$ the point on $\Sph$ we get by moving from $A^+$ along the $x_3$-direction by distance $\eta/2$.

With \eqref{RHSisDer}, this implies 
$$
c\cdot \frac{\ptil-p}{w}(A^++\frac{\eta}{2}e_3)\le\frac{f^+}{w_\nu}\le C \cdot\frac{\ptil-p}{w}(A^++\frac{\eta}{2}e_3)\text{ along $\partial\Ceta^+$.}
$$

With a similar argument, we have $$
c\cdot \frac{\ptil-p}{v}(A^-+\frac{\eta}{2}e_3)\le\frac{f^-}{v_\nu}\le C\cdot \frac{\ptil-p}{v}(A^-+\frac{\eta}{2}e_3) \text{ along $\partial\Ceta^-$,}
$$where $v$ denotes the solution to 
$$\begin{cases}
(\SphLap+\lambda_\ST)v=0 &\text{ in $\Ceta^-\backslash\mathcal{C}^-_{\eta/4}$,}\\
v=1 &\text{ on $\partial\mathcal{C}^-_{\eta/4}$,}\\
v=0 &\text{ on $\partial\Ceta^-.$}
\end{cases}$$

Now for a constant $\beta>0$ to be chosen, let's define 
\begin{equation}\label{DefOfq}
q=\uS+\beta\vF\in\mathcal{H}_\ST.
\end{equation} A direct computation gives
$$
\ddt q(A^+)=a(\beta-\frac 75\sqrt{\frac 35}), \text{ and }q(A^-)=b(\sqrt{\frac 53}-\beta),
$$where $a$ and $b$ are positive constants.  With $\frac 75\sqrt{\frac 35}<\sqrt{\frac 53}$, we find $\beta$ such that 
\begin{equation}\label{GiveAName}
\Delta q\ge c>0 \text{ along $\Ceta^+\cap\Hpp$},\hem  q\ge c|x_3| \text{ in $\Ceta^+$, and } q\ge c \text{ in $\Ceta^-$}
\end{equation}
 for a universal $c>0$ if $\eta$ is small.

Consequently, 
\begin{align*}
\int_{\partial\Ceta^+}f^+qdH^1&\ge c\int_{\partial\Ceta^+}w_\nu |x_3|dH^1\cdot \frac{\ptil-p}{w}(A^++\frac{\eta}{2}e_3)\\
&\ge c\frac{\ptil-p}{w}(A^++\frac{\eta}{2}e_3)\\
&\ge c\|f^+\|_{L^\infty(\partial\Ceta^+)}.
\end{align*}
Similarly, 
\begin{equation}\label{410}
\int_{\partial\Ceta^-}f^-qdH^1\ge  c\frac{\ptil-p}{w}(A^-+\frac{\eta}{2}e_3)
\ge c\|f^-\|_{L^\infty(\partial\Ceta^-)}.
\end{equation}

Note that $q\in\mathcal{H}_\ST$, we have $\|\varphi\|_{L^\infty(\Sph)}\ge c(\|f^+\|_{L^\infty(\partial\Ceta^+)}+\|f^-\|_{L^\infty(\partial\Ceta^-)})$, which gives the desired estimate. 
\end{proof} 

We also have
\begin{lem}\label{4.4}
Under the same assumptions as in Lemma \ref{NonOrthB}, we have
$$
|\ptil-p|\le C\kappa \text{ in $\Ceta^-$}
$$for a universal constant $C$.
\end{lem}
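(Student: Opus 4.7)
The plan is to reduce the proof to two estimates: (i) $\|f^-\|_{L^\infty(\partial\Ceta^-)} \le C\kappa$, and (ii) $\|\ptil-p\|_{L^\infty(\Ceta^-)} \le C\|f^-\|_{L^\infty(\partial\Ceta^-)}$, whose composition yields the lemma.

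For (i), the argument in the proof of Lemma \ref{NonOrthB} culminating in \eqref{410} already provides the lower bound $\|\varphi_p\|_{L^\infty(\Sph)} \ge c\|f^-\|_{L^\infty(\partial\Ceta^-)}$ for a universal $c>0$, since the test function $q \in \mathcal{H}_\ST$ constructed there is bounded and $\int_{\Sph} q\varphi_p = \int_{\partial\Ceta^+} qf^+ + \int_{\partial\Ceta^-}qf^-$. To close (i), I would show $\|\varphi_p\|_{L^\infty(\Sph)} \le C\kappa$ via the two-scale identity
\begin{equation*}
\tfrac{1}{8}\varphi_p(\omega) = \Phi_p(\omega) - e^{\ST}\Phi_p(\omega/e), \qquad \omega \in \Sph,
\end{equation*}
obtained by evaluating $\Phi_p(r\omega) = r^{\ST}H_p(\omega) + \tfrac{1}{8}r^{\ST}\log(r)\varphi_p(\omega)$ from \eqref{PhiB} at $r=1$ and $r=e^{-1}$; this immediately yields $\|\varphi_p\|_{L^\infty(\Sph)} \le 8(1+e^{\ST})\kappa$.

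For (ii), set $w:=\ptil-p$. By \eqref{OnTopOfpB} and the definition of $\ptil$, $w\ge 0$ in $\Ceta^-$ with $w\equiv 0$ on $\partial\Ceta^-$, and by \eqref{SphTOPB} together with the harmonicity of $p$ in $\widehat{\R^3}$, $w$ is harmonic in $\Ceta^-\cap\{x_3\ne 0\}$ and $(\SphLap+\lambda_\ST)w\le 0$ in $\Ceta^-$ as a non-positive measure supported on the contact set $\{\ptil=0\}\cap\Hpp$. I would apply the boundary Harnack principle to the pair $(w,v)$---with $v$ the barrier from the proof of Lemma \ref{NonOrthB}---in the annular region $\Ceta^-\setminus\mathcal{C}^-_{\eta/4}$, and use boundary regularity to identify $\frac{w}{v}(A^-+\tfrac{\eta}{2}e_3)\asymp \frac{f^-}{v_\nu}$ along $\partial\Ceta^-$, concluding $w\le C\|f^-\|_{L^\infty(\partial\Ceta^-)}$ on $\Ceta^-\setminus\mathcal{C}^-_{\eta/2}$. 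The bound is then extended into the inner cap $\mathcal{C}^-_{\eta/2}$ by the maximum principle applied to $\ptil$: since $\ptil$ is harmonic in $\mathcal{C}^-_{\eta/2}\cap\{x_3>0\}$ and $\partial_3\ptil=0$ on the non-contact portion of $\Hpp$ by complementarity, the supremum of $\ptil$ on this half-ball is attained either on $\partial\mathcal{C}^-_{\eta/2}$ (controlled by the previous step) or on the contact set, which by Lemma \ref{ContLocB} is confined to $\mathcal{C}^-_{C\sqrt{\delta}}$; on this small set, the quadratic vanishing of $p_{dc}$ at $A^-$ combined with $|p-p_{dc}|\le C\delta$ makes $|p|$ of the same scale as the perturbation produced by the thin obstacle, and an integration-by-parts estimate of the mass of $-(\SphLap+\lambda_\ST)w$ against $\int_{\partial\Ceta^-}f^-$ (using the $C^{1,1/2}$-regularity of $\ptil$) yields $\sup_{\text{contact}}(-p)\le C\|f^-\|_{L^\infty}$.

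The hard part is exactly this final extension into the inner cap $\mathcal{C}^-_{\eta/2}$, where the degeneracy of $p$ near $A^-$ prevents a direct comparison with the barrier $v$; here one must combine boundary Harnack with the fine structure of the thin obstacle solution near the degenerate point $A^-$, the contact-set localization from Lemma \ref{ContLocB}, and the quadratic vanishing of $p_{dc}$, to close the gap with a universal constant.
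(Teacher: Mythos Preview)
Your part (i) is correct, and the two-scale identity for extracting $\varphi_p$ from $\Phi_p$ is a clean observation. Part (ii) also works in the outer annulus $\Ceta^-\setminus\mathcal{C}^-_{\eta/2}$ via boundary Harnack, exactly as in the proof of Lemma \ref{NonOrthB}.

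The genuine gap is the extension into the inner cap. Your argument there conflates bounding $\ptil$ with bounding $w=\ptil-p$: the maximum principle for $\ptil$ in the half-cap gives no control on $w$, because $p$ varies at unit scale across $\mathcal{C}^-_{\eta/2}$. Your fallback---using the quadratic vanishing of $p_{dc}$ at the \emph{fixed} point $A^-$ together with $|p-p_{dc}|\le C\delta$---only yields $\sup_{\text{contact}}(-p)\le C\delta$, and $\delta$ is a fixed small parameter unrelated to $\kappa$ (which can be arbitrarily small when $p$ nearly solves the thin obstacle problem). The integration-by-parts bound on the total mass $\int|g^-|\le C\|f^-\|_{L^\infty}$ is correct, but since the Green's function in two dimensions has a logarithmic singularity, small total mass does not bound the pointwise value of $w$ at its maximum.

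The paper's proof supplies exactly the missing idea. Let $\eps=\max_{\Ceta^-}w$ be attained at $x_0$. Since the maximum must occur where $(\SphLap+\lambda_\ST)w<0$, the point $x_0$ lies on the contact set $\{\ptil=0\}'$; hence not only $p(x_0)=-\eps$ but also the \emph{tangential} derivative $\partial_{x_1}p(x_0)=0$ (because $\partial_{x_1}\ptil$ vanishes along the contact set and $\partial_{x_1}w(x_0)=0$ at an interior maximum). This second-order vanishing of $p$ at the \emph{variable} point $x_0$---not at $A^-$---gives $p\le-\tfrac78\eps$ on a disk of radius $c\sqrt{\eps}$ around $x_0$, hence $w\ge\tfrac78\eps$ there on $\Hpp$, and by super-harmonicity $w\ge\tfrac12\eps$ on the full disk. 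A Green's function comparison then propagates this to $w(A^-+\tfrac{\eta}{2}e_3)\ge c\eps/|\log\eps|$, which via \eqref{410} yields $\kappa\ge c\eps$. The crucial point is that the scale $\sqrt{\eps}$ is dictated by the solution itself, not by the fixed parameter $\delta$.
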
 

\begin{proof}
In this proof, define $w=\ptil-p$. By \eqref{OnTopOfpB}, it suffices to get an upper bound for $w$ in $\Ceta^-.$

Suppose $\eps:=\max_{\Ceta^-} w>0$, and that $x_0$ is a point where this maximum is achieved, then  $x_0\in\{(\SphLap+\lambda_\ST)w<0\}$. As a result, 
$
x_0\in\{\ptil=0\}'.
$
This implies that $p(x_0)=-\eps$,  and $\frac{\partial}{\partial x_1} w(x_0)=\frac{\partial}{\partial x_1}\ptil(x_0)=0$. Thus $\frac{\partial}{\partial x_1}p(x_0)=0$ and we have 
$$p\le -\frac 78\eps \text{ in }(B_{c\sqrt{\eps}}(x_0))'.$$

Since $\ptil\ge0$ along $\Hpp$, this implies $w\ge\frac 78\eps$ in $(B_{c\sqrt{\eps}}(x_0))'.$ With the super-harmonicity of $w$ in $\Ceta^-$, we have 
$$
w\ge\frac12\eps \text{ in $B_{c\sqrt{\eps}}(x_0).$}
$$
Comparing with the Green's function for $\Ceta^-$ with a pole at $x_0$, we see that 
$$(\ptil-p)(A^-+\frac{\eta}{2}e_3)\ge c\eps/|\log\eps|.$$
With \eqref{410} from the proof of Lemma \ref{NonOrthB}, this implies
$\kappa\ge c\eps,$ 
the desired estimate.
\end{proof} 

We also have the following control on the Weiss energy of the replacement $\ptil$:
\begin{lem}\label{WeissControlReplacementB}
Suppose that $p$ satisfies \eqref{ConditionB},  then 
$$W_\ST(\ptil;1)\le C\kappa^2$$
for a universal constant $C$.
\end{lem}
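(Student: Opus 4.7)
Because $\ptil$ is $\ST$-homogeneous, the Weiss energy reduces to a spherical quadratic form,
\[
W_\ST(\ptil;1)\;=\;\tfrac18\int_\Sph\bigl(|\SphGrad\ptil|^2-\lambda_\ST\,\ptil^2\bigr)\;=:\;\tfrac18\,E_\Sph[\ptil].
\]
Since $p\in\mathcal H_\ST$ solves $(\SphLap+\lambda_\ST)p=0$ on $\widehat\Sph$ with $p=0$ on $\widetilde\Sph$, integration by parts on the slit domain gives $E_\Sph[p]=0$. Writing $w:=\ptil-p$, supported in $\Ceta^+\cup\Ceta^-$ with $w=0$ on $\partial\Ceta^\pm$, I expand
\[
W_\ST(\ptil;1)\;=\;\tfrac18\bigl(E_\Sph[w]+2B_\Sph[p,w]\bigr),
\]
where $B_\Sph$ is the bilinear form associated with $E_\Sph$.

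A second integration by parts on each cap — using again $(\SphLap+\lambda_\ST)p=0$ on $\widehat\Sph$ together with the equation \eqref{EqnForReplacementB} for $\ptil$ — causes the bulk terms to cancel. What survives consists of two interface integrals: one against the obstacle density $g_p^\pm$ on the contact set $\{\ptil=0\}\cap\Hpp\cap\Ceta^\pm$, and one on the slit arc $\widetilde\Sph\cap\Ceta^+$ (which is nonempty because $A^+$ lies on the slit) against the spherical $\Delta p$-density. Each of the resulting integrands is a product of two quantities of size $\kappa$: Lemma \ref{4.4} and its $\Ceta^+$-counterpart give $|\ptil-p|\le C\kappa$ throughout both caps, whence $|p|\le C\kappa$ on $\{\ptil=0\}$ (since $p=-w$ there) and $|\ptil|\le C\kappa$ on $\widetilde\Sph\cap\Ceta^+$ (since $p=0$ on the slit, so $\ptil=w$). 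Complementary Green's-function barriers of the type constructed in the proof of Lemma \ref{NonOrthB} control the forcing measures: $\int|g_p^\pm|\,dH^1\le C\kappa$, and the spherical density of $\Delta p$ along $\widetilde\Sph\cap\Ceta^+$ is bounded by $C\kappa$ as well. Multiplying these bounds gives $W_\ST(\ptil;1)\le C\kappa^2$.

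\textbf{Main obstacle.} The delicate step is producing the second factor of $\kappa$ for the slit integral on $\widetilde\Sph\cap\Ceta^+$. The naive estimate controls the spherical $\Delta p$-density only by $\delta:=\|p-p_{dc}\|$, which can exceed $\kappa$ when the perturbation preserves admissibility. One must therefore isolate the ``admissibility-violating'' component of the perturbation — exactly what $\kappa=\|\Phi_p\|_{L^\infty(B_1)}$ measures through the Fredholm/Green's structure defining $\Phi_p$ — and verify that it is precisely this component that enters $\partial_{x_3^+}p$ near $A^+$. This linkage, together with the boundary-regularity argument behind Lemma \ref{4.4} adapted to the slit geometry of $\Ceta^+$, is the technical heart of the bound.
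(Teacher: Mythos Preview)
Your treatment of $\Ceta^-$ is essentially the paper's: bound $\int|g^-|\,dH^1\le C\kappa$ via the test function $q$ from \eqref{DefOfq}, bound $|\ptil-p|\le C\kappa$ in $\Ceta^-$ via Lemma~\ref{4.4}, and multiply. The gap is entirely in your handling of $\Ceta^+$.

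The ``main obstacle'' you flag is real, and your proposed resolution does not close it. The density $\mu_p=2\partial_{x_3^+}p$ on the slit arc $\widetilde\Sph\cap\Ceta^+$ depends \emph{linearly} on the coefficients of $p$, whereas $\kappa$ measures the nonlinear obstruction to admissibility; a perturbation of $p_{dc}$ tangent to $\partial E_1$ and pointing into the interior of $E_2$ changes $\mu_p$ near $A^+$ by order $\delta$ while keeping $\kappa$ much smaller. So the inequality $|\mu_p|\le C\kappa$ is not available in general, and the pairing $\int\ptil\,\mu_p$ cannot be bounded by $C\kappa^2$ through a product of two $O(\kappa)$ factors. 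Your appeal to a ``$\Ceta^+$-counterpart'' of Lemma~\ref{4.4} is also unproven: that lemma is stated and proved only for $\Ceta^-$, and its argument uses that the maximum of $\ptil-p$ occurs where $p<0$, a mechanism unavailable on the slit where $p\equiv 0$.

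What you are missing is that no estimate is needed in $\Ceta^+$ at all. The point is geometric: $A^+$ lies on the slit, so for $\eta$ small the entire set $(\Ceta^+)'=\Ceta^+\cap\{x_3=0\}$ is contained in $\widetilde\Sph$, hence $p\equiv 0$ there. Thus $p$ itself is admissible in the minimization defining $\ptil$ on $\Ceta^+$ (Definition~\ref{ReplacementB}), and the variational characterization gives
\[
\int_{\Ceta^+}\bigl(|\SphGrad\ptil|^2-\lambda_\ST\ptil^2\bigr)\;\le\;\int_{\Ceta^+}\bigl(|\SphGrad p|^2-\lambda_\ST p^2\bigr)
\]
directly. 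The $\Ceta^+$ contribution to $W_\ST(\ptil;1)-W_\ST(p;1)$ is therefore nonpositive, and only the $\Ceta^-$ contribution---which you already control by $C\kappa^2$---survives. This one-line observation replaces the entire slit-integral analysis you attempt.
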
 
Recall the definition of the Weiss energy from \eqref{Weiss}.

\begin{proof}
We first control the total mass of $g^-$ from \eqref{EqnForReplacementB}.

Take the auxiliary function $q$ from \eqref{DefOfq}, we have
\begin{align*}
\int_{\Sph}\ptil\cdot (\SphLap+\lambda_\ST)q&=\int_{\Sph}(\SphLap+\lambda_\ST)\ptil\cdot q\\
&=\int_{\partial\Ceta^\pm} f^\pm q+\int_{\Ceta^\pm\cap\Hpp} g^\pm q\\
&=\int_{\partial\Ceta^\pm} f^\pm q+\int_{\Ceta^-\cap\Hpp}g^-q.
\end{align*}
For the last equality, we used that $q=0$ along $(\Ceta^+)'$, which contains $\Spt(g^+).$

Now we note that $(\SphLap+\lambda_\ST)q$ is supported in $\widetilde{\Sph}$. On this set, $\ptil\ge 0$ is supported in $(\Ceta^+)'$. Recall from \eqref{GiveAName},  we have $(\SphLap+\lambda_\ST)q\ge 0$ in $(\Ceta^+)'$. Thus 
$\ptil\cdot (\SphLap+\lambda_\ST)q\ge 0$ and we conclude
$$-\int_{\Ceta^-\cap\Hpp}g^-q\le \int_{\partial\Ceta^+} f^+ q+\int_{\partial\Ceta^-} f^-q\le C\kappa.$$
With $q\ge c$ in $\Ceta^-$ and $g^-\le 0$, we conclude
\begin{equation}\label{413}
\int_{\Sph}|g^-|\le C\kappa.
\end{equation}

Using \eqref{NoWForp} and the homogeneity of $\ptil$, we have
\begin{align*}
W_\ST(\ptil;1)=&C[\int_{\Sph}(|\SphGrad\ptil|^2-\lambda_\ST\ptil^2)-\int_{\Sph}(|\SphGrad p|^2-\lambda_\ST p^2)]\\
=&C[\int_{\Ceta^+}(|\SphGrad\ptil|^2-\lambda_\ST\ptil^2)-\int_{\Ceta^+}(|\SphGrad p|^2-\lambda_\ST p^2)]\\
&+C[\int_{\Ceta^-}(|\SphGrad\ptil|^2-\lambda_\ST\ptil^2)-\int_{\Ceta^-}(|\SphGrad p|^2-\lambda_\ST p^2)],
\end{align*} since $\Spt(\ptil-p)\subset\Ceta^\pm.$

With $p=0$ along $(\Ceta^+)'$, the profile $p$ is admissible in the minimization problem defining $\ptil$ in $\Ceta^+$. See Definition \ref{ReplacementB}. 
Using the harmonicity of $p$ in $\Ceta^-$, we continue the previous estimate
\begin{align*}
W_\ST(\ptil;1)\le &C[\int_{\Ceta^-}(|\SphGrad\ptil|^2-\lambda_\ST\ptil^2)-\int_{\Ceta^-}(|\SphGrad p|^2-\lambda_\ST p^2)]\\
=&-C\int_{\Ceta^-}(\ptil-p)(\SphLap+\lambda_\ST)\ptil\\
=&-C\int_{\Ceta^-}(\ptil-p)g^-.
\end{align*} 
Combining this with \eqref{413} and Lemma \ref{4.4}, we have the desired estimate. 
\end{proof}

\subsection{Well-approximated solutions near $p_{dc}$}\label{SubsectionWellApproxSolB}
For $p$ satisfying \eqref{ConditionB}, we define the space of well-approximated solutions in a similar manner as in Subsection \ref{SubsecWellApproxSolnA}. The main difference is that now the solutions are approximated by the replacement $\ptil$ as in Definition \ref{ReplacementB}.

\begin{defi}\label{WellApproxSolB}
Suppose that  the coefficients of $p$ satisfy \eqref{ConditionB}.

For $d,\rho\in(0,1]$, we say that $u$ is a \textit{solution $d$-approximated by $p$ at scale $\rho$} if $u$ solves the thin obstacle problem \eqref{IntroTOP} in $B_\rho$, and
$$|u-\ptil|\le d\rho^\ST \text{ in $B_\rho$.}$$ 
In this case, we write 
$$u\in\mathcal{S}(p,d,\rho).$$
\end{defi} 

Similar to Lemma \ref{ContLocA}, we can localize the contact set of a well-approximated solution if \text{we assume $a_3=0$ in the expansion of $p$}. See Remark \ref{BadNearR0}.

\begin{lem}\label{ContLocBB}
Suppose that $u\in\mathcal{S}(p,d,1)$ with $d$ small, and that $p$ satisfies \eqref{ConditionB} with $a_3=0$. 

We have
$$
\Delta u=0 \text{ in } \widehat{B_1}\cap\{r>C(d+\delta)^{\frac{2}{7}}\}\cap\{(x_1,x_2,0):|x_2+\sqrt{\frac{5}{3}}x_1|> C(d+\delta)^{\frac 12}, x_1\le 0\},
$$
and 
$$ 
u=0 \text{ in } \widetilde{B_{\frac 78}}\cap\{r>C(d+\delta)^{\frac{2}{15}}\}\cap\{(x_1,x_2,0):|x_2+\sqrt{\frac35}x_1|> C(d+\delta)^{\frac12}, x_1\ge 0\}.
$$
\end{lem}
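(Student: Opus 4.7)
The argument mirrors the two-step strategy of Lemma~\ref{ContLocA}: a pointwise lower bound on $p$ precludes coincidence in the region of statement (1), and a quadratic barrier driven by a pointwise lower bound on $-\partial_{x_3} p$ forces coincidence in the region of statement (2). The new feature is that $p$ degenerates along the rays $R_p^\pm$ in addition to $\{r = 0\}$, and the separation hypotheses are designed to stay away from these rays. One first checks that the regions in the statement lie outside the cones generated by $\Ceta^\pm$, so that $\ptil \equiv p$ there and hence $|u - p| \leq d$; this holds provided $\eta$ is universally small and $d+\delta$ is small, since $\Ceta^+$ (resp.\ $\Ceta^-$) projects onto an angular neighborhood of $R_p^+$ (resp.\ $R_p^-$) which is contained in the strip being excluded.

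For (1), restricting to $\Hpp \cap \{x_2 > 0\}$ and using $a_3 = 0$, one obtains the factorization
\[
p(x_1, x_2, 0) \;=\; x_2^{3/2}\, Q(x_1, x_2), \qquad Q \;=\; \bigl(a_0 - \tfrac{a_2}{5}\bigr)x_2^2 + a_1 x_1 x_2 + a_2 x_1^2.
\]
A direct computation at $p_{dc}$ gives $Q_{dc} = \tfrac{5}{4}\bigl(x_1 + \sqrt{3/5}\,x_2\bigr)^2$, a perfect square vanishing exactly along $R^-_{p_{dc}}$. For $p$ satisfying \eqref{ConditionB}, $Q - Q_{dc}$ has $O(\delta)$ coefficients, and the identity $|x_2 + \sqrt{5/3}\,x_1| = \sqrt{5/3}\,|x_1 + \sqrt{3/5}\,x_2|$ together with the separation hypothesis yield $Q \geq c(d+\delta)$ once $C$ is large. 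Combining with $r = x_2 > C(d+\delta)^{2/7}$ gives $p \geq c(d+\delta)^{10/7} > d$ for $d$ small, so $u \geq p - d > 0$ on $\Hpp$, and the stated harmonicity of $u$ follows.

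For (2), a parallel computation on $\Hpp \cap \{x_2 < 0\}$ yields
\[
\partial_{x_3} p\bigl|_{x_3 = 0^+} \;=\; -\tfrac{1}{2}\,|x_2|^{1/2}\, \widetilde Q(x_1, |x_2|),
\]
with $\widetilde Q_{dc} = \tfrac{5}{4}\bigl(\sqrt{5}\,|x_2| - \sqrt{3}\,x_1\bigr)^2$, another perfect square vanishing along $R^+_{p_{dc}}$. The same perturbation-plus-separation argument gives $-\partial_{x_3} p \geq c(d+\delta)\,|x_2|^{1/2}$, and with $|x_2| > C(d+\delta)^{2/15}$ this becomes $-\partial_{x_3} p \geq c(d+\delta)^{16/15}$ in a neighborhood of $x_0$. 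Running the quadratic barrier argument from the proof of Lemma~\ref{ContLocA}, with $d^{1/3}$ replaced by $(d+\delta)^{16/15}$ and the box size rescaled accordingly, forces $u(x_0) \leq 0$, hence $u(x_0) = 0$.

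The main obstacle is bookkeeping the interplay of the three scales $d$, $\delta$, and the separation $C(d+\delta)^{1/2}$: one must check that the quadratic gain from the separation dominates the $O(\delta)$ perturbation error in $Q$ (and $\widetilde Q$) uniformly over $p$ satisfying \eqref{ConditionB}, and that the resulting product with $r^{3/2}$ (resp.\ $|x_2|^{1/2}$) beats $d$. The perfect-square factorizations are available only because $a_3 = 0$; a $v_{1/2}$ term would contribute a piece with the wrong $x_2$-scaling and destroy the quantitative lower bound near the degenerate rays, so the hypothesis $a_3 = 0$ is essential to the argument.
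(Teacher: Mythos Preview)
Your approach mirrors the paper's (reduce to $p_{dc}$, use that $Q_{dc}$ and $\widetilde Q_{dc}$ are perfect squares, then run the barrier from Lemma~\ref{ContLocA}), and the factorizations $Q_{dc}=\tfrac34(x_2+\sqrt{5/3}\,x_1)^2$ and $\widetilde Q_{dc}=\tfrac54(\sqrt5\,|x_2|-\sqrt3\,x_1)^2$ are correct. The paper's own proof is a two-line reduction to $p_{dc}$ and an appeal to Lemma~\ref{ContLocA}; yours is the spelled-out version.

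There is, however, a genuine quantitative gap in both halves of your argument. In part~(1) you conclude $p\ge c(d+\delta)^{10/7}$ and assert this exceeds $d$ ``for $d$ small''. But $10/7>1$, so $(d+\delta)^{10/7}<d+\delta$; when $\delta\lesssim d$ this bound is \emph{smaller} than $d$, not larger, and $u\ge p-d>0$ does not follow. Concretely, take $\delta=0$ (so $p=a_0p_{dc}$), $r=2C d^{2/7}$, and $|x_2+\sqrt{5/3}\,x_1|=2Cd^{1/2}$: then $p_{dc}\sim d^{10/7}\ll d$. The same failure occurs in part~(2): with normal derivative bound $N=c(d+\delta)^{16/15}$, the quadratic barrier requires $N\cdot(\text{box size})\gtrsim d$, forcing box size $\gtrsim d/N\sim (d+\delta)^{-1/15}\gg1$, so the barrier cannot fit in $B_1$. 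Your phrase ``box size rescaled accordingly'' hides an impossible scaling.

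The issue is localized to the ``corner'' where both $r$ and the distance to $R^\pm$ sit at their minimum thresholds simultaneously; away from this corner one of the two lower bounds improves (either $Q_{dc}\gtrsim r^2$ when the angular separation from $R^\pm$ is of order one, giving $p_{dc}\gtrsim r^{7/2}$, or $r$ is of order one and $p_{dc}\gtrsim d+\delta$). This suggests the stated exponents $2/7$ and $1/2$ are not jointly sharp --- and indeed the downstream use of the lemma (the compactness step in Lemma~\ref{DichotomyB}) only needs that the localization region exhausts the slit as $d+\delta\to0$, for which any exponents would do. So the gap is real but inessential; to close it honestly you should either split into the two regimes above, or weaken one of the exponents (e.g.\ replace $(d+\delta)^{1/2}$ by $(d+\delta)^{2/7}$ in the separation condition, which makes the combined bound $(d+\delta)^{3/7+4/7}=d+\delta$).
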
 
Recall that $\delta>0$ is the small parameter from \eqref{ConditionB}, and that $\{(x_1,x_2,0):|x_2+\sqrt{\frac35}x_1|=0, x_1\ge 0\}$
 and $\{(x_1,x_2,0):|x_2+\sqrt{\frac{5}{3}}x_1|=0, x_1\le 0\}$  are  the two rays of degeneracy $R^\pm$ from \eqref{RaysOfDeg} for $p_{dc}$.

\begin{proof}
Under our assumptions, we have, inside $B_1$,
$$u\ge\ptil-d\ge p-d\ge Cp_{dc}-C\delta-d, \text{ and } 
u\le\ptil+d\le Cp_{dc}+C\delta+d.$$
The conclusion follows from the same argument as in the proof for Lemma \ref{ContLocA}.
\end{proof} 

The following is similar to Lemma \ref{OneToInftyA} and Lemma \ref{WeissControlA}. It is the main reason why it is preferable to work with $\ptil$ instead of directly with $p$.
\begin{lem}\label{OneToInftyB}
Suppose that $p$ satisfies \eqref{ConditionB} with $a_3=0$. Let $u$ be a solution to \eqref{IntroTOP} in $B_1$, then 
$$
\|u-\ptil\|_{L^\infty(B_{1/2})}+\|u-\ptil\|_{H^1(B_{1/2})}\le C(\|u-\ptil\|_{L^1(B_1)}+\kappa)
$$
and
$$
W_\ST(u;\frac 12)\le C(\|u-\ptil\|^2_{L^1(B_1)}+\kappa^2).
$$
\end{lem}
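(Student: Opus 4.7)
\medskip
\noindent\textbf{Proof proposal.}
The idea is that $\ptil$ fails to solve the thin obstacle problem only by the boundary‐layer term $f_p^{\pm}dH^1|_{\partial\Ceta^{\pm}}$, and this defect is exactly captured by $\Phi_p$. So the natural pivot is $\hat p:=\ptil-\Phi_p$, which in $\widehat{\R^3}$ satisfies $\Delta\hat p=g_p^{\pm}dH^2|_{\Ceta^{\pm}\cap\HPP}+(\text{slit measure on }\widetilde{\R^3}\setminus\Ceta^+)$, where every singular piece is non‐positive and supported on $\HPP$. Together with $\hat p\ge-\kappa$ on $\HPP$ (since $\ptil\ge 0$ and $|\Phi_p|\le\kappa$), this makes $\hat p$ a genuine thin obstacle subsolution up to an additive $\kappa$, which is exactly the setting where a maximum‐principle / Caccioppoli argument as in Lemma~\ref{OneToInftyA} goes through.

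\medskip
\noindent\emph{Step 1 ($L^\infty$ bound).} Set $M:=\|u-\ptil\|_{L^1(B_1)}$ and compare $u$ with the barriers $\hat p\pm \kappa\pm C(M+\kappa)$. The admissibility on $\HPP$ (in the thin‐obstacle sense) of $\hat p+\kappa$ as an obstacle supersolution and $\hat p-\kappa$ as a subsolution follows from the sign of $g_p^{\pm}$ and the slit contribution; in $\widehat{B_1}$, both $u$ and $\hat p$ are harmonic, so the standard $L^1\to L^\infty$ localization for the thin obstacle problem (the same one invoked in Lemma~\ref{OneToInftyA}) yields
\[
\|u-\hat p\|_{L^\infty(B_{1/2})}\le C\bigl(\|u-\hat p\|_{L^1(B_1)}+\kappa\bigr).
\]
The triangle inequality with $\|\Phi_p\|_{L^\infty(B_1)}=\kappa$ and $\|\Phi_p\|_{L^1(B_1)}\le C\kappa$ gives the desired $L^\infty$ bound on $u-\ptil$.

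\medskip
\noindent\emph{Step 2 ($H^1$ bound).} Apply Caccioppoli to $v:=u-\hat p$ with a cutoff $\zeta$ supported in $B_1$ equal to $1$ on $B_{1/2}$. Testing the PDE for $v$ against $\zeta^2 v$ and integrating by parts yields the standard Dirichlet energy bound, except for contact terms $\int v\,d(\Delta u-\Delta\hat p)$. On $\widetilde{B_1}\setminus\Ceta^+$ we have $v=-\Phi_p$, controllable by $\kappa$; on $\Ceta^{\pm}\cap\HPP$ the measures $g_p^{\pm}$ and the singular part of $\Delta u$ are both non‐positive, and on $\{u=0\}\cup\{\ptil=0\}$ they are accompanied by $v$‐values of size $O(\kappa)$. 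Hence these extra terms contribute at most $C(M+\kappa)^2$, and the Caccioppoli inequality combined with Step~1 produces
\[
\|u-\ptil\|_{H^1(B_{1/2})}\le C\bigl(\|u-\ptil\|_{L^1(B_1)}+\kappa\bigr).
\]

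\medskip
\noindent\emph{Step 3 (Weiss energy).} Expand $W_\ST(u;\tfrac12)$ using $u=\ptil+v$. Since $\ptil$ is $\ST$-homogeneous, $W_\ST(\ptil;\tfrac12)=W_\ST(\ptil;1)\le C\kappa^2$ by Lemma~\ref{WeissControlReplacementB}. The cross and purely quadratic terms reduce, via Green's identity and the identity $\partial_\nu\ptil=7\ptil$ on $\partial B_{1/2}$, to
\[
\int_{B_{1/2}}|\nabla v|^2 \;\text{ and }\; \int_{B_{1/2}} v\,\Delta\ptil,
\]
plus symmetric terms. The first is controlled by Step~2; the second is bounded by $\|v\|_{L^\infty(B_{1/2})}$ times the total mass of the singular parts of $\Delta\ptil$, which by Lemma~\ref{NonOrthB} and the construction of $\varphi_p,g_p^{\pm}$ is at most $C\kappa$. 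Using Step~1 this term is $\le C(M+\kappa)\kappa$. Collecting everything gives $W_\ST(u;\tfrac12)\le C(M^2+\kappa^2)$.

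\medskip
\noindent\textbf{Main obstacle.} The technical heart is Step~1: $\hat p$ is not a classical thin obstacle solution, so we cannot quote Lemma~\ref{OneToInftyA} verbatim. One must verify that the $\kappa$-perturbed profiles $\hat p\pm\kappa$ are legitimate barriers respecting both the obstacle constraint on $\HPP$ and the correct sign of the singular part of the Laplacian, simultaneously on $\Ceta^{\pm}\cap\HPP$ (where the defect is $g^{\pm}$) and on $\widetilde{\R^3}\setminus\Ceta^+$ (where it is the standard slit measure of $p$). Once this compatibility is established, the maximum principle and Caccioppoli estimate apply as in Lemma~\ref{OneToInftyA}, and the rest is bookkeeping.
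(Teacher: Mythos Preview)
Your route is workable but unnecessarily indirect, and it misses the key observation that makes the paper's proof almost immediate. The paper does not introduce $\hat p=\ptil-\Phi_p$ at all. Instead it notes (via Remark~\ref{BadNearR0} and the hypothesis $a_3=0$) that $\ptil$ itself already satisfies \emph{both} thin obstacle constraints $\ptil|_{\Hpp}\ge 0$ and $\Delta\ptil|_{\Hpp}\le 0$ \emph{globally}, the sole defect being the gluing term $f^\pm dH^2|_{\partial\Ceta^\pm}$. Hence in $\{u>\ptil\}$ one has $\Delta u=0$ (since $u>\ptil\ge 0$ there) while $\Delta\ptil\le f^\pm$, so $\Delta(u-\ptil)\ge -f^\pm\ge -C\kappa|x|^{5/2}dH^2|_{\partial\Ceta^\pm}$; the $L^\infty$ bound follows from the standard sub-mean-value argument, and the $H^1$ and Weiss estimates then reduce verbatim to Lemmas~\ref{OneToInftyA} and~\ref{WeissControlA} together with Lemma~\ref{WeissControlReplacementB}. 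Your ``Main obstacle'' simply does not arise in this approach, because $\ptil$ needs no $\kappa$-shift to serve as a barrier.

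Your detour trades the $f^\pm$ defect for a $\kappa$-perturbation of the obstacle constraint, which is a reasonable alternative but buys nothing and costs extra bookkeeping. There is also a concrete slip in your Step~2: on $\widetilde{B_1}\setminus\Ceta^+$ both $\ptil$ and $\Phi_p$ vanish (the latter by construction), so $v=u-\hat p=u$ there, not $-\Phi_p$; the contact-term analysis must instead use $u\ge 0$ together with the sign of the slit measure of $\hat p$. This is fixable, but it illustrates why the paper's direct use of $\ptil$ is cleaner: once you recognize that $a_3=0$ makes $\ptil$ an honest thin obstacle super/subsolution up to $f^\pm$, the entire lemma collapses to the situation of Lemma~\ref{OneToInftyA} plus a $\kappa$-sized forcing term.
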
 

\begin{proof}
With Remark \ref{BadNearR0} and $a_3=0$, $\ptil$ satisfies the constraints $\ptil|_{\Hpp}\ge0$ and $\Delta\ptil_{\Hpp}\le0$ outside $\Ceta^\pm.$ The same constraints are satisfied inside $\Ceta^\pm$ by Definition \ref{ReplacementB}.

As a result, inside $\{u>\ptil\}$, we have
$$
\Delta (u-\ptil)\ge -f^\pm dH^2|_{\partial\Ceta^\pm}\ge -C\kappa |x|^{\frac 52}dH^2|_{\partial\Ceta^\pm}.
$$ 
This implies 
$$
u-\ptil \le (\|u-\ptil\|_{L^1(B_1)}+\kappa) \text{ in $B_{1/2}$.}
$$
A symmetric argument gives the corresponding lower bound, and we have the control on $\|u-\ptil\|_{L^\infty(B_{1/2})}$.

The other estimates follow from the same argument as for Lemma \ref{OneToInftyA} and Lemma \ref{WeissControlA}, together with Lemma \ref{WeissControlReplacementB}.
\end{proof}

\subsection{The dichotomy near $p_{dc}$}\label{SubsectionDichotomyB}
With these preparations, we state the dichotomy for profiles near $p_{dc}$ from \eqref{DoublyCritical}.

\begin{lem}\label{DichotomyB}
Suppose that $$u\in\SPDO$$ for some  $p=a_0\uS+a_1\vF+a_2\vT$ satisfying $$a_0\in[\frac 12,2], \text{ and } |a_1/a_0-\frac{\sqrt{15}}{2}|+|a_2/a_0-\frac54|\le\delta.$$

There is a universal small constant $\tilde{\delta}>0$, such that if $d<\tilde{\delta}$ and $\delta<\tilde{\delta}$, then we have the following dichotomy:
\begin{enumerate}
\item{either 
$$W_\ST(u;1)-W_\ST(u;\rho_0)\ge c_0^2d^2$$ 
and 
$$u\in\mathcal{S}(p,Cd,\rho_0);$$
}
\item{or $$u\in\mathcal{S}(p',\frac{1}{2}d,\rho_0)$$ 
for some 
$$p'=\Rot_\tau[a_0'\uS+a_1'\vF+a_2'\vT]$$ 
with $$\kappa_{p'}<d, \text{ and }|\tau|+\sum|a_j'-a_j|\le Cd.$$}
\end{enumerate}
The constants $c_0$, $\rho_0$ and $C$ are universal. 
\end{lem}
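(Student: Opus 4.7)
The plan is to run a compactness–contradiction scheme analogous to the one in Lemma~\ref{DichotomyA}, with the replacement $\ptil$ used in place of $p$. I will split the argument into a \emph{Weiss-drop regime} ($\kappa_{p_n}>d_n/4$) and a \emph{compactness regime} ($\kappa_{p_n}\le d_n/4$), the new ingredients being Lemma~\ref{NonOrthB} (non-orthogonality of the boundary forcing to $\mathcal{H}_\ST$), Corollary~\ref{ChangeInRHSB} (stability of $\kappa$), and the pair Lemma~\ref{OneToInftyB}/Lemma~\ref{WeissControlReplacementB} (providing the $L^1\to L^\infty$ and Weiss-energy estimates of Section~3, adapted to $\ptil$). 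Assume the lemma fails, so that there exist $(u_n,p_n,d_n)$ with $p_n=a_{0,n}\uS+a_{1,n}\vF+a_{2,n}\vT$ satisfying \eqref{ConditionB}, $u_n\in\SPDO$, $d_n\to 0$, and
\[W_\ST(u_n;1)-W_\ST(u_n;\rho_0)<c_0^2 d_n^2,\qquad u_n\notin\mathcal{S}(p',\tfrac12 d_n,\rho_0)\]
for every admissible $p'$. Pass to a subsequence so that $p_n\to p_\infty$, still satisfying \eqref{ConditionB}.

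\textbf{Regime 1 ($\kappa_{p_n}>d_n/4$):} Here I argue directly for a Weiss drop. By Lemma~\ref{NonOrthB}, $\|\varphi_{p_n}\|_{L^\infty(\Sph)}\ge c\kappa_{p_n}$. The rescaling identity $\Phi_{p_n,(\rho_0)}-\Phi_{p_n}=\tfrac18\log(\rho_0)\,\varphi_{p_n}$ (immediate from \eqref{PhiB}) shows that $\Phi_{p_n}$ carries a genuine log-oscillation across scales. Because $u_n-\widetilde{p_n}-\Phi_{p_n}$ is harmonic in $\widehat{B_1}\setminus\partial\Ceta^\pm$ (both $\widetilde{p_n}$ and $\Phi_{p_n}$ carry the same distributional forcing along $\partial\Ceta^\pm$) and admits a Taylor expansion via Theorem~\ref{TaylorExpansion}, most of the radial deviation $u_{(\rho_0)}-u_{(1)}$ should come from the log-oscillation, so that
\[\int_{\partial B_1}|u_{(1)}-u_{(\rho_0)}|\ge c|\log\rho_0|\,\kappa_{p_n}\]
modulo absorbable lower-order Taylor pieces. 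Via \eqref{ChangeInRadial} this would produce $W_\ST(u_n;1)-W_\ST(u_n;\rho_0)\ge c|\log\rho_0|^{-1}\kappa_{p_n}^2>c_0^2 d_n^2$ for $c_0$ small, contradicting the failure of alternative (1).

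\textbf{Regime 2 ($\kappa_{p_n}\le d_n/4$):} Now $\|\widetilde{p_n}-p_n\|_{L^\infty}=O(d_n)$ by Lemma~\ref{4.4}, and I follow the template of Lemma~\ref{DichotomyA}. Set $\hu_n:=(u_n-\widetilde{p_n})/d_n$, bounded by $1$. Because $a_3=0$ in $p_n$, Lemma~\ref{ContLocBB} localizes the non-harmonic region of $u_n$ to shrinking tubes about $R_{p_n}^\pm$ and $\{r=0\}$; coupled with the harmonicity of $\widetilde{p_n}$ off $\partial\Ceta^\pm$, I extract a subsequential limit $\hu_\infty$ harmonic in $\widehat{B_{7/8}}\setminus\partial\Ceta^\pm$ and vanishing on $\widetilde{B_{7/8}}$. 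The Weiss-gap bound with \eqref{ChangeInRadial} and homogeneity of $\widetilde{p_n}$ yields $\|u_{(1/2)}-u\|_{L^2(\partial B_\rho)}\le Cc_0 d_n\rho^{9/2}$ at some $\rho\in[\rho_0,4\rho_0]$ where $\hu_n\to\hu_\infty$ in $L^2$. A slit-domain version of Theorem~\ref{TaylorExpansion} together with Proposition~\ref{Orthogonal} then forces the $\tfrac12,\tfrac32,\tfrac52$-homogeneous parts of $\hu_\infty$ to have $L^\infty$-norms $O(\rho_0+c_0+o(1))$, leaving the $\ST$-homogeneous piece $h_\ST\in\mathcal{H}_\ST$, and
\[\|u-(\widetilde{p_n}+d_n h_\ST)\|_{L^1(B_{2\rho_0})}\le Cd_n(\rho_0+c_0+o(1))\rho_0^{13/2}.\]
Switching to the basis $\{\uS,\wF,\wT,\wO\}$, I solve a small $O(d_n)$ system in $(\tau,\beta_1,\beta_2)$ to absorb the $\wO$-part of $h_\ST$ into a rotation, producing $p'=\Rot_{-\tau}[a_0'\uS+a_1'\vF+a_2'\vT]$ with $|\tau|+\sum|a_j'-a_j|\le Cd_n$ and $\kappa_{p'}\le\kappa_{p_n}+\omega(\delta+d_n)d_n<d_n$ by Corollary~\ref{ChangeInRHSB}. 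Lemma~\ref{CommutatorB} relates $\widetilde{p'}$ to $\widetilde{p_n}+d_n h_\ST$ modulo $o(d_n)$, and Lemma~\ref{OneToInftyB} promotes the $L^1$-estimate to $\|u-\widetilde{p'}\|_{L^\infty(B_{\rho_0})}\le d_n\rho_0^\ST(\tfrac14+o(1))$, contradicting the failure of alternative (2).

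\textbf{Main obstacle.} The delicate point will be Regime~1: I must argue that the log-oscillation from $\Phi_p$ genuinely enters the radial comparison $u_{(1)}-u_{(\rho_0)}$ and is not cancelled by Taylor corrections of the regular remainder $u-\widetilde{p_n}-\Phi_{p_n}$ or by freedom within $\mathcal{H}_\ST$. This is exactly the purpose of the non-orthogonality bound $\|\varphi_p\|\gtrsim\kappa_p$ (Lemma~\ref{NonOrthB}) together with the explicit structure of $\Phi_p$ in \eqref{PhiB}. A second, finer compactness argument on the rescaled difference $(u_n-\widetilde{p_n}-\Phi_{p_n})/\kappa_{p_n}$ will likely be needed to rigorously isolate the log contribution from the regular Taylor pieces and close the lower bound on $\int_{\partial B_1}|u_{(1)}-u_{(\rho_0)}|$.
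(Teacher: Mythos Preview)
Your regime split introduces a genuine gap, and the paper avoids it by a single, cleaner normalization. The paper sets
\[
\hu_n:=\frac{1}{d_n+\kappa_{p_n}}\bigl(u_n-\ptil_n+\Phi_{p_n}\bigr),
\]
which is the crucial idea you are missing. The point of adding $\Phi_{p_n}$ is that, by \eqref{EqnForReplacementB} and \eqref{EqnForPhiB}, the distributional forcing $f_{p_n}^\pm dH^2|_{\partial\Ceta^\pm}$ carried by $\ptil_n$ is exactly cancelled. Hence $u_n-\ptil_n+\Phi_{p_n}$ is genuinely harmonic in the slit domain $\widehat{B_{7/8}}$ (away from the shrinking contact regions), the limit $\hu_\infty$ is harmonic in $\widehat{B_{7/8}}$, and Theorem~\ref{TaylorExpansion} applies directly. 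In the almost-homogeneity step, the rescaling of $\Phi_{p_n}$ produces a term $\frac{\log 2}{8(d+\kappa)}\varphi_p(\tfrac{x}{|x|})|x|^{7/2}$, which lies in $\mathcal{H}_{7/2}$ and is therefore orthogonal to $h_{1/2},h_{3/2},h_{5/2}$. The Weiss bound then forces $\|\varphi_p\|/(d+\kappa)\le C(c_0+\rho_0)+o(1)$, and Lemma~\ref{NonOrthB} converts this into $\kappa_p\le Cd[(c_0+\rho_0)+o(1)]$. Thus the smallness of $\kappa_p/d$ is an \emph{output} of the argument, not an input, and your Regime~1 never occurs once $c_0,\rho_0$ are fixed small.

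Concretely, your Regime~2 fails as written: with $\hu_n=(u_n-\ptil_n)/d_n$ and $\kappa_{p_n}/d_n$ merely bounded by $1/4$, the limit $\hu_\infty$ carries a possibly nonzero singular measure along the cone $\partial\Ceta^\pm$, which passes through the origin at every scale. So $\hu_\infty$ is \emph{not} harmonic in $\widehat{B_{7/8}}$, and there is no ``slit-domain version of Theorem~\ref{TaylorExpansion}'' available for functions with such forcing. Your Regime~1 is also only heuristic: the inequality $\int_{\partial B_1}|u_{(1)}-u_{(\rho_0)}|\ge c|\log\rho_0|\kappa_{p_n}$ is exactly what requires separating the $\varphi_p$-contribution from the regular Taylor pieces, and that separation is what the paper achieves automatically by the normalization above together with Proposition~\ref{Orthogonal}. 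Once you adopt the normalization $\hu_n=(u_n-\ptil_n+\Phi_{p_n})/(d_n+\kappa_{p_n})$, both regimes collapse into one and the rest of your outline (Lemma~\ref{CommutatorB}, Corollary~\ref{ChangeInRHSB}, Lemma~\ref{OneToInftyB}) goes through as in the paper.
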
 
Recall the rotation operator $\Rot$ from \eqref{Rotation}, and the basis $\{u_\ST,v_{\frac 52},v_{\frac 32}, v_{\frac 12}\}$ from \eqref{BasisA}. 

\begin{rem}
For $p'=\Rot_\tau[a_0'\uS+a_1'\vF+a_2'\vT]$, all the constructions in Subsection \ref{SubsectionBoundaryLayerB}, leading to $\tilde{p'}$, are performed with respect to the rotated coordinate system.
\end{rem} 

\begin{proof}
We apply a similar strategy as in the proof for Lemma \ref{DichotomyB}. 

For $c_0$ and $\rho_0$ to be chosen, suppose that the lemma is false, then we find a sequence $(u_n,p_n,d_n,\delta_n)$ satisfying the assumptions as in the lemma with $d_n,\delta_n\to0$, but both alternatives fail, namely, 
\begin{equation}\label{AlmostHomB}
W_\ST(u_n;1)-W_\ST(u_n;\rho_0)\le c_0^2d_n^2 \text{ for all } n, 
\end{equation} 
and
\begin{equation}\label{ToBeContraB}
u_n\not\in\mathcal{S}(p',\frac12 d_n,\rho_0)
\end{equation} for any $p'$ satisfying the properties as in alternative (2) from the lemma. 

\text{ }

\textit{Step 1: Compactness.}

Define $$\hu_n:=\frac{1}{d_n+\kappa_{p_n}}(u_n-\ptil_n+\Phi_{p_n}),$$ where $\Phi$ and $\kappa$ are defined in \eqref{PhiB} and \eqref{KappaB} respectively. Then $|\hu_n|\le 2$ in $B_1$.

With Lemma \ref{ContLocB}, Lemma \ref{ContLocBB},  \eqref{EqnForReplacementB} and \eqref{EqnForPhiB}, we have, up to a subsequence, 
$$\|\hu_n-\hu_\infty\|_{L^2(B_{7/8})}\to 0.$$
The limit $\hu_\infty$ is a harmonic function in the slit domain $\widehat{B}_1$ according to \eqref{HarmFuncInSlit}.

For $m=0,1,2,3,$ Theorem \ref{TaylorExpansion} allows us to find $h_{m+\frac 12}$, an $(m+\frac 12)$-homogeneous harmonic function that is universally bounded in $B_1$ and satisfies 
$$
\|[\hu_n-(h_{\frac 12}+h_{\frac 32}+h_{\frac 52}+h_{\frac 72})]_{(\frac 12)}-[\hu_n-(h_{\frac 12}+h_{\frac 32}+h_{\frac 52}+h_{\frac 72})]\|_{L^2(\partial B_\rho)}\le C\rho^{\frac{11}{2}}+o(1)
$$ 
for some $\rho\in[\rho_0,4\rho_0].$

Recall that $f_{(\frac 12)}$ denotes the rescaling of $f$ as in \eqref{Rescaling}.

We omit the subscripts in $u_n$, $\hu_n$, $p_n$, $d_n$ and $\delta_n$ in the remaining of the proof.

\text{ }

\textit{Step 2: Almost homogeneity.}

Using the homogeneity of the functions as well as the definition of $\Phi$ from \eqref{PhiB}, the last estimate from the previous step gives
\begin{equation}\label{SumSmall}
\|(7h_{\frac 12}+3h_{\frac 32}+h_{\frac 52})+\frac{\log 2}{8(d+\kappa)}\varphi(\frac{x}{|x|})|x|^{\frac 72}\|_{L^2(\partial B_\rho)}\le C\rho^{\frac 92}(c_0+\rho)+o(1).
\end{equation}
For this, we used \eqref{AlmostHomB} in the same way as in Step 2 from the proof of Lemma \ref{DichotomyA} to control $(u_{(\frac 12)}-u).$

By definition, we have $\varphi\in\mathcal{H}_{\frac 72}$ from \eqref{7/2HarmFunc}. With \eqref{SumSmall}, we apply Proposition \ref{Orthogonal} to get 
\begin{equation*}
\|h_{\frac 12}\|_{L^\infty(B_1)}+\rho\|h_{\frac 32}\|_{L^\infty(B_1)}+\rho^2\|h_{\frac 52}\|_{L^\infty(B_1)}
\le C(\rho_0^4+c_0\rho_0^3+o(1)),
\end{equation*}
and 
\begin{equation}\label{AAA}
\|\frac{1}{d+\kappa}\varphi\|_{L^\infty(B_1)}\le C[(c_0+\rho_0)+o(1)].
\end{equation}
These imply
\begin{equation}\label{BBB}
\|\hu-h_{\frac 72}\|_{L^1(B_{2\rho_0})}\le C(\rho_0+c_0)\rho_0^{\frac{13}{2}}+o(1).
\end{equation}

Moreover, with Lemma \ref{NonOrthB}, we use \eqref{AAA} to conclude
\begin{equation}\label{KappaTinyB}
\kappa\le Cd[(c_0+\rho_0)+o(1)]
\end{equation} for small $(c_0+\rho_0)$ and large $n$.
Consequently, 
$$
\|\Phi\|_{L^1(B_{2\rho_0})}\le Cd[(c_0+\rho_0)\rho_0^{\frac{13}{2}}|\log\rho_0|+o(1)].
$$
Combining this with the definition of $\hu$ and \eqref{BBB}, we have
$$
\|u-[\ptil+(d+\kappa)h_{\ST}]\|_{L^1(B_{2\rho_0})}\le Cd[(\rho_0+c_0)\rho_0^{\frac{13}{2}}|\log\rho_0|+o(1)].
$$
With Lemma \ref{CommutatorB}, we get
$$
\|u-\tilde{q}\|_{L^1(B_{2\rho_0})}\le Cd[(\rho_0+c_0)\rho_0^{\frac{13}{2}}|\log\rho_0|+o(1)],
$$where $q=p+(d+\kappa)h_\ST.$

\text{ }

\textit{Step 3: Improvement of flatness.}

With the same technique in Step 3 from the proof of Lemma \ref{DichotomyB}, we find 
$$p'=a_0'\uS+a_1'\vF+a_2'\vT$$ 
such that 
\begin{equation}\label{qRot}\|q-\Rot_\tau(p')\|_{L^\infty(\Sph\cap\{r>\frac{1}{8}\})}\le Cd^2\end{equation}
where $|\tau|+\sum|a_j-a_j'|\le Cd.$ In this step, it is crucial that $a_2$ is  bounded away from $0$.

If $\eta$ and $\delta$ are small, then $\{r>\frac18\}$ contains $\Ceta^\pm$ and $\Rot_\tau(\Ceta^\pm).$ By definition of replacements and \eqref{qRot}, we have
$$
|\tilde{q}-\widetilde{\Rot_\tau(p')}|\le Cd^2 \text{ on $\Sph$.}
$$Combining this with the last estimate from the previous step, we have
\begin{equation}\label{thatEquation}
\|u-\widetilde{\Rot_\tau(p')}\|_{L^1(B_{2\rho_0})}\le Cd[(\rho_0+c_0)\rho_0^{\frac{13}{2}}|\log\rho_0|+o(1)].
\end{equation}

On the other hand, with Lemma  \ref{ChangeInRHSB}, we have 
\begin{equation}\label{ThatEquation}\kappa_{p'}\le\kappa_p+d\cdot o(1)\le Cd[(c_0+\rho_0)+o(1)],
\end{equation} where we used \eqref{KappaTinyB} for the last comparison.

With Lemma \ref{OneToInftyB} and \eqref{thatEquation}, this implies 
$$
\|u-\widetilde{\Rot_\tau(p')}\|_{L^\infty(B_{\rho_0})}\le Cd\rho_0^{\ST}[(\rho_0+c_0)|\log\rho_0|+o(1)],
$$which implies 
$$u\in\mathcal{S}(p',\frac{1}{2}d,\rho_0)$$ 
if $\rho_0$, $c_0$ are chosen small universally and $n$ is large. The bound on $\kappa_{p'}$ follows from \eqref{ThatEquation}.

This contradicts \eqref{ToBeContraB}.
\end{proof} 

\subsection{Dichotomy near general $p^*\in\mathcal{A}_2$}
In this subsection, we sketch the ideas for dealing with other profiles in $\mathcal{A}_2$. Let's take $p^*\in\mathcal{A}_2$, namely, 
$$
p^*=\uS+a_1^*\vF+a_2^*\vT
$$ with $$\mu\le a_2^*\le 5 \text{ and }(a_1^*)^2=\Gamma(a_2^*)$$ for a small parameter $\mu>0$ to be fixed in the next section (see Remark \ref{UniversalChoice}), and the function $\Gamma$ defined in \eqref{GammaFunction}. Let's further assume $a_1^*\ge0$ as the other case is symmetric. 

Thanks to results from Subsection \ref{SubsectionDichotomyB}, it suffices to consider the case when 
\begin{equation}\label{GapFrom54}
|a_2^*-\frac 54|\ge\tilde{\delta}.
\end{equation}
For such $p^*$, we define 
$$
\Ceta^\pm(p^*)=\{x\in\Sph:|x-A^\pm_{p^*}|<\eta\},
$$with the notation from \eqref{Apm}. With \eqref{GapFromAxis}, these caps are  bounded away from $\{r=0\}$ if $\eta$ is small, depending on $\mu$.

For small $\delta>0$ and a profile 
$$p=a_0\uS+a_1\vF+a_2\vT$$ 
with $$a_0\in[1/2,2] \text{ and } |a_1/a_0-a_1^*|+|a_2/a_0-a_2^*|<\delta,$$ we define its replacement $\ptil$ by solving \eqref{SphTOPB} in $\Ceta^\pm(p^*)$, as in Definition \ref{ReplacementB}. With \eqref{GapFrom54}, we see that when $\delta>0$ is small, the replacement in one of the caps is identical to $p$.  
The auxiliary functions in Subsection \ref{SubsectionBoundaryLayerB}
can be defined in a similar fashion.

With this construction, results from Subsection \ref{SubsectionBoundaryLayerB} follow from the same argument, with the constants possibly depending on $\mu.$ The class of well-approximated solutions can be defined similarly to Definition \ref{WellApproxSolB} with similar properties. The same argument in the previous section leads to a similar dichotomy for profiles near $p^*$.

Combining these with Lemma \ref{DichotomyB}, we have the following 
\begin{lem}\label{DichotomyBB}
Suppose that 
$$u\in\SPDO$$ 
for some  $p=a_0\uS+a_1\vF+a_2\vT$ satisfying 
$$a_0\in[\frac 12,2]\text{ and } |a_1/a_0-a_1^*|+|a_2/a_0-a_2^*|\le\delta$$ 
with 
$$\mu\le a_2^*\le 5\text{ and }(a_1^*)^2=\Gamma(a_2^*).$$

There is a small constant $\tilde{\delta}>0$, depending only on $\mu$,  such that if $d<\tilde{\delta}$ and $\delta<\tilde{\delta}$, then we have the following dichotomy:
\begin{enumerate}
\item{either 
$$W_\ST(u;1)-W_\ST(u;\rho_0)\ge c_0^2d^2$$ 
and 
$$u\in\mathcal{S}(p,Cd,\rho_0);$$
}
\item{or $$u\in\mathcal{S}(p',\frac{1}{2}d,\rho_0)$$ 
for some 
$$p'=\Rot_\tau[a_0'\uS+a_1'\vF+a_2'\vT]$$ 
with $$\kappa_{p'}<d, \text{ and }|\tau|+\sum|a_j'-a_j|\le Cd.$$}
\end{enumerate}
The constants $c_0$, $\rho_0$ and $C$ depend only on $\mu$. 
\end{lem}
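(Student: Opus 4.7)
The plan is to reduce to Lemma \ref{DichotomyB} near the doubly-critical profile $p_{dc}$, and for all remaining profiles in $\mathcal{A}_2$ to repeat the three-step compactness argument of Section \ref{SectionB}, with the simplification that only one of the two constraints degenerates and the constants now depend on the lower bound $\mu$ on $a_2^*$ and on the gap from $5/4$.

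First, fix $\tilde\delta_0$ to be the universal small constant from Lemma \ref{DichotomyB}. If $|a_2^*-\tfrac 54|<\tilde\delta_0/2$, then for $\delta$ small enough (depending on $\mu$) the hypothesis $|a_1/a_0-\tfrac{\sqrt{15}}{2}|+|a_2/a_0-\tfrac 54|\le\tilde\delta_0$ holds, so Lemma \ref{DichotomyB} directly supplies the dichotomy. Henceforth assume \eqref{GapFrom54}, namely $|a_2^*-\tfrac 54|\ge\tilde\delta_0/2$. Then exactly one of $\Delta p^*(A^+_{p^*})$, $p^*(A^-_{p^*})$ vanishes, and the corresponding point $A^\pm_{p^*}$ is the unique point of degeneracy of $p^*$ on $\Sph$. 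In this regime, define the replacement $\tilde p$ by solving \eqref{SphTOPB} only in the single cap $\Ceta^{\pm}(p^*)$ where the constraint is degenerate; in the opposite cap the constraint is strictly satisfied with a gap depending on $\mu$ and $\tilde\delta_0$, so the thin-obstacle solution there coincides with $p$ and contributes nothing to the gluing error.

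Next, I would rerun the constructions of Subsection \ref{SubsectionBoundaryLayerB} with this single-cap replacement. The analogue of Lemma \ref{ContLocB} is immediate with constants depending on $\mu$, because both $A^\pm_{p^*}$ lie at distance $\ge c_\mu$ from $\{r=0\}$ by \eqref{GapFromAxis}. The commutator estimate (Lemma \ref{CommutatorB}) and the control on $\kappa_{p+dq}$ (Corollary \ref{ChangeInRHSB}) follow by the same barrier argument, now applied in only one cap. For the key non-orthogonality estimate (analog of Lemma \ref{NonOrthB}), I would choose an auxiliary $q=\alpha\uS+\beta\vF+\gamma\vT\in\mathcal H_\ST$ with $\Delta q(A^+_{p^*})>0$ when the upper cap is the degenerate one, or $q(A^-_{p^*})>0$ when the lower cap is: since we only need one strict inequality instead of two simultaneous ones, such $q$ exists by a simple linear-algebra argument, and the constants $c$, $C$ depend only on $\mu$ through the location of $A^\pm_{p^*}$. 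The Weiss-control estimate (Lemma \ref{WeissControlReplacementB}) follows verbatim.

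The space $\SPDO$ is then defined exactly as in Definition \ref{WellApproxSolB}, and the analogs of Lemma \ref{ContLocBB} and Lemma \ref{OneToInftyB} follow from the same barrier-plus-maximum-principle arguments, with constants now depending on $\mu$. Finally, the compactness/almost-homogeneity/improvement-of-flatness trichotomy of Steps 1--3 in the proof of Lemma \ref{DichotomyB} goes through unchanged: the blow-up sequence $\hu_n = (u_n-\ptil_n+\Phi_{p_n})/(d_n+\kappa_{p_n})$ converges in $L^2$ to a harmonic function in the slit domain, whose Taylor expansion combined with the almost-monotonicity \eqref{AlmostHomB} of the Weiss energy forces the $\le\ST$-frequency components to be small and $\kappa\le Cd[(c_0+\rho_0)+o(1)]$, after which one solves the $3\times 3$ algebraic system for $(\tau,a_1',a_2')$ (noting $a_2\ge\mu$ makes it invertible with bounds in $\mu$) to produce the desired $p'$.

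The main obstacle is the non-orthogonality lemma: one has to verify that for every admissible $p^*\in\mathcal{A}_2$ with $|a_2^*-\tfrac 54|\ge\tilde\delta_0/2$ there is a fixed test function $q\in\mathcal H_\ST$ whose sign at the single degenerate point is correct and whose positivity persists in a cap of size $\eta$. This is a one-parameter perturbation of the construction \eqref{DefOfq}, and the key is that \eqref{GapFromAxis} keeps the relevant pole uniformly away from $\{r=0\}$, so universal continuity of $q$, $\Delta q$ in the parameter $a_2^*$ gives the bound $c=c(\mu)>0$. All other steps are formally identical to those already carried out in Subsections \ref{SubsectionBoundaryLayerB}--\ref{SubsectionDichotomyB}.
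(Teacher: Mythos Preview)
Your proposal is correct and follows essentially the same route as the paper's own sketch in Subsection~4.4: reduce to Lemma~\ref{DichotomyB} when $a_2^*$ is close to $5/4$, and for the remaining $p^*\in\mathcal{A}_2$ observe that only one cap carries a nontrivial replacement, so the entire machinery of Subsections~\ref{SubsectionBoundaryLayerB}--\ref{SubsectionDichotomyB} reruns with constants depending on $\mu$. The paper formally solves \eqref{SphTOPB} in both caps $\Ceta^\pm(p^*)$ and then notes the solution in the non-degenerate cap coincides with $p$, while you solve in one cap from the start; this is a cosmetic difference. Your explicit remark that the non-orthogonality step (the analogue of Lemma~\ref{NonOrthB}) becomes easier because only one sign condition on the test function $q$ is needed is a detail the paper leaves implicit.
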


\section{Trichotomy for $p\in\mathcal{A}_3$}
In this section, we study profiles near $\mathcal{A}_3$ from \eqref{SubRegions}, that is, profiles near $\uS$. To some extend, this section contains the main contribution of the work. 

Similar to the case studied in Section \ref{SectionB},  the constraints $p|_{\Hpp}\ge 0$ and $\Delta p|_{\Hpp}\le 0$ become degenerate for profiles in $\mathcal{A}_3$. Contrary to the previous case, the points of degeneracy can be arbitrarily close to two poles $P^+,P^-\in\Sph\cap\{r=0\}$. As a result, the cornerstone  for the previous case, Lemma \ref{NonOrthB}, no longer holds. 

To tackle this issue, we need to study the the thin obstacle problem on spherical caps near $P^\pm$. At the infinitesimal level, this reduces to the problem in $\R^2$ studied in Appendix \ref{2DProblem}. This is one of the main reasons why we need to restrict to three dimensions in this work. This infinitesimal information influences the solution at unit scale through two higher Fourier coefficients along small spherical caps near $P^\pm.$
With these two extra Fourier coefficients, we can define two extended profiles, one for each semi-sphere. These extended profiles approximate the solution with finer accuracy. 

Arising from this procedure are two errors, say $E_1$ and $E_2$. The former $E_1$ happens along the big circle $\Sph\cap\{x_1=0\}$, where the extended profiles from the two semi-spheres are glued. The latter $E_2$ happens along the boundary of small spherical caps near the poles $P^\pm.$ We establish that $E_1$ has a projection into $\mathcal{H}_{\ST}$ with size proportional to $E_1$. Therefore, if $E_1$ is dominating, a similar strategy as in Section \ref{SectionB} can be carried out. 

This leads to the following trichotomy,  the main result in this section. 
\begin{lem}\label{Trichotomy}
Suppose that 
$$u\in\SPDO$$ 
for some $p=a_0\uS+a_1\vF+a_2\vT+a_3\vO$ 
with $a_0\in[\frac 12,2]$ and 
\begin{equation}\label{EpsP}
\eps_p:=\max\{|a_1/a_0|,|a_2/a_0|^\frac12,|a_3/a_0|^\frac 13\}
\end{equation} small. 

Given small $\sigma>0$, we can find $\tilde{\delta},c_0$,$\rho_0$ small, and $C$ big,  depending only on $\sigma$, such that if $d<\tilde{\delta}$ and $\eps_p<\tilde{\delta}$, then we have the following three possibilities:
\begin{enumerate}
\item{ 
$$
W_{\ST}(u;1)-W_{\ST}(u;\rho_0)\ge c_0^2d^2;
$$and
$$u\in\mathcal{S}(p,Cd,\rho_0);$$}
\item{or $$u\in\mathcal{S}(p',\frac14 d,\rho_0)$$
for $p'=a_0'\uS+a_1'\vF+a_2'\vT+a_3'\vO$ with 
$$\kappa_{p'}<\sigma\kappa_p, \text{ and } \sum|a_j'-a_j|\le Cd;$$} 
\item{or $$d\le\eps_p^5.$$}
\end{enumerate}
\end{lem}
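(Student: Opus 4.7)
Following the compactness strategy used in Lemmas \ref{DichotomyA} and \ref{DichotomyB}, I would argue by contradiction, taking a sequence $(u_n,p_n,d_n)$ satisfying the hypotheses of the lemma with $d_n,\eps_{p_n}\to 0$, for which all three alternatives fail simultaneously. The negation of alternative (3) gives the quantitative relation $d_n > \eps_{p_n}^5$, which serves as the indispensable scale relation guaranteeing that $d_n$ is the dominant size parameter and that a meaningful normalized limit of $\hu_n := (u_n-\tilde{p}_n+\Phi_{p_n})/(d_n+\kappa_{p_n})$ exists. As in the previous sections, in the limit $\hu_\infty$ is a harmonic function in the slit domain $\widehat{B_1}$ whose $\ST$-Taylor expansion supplies candidate profile corrections $h_{\frac12}, h_{\frac32}, h_{\frac52}, h_{\frac72}$.

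\textbf{Inner problem and extended profiles.} The new feature is that, as $\eps_p \to 0$, the degeneracy rays $R^\pm_p$ from \eqref{RaysOfDeg} collapse onto the poles $P^\pm = \Sph\cap\{r=0\}$, so the boundary-layer caps $\Ceta^\pm$ used in Section \ref{SectionB} are no longer bounded away from $\{r=0\}$. Instead, one must work in small spherical caps centered at $P^\pm$ and rescale by a factor proportional to $\eps_p$. At this blown-up scale, freezing coefficients of $p$ reduces the boundary-layer equation to the planar thin obstacle problem studied in the appendix, with data at infinity determined by $(a_1,a_2,a_3)$. The planar analysis contributes two additional Fourier modes corresponding to the singular harmonics $\vNO, \vNT$ from \eqref{SingularHarmFunc}. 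Inserting these into the profile on each hemisphere $\{\pm x_1>0\}$ produces two \emph{extended profiles} $p^\pm$, and the replacement $\ptil$ is redefined hemisphere-by-hemisphere so that $u$ is approximated with higher accuracy near each pole than by $p$ alone.

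\textbf{Error analysis.} The extended profiles $p^+$ and $p^-$ generically disagree along the equator $\Sph\cap\{x_1=0\}$, generating a gluing error $E_1$ there; a secondary error $E_2$ comes from the boundaries of the polar caps. The decisive step is the analogue of Lemma \ref{NonOrthB}: one must show that the projection $\varphi_p$ of $E_1$ into $\mathcal{H}_\ST$ (now computed via a test function in the extended space spanned by $\uS,\vF,\vT,\vO,\vNO,\vNT$) satisfies $\|\varphi_p\|_{L^\infty(\Sph)} \ge c\,\|E_1\|$. If this dominant-error estimate holds, then the argument of Section \ref{SectionB} goes through: either the projection $\varphi$ forces the strict Weiss decay $W_\ST(u;1)-W_\ST(u;\rho_0)\ge c_0^2 d^2$ giving alternative (1), or the extended profile yields $p'$ with $u\in\mathcal{S}(p',\tfrac14 d,\rho_0)$ and $\kappa_{p'} \le \sigma\kappa_p$, which is alternative (2). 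The scale relation $d>\eps_p^5$ is used precisely to ensure that the contributions of the extended Fourier modes, which are driven by powers of $\eps_p$, are controlled by $d$ so that $p'$ remains a valid admissible profile after the correction.

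\textbf{Main obstacle.} The principal difficulty is establishing the non-orthogonality estimate for $E_1$. The proof of Lemma \ref{NonOrthB} crucially used the auxiliary function $q = \uS+\beta\vF\in\mathcal{H}_\ST$, whose sign at $A^\pm$ is forced because $A^\pm$ are well-separated from $\{r=0\}$. For $p\in\mathcal{A}_3$, the degeneracies occur near the poles, so one must instead pair $E_1$ against elements involving the singular harmonics $\vNO, \vNT$ and verify that the leading-order contribution from the 2D inner problem does not cancel when integrated against them. This matching of the planar obstacle problem's Fourier data with the three-dimensional test functions is the heart of Section 5 and the main reason the analysis is confined to $\R^3$, where the polar slit has zero capacity and the 2D reduction is clean.
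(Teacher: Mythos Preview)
Your overall architecture matches the paper's: contradiction via a sequence with $d_n>\eps_{p_n}^5$, compactness of the normalized difference built from the replacement $\pbar$ and the auxiliary functions $\Phi_{1,p},\Phi_{2,p}$, almost-homogeneity from the failure of alternative (1), and then improvement of flatness contradicting the failure of (2). Two points, however, are misdescribed and would lead you astray if implemented literally.

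First, the non-orthogonality estimate for $E_1$ (the paper's Lemma~\ref{NonOrthC}) does \emph{not} test against the singular harmonics $\vNO,\vNT$. The projection $\varphi_1$ is into $\mathcal{H}_{\ST}$, and the test functions are $\vO,\vT\in\mathcal{H}_{\ST}$. The singular modes enter only through the extended profile: integrating $f_1$ against $\vO$ by parts over $\Sph\setminus\mathcal{C}_{r_0}^\pm$, the boundary terms on $\partial\mathcal{C}_{r_0}^\pm$ involve $\vNO(\vO)_\nu-\vO(\vNO)_\nu$, which has a nonzero limit as $r_0\to 0$, yielding $\int f_1\vO\,dH^1|_{\partial\Leta}=2A_1(b_1^--b_1^+)\eps^4$ and similarly for $\vT$. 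So the projection of $f_1$ into $\mathcal{H}_{\ST}$ is comparable to the jump $|b_1^+-b_1^-|\eps^4+|b_2^+-b_2^-|\eps^5$, which in turn dominates $\kappa_1$. No ``extended test space'' is needed.

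Second, the error $E_2$ (i.e., $\kappa_2$) is not handled by any non-orthogonality argument at all. The extended profile is constructed precisely so that $|v_p-\pe|\le C\eps^6$ on $\Ceta\setminus\mathcal{C}_{\eta/2}$ (Lemma~\ref{ChoosingTheRightCoeff}), whence $\kappa_2\le C\eps^6$. The role of $d>\eps^5$ is then simply $\kappa_2\le C\eps\cdot\eps^5\le C\eps d=o(d)$, so $\Phi_2$ drops out of the almost-homogeneity computation and one is left with the $\varphi_1$ term alone. This is also where the $O(\eps^6)$ error in Lemma~\ref{CommutatorC} and Corollary~\ref{ChangeInRHSC} gets absorbed into $d$, allowing the $\kappa_{p'}$ bound with the prescribed $\sigma$.
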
 

Recall the basis $\{\uS,\vF,\vT,\vO\}$, the rotation operator $\Rot$, and the Weiss energy $W_\ST$ from \eqref{BasisA}, \eqref{Rotation} and \eqref{Weiss} respectively. The solution class $\mathcal{S}(\dots)$ and the measurement for error $\kappa$, similar to their counterparts from Section \ref{SectionB}, will be defined later in this section.

\begin{rem}\label{UniversalChoice}
We will fix $\sigma$ universally, which leads to universally defined $\tilde{\delta}$ as in the  lemma. If we choose $\mu$, the parameter from \eqref{SubRegions}, small depending on $\tilde{\delta}$, then Lemma \ref{Trichotomy} holds true for $p$ near $\mathcal{A}_3$ from \eqref{SubRegions}. Once this choice of $\mu$ is made, estimates from previous sections become universal. 
\end{rem} 

\begin{rem}\label{COCNear0}
With the definition of $\eps_p$, we can write $p=a_0\uS+\tilde{a}_1\eps_p\vF+\tilde{a}_2\eps_p^2\vT+\tilde{a}_3\eps_p^3\vO$ with $|\tilde{a}_j|\le 1.$ Such coordinates are more convenient for profiles near $\uS$. 

In this section,  we often write 
$$
 p=a_0\uS+a_1\eps\vF+a_2\eps^2\vT+a_3\eps^3\vO,
$$assuming implicitly that 
$$a_0\in[\frac 12,2]\text{ and }|a_j|\le 1 \text{ for $j=1,2,3$ with $\eps$ small.}$$ Equivalently, in the basis $\{\uS,\wF,\wT,\wO\}$ from \eqref{BasisB}\footnote{The coefficients are related by $a_0=\tilde{a}_0-\frac74\tilde{a}_2\eps^2, \hspace{0.5em} a_1=\ST\tilde{a}_1-\frac{133}{8}\tilde{a}_3\eps^2, \hspace{0.5em} a_2=\frac{35}{4}\tilde{a}_2, \text{ and }a_3=\frac{105}{8}\tilde{a}_3.$}, 
$$
p=\tilde{a}_0\uS+\tilde{a}_1\eps\wF+\tilde{a}_2\eps^2\wT+\tilde{a}_3\eps^3\wO.
$$
\end{rem} 

\begin{rem}\label{ConditionForSolving}
By comparing with \eqref{NormFam} and \eqref{GammaFunction} we  see that $ p=a_0\uS+a_1\eps\vF+a_2\eps^2\vT+a_3\eps^3\vO$ solves the thin obstacle problem if and only if 
$$a_0>0,\hem a_2\ge 0, \hem a_3=0, \text{ and } (\frac{a_1}{a_0})^2\le\min\{4\frac{a_2}{a_0}(1-\frac15\frac{a_2\eps^2}{a_0}),\hem \frac{24}{25}\frac{a_2}{a_0}(\ST-\frac{3}{10}\frac{a_2\eps^2}{a_0})\}.$$

If we assume only $$a_0>0,\hem a_2\ge 0, \hem a_3=0, \text{ and }(\frac{a_1}{a_0})^2\le 4\frac{a_2}{a_0}(1-\frac15\frac{a_2\eps^2}{a_0}),$$ and further $a_1\ge 0,$ then  $p$ solves the thin obstacle problem outside a cone with $O(\frac{a_1\eps}{a_0})$-opening near $\{r=0,x_1>0\}$, where $\Delta p|_{\Hpp}$ might become positive.
\end{rem} 

The first half of this section is devoted to the proof of Lemma \ref{Trichotomy}.

If the last possibility in Lemma \ref{Trichotomy} happens, then  $E_1$ is not necessarily dominating (see the paragraph before the statement of the lemma). In this case, we need finer information of  the solution using information from Appendix \ref{2DProblem}. This is carried out in the second half of this section.

\subsection{The extended profile $p_{ext}$} 
Recall that $(r,\theta)$ denotes the polar coordinate of the $(x_2,x_3)$-plane. For a small \textit{universal} constant $\eta>0$, we define two small spherical caps near $\Sph\cap\{r=0\}$
\begin{equation}\label{SmallCapC}
\Ceta^\pm:=\{r<\eta,\hem \pm x_1>0\}\cap\Sph.
\end{equation}
In general, for small $r_0>0$, let 
$$
\mathcal{C}_{r_0}^\pm:=\{r<r_0,\hem \pm x_1>0\}\cap\Sph.
$$
The cones generated by these caps are denoted by the same notations. 

In this subsection, we focus on $\Ceta^+$. The other case is symmetric. As a result, we often omit the superscript in $\Ceta^+$

Given a profile $p=a_0\uS+a_1\eps\vF+a_2\eps^2\vT+a_3\eps^3\vO$, if we denote by $v$ the solution to the thin obstacle problem in $\Ceta$ with $p$ as boundary data,  then in general we have $|v-p|\le C\eps^{\frac72}$. This is not precise enough for later development. 

The main task of this subsection is to show that we can find an extended profile, $\pe$, so that if we solve a similar problem with $\pe$ as boundary data, then the error can be improved to $O(\eps^6)$-order. This $\pe$ will be an essential building block for our replacement of $p$. 

Throughout this subsection, we assume 
\begin{equation}\label{CondPext}
a_0\in[\frac 12,2];\hem |a_j|\le 1 \text{ for } j=1,2,3; \hem |b_j|\le A+1 \text{ for }j=1,2; \text{ and }\eps<\tilde{\eps},
\end{equation} 
where $A$ is the universal constant from Proposition \ref{StartingPtIn2d}, and $\tilde{\eps}$ is a small universal constant. 

Corresponding to these parameters,
let us denote
\begin{align}\label{PextOld}
p[a_0,a_1,a_2,a_3;\eps]:&=a_0\uS+a_1\eps\vF+a_2\eps^2\vT+a_3\eps^3\vO, \text{ and }\\
\pe[a_0,a_1,a_2,a_3;b_1,b_2;\eps]:&=p[a_0,a_1,a_2,a_3;\eps]+b_1\eps^4v_{-\frac12}+b_2\eps^5v_{-\frac32}.\nonumber
\end{align}
Recall the basis $\{\uS,\vF,\vT,\vO\}$ from \eqref{BasisA} and the functions $v_{-\frac 12}$, $v_{-\frac 32}$ from \eqref{SingularHarmFunc}.

Let $v=v[a_0,a_1,a_2,a_3;b_1,b_2;\eps]$ denote the solution to 
\begin{equation}\label{AnotherTOP}
\begin{cases}
(\SphLap+\lambda_{\ST})v\le 0 &\text{ in }\Ceta,\\
v\ge 0 &\text{ in }\Ceta\cap\Hpp,\\
(\SphLap+\lambda_{\ST})v=0 &\text{ in }\Ceta\cap(\{v>0\}\cup\{x_3\neq0\})\\
v=\pe[a_0,a_1,a_2,a_3;b_1,b_2;\eps] &\text{ along }\partial\Ceta.
\end{cases}
\end{equation}
The $\ST$-homogeneous extension of $v$ is denoted by the same notation.

We often omit some of the parameters in $p[\dots],\hem \pe[\dots]$ and $v[\dots]$ when there is no ambiguity.

We begin with  localization of the contact set of $v$:
\begin{lem}\label{RoughBoundC}
Assuming \eqref{CondPext}, we have
$$|v-p|\le C\eps^\ST \text{ in $\Ceta$,}$$
$$(\SphLap+\lambda_\ST)v=0 \text{ in }\widehat{\Ceta}\cap\{r>M\eps\}, \text{ and }
v=0 \text{ in }\widetilde{\Ceta}\cap\{r>M\eps\}$$
for  universal constants $C$ and $M$.
\end{lem}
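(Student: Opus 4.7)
The plan is to prove the three assertions in turn using barrier and rescaling arguments in the cap $\Ceta$. For the $L^\infty$ closeness $|v-p|\le C\eps^\ST$, first note that on $\partial\Ceta$ (where $r=\eta$ is universally fixed), the singular correction $\pe-p=b_1\eps^4 v_{-\frac12}+b_2\eps^5 v_{-\frac32}$ is bounded by $C\eps^4\le C\eps^\ST$. In the interior, $p$ is harmonic in $\widehat{\R^3}$ and its only obstruction to being a competitor for $v$ is that $p\ge 0$ may fail on $\{x_3=0\}\cap\Ceta$. But on this planar set $\theta$ takes only the values $0$ and $\pi$: at $\theta=\pi$ every summand of $p$ vanishes (since $\cos((k-\frac12)\pi)=0$), while at $\theta=0$ a direct computation gives $p=a_0 r^\ST+O(\eps r^{5/2}+\eps^2 r^{3/2}+\eps^3 r^{1/2})$, which is positive for $r\ge c\eps$ and bounded by $C\eps^\ST$ on $\{r\le c\eps\}$. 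Hence the defect is concentrated where $|p|\le C\eps^\ST$, and standard barriers $p\pm C\eps^\ST\chi$, with $\chi$ a fixed non-negative universal function on $\Ceta$ absorbing the boundary and obstacle discrepancies, sandwich $v$ via the maximum principle for the thin obstacle problem.

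For the second assertion, a direct computation at $\theta=0$ gives $p\ge\tfrac14 a_0 r^\ST$ whenever $r\ge M\eps$ with $M$ universally large and $a_0\in[\tfrac12,2]$. Combined with Step 1, $v\ge p-C\eps^\ST\ge cr^\ST>0$ along the ray $\{x_3=0,\theta=0,r>M\eps\}\cap\Ceta$, and continuity upgrades this to $v>0$ in a spatial neighborhood, producing the equation in $\widehat\Ceta\cap\{r>M\eps\}$. The third assertion, $v=0$ on $\widetilde\Ceta\cap\{r>M\eps\}$, requires more care because Step 1 only yields $|v|\le C\eps^\ST$ on the slit (where $p\equiv 0$). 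At a slit point $x_0$ with $r_0=r(x_0)\ge M\eps$, I rescale $\tilde v(y):=v(x_0+r_0 y)/r_0^\ST$. Then $\tilde v$ solves a thin obstacle problem in a fixed ball, and by Step 1 together with the homogeneous structure of $p$, $\tilde v$ is $O((\eps/r_0)^{\ST})=O(M^{-\ST})$-close to the rescaled profile $\tilde p$, which is in turn $O(\eps/r_0)=O(M^{-1})$-close at unit scale to a translate of $a_0 u_\ST$ centered at a generic \emph{interior} slit point. For the model $a_0 u_\ST$, the origin is an interior coincidence point; by the standard $L^\infty$-stability of the interior of the thin obstacle contact set under small perturbations (cf.~\cite{ACS,DS1}), $\tilde v(0)=0$ once $M$ is large enough, so $v(x_0)=0$. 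Varying $x_0$ gives the claim.

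The main obstacle is the third step. The bound $|v-p|\le C\eps^\ST$ together with $p\equiv 0$ on the slit only produces $|v|\le C\eps^\ST$ on $\widetilde\Ceta$, which is far weaker than the desired $v\equiv 0$. Recovering the vanishing hinges on the fact that every interior point of the slit is an interior coincidence point for the model profile $u_\ST$ and that this property is stable under small $L^\infty$-perturbations at the appropriate scale. Rescaling by $r_0\ge M\eps$ is what converts the original-scale estimate $|v-p|\le C\eps^\ST$ into a genuine $O(M^{-\ST})$ perturbation-of-profile at unit scale, where classical stability of the interior of the contact set then delivers $v(x_0)=0$.
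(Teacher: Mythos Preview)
Your Step 1 has a genuine gap. You correctly observe that the obstacle constraint $p\ge 0$ fails by at most $O(\eps^{7/2})$ on $\{\theta=0\}$, but you overlook the \emph{superharmonicity} constraint on the slit $\{\theta=\pi\}$. There $p$ vanishes, yet its distributional Laplacian is the surface measure $2\,\partial_{x_3}p\big|_{\theta=\pi}\,dH^1$, and a direct computation gives
\[
\partial_{x_3}p\big|_{\theta=\pi}\;\approx\;-\tfrac{7}{2}a_0r^{5/2}+\tfrac{5}{2}a_1\eps\,r^{3/2}-\tfrac{3}{2}a_2\eps^2 r^{1/2}+\tfrac{1}{2}a_3\eps^3 r^{-1/2}
\]
in $\Ceta$. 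For generic $a_j$ this changes sign and has magnitude $\sim\eps^{5/2}$ near $r\sim\eps$. Hence $p$ is \emph{neither} a supersolution nor a subsolution of the thin obstacle problem in $\Ceta$, and this defect---a surface density of order $\eps^{5/2}$---cannot be absorbed by adding $C\eps^{7/2}\chi$ for any \emph{universal} $\chi$: you would need $(\SphLap+\lambda_\ST)\chi$ to contribute a surface density of order $\eps^{-1}$ on the slit, which is impossible. Since your Steps 2 and 3 both rely on the bound from Step 1, the whole scheme collapses.

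The paper's remedy is to bypass $p$ entirely and compare $v$ against \emph{exact solutions} of the thin obstacle problem. One writes $p=\tilde a_0\uS+\tilde a_1\eps\wF+\tilde a_2\eps^2\wT+\tilde a_3\eps^3\wO$ in the rotational basis \eqref{BasisB}, chooses a large universal $\tau>0$, and solves $\alpha_1+\tilde a_0\tau=\tilde a_1$, $\alpha_2+\alpha_1\tau+\tfrac12\tilde a_0\tau^2=\tilde a_2$ so that the rotated profile $\Rot_{-\tau\eps}(q)$ with $q=\tilde a_0\uS+\alpha_1\eps\wF+\alpha_2\eps^2\wT$ dominates $\pe$ along $\partial\Ceta$. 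For $\tau$ large enough the coefficients $(\alpha_1,\alpha_2)$ satisfy the admissibility condition of Remark~\ref{ConditionForSolving}, so $q$ (and hence $\Rot_{-\tau\eps}(q)$) \emph{solves} the thin obstacle problem. The comparison principle then gives $v\le\Rot_{-\tau\eps}(q)$ in $\Ceta$; since $q$ vanishes on the slit, this immediately yields $v=0$ on $\widetilde{\Ceta}\cap\{r>C\tau\eps\}$ and $v-p\le C\eps^{7/2}$ in $\mathcal{C}_{M\eps}$. A symmetric lower barrier $\Rot_{\tau\eps}(q')$ handles the other inequalities, and the maximum principle in $\Ceta\setminus\mathcal{C}_{M\eps}$ finishes the $L^\infty$ bound. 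Thus all three conclusions fall out of a single barrier construction, with no separate rescaling argument needed.
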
 
Here we are using the notations for slit domains from \eqref{SlitSet}.

\begin{proof}
The key is to construct a barrier similar to the one in Step 2 from the proof of Proposition \ref{StartingPtIn2d}.   We omit some details.

With the basis from \eqref{BasisB}, rewrite $\pe$ as $$\pe=\tilde{a}_0\uS+\tilde{a}_1\eps\wF+\tilde{a}_2\eps^2\wT+\tilde{a}_3\eps^3\wO+b_1\eps^4\vNO+b_2\eps^5\vNT.$$
By taking $\tau$ large universally and $(\alpha_1,\alpha_2)$ solving 
$$
\alpha_1+\tilde{a}_0\tau=\tilde{a}_1,\text{ and }\alpha_2+\alpha_1\tau+\frac{1}{2}\tilde{a}_0\tau^2=\tilde{a}_2,
$$the function $q=\tilde{a}_0\uS+\alpha_1\eps\wF+\alpha_2\eps^2\wT$ satisfies 
$$\Rot_{-\tau\eps}(q)\ge\pe \text{ along }\partial\Ceta$$
for $\eps$ small. Recall the rotation operator $\Rot$ from \eqref{Rotation}.
By taking $\tau$ larger, if necessary, it can be verified that $q$ solves the thin obstacle problem in $\R^3$ (See Remark \ref{ConditionForSolving}).

By the maximum principle, we have 
\begin{equation}\label{UsingAgain}v\le\Rot_{-\tau\eps}(q)\text{ in }\Ceta.\end{equation}
With $q=0$ along $\widetilde{\R^3}$, we have 
$$v=0 \text{ in }\widetilde{\Ceta}\cap\{r>M\eps\}.$$
Using again \eqref{UsingAgain}, we have
$$
v-p\le \Rot_{-\tau\eps}(q)-p\le C\eps^\ST \text{ in }\mathcal{C}_{M\eps}.
$$

With a similar argument, we can construct a lower barrier of the form $\Rot_{\tau\eps}(\tilde{a}_0\uS+\beta_1\eps\wF+\beta_2\eps^2\wT)$, which implies
$$
(\SphLap+\lambda_\ST)v=0 \text{ in }\widehat{\Ceta}\cap\{r>M\eps\},
$$
and 
$$
v-p\ge-C\eps^\ST \text{ in }\mathcal{C}_{M\eps}.
$$

Finally, we apply the maximum principle to $v-p$ in $\Ceta\backslash\mathcal{C}_{M\eps}$ to get the desired bound on $|v-p|$.
\end{proof} 

We  now link  the  behavior of $v$ near $\{r=0\}\cap\Sph$ to the problem studied in Appendix \ref{2DProblem}. Before the precise statement, we introduce some notations.

For a function $w:\R^3\to\R$, let us denote by $\tilde{w}_\eps$ the following rescaling of its restriction to the plane $\{x_1=1\}$
\begin{equation}\label{RescRest}
\tilde{w}_\eps:\R^2\to\R, \text{ and } \tilde{w}_\eps(x_2,x_3):=\frac{1}{\eps^\ST}w(1,\eps x_2,\eps x_3).
\end{equation}
For the solution $v$ from \eqref{AnotherTOP}, it follows that $\tilde{v}_{\eps}$ solves the following 
\begin{equation}\label{LTOP}
\begin{cases}
\mathcal{L}_\eps \tilde{v}_{\eps}\le 0 &\text{ in }B_{R_\eps},\\
 \tilde{v}_{\eps}\ge 0 &\text{ in }B'_{R_\eps}, \\
 \mathcal{L}_\eps \tilde{v}_{\eps}=0 &\text{ in  }B_{R_\eps}\cap(\{\tilde{v}_{\eps}>0\}\cup\{x_3\neq 0\}),\\
 \tilde{v}_{\eps}=(\tilde{p}_{ext})_{\eps} &\text{  on }\partial B_{R_\eps},
\end{cases}
\end{equation}
where $R_\eps=\frac{\eta}{\eps\sqrt{1-\eta^2}}$, and $\mathcal{L}_\eps$ is the operator defined as
$$
\mathcal{L}_\eps w=\Delta_{\R^2}w+\eps^2[x\cdot (D^2_{\R^2}w\hem x)-5x\cdot\nabla_{\R^2}w+\frac{35}{4}w].
$$

\begin{prop}\label{Flattening}
For $\{a_j\}$, $\{b_j\}$, $\eps$ satisfying \eqref{CondPext}, let $v=v[a_j;b_j;\eps]$ be the solution to \eqref{AnotherTOP}. 

For $a_0^*\in[\frac 12,2]$ and $|a_j^*|\le 1$ for $j=1,2,3$,  let $v^*$ be the solution to \eqref{TOPIn2D} with data $p^*=a_0^*\uS+a_1^*\uF+a_2^*\uT+a_3^*\uO$ at infinity.

Given $R>0$, there is a modulus of continuity, $\omega_R$, depending only on $R$, such that 
$$
\|\tilde{v}_{\eps}-v^*\|_{L^\infty(B_R)}\le\omega_R(\sum|a_j-a_j^*|+\eps),
$$
where $\tilde{v}_\eps$ is defined in \eqref{RescRest}.
\end{prop}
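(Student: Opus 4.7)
The plan is to argue by compactness and contradiction. Suppose the conclusion fails: there exist $\sigma>0$ and sequences $(a_j^{(n)}, b_j^{(n)}, a_j^{*,(n)}, \eps_n)$ compatible with \eqref{CondPext} satisfying $\sum_j |a_j^{(n)}-a_j^{*,(n)}|+\eps_n\to 0$ while
$$\|\tilde v_{n,\eps_n}-v_n^*\|_{L^\infty(B_R)}\ge\sigma,$$
where $v_n=v[a_j^{(n)};b_j^{(n)};\eps_n]$ solves \eqref{AnotherTOP} and $v_n^*$ solves the 2D problem with data $p^{*,(n)}$ at infinity. By the uniform bounds in \eqref{CondPext}, pass to a subsequence with $a_j^{(n)},a_j^{*,(n)}\to a_j^\infty$ (a common limit, since their difference tends to zero) and $b_j^{(n)}\to b_j^\infty$; set $p^\infty:=a_0^\infty u_{\ST}+a_1^\infty u_{\frac 52}+a_2^\infty u_{\frac 32}+a_3^\infty u_{\frac 12}$.

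First I would establish uniform bounds and interior regularity for the rescaled family $\tilde v_{n,\eps_n}$. Lemma \ref{RoughBoundC} gives $|v_n-p_n|\le C\eps_n^{\ST}$ on $\Ceta$; through the rescaling \eqref{RescRest} this becomes $|\tilde v_{n,\eps_n}-\tilde p_{n,\eps_n}|\le C$ on every fixed $B_R$. A direct computation shows that $\tilde p_{n,\eps_n}$, the rescaling of the unextended profile, converges locally uniformly on $\R^2$ to $p^\infty$, which yields a uniform $L^\infty$ bound on $\tilde v_{n,\eps_n}$. The second half of Lemma \ref{RoughBoundC} confines the rescaled contact set inside $\{r\le M\}$ and shows $\mathcal{L}_{\eps_n}\tilde v_{n,\eps_n}=0$ away from this set. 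Combined with standard $C^{1,1/2}$ interior regularity for the thin obstacle problem associated to $\mathcal{L}_{\eps_n}$, which tends to $\Delta_{\R^2}$ as $\eps_n\to 0$, this gives uniform $C^{1,1/2}_{loc}$ estimates for $\tilde v_{n,\eps_n}$.

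By Arzel\`{a}--Ascoli and a diagonal extraction, $\tilde v_{n,\eps_n}\to v_\infty$ locally uniformly on $\R^2$, with $v_\infty$ a solution of the 2D thin obstacle problem \eqref{TOPIn2D}, and the bound $|\tilde v_{n,\eps_n}-\tilde p_{n,\eps_n}|\le C$ passes to the limit as $|v_\infty-p^\infty|\le C$ on $\R^2$. The same compactness applied to $v_n^*$ yields $v_n^*\to v^{*,\infty}$, a solution of \eqref{TOPIn2D} with $|v^{*,\infty}-p^\infty|\le C$ globally. By the uniqueness in Proposition \ref{StartingPtIn2d} of Appendix \ref{2DProblem}, that the 2D problem admits a unique bounded-error solution with given polynomial far-field data, we obtain $v_\infty=v^{*,\infty}$, contradicting $\|\tilde v_{n,\eps_n}-v_n^*\|_{L^\infty(B_R)}\ge\sigma$.

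The main obstacle will be the uniform $C^{1,1/2}$ regularity of $\tilde v_{n,\eps_n}$ across the rescaled free boundary, which is contained in the fixed bounded set $\{r\le M\}$ rather than being pushed off to infinity; because the $\eps_n$-dependent operator $\mathcal{L}_{\eps_n}$ differs from $\Delta_{\R^2}$ by first- and second-order terms with coefficients that grow in $|x|$, one must verify that the thin-obstacle regularity theory applies uniformly on the rescaled cap, and that the free boundary condition passes to the limit in the sense necessary to invoke the uniqueness from Appendix \ref{2DProblem}. Once these are in hand, the remaining steps are routine compactness.
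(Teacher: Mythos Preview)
Your proposal is correct and follows essentially the same compactness-and-uniqueness strategy as the paper. Two minor remarks: first, the paper simplifies slightly by observing at the outset that (by compactness of the parameter region for $(a_j^*)$) it suffices to treat a \emph{fixed} $(a_j^*)$, so that $v^*$ is a single function and no separate compactness argument for $v_n^*$ is needed; second, your stated ``main obstacle'' is not really one, since on any fixed ball $B_R$ the perturbation term $\eps^2[x\cdot D^2 w\,x-5x\cdot\nabla w+\tfrac{35}{4}w]$ has coefficients of size $O(\eps^2 R^2)\to 0$, so standard thin-obstacle compactness (local $C^{1,1/2}$ estimates for uniformly elliptic divergence-form operators with smooth coefficients) applies without difficulty, exactly as the paper implicitly uses.
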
 

\begin{proof}
With the compactness of the region for $(a_j^*),$ it suffices to find a modulus of continuity for a fixed $(a_j^*)$. 

Suppose there is no such $\omega$, we find a sequence $(a_j^n,b_j^n,\eps_n)$ with $\eps_n\to 0$ and $a_j^n\to a_j^*$ such that 
\begin{equation}\label{LargerThanDeltaInFlattening}
\|\tilde{v}^n_{\eps_n}-v^*   \|_{L^\infty(B_R )}\ge\delta>0, 
\end{equation} 
where $v^n=v^n[a_j^n;b_j^n;\eps_n]$ as in \eqref{AnotherTOP}.

With the bound on $|v-p|$ from Lemma \ref{RoughBoundC}, we have, for a universal $C$,  
$$
|\tilde{v}^n_{\eps_n}-[p_n+\eps_n^2(-a_2^n\frac{r^2}{5}\uT+a_3^nr^2\uO)]|\le C \text{ in }B_{R_{\eps_n}},
$$
where $R_{\eps_n}=\frac{\eta}{\eps_n\sqrt{1-\eta^2}}$ and 
$$p_n=a_0^n\uS+a_1^n\uF+a_2^n\uT+a_3^n\uO.$$

Consequently, for any compact $K\subset\R^2$, there is a constant $C_K$ depending only on $K$, such that 
$$
|\tilde{v}^n_{\eps_n}-p_n|\le C+C_K\eps_n^2 \text{ in }K \text{ if $n$ is large.}
$$

With $\tilde{v}^n_{\eps_n}$ solving \eqref{LTOP}, we have, up to a subsequence, 
\begin{equation}\label{ConvInFlattening}
\tilde{v}^n_{\eps_n}\to u_\infty \text{ locally uniformly in }\R^2,
\end{equation}
with the limit $u_\infty$ solving the thin obstacle problem \eqref{IntroTOP} in $\R^2$.

Now for any $x\in\R^2$, we find a compact set $K\ni x,$ then 
\begin{align*}
|u_\infty-p^*|(x)&\le\limsup [|u_\infty-\tilde{v}^n_{\eps_n}|(x)+|\tilde{v}^n_{\eps_n}-p_n|(x)+|p_n-p^*|(x)]\\
&\le \limsup(C+C_K\eps_n^2)\le C.
\end{align*}
With this, we have $\sup_{\R^2}|u_\infty-p^*|<+\infty,$ and by the uniqueness result in Proposition \ref{StartingPtIn2d}, we have $u_\infty=v^*.$ This contradicts \eqref{LargerThanDeltaInFlattening} and \eqref{ConvInFlattening}.
\end{proof} 

If we apply Proposition \ref{Flattening} to the special case when $a_j=a_j^*$, we see that the infinitesimal behavior of $v$ near $\{r=0\}\cap\Sph$ is not affected by the coefficient $b_j$. This allows the fixed argument in the following lemma:

\begin{lem}\label{ChoosingTheRightCoeff}
Let $a_j$ satisfy $a_0\in[\frac12,2]$ and $|a_j|\le1$ for $j=1,2,3$, and $A$ be the universal constant from Proposition \ref{StartingPtIn2d}. 

We can find $b_1$ and $b_2$ with $|b_j|\le A+1$ such that 
$$
|v-\pe|_{L^\infty(\Ceta\backslash\mathcal{C}_{\eta/2})}\le C\eps^6,
$$
where $v=v[a_j;b_j;\eps]$ and $\pe=\pe[a_j;b_j;\eps]$ are defined in \eqref{AnotherTOP} and \eqref{PextOld}, and $C$ is universal.

Moreover, there is a universal modulus of continuity, $\omega$, such that 
$$|b_j-a_0\cdot b_j^{\R^2}[a_1/a_0,a_2/a_0,a_3/a_0]|\le\omega(\eps)$$ for $j=1,2$.
\end{lem}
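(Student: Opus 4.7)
The natural candidate for $b_j$ is $b_j^{0}:=a_0\,b_j^{\R^2}[a_1/a_0,a_2/a_0,a_3/a_0]$, where $b_j^{\R^2}$ is provided by Proposition~\ref{StartingPtIn2d}; the actual $b_j$ will differ from $b_j^{0}$ by an $\omega(\eps)$ correction. The plan is to reduce the bound $|v-\pe|\le C\eps^{6}$ on $\Ceta\setminus \mathcal{C}_{\eta/2}$ to an estimate of $|v-\pe|$ on the inner boundary $\partial\mathcal{C}_{\eta/2}$, and then control the latter via the $2$D reference problem from Appendix~\ref{2DProblem}.

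Observe first that $\pe$ is $\ST$-homogeneous and harmonic in the slit domain $\widehat{\R^{3}}$, since the singular contributions $\vNO, \vNT$ from \eqref{SingularHarmFunc} were introduced precisely to have this property. Combined with Lemma~\ref{RoughBoundC}, which places the contact set of $v$ inside $\mathcal{C}_{M\eps}$, this implies that for $\eps$ small the difference $W:=v-\pe$ is slit-harmonic throughout $\Ceta\setminus\mathcal{C}_{\eta/2}$, vanishes on $\widetilde{\Ceta}$, and vanishes on $\partial\Ceta$ by construction. A maximum-principle argument for slit-harmonic functions then yields
\begin{equation*}
\|W\|_{L^\infty(\Ceta\setminus \mathcal{C}_{\eta/2})}\le C\,\|W\|_{L^\infty(\partial\mathcal{C}_{\eta/2})},
\end{equation*}
so it remains to bound $W$ on $\partial\mathcal{C}_{\eta/2}$.

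To estimate $W$ there, I pass to the rescaling $\tilde w_\eps$ of \eqref{RescRest}. A direct Taylor expansion along $\{x_1=1\}$ shows
\begin{equation*}
\tilde{\pe}_\eps(y)\;=\;p^{*}(y)+(b_1/a_0)\,u_{-\frac12}(y)+(b_2/a_0)\,u_{-\frac32}(y)+O(\eps^2)
\end{equation*}
uniformly on compact subsets of $\R^{2}$, where $p^{*}=a_0 u_{\ST}+a_1 u_{\frac52}+a_2 u_{\frac32}+a_3 u_{\frac12}$. With the choice $b_j=b_j^{0}$, these leading terms match the asymptotic expansion $v^{*}(y)=p^{*}(y)+b_1^{\R^2}u_{-\frac12}(y)+b_2^{\R^2}u_{-\frac32}(y)+O(|y|^{-5/2})$ supplied by Proposition~\ref{StartingPtIn2d}. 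On $\partial\mathcal{C}_{\eta/2}$ we have $|y|\sim\eta/(2\eps)$, so the $O(|y|^{-5/2})$ tail gives $O(\eps^{5/2})$ in rescaled coordinates; unwinding the $\eps^{\ST}$ factor from the rescaling produces the target $O(\eps^{\ST+5/2})=O(\eps^{6})$ on $\partial\mathcal{C}_{\eta/2}$ in the original scale.

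The hard step is making this matching quantitative, since Proposition~\ref{Flattening} provides only qualitative convergence of $\tilde v_\eps\to v^{*}$ on fixed compact subsets of $\R^{2}$, whereas $\partial\mathcal{C}_{\eta/2}$ corresponds to $|y|\sim 1/\eps$, well outside any fixed compact set. My plan is to build an ansatz $V_\eps=v^{*}+\eps^{2}w_1+\eps^{4}w_2$ with corrections $w_1,w_2$ chosen so that $\Leta V_\eps=0$ up to $O(\eps^{6})$ and $V_\eps$ matches $\tilde{\pe}_\eps$ on $\partial B_{R_\eps}$ to the same precision, and then bound $\tilde v_\eps-V_\eps$ by stability of the thin obstacle problem, exploiting the explicit $O(r^{-5/2})$ decay of each $w_j$ inherited from the $2$D expansion. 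Finally, the $\omega(\eps)$-perturbation of $b_j$ comes from a continuity argument: the map $(b_1,b_2)\mapsto(\text{coefficients of }u_{-\frac12},u_{-\frac32}\text{ in the expansion of }W\text{ near }\partial\mathcal{C}_{\eta/2})$ is, to leading order in $\eps$, the identity on the compact box $|b_j|\le A+1$, so a fixed-point argument locates a choice of $b_j$ absorbing any residual leading-order error while staying within $\omega(\eps)$ of $b_j^{0}$.
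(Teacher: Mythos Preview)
Your proposal contains a genuine gap precisely at the point you yourself flag as ``the hard step.'' You attempt to estimate $W=v-\pe$ directly on $\partial\mathcal{C}_{\eta/2}$, which in the rescaled picture sits at radius $|y|\sim\eta/\eps$; Proposition~\ref{Flattening} gives nothing there. Your remedy --- an ansatz $V_\eps=v^*+\eps^2 w_1+\eps^4 w_2$ together with an unspecified ``stability of the thin obstacle problem'' --- is not substantiated. In particular, the claim that each $w_j$ inherits $O(r^{-5/2})$ decay from the 2D expansion has no basis: the forcing $(\mathcal{L}_\eps-\Delta)v^*$ acts on terms of $v^*$ with various homogeneities and does not yield decaying sources at infinity in any obvious way, nor is it clear the resulting equations for $w_j$ admit solutions with the decay you need. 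The fixed-point argument for $b_j$ is likewise vague: you never identify what functional of $W$ the parameters $b_j$ are supposed to annihilate.

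The paper bypasses the large-$|y|$ difficulty entirely by working on the \emph{inner} circle $\partial\mathcal{C}_{M\eps}$ rather than on $\partial\mathcal{C}_{\eta/2}$. After rescaling, $\partial\mathcal{C}_{M\eps}$ lies at radius $R_\eps\le 2M$, which is \emph{fixed}, so Proposition~\ref{Flattening} applies directly there. The bridge back to the outer region is Lemma~\ref{FourierExpansionLemma}: since $W$ is slit-harmonic in $\Ceta\setminus\mathcal{C}_{M\eps}$, vanishes on $\partial\Ceta$, and satisfies $\sup_{\partial\mathcal{C}_{M\eps}}|W|\le C\eps^{7/2}$ by Lemma~\ref{RoughBoundC}, the Fourier lemma with $m=2$ gives $\sup_{\Ceta\setminus\mathcal{C}_{\eta/2}}|W|\le C(M\eps)^{5/2}\cdot\eps^{7/2}=C\eps^6$ \emph{provided} the first two Fourier coefficients $\int_{\partial\mathcal{C}_{M\eps}}W\cos(\tfrac12\theta)$ and $\int_{\partial\mathcal{C}_{M\eps}}W\cos(\tfrac32\theta)$ vanish. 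Rescaling these integrals to $\partial B_{R_\eps}$ and invoking Corollary~\ref{VanishingFourier2D} (which says the corresponding integrals for $v^*-\pe^*$ vanish exactly) plus Proposition~\ref{Flattening} (now legitimately, on $B_{2M}$), each Fourier condition becomes an equation of the form $\omega(\eps)+c(\beta_j-b_j)=0$, linear in $b_j$. Solving gives $|b_j-\beta_j|\le C\omega(\eps)$, which is both the existence of $b_j$ and the closeness estimate, in one stroke.
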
 
Recall the definition of  $b_j^{\R^2}[\dots]$ from Remark \ref{AppenBj}.

\begin{proof}
With Lemma \ref{RoughBoundC}, we have 
$$
\begin{cases}
(\SphLap+\lambda_\ST)(v-\pe)=0 &\text{ in } \widehat{\Ceta\backslash\mathcal{C}_{M\eps}},\\
v-\pe=0 &\text{ on } \partial\Ceta\cup\widetilde{\Ceta\backslash\mathcal{C}_{M\eps}},
\end{cases}
$$ 
and $|v-\pe|\le C\eps^{\frac 72}$ along $\partial\mathcal{C}_{M\eps}$.
By Lemma \ref{FourierExpansionLemma} with $m=2$, to get the bound on $|v-\pe|$, it suffices to choose $b_j$ so that 
\begin{equation}\label{NEED}
\int_{\partial\mathcal{C}_{M\eps}}(v-\pe)\cdot\cos(\frac 12\theta)=0, \text{ and }
\int_{\partial\mathcal{C}_{M\eps}}(v-\pe)\cdot\cos(\frac 32\theta)=0.
\end{equation}

By a change of variable, we have
\begin{equation}\label{OneHalfCoeff}
\int_{\partial\mathcal{C}_{M\eps}}(v-\pe)\cdot\cos(\frac 12\theta)=\eps^{\frac 92}(1-M^2\eps^2)^{\frac 94}\int_{\partial B_{R_\eps}}(\tilde{v}_\eps-(\tilde{p}_{ext})_\eps)\cdot\cos(\frac 12\theta),
\end{equation}
where $R_\eps=\frac{M}{\sqrt{1-M^2\eps^2}}$, and $\tilde{v}_\eps$, and $(\tilde{p}_{ext})_\eps$ are defined in \eqref{RescRest}.

Now let us we define 
$$p^*=a_0\uS+a_1\uF+a_2\uT+a_3\uO, $$
$$v^* \text{ is the solution to \eqref{TOPIn2D} with data $p^*$ at infinity}, $$
$$ \beta_j=a_0\cdot b_j^{\R^2}[a_1/a_0,a_2/a_0,a_3/a_0]$$ and 
$$\pe^*=p^*+\beta_1u_{-\frac12}+\beta_2u_{-\frac32}.$$
Then we can compute the last term in \eqref{OneHalfCoeff} as
\begin{align*}
\int_{\partial B_{R_\eps}}(\tilde{v}_\eps-(\tilde{p}_{ext})_\eps)\cdot\cos(\frac 12\theta)=&\int_{\partial B_{R_\eps}}(\tilde{v}_\eps-v^*)\cdot\cos(\frac 12\theta)\\+\int_{\partial B_{R_\eps}}(v^*-\pe^*)&\cdot\cos(\frac12\theta)+\int_{\partial B_{R_\eps}}(\pe^*-(\tilde{p}_{ext})_\eps)\cdot\cos(\frac 12\theta)
\end{align*}
With Corollary \ref{VanishingFourier2D}, the second term vanishes.
With Proposition \ref{Flattening} and $R_\eps\le 2M$ for $\eps$ small, the first term is of order $\omega_{2M}(\eps)\int_{\partial B_{R_\eps}}\cos(\frac12\theta)$.  We can use the definition of $\pe$ and $\pe^*$ to continue
\begin{align*}
\int_{\partial B_{R_\eps}}(\tilde{v}_\eps-(\tilde{p}_{ext})_\eps)\cdot\cos(\frac 12\theta)=\int_{\partial B_{R_\eps}}\cos(\frac{1}{2}\theta)\cdot\omega_{2M}(\eps)+(\beta_1-b_1)R_\eps^{-\frac12}\int_{\partial B_{R_\eps}}\cos^2(\frac12\theta).
\end{align*}
By adjusting $b_1$, we can make the right-hand side $0$. Moreover, this choice of $b_1$ satisfies 
\begin{equation}\label{B1Close}
|b_1-\beta_1|\le 2M^{\frac 12}\omega_{2M}(\eps),
\end{equation} 
where $\omega_{2M}$ is the modulus of continuity from Proposition \ref{Flattening}, and $M$ is the constant from Lemma \ref{RoughBoundC}. 

In particular, for $\eps$ small, we can find $b_1$ satisfying $|b_1|\le A+1$ as in \eqref{CondPext} such that 
$$\int_{\partial\mathcal{C}_{M\eps}}(v-\pe)\cdot\cos(\frac 12\theta)=0.$$
A similar argument gives $b_2$ satisfying  $|b_2|\le A+1$ such that 
\begin{equation}\label{B2Close}
|b_2-\beta_2|\le CM^{\frac 32}\omega_{2M}(\eps), \end{equation}
and 
$$\int_{\partial\mathcal{C}_{M\eps}}(v-\pe)\cdot\cos(\frac 32\theta)=0.$$

Therefore, we have \eqref{NEED} and the bound on $|v-\pe|$ follows. The control on $|b_j-\beta_j|$ is  consequence of \eqref{B1Close} and \eqref{B2Close}.
\end{proof} 

With these, we finally define the extended profile:
\begin{defi}\label{ExtendedP}
Corresponding to $p=a_0\uS+a_1\eps\vF+a_2\eps^2\vT+a_3\eps^3\vO$ with $a_j,\eps$ satisfying \eqref{CondPext}, we define the \textit{extended profile}, $\pe$, by 
$$
\pe=
\begin{cases}&p+b_1^+\eps^4\vNO+b_2^+\eps^5\vNT \text{ in }\{x_1>0\},\\
&p+b_1^-\eps^4\vNO+b_2^-\eps^5\vNT \text{ in }\{x_1<0\},
\end{cases}
$$
where $b_j^+$ are the coefficients from Lemma \ref{ChoosingTheRightCoeff}, and $b_j^-$ are given by a similar procedure in $\Ceta^-.$ 
\end{defi} 

\begin{rem}\label{BjSph}
Corresponding to $a_j,\eps$, we refer the coefficients $b_j^\pm$ from Definition \ref{ExtendedP} by
$$
b_j^{\pm,\Sph}[a_j;\eps]:=b_j^\pm.
$$
\end{rem}

\subsection{Boundary layer near $\mathcal{A}_3$ and well-approximated solutions}\label{SubsectionBoundaryLayerC}
In this section we construct the replacement of profiles near $\mathcal{A}_3$. It is illustrative to compare this subsection with the construction from Subsection \ref{SubsectionBoundaryLayerB}.

Given $p=a_0\uS+a_1\eps\vF+a_2\eps^2\vT+a_3\eps^3\vO$, satisfying \eqref{CondPext},  and the corresponding $\pe$ as in Definition \ref{ExtendedP}, we define $v_p^\pm$ as the solution to \eqref{AnotherTOP} in $\Ceta^\pm$. 

If $b^+_j\neq b^-_j$ in Definition \ref{ExtendedP}, the extended profile $\pe$ is discontinuous along $\{x_1=0\}$. To fix this issue, we make another replacement in the following layer
\begin{equation*}
\mathcal{L}_\eta:=\{|x_1|<\eta\}\cap\Sph.
\end{equation*} The cone generated by $\mathcal{L}_\eta$ is denoted by the same notation.

In this layer, we solve 
\begin{equation*}
\begin{cases}
(\SphLap+\lambda_\ST)h_p=0 &\text{ in }\widehat{\Leta},\\
h_p=\pe &\text{ in }\partial\Leta\cup\widetilde{\Leta}.
\end{cases}
\end{equation*} 
Recall the notation for slit domains from \eqref{SlitSet}.

With this, we define the replacement of $p$ as:
\begin{defi}\label{ReplacementC}
For $p$ satisfying \eqref{CondPext}, its \textit{replacement}, $\pbar$, is defined as 
$$
\pbar=\begin{cases}
v_p^\pm &\text{ in }\Ceta^\pm,\\
h_p &\text{ in }\Leta,\\
\pe &\text{ in }\Sph\backslash(\Ceta^\pm\cup\Leta).
\end{cases}$$

Equivalently, the replacement $\pbar$ is the minimizer of the energy
$$v\mapsto\int_{\Sph}|\SphGrad v|^2-\lambda_\ST v^2$$
over 
$$\{v: v=\pe \text{ outside }\Ceta^\pm\cup\Leta, \text{ and }v\ge 0 \text{ on } \Hpp\cap\Ceta^\pm\}.$$
\end{defi} 

This replacement satisfies
\begin{equation}\label{EqnForReplacementC}\begin{cases}
(\SphLap+\lambda_\ST)\pbar=f_{1,p} dH^1|_{\partial\Leta}+f_{2,p}^\pm dH^1|_{\partial\Ceta^\pm}+g_p^\pm dH^1|_{\Ceta^\pm\cap\Hpp} &\text{ in }\widehat{\Sph}\cup\{r<\eta\},\\
\pbar=0 &\text{ in }\widetilde{\Sph}\cap\{r\ge\eta\}.
\end{cases}\end{equation}

Similar to Subsection \ref{SubsectionBoundaryLayerB}, we define some auxiliary functions. 

The projection of $f_{1,p}$ into $\mathcal{H}_\ST$ (see \eqref{7/2HarmFunc}) is denoted by $\varphi_{1,p}$, namely,
\begin{equation}\label{phi1C}
\varphi_{1,p}:=c_{\ST}\uS+c_{\frac 52}\vF+c_{\frac 32}\vT+c_{\frac 12}\vO,
\end{equation}
where $$c_{\ST}=\frac{1}{\|\uS\|_{L^2(\Sph)}}\cdot\int_{\Sph}\uS\cdot f_{1,p}dH^1|_{\partial\Leta}, \text{ and }
c_{m+\frac 12}= \frac{1}{\|v_{m+\frac 12}\|_{L^2(\Sph)}}\cdot\int_{\Sph}v_{m+\frac 12}\cdot f_{1,p}dH^1|_{\partial\Leta}$$
for $m=0,1,2.$

By Fredholm alternative, there is a unique function $H_{1,p}:\Sph\to\R$ satisfying 
$$\begin{cases}
(\SphLap+\lambda_\ST)H_{1,p}=f_{1,p}dH^1|_{\partial\Leta}-\varphi_{1,p} &\text{ on $\widehat{\Sph},$}\\
H_{1,p}=0 &\text{ on $\widetilde{\Sph}$.}
\end{cases}$$

Corresponding to these,  we define 
\begin{equation}\label{Phi1C}
\Phi_{1,p}:=H_{1,p}(\frac{x}{|x|})|x|^\ST+\frac 18\varphi_{1,p}(\frac{x}{|x|})|x|^\ST\log(|x|),
\end{equation}
which satisfies
\begin{equation}\label{EqnForPhi1C}\begin{cases}
\Delta\Phi_{1,p}=f_{1,p}dH^2|_{\partial\Leta} &\text{ in $\widehat{\R^3}$,}\\
\Phi_{1,p}=0 &\text{ on $\widetilde{\R^3}.$}
\end{cases}\end{equation}

Let's denote 
\begin{equation}\label{Kappa1C}
\kappa_{1,p}:=\|\Phi_{1,p}\|_{L^\infty(B_1)},
\end{equation}which measures the size of the error coming from the gluing procedure along $\partial\Leta.$

Similarly, corresponding to $f_{2,p}^+dH^1|_{\partial\Ceta^+}+f_{2,p}^-dH^1|_{\partial\Ceta^-}$ from \eqref{EqnForReplacementC}, we define
$$
\varphi_{2,p}:=\operatorname{Proj}_{\mathcal{H}_\ST}(f_{2,p}^+dH^1|_{\partial\Ceta^+}+f_{2,p}^-dH^1|_{\partial\Ceta^-}). 
$$
The function $H_{2,p}$ is the unique solution to 
$$
(\SphLap+\lambda_\ST)H_{2,p}=f_{2,p}^+dH^1|_{\partial\Ceta^+}+f_{2,p}^-dH^1|_{\partial\Ceta^-}-\varphi_{2,p} \hem \text{ on $\widehat{\Sph},$ and }\hem
H_{2,p}=0 \text{ on $\widetilde{\Sph}$.}
$$
Finally, we let 
$$
\Phi_{2,p}:=H_{2,p}(\frac{x}{|x|})|x|^\ST+\frac 18\varphi_{2,p}(\frac{x}{|x|})|x|^\ST\log(|x|),
$$
\begin{equation}\label{Kappa2C}
\kappa_{2,p}:=\|  \Phi_{2,p} \|_{L^\infty(B_1 )},
\end{equation}
and 
\begin{equation}\label{SumOfKappaC}
\kappa_p:=\kappa_{1,p}+\kappa_{2,p}.
\end{equation}

The subscript $p$ is often omitted when there is no ambiguity.

We estimate the change in $\pbar$ when $p$ is modified:
\begin{lem}\label{CommutatorC}
Suppose that $p=a_0\uS+a_1\eps\vF+a_2\eps^2\vT+a_3\eps^3\vO$ satisfies \eqref{CondPext},  and that $q=\alpha_0\uS+\alpha_1\vF+\alpha_2\vT+\alpha_3\vO$ satisfies $|\alpha_j|\le 1.$ 

Given small $a>0$, we have, for $C_a>0$ big depending on $a$, 
$$
\| \overline{p+dq}-(\pbar+dq)  \|_{L^\infty(\Sph )}\le ad+C_a\eps^6 
$$ for small $d,\eps>0$.
\end{lem}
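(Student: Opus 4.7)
The plan is to mirror the barrier-plus-maximum-principle scheme of Lemma \ref{CommutatorB}, with two added ingredients to handle the singular pieces of $\pe$ from Definition \ref{ExtendedP} and the gluing layer $\Leta$ from Definition \ref{ReplacementC}. Writing $w := \overline{p+dq} - (\pbar + dq)$, I decompose $\Sph$ into the three regions $\Ceta^\pm$, $\Leta$, and $\Sph\setminus(\Ceta^\pm \cup \Leta)$ and estimate $w$ in each.

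The first preparation is the continuity of the coefficients $b_j^{\pm,\Sph}[\cdot;\eps]$ of Remark \ref{BjSph}. By Lemma \ref{ChoosingTheRightCoeff} each one is within $\omega(\eps)$ of $a_0 \cdot b_j^{\R^2}[a_1/a_0, a_2/a_0, a_3/a_0]$, and the two-dimensional coefficient map inherits continuity in its arguments from the stability of the 2D thin obstacle problem in Appendix \ref{2DProblem}. This yields a universal modulus $\omega_0$ with $|b_j^{\pm,\Sph}[p+dq;\eps] - b_j^{\pm,\Sph}[p;\eps]| \le \omega_0(d) + 2\omega(\eps)$.

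In $\Sph\setminus(\Ceta^\pm\cup\Leta)$, both $\pbar$ and $\overline{p+dq}$ coincide with their extended profiles, and since $q$ carries no $\vNO, \vNT$ part, $w$ reduces to $(\Delta b_1^\pm)\eps^4 \vNO + (\Delta b_2^\pm)\eps^5 \vNT$, which is uniformly controlled because $\vNO, \vNT$ remain bounded off $\{r=0\}$. In $\Leta$ both $h_p$ and $h_{p+dq}$ solve the same linear equation with boundary data given by the respective $\pe$'s, so the maximum principle applied to $h_{p+dq} - h_p - dq$ transports the preceding $\partial\Leta$-estimate inward. In each cap $\Ceta^\pm$, Lemma \ref{RoughBoundC} localizes both contact sets to $\{r \le M\eps\}$: outside that strip the obstacle is inactive and $q$ is harmonic, so a $(\SphLap+\lambda_\ST)$-harmonic-measure barrier modelled on Step 1 of the proof of Lemma \ref{CommutatorB} dominates $w$ by its values on $\partial\Ceta^\pm$ and $\partial\mathcal{C}_{M\eps}^\pm$; inside the strip, Lemma \ref{ChoosingTheRightCoeff} directly gives $|v_p^\pm - \pe|, |v_{p+dq}^\pm - (p+dq)_{ext}| \le C\eps^6$, which absorbs $w$ into the $\eps^6$ term. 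A regime split $d \le \eps^\gamma$ versus $d > \eps^\gamma$ with $\gamma = \gamma(a)$ then converts the combined estimate $(\omega_0(d)+\omega(\eps))\eps^4 + C\eps^6$ into the desired $ad + C_a\eps^6$.

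The main obstacle is that $b_j^{\pm,\Sph}$ is only continuous (and not Lipschitz) in the coefficients of $p$: this is precisely what forces the degradation from a clean `$Cd$' bound, as in Lemma \ref{CommutatorB}, to the weaker `$ad$' with an $a$-dependent constant $C_a$. The dichotomy $d \lessgtr \eps^\gamma$ is what reconciles the continuity slack $\omega_0(d)\eps^4$ (small when $d$ is small compared to a fixed power of $\eps$) with the structural residual $C\eps^6$ from Lemma \ref{ChoosingTheRightCoeff} into a single admissible bound, and any attempt to drop the arbitrary $a$ would require Lipschitz dependence of $b_j^{\R^2}$, which is not available at this level of generality.
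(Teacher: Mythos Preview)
Your approach has a genuine gap in the coefficient estimate. You bound $|b_j^{\pm,\Sph}[p+dq;\eps] - b_j^{\pm,\Sph}[p;\eps]|$ by routing through Lemma~\ref{ChoosingTheRightCoeff} and the 2D map $b_j^{\R^2}$, which costs you a term $2\omega(\eps)$ coming from the two applications of that lemma. Multiplied by $\eps^4$ from $\vNO$, this produces a contribution of order $\omega(\eps)\eps^4$ in your final bound. But $\omega$ is only a modulus of continuity, so $\omega(\eps)\eps^4$ need not be $O(\eps^6)$, and it is certainly not $O(d)$ since it is independent of $d$. No regime split $d\lessgtr\eps^\gamma$ rescues this: in the regime $d\le\eps^\gamma$ you would need $\omega(\eps)\le C_a\eps^2$, which is not available. (There is also a secondary issue: the 2D inputs shift by $O(d/\eps^j)$, not $O(d)$, so your $\omega_0(d)$ should be $\omega_0(d/\eps^3)$ at best.)

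The paper sidesteps this by never passing through the 2D coefficients. Instead, it observes that both $\pbar$ and $\overline{p+dq}$ solve the thin obstacle problem \eqref{AnotherTOP} in $\Ceta$, so by the maximum principle $r\mapsto\|\pbar-\overline{p+dq}\|_{L^\infty(\partial\mathcal{C}_r)}$ is nondecreasing. Combining this with the $C_{r_0}\eps^6$ bound on $|\pbar-\pe|$ and $|\overline{p+dq}-(p+dq)_{ext}|$ in $\Ceta\backslash\mathcal{C}_{r_0}$ (Lemma~\ref{ChoosingTheRightCoeff}) and the smallness $|q|\le Cr^{1/2}$, one obtains directly
\[
|b_1-\beta_1|\eps^4 r_0^{-1/2}+|b_2-\beta_2|\eps^5 r_0^{-3/2}\le C_{r_0}\eps^6+Cr_1^{1/2}d
\]
for two radii $r_0\ll r_1$, with no $\omega(\eps)$ term. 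The free parameter $r_1$ is then chosen so that $Cr_1^{1/2}<a$, which is exactly what generates the $ad+C_a\eps^6$ structure. The rest of your outline (maximum principle in $\Leta$, etc.) is fine once this sharper coefficient bound is in hand.
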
 

\begin{rem}\label{dleeps3}
It suffices to consider the case when $d\le\eps^3$. In case $d>\eps^3$, we define $\bar{\eps}=d^{\frac 13}> \eps$, and rewrite $p=a_0\uS+\tilde{a}_1\bar{\eps}\vF+\tilde{a}_2\bar{\eps}^2\vT+\tilde{a}_3\bar{\eps}^3\vO$. Then $|\tilde{a}_j|\le|a_j|$. Consequently, conditions  from \eqref{CondPext} are satisfied if $d$ is small enough. 

Once the case for $d\le\eps^3$ is established, to deal with the case $d>\eps^3$, we can apply the conclusion to get an upper bound of the form $ad+C_a\bar{\eps}^6=ad+C_ad^2$, leading to the desired conclusion if we slightly decrease $a$ and choose $d$ small enough.
\end{rem} 

\begin{proof}
We first focus on the estimate in $\{x_1>0\}$. In this region, suppose
$$
\pe=p+b_1\eps^4\vNO+b_2\eps^5\vNT, \text{ and }(p+dq)_{ext}=p+dq+\beta_1\eps^4\vNO+\beta_2\eps^5\vNT.
$$

\textit{Step 1: Estimate on $|b_j-\beta_j|$.}

For $r_0>0$ to be chosen, with the same argument as in Lemma \ref{ChoosingTheRightCoeff}, we have 
\begin{equation}\label{FirstIn54}
|\pbar-\pe|+|\overline{p+dq}-(p+dq)_{ext}|\le C_{r_0}\eps^6 \text{ in }\Ceta\backslash\mathcal{C}_{r_0},
\end{equation}
for a constant $C_{r_0}$ depending on $r_0$.

Since $\pbar$ and $\overline{p+dq}$ both solve the thin obstacle problem \eqref{AnotherTOP} in $\Ceta$, the maximum principle implies that the function $r\mapsto \|\pbar-\overline{p+dq}   \|_{L^\infty( \partial\mathcal{C}_r)} $ is increasing for $r\in(0,\eta).$ With the previous estimate, this gives
\begin{align*}
\|(b_1-\beta_1)\eps^4\vNO+(b_2-\beta_2)\eps^5\vNT   \|_{L^\infty(\partial\mathcal{C}_{r_0} )} &\le 
\|(b_1-\beta_1)\eps^4\vNO+(b_2-\beta_2)\eps^5\vNT   \|_{L^\infty(\partial\mathcal{C}_{r_1} )}\\&+C_{r_0}\eps^6+Cr_1^{\frac12}d
\end{align*}
if $r_0<r_1<\eta$. The last term is a consequence of the bound $|q|\le Cr_1^\frac12$ in $\mathcal{C}_{r_1}$.

With the orthogonality between $\vNO$ and $\vNT$, this implies
$$
|b_1-\beta_1|\eps^4r_0^{-\frac12}+|b_2-\beta_2|\eps^5r_0^{-\frac32}\le C[|b_1-\beta_1|\eps^4r_1^{-\frac12}+|b_2-\beta_2|\eps^5r_1^{-\frac 32}]+C_{r_0}\eps^6+Cr_1^{\frac12}d,
$$
where $C$ is universal. 
By choosing $r_1/r_0\gg 1$, universally, we have
$$
|b_1-\beta_1|\eps^4r_0^{-\frac12}+|b_2-\beta_2|\eps^5r_0^{-\frac32}\le C_{r_0}\eps^6+Cr_1^{\frac12}d.
$$

\text{ }

\textit{Step 2: Estimate in $\{x_1>\eta\}$.}

With the final estimate from the previous step, we have 
$$
|(\pe+dq)-(p+dq)_{ext}|\le C_{r_0}\eps^6+Cr_1^{\frac12}d \hem \text{ in } \{x_1>0\}\backslash\mathcal{C}_{r_0}.
$$
With the maximum principle and \eqref{FirstIn54}, this gives
$$
|\overline{p+dq}-(\pbar+dq)|\le C_{r_0}\eps^6+Cr_1^{\frac12}d \text{ in }\Ceta.
$$

Meanwhile, $\overline{p+dq}=(p+dq)_{ext}$ and $\pbar=\pe$ in $\{x_1>\eta\}\backslash\Ceta$ by definition, we conclude 
$$|\overline{p+dq}-(\pbar+dq)|\le C_{r_0}\eps^6+Cr_1^{\frac12}d \text{ in }\{x_1>\eta\}.$$

\text{ }

\textit{Step 3: Conclusion.}

With a symmetric argument, we have 
$$|\overline{p+dq}-(\pbar+dq)|\le C_{r_0}\eps^6+Cr_1^{\frac12}d \text{ in }\{x_1<-\eta\}.$$
Using the maximum principle, we have
$$|\overline{p+dq}-(\pbar+dq)|\le C_{r_0}\eps^6+Cr_1^{\frac12}d \text{ in }\Sph.$$

From here, we choose $r_1$ small, depending on $a$ such that $Cr_1^{\frac12}<a$. Then we choose $r_0\ll r_1$, which fixes the constant $C_{r_0}$.
\end{proof}

Lemma \ref{CommutatorC} leads to a control over the change in $\kappa_{1,p}$ and $\kappa_{2,p}$ from \eqref{Kappa1C} and \eqref{Kappa2C} when $p$ is modified. The proof is similar to the proof for Corollary \ref{ChangeInRHSB} and is omitted.

\begin{cor}\label{ChangeInRHSC}
With the same assumption and the same notation as in Lemma \ref{CommutatorC}, we have
$$\kappa_{j,p+dq}\le\kappa_{j,p}+ad+C_a\eps^6 \text{ for }j=1,2.$$
\end{cor}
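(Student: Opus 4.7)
The plan is to mimic the proof of Corollary~\ref{ChangeInRHSB}, working with the single auxiliary function
\[
V := \overline{p+dq}-(\pbar+dq).
\]
Applying Lemma~\ref{CommutatorC} with $a$ replaced by $a/C$ (where $C$ is the boundary-regularity constant that appears below) yields $\|V\|_{L^\infty(\Sph)}\le (a/C)d+C_{a/C}\eps^6$. Since $q\in\mathcal{H}_\ST$ implies $(\SphLap+\lambda_\ST)(dq)=0$, the function $V$ is continuous across the smooth surfaces $\partial\Leta$ and $\partial\Ceta^\pm$, and it is $(\SphLap+\lambda_\ST)$-harmonic on each of their two sides: Lemma~\ref{RoughBoundC} applied to both $v_p^\pm$ and $v_{p+dq}^\pm$ places the coincidence sets of $\pbar$ and $\overline{p+dq}$ inside $\mathcal{C}_{M\eps}^\pm$, well separated from $\partial\Ceta^\pm$ and from $\partial\Leta$ once $\eps$ is small. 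By construction the one-sided normal-derivative jumps of $V$ across $\partial\Leta$ and $\partial\Ceta^\pm$ are precisely $f_{1,p+dq}-f_{1,p}$ and $f_{2,p+dq}^\pm-f_{2,p}^\pm$, respectively.

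Next, because $\partial\Leta$ and $\partial\Ceta^\pm$ are smooth and, for $\eta$ universally small, separated from each other by a universal distance, interior-plus-boundary Schauder estimates applied to $V$ on each side of each surface give
\[
\|f_{1,p+dq}-f_{1,p}\|_{L^\infty(\partial\Leta)}+\|f_{2,p+dq}^\pm-f_{2,p}^\pm\|_{L^\infty(\partial\Ceta^\pm)}\le C\|V\|_{L^\infty(\Sph)}\le ad+CC_{a/C}\eps^6.
\]

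This uniform bound on the change in the surface data controls the $\mathcal{H}_\ST$-projections $\varphi_{j,p+dq}-\varphi_{j,p}$ directly (cf.~\eqref{phi1C}) and, through the standard solvability theory for $(\SphLap+\lambda_\ST)$ on $\widehat\Sph$ with vanishing trace on $\widetilde\Sph$, also the corrections $H_{j,p+dq}-H_{j,p}$ in $L^\infty$. Passing through the defining formula \eqref{Phi1C} and its $j=2$ analogue yields $\|\Phi_{j,p+dq}-\Phi_{j,p}\|_{L^\infty(B_1)}\le ad+C_a\eps^6$ after relabelling $C_a$, and the triangle inequality in the sup-norm definitions \eqref{Kappa1C}, \eqref{Kappa2C} closes the argument.

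The main point to verify carefully is the simultaneous harmonicity of $V$ on both sides of $\partial\Ceta^\pm$ and of $\partial\Leta$; this requires the coincidence sets of \emph{both} $\pbar$ and $\overline{p+dq}$ to be pushed well inside $\mathcal{C}_{M\eps}^\pm$, which is precisely what Lemma~\ref{RoughBoundC} delivers. Once this geometric separation is in place, no new estimate beyond those already established in this section is required.
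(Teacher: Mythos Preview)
Your overall route mirrors the paper's, but there is a genuine gap at the Schauder step. You assert that since $V$ is continuous across $\partial\Leta$ (resp.\ $\partial\Ceta^\pm$) and $(\SphLap+\lambda_\ST)$-harmonic on each side, ``interior-plus-boundary Schauder estimates'' bound the normal-derivative jump by $C\|V\|_{L^\infty(\Sph)}$. This is false in general: already in the plane, $V(x_1,x_2)=e^{-k|x_2|}\cos(kx_1)$ is continuous, harmonic on each half-plane, satisfies $|V|\le 1$, yet the jump of $\partial_{x_2}V$ across $\{x_2=0\}$ is $-2k\cos(kx_1)$, unbounded in $k$. An $L^\infty$ bound on a function that is merely harmonic on two sides of a hypersurface does not control the jump of its normal derivative.

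What rescues the situation---and what the paper's intended argument (following the proof of Corollary~\ref{ChangeInRHSB}) uses---is to work instead with the difference $\bigl(\overline{p+dq}-(p+dq)_{ext}\bigr)-\bigl(\pbar-\pe\bigr)$, which \emph{vanishes} on $\partial\Leta$ and on $\partial\Ceta^\pm$ by construction. This function is $(\SphLap+\lambda_\ST)$-harmonic in the annular regions $\Leta\cap\{|x_1|>\eta/2\}$ and $\Ceta^\pm\setminus\mathcal{C}_{\eta/2}^\pm$ (away from the discontinuity of $\pe$ at $\{x_1=0\}$ and from the coincidence sets), so the one-sided boundary gradient estimate applies directly and yields $|f_{j,p+dq}-f_{j,p}|$ bounded by its $L^\infty$ norm on those annuli. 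That $L^\infty$ norm is in turn controlled by $\|V\|_{L^\infty}$ together with the bound on $|b_j^\pm-\beta_j^\pm|\eps^{3+j}$ from Step~1 of the proof of Lemma~\ref{CommutatorC}. Your $V$ and this vanishing-trace function differ exactly by the explicit globally harmonic term $(\beta_1^\pm-b_1^\pm)\eps^4\vNO+(\beta_2^\pm-b_2^\pm)\eps^5\vNT$; noting this (equivalently, that $V$ on the exterior side extends harmonically across the interface, so interior estimates feed a controlled $C^{1,\alpha}$ trace into the interior-side boundary estimate) would repair your argument, but as written the key inequality is unjustified.
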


Among the terms on the right-hand side of \eqref{EqnForReplacementC}, the term $f_1$ has a significant projection into $\mathcal{H}_\ST$. While the similar result is not necessarily true for $f_2^{\pm}$, we can show that $f_2^\pm$ is small. This is the content of the following lemma.

\begin{lem}\label{NonOrthC}
Suppose that $p$ satisfies \eqref{CondPext} and $\pe$ is as in Definition \ref{ExtendedP}, then we have
$$
c\kappa_1\le |b_1^+-b_1^-|\eps^4+|b_2^+-b_2^-|\eps^5\le C \| \varphi_1  \|_{L^\infty( \Sph)}, 
$$
and $$\kappa_2\le C\eps^6$$
for universal constants $c$ small and $C$ large. 
\end{lem}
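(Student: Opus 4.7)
Set $\alpha:=(b_1^+-b_1^-)\eps^4$ and $\beta:=(b_2^+-b_2^-)\eps^5$. The three inequalities are handled separately.

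\smallskip
\noindent\emph{Right inequality of Part 1.} The plan is to extract the two jumps by testing $f_{1,p}$ against specific elements of $\mathcal{H}_\ST$ via Green's identity. Apply Green's identity to $(h_p,q)$ on $\widehat{\Leta}$ and to $(\pe^\pm,q)$ on $\widehat{\Leta^\pm}:=\widehat\Leta\cap\{\pm x_1>0\}$. Since $h_p$, $\pe^\pm$ and $q\in\mathcal{H}_\ST$ are all $(\SphLap+\lambda_\ST)$-harmonic in the relevant slit domains and vanish on the slit, and since $h_p=\pe^\pm$ on $\partial\Leta\cap\{\pm x_1>0\}$, combining the three resulting identities yields
\[
\int_{\partial\Leta} f_{1,p}\,q\,dH^1 \;=\; \int_{E}\bigl\{[\pe]\,\partial_{x_1}q - q\,\partial_{x_1}[\pe]\bigr\}\,dH^1,
\]
where $E:=\{x_1=0\}\cap\Sph$ and $[\pe]:=\pe^+-\pe^-=\alpha\vNO+\beta\vNT$. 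On $E$ (where $r=1$), $\vNT$ and $\partial_{x_1}\vNO$ vanish while $\vNO|_E$ and $\partial_{x_1}\vNT|_E$ are nontrivial multiples of $\cos(\theta/2)$ and $\cos(3\theta/2)$. Testing with $q=\vO$, which vanishes on $E$ with $\partial_{x_1}\vO|_E\propto\cos(\theta/2)$, isolates the $\alpha$-term, while testing with $q=\vT$, whose restriction to $E$ is a multiple of $\cos(3\theta/2)$ and whose $\partial_{x_1}$ vanishes there, isolates the $\beta$-term. Since $\vO,\vT\in\mathcal{H}_\ST$, the left-hand sides equal $\int_\Sph\vO\,\varphi_{1,p}$ and $\int_\Sph\vT\,\varphi_{1,p}$ by definition of $\varphi_{1,p}$, yielding $|\alpha|+|\beta|\le C\|\varphi_{1,p}\|_{L^\infty(\Sph)}$.

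\smallskip
\noindent\emph{Left inequality of Part 1 and the main obstacle.} The plan is to bound $|f_{1,p}|$ on $\partial\Leta$ by $C(|\alpha|+|\beta|)$ and then promote this to the bound on $\kappa_{1,p}$ via the linear elliptic estimates that produce $\varphi_{1,p}$ and $H_{1,p}$. Decomposing $h_p$ into its $x_1$-symmetric and antisymmetric parts, the antisymmetric part vanishes on $E$, carries boundary data $\pm\tfrac12(\alpha\vNO+\beta\vNT)|_{\partial\Leta}$ of magnitude $O(|\alpha|+|\beta|)$ (since $\partial\Leta$ stays at distance $\eta$ from the poles where $\vNO,\vNT$ are bounded), and is therefore controlled by the same quantity throughout $\Leta^\pm$ by the maximum principle for $(\SphLap+\lambda_\ST)$, valid once $\eta$ is small enough that $\lambda_\ST$ lies below the first Dirichlet eigenvalue. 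The main obstacle is that the symmetric piece of $\pe$ is singular at the poles $\{r=0\}\cap\Leta$, so the naive subtraction $h_p-\tfrac12(\pe^++\pe^-)$ is a priori unbounded and the Dirichlet problem for bounded data is not uniquely solvable over functions with arbitrary pole behavior; the crux is to absorb the singular behavior by an explicit corrector so that a bounded residue remains and the maximum principle applies. Boundary Schauder estimates for $(\SphLap+\lambda_\ST)$ at $\partial\Leta$, away from the poles, then convert the sup bound on $h_p-\pe^\pm$ into the same bound on the one-sided derivative $f_{1,p}$.

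\smallskip
\noindent\emph{Part 2.} Lemma~\ref{ChoosingTheRightCoeff} in each of $\Ceta^\pm$ gives $|v_p^\pm-\pe|\le C\eps^6$ on $\Ceta^\pm\setminus\mathcal{C}_{\eta/2}$, and Lemma~\ref{RoughBoundC} ensures that the contact set of $v_p^\pm$ is contained in $\mathcal{C}_{M\eps}\subset\mathcal{C}_{\eta/2}$, so $v_p^\pm-\pe$ is $(\SphLap+\lambda_\ST)$-harmonic in the annular region. Standard boundary regularity at $\partial\Ceta^\pm$ upgrades this sup bound to $|f_{2,p}^\pm|\le C\eps^6$; then $\varphi_{2,p}$ and $H_{2,p}$ inherit the same $O(\eps^6)$ order by linear elliptic estimates, and $\kappa_{2,p}\le C\eps^6$ follows from the representation of $\Phi_{2,p}$.
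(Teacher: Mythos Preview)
Your argument for Part~2 and for the \emph{right} inequality in Part~1 is correct. For the right inequality you take a genuinely different route from the paper: you push the Green's identity computation to the equator $E=\{x_1=0\}\cap\Sph$, where the jump $[\pe]=\alpha\vNO+\beta\vNT$ lives, and use the $x_1$-parity of $\vNO,\vNT,\vO,\vT$ to isolate $\alpha$ and $\beta$ separately. The paper instead integrates by parts on $\Sph\setminus\mathcal{C}_{r_0}^\pm$ and sends $r_0\to0$, reading off the coefficients from the singular behaviour of $\vNO,\vNT$ near the poles. Your approach is arguably more transparent, since the parity immediately explains why $\vO$ picks out $\alpha$ and $\vT$ picks out $\beta$; the paper's route has the advantage that the same integration-by-parts template is reused elsewhere (e.g.\ Lemma~\ref{NonOrthB}).

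However, your treatment of the \emph{left} inequality contains a geometric error that invents a nonexistent obstacle. You write that ``the symmetric piece of $\pe$ is singular at the poles $\{r=0\}\cap\Leta$''. But $\{r=0\}\cap\Sph=\{(\pm1,0,0)\}$, and these points lie in $\Ceta^\pm$, \emph{not} in $\Leta=\{|x_1|<\eta\}\cap\Sph$. On $\Leta$ one has $r\ge\sqrt{1-\eta^2}$, so $\vNO,\vNT$ and hence $\pe^\pm$ are bounded and smooth there (away from the slit). Consequently the ``naive subtraction'' $h_p-\tfrac12(\pe^++\pe^-)$ is perfectly bounded: it is $(\SphLap+\lambda_\ST)$-harmonic in $\widehat\Leta$, vanishes on the slit, and has boundary data $\pm\tfrac12[\pe]$ on $\partial\Leta$ of size $O(|\alpha|+|\beta|)$. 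The maximum principle gives the bound directly, with no corrector needed. This is exactly what the paper does in one line: $|\pbar-\pe|\le C(|\alpha|+|\beta|)$ in $\Leta$ by the maximum principle, then boundary regularity yields $|f_{1,p}|\le C(|\alpha|+|\beta|)$. Your decomposition of $h_p$ itself into $x_1$-symmetric and antisymmetric parts is also miscomputed (the boundary data of the antisymmetric part of $h_p$ involves the antisymmetric part of $p$ as well, not just $\tfrac12(\alpha\vNO+\beta\vNT)$), but once you recognise that the poles are absent from $\Leta$, the whole decomposition becomes unnecessary.
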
 

\begin{proof}
With our convention for one-sided derivatives from \eqref{OneSidedDGeneral}, we have 
$f_1=(\pbar-\pe)_{\nu}|_{\partial\Leta}.$ By definition of $\pbar$ and the maximum principle, we have $ | \pbar-\pe  |\le C(|b_1^+-b_1^-|\eps^4+|b_2^+-b_2^-|\eps^5)$  in $\Leta$. The upper bound on $\kappa_1$ follows from boundary regularity of harmonic functions.  

With Lemma \ref{ChoosingTheRightCoeff}, the bound on $\kappa_2$ follows from a similar argument. 

It remains to prove the lower bound for $ \| \varphi_1  \|_{L^\infty( \Sph)}. $ 

For this let's define 
$$q:=\begin{cases}
\pbar-p &\text{ in }\Sph\backslash\Ceta^\pm,\\
\pe-p &\text{ in }\Ceta^\pm.
\end{cases}$$
Then for small $r_0>0$, we have 
\begin{align*}
\int_{\Sph}f_1\vO dH^1|_{\partial\Leta}&=\int_{\Sph\backslash\mathcal{C}_{r_0}^\pm}(\SphLap +\lambda_\ST)q\cdot \vO\\
&=\int_{\Sph\backslash\mathcal{C}_{r_0}^\pm}(\SphLap +\lambda_\ST)q\cdot \vO-q\cdot (\SphLap+\lambda_\ST)\vO\\
&=\int_{\partial\mathcal{C}_{r_0}^\pm}q\cdot(\vO)_\nu-\vO \cdot q_\nu.
\end{align*}
With the definition of $q$ and orthogonality, we have
\begin{equation}\label{FirstIn55}
\int_{\Sph}f_1\vO dH^1|_{\partial\Leta}=b_1^+\eps^4\int_{\partial\mathcal{C}_{r_0}^+}[\vNO(\vO)_\nu-\vO(\vNO)_\nu]+b_1^-\eps^4\int_{\partial\mathcal{C}_{r_0}^-}[\vNO(\vO)_\nu-\vO(\vNO)_\nu].
\end{equation}

By direct computation, we have 
$$
\lim_{r_0\to 0}\int_{\partial\mathcal{C}_{r_0}^+}\vNO(\vO)_\nu=-A_1=-\lim_{r_0\to 0}\int_{\partial\mathcal{C}_{r_0}^+}\vO(\vNO)_\nu,
$$
where $A_1$ is a positive constant.  Thus 
$$\int_{\partial\mathcal{C}_{r_0}^+}[\vNO(\vO)_\nu-\vO(\vNO)_\nu]\to -2A_1 \text{ as }r_0\to 0.$$
Now note that $\vO$ is odd with respect to the $x_1$-variable and $\vNO$ is even with respect to the $x_1$-variable, we have 
$$\int_{\partial\mathcal{C}_{r_0}^-}[\vNO(\vO)_\nu-\vO(\vNO)_\nu]\to 2A_1 \text{ as }r_0\to 0.$$

Consequently, sending $r_0\to0$ in \eqref{FirstIn55}, we have 
$$
\int_{\Sph}f_1\vO dH^1|_{\partial\Leta}=2A_1(b_1^--b_1^+)\eps^4.
$$
A similar argument gives 
$
\int_{\Sph}f_1\vT dH^1|_{\partial\Leta}=2A_2(b_2^--b_2^+)\eps^5
$ for a positive constant $A_2$. The lower bound for $ \|  \varphi_1 \|_{L^\infty( \Sph)} $ follows from these two equations together with the definition in \eqref{phi1C}.
\end{proof} 

We now control the Weiss energy from \eqref{Weiss} for the replacement $\pbar$:
\begin{lem}
Suppose that $p$ satisfies \eqref{CondPext} and $\pbar$ is as in Definition \ref{ReplacementC}. Then 
$$
W_\ST(\pbar;1)\le C\eps^6
$$for a universal $C$. 
\end{lem}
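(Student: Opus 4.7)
The plan is to compute $W_\ST(\pbar;1)=\tfrac{1}{8}\int_\Sph(|\SphGrad\pbar|^2-\lambda_\ST\pbar^2)$ by integration by parts on $\Sph$ together with the distributional equation \eqref{EqnForReplacementC}. Since $\pbar=0$ on the contact set (where $g_p^\pm$ is supported) and on $\widetilde{\Sph}\cap\{r\ge\eta\}$, and since $\pbar=\pe$ on $\partial\Leta\cup\partial\Ceta^\pm$, this collapses to
\[
W_\ST(\pbar;1) = -\frac{1}{8}\left[\int_{\partial\Leta}\pe\,f_{1,p} + \int_{\partial\Ceta^\pm}\pe\,f_{2,p}^\pm\right].
\]
The boundary contribution from $\partial\Ceta^\pm$ is immediate from Lemma \ref{ChoosingTheRightCoeff}: the estimate $\|v_p^\pm-\pe\|_{L^\infty(\Ceta\setminus\mathcal{C}_{\eta/2})}\le C\eps^6$ together with standard boundary regularity for harmonic functions gives $\|f_{2,p}^\pm\|_{L^\infty(\partial\Ceta^\pm)}\le C\eps^6$, so that $|\int_{\partial\Ceta^\pm}\pe\,f_{2,p}^\pm|\le C\eps^6$.

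The $\partial\Leta$ contribution is more delicate, since the naive bound $\|f_{1,p}\|_{L^\infty}\lesssim\eps^4$ is too weak. To upgrade to $\eps^8$, I will introduce the averaged extended profile
\[
\pe^{avg}:=p+\tfrac{b_1^++b_1^-}{2}\eps^4\,\vNO+\tfrac{b_2^++b_2^-}{2}\eps^5\,\vNT,
\]
which is $\ST$-homogeneous, harmonic in $\widehat{\R^3}$, continuous globally, and vanishes on $\mathcal{S}$, so that $\pe-\pe^{avg}=\pm\tfrac12(b_1^+-b_1^-)\eps^4\,\vNO\pm\tfrac12(b_2^+-b_2^-)\eps^5\,\vNT$ on the two sides $\{\pm x_1>0\}$. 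The central claim is the orthogonality
\[
\int_{\partial\Leta}\pe^{avg}\,f_{1,p}=0.
\]
To establish it, set $W:=h_p-\pe^{avg}$, which is harmonic in $\widehat{\Leta}$ and vanishes on $\widetilde{\Leta}$, with $W|_{\partial\Leta}=(\pe-\pe^{avg})|_{\partial\Leta}$. Since $\pe^{avg}$ is smooth across $\partial\Leta$, one has $f_{1,p}=W_\nu|^{-}-(\pe-\pe^{avg})_\nu|^{+}$. Two applications of Green's identity, one in $\widehat{\Leta}$ for the pair $(\pe^{avg},W)$, and one in each connected component of $\Sph\setminus\Leta$ for $(\pe^{avg},\pe-\pe^{avg})$, both produce the same boundary integral $\int_{\partial\Leta}(\pe-\pe^{avg})(\pe^{avg})_\nu$; subtracting yields the claim. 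The pole singularities of $\vNO,\vNT$ contribute no flux, since both $\pe^{avg}(\pe-\pe^{avg})_\nu$ and $(\pe-\pe^{avg})(\pe^{avg})_\nu$ vanish to order $r^2$ near the poles, and the slit integrals vanish because $p,\vNO,\vNT$ all vanish on $\mathcal{S}$.

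With the orthogonality in hand, since $|b_j^\pm|$ are universally bounded, both $\|\pe-\pe^{avg}\|_{L^\infty(\partial\Leta)}\le C\eps^4$ and $\|f_{1,p}\|_{L^\infty(\partial\Leta)}\le C\eps^4$ (the latter via boundary regularity for the harmonic function $W$), so
\[
\left|\int_{\partial\Leta}\pe\,f_{1,p}\right|=\left|\int_{\partial\Leta}(\pe-\pe^{avg})f_{1,p}\right|\le C\eps^8\le C\eps^6.
\]
Combining with the $\partial\Ceta^\pm$ estimate gives $W_\ST(\pbar;1)\le C\eps^6$. The main obstacle will be the orthogonality step $\int_{\partial\Leta}\pe^{avg}f_{1,p}=0$, which requires carefully pairing Green's identities on the two domains and verifying that the flux through small spheres around the poles of $\vNO,\vNT$ vanishes in the limit.
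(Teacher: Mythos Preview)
Your approach is different from the paper's and, with a correction, works. The paper subtracts $p$ from $\pbar$ and writes
\[
\int_{\Sph}(|\SphGrad\pbar|^2-\lambda_\ST\pbar^2)=-\int_{\Sph}(f_1+f_2^\pm+g^\pm)(\pbar-p)-\int_{\Sph}(\SphLap+\lambda_\ST)p\cdot(\pbar-p),
\]
then estimates each piece separately; in particular the $g^\pm$ contribution is handled by a further integration by parts in $\Ceta^\pm$, producing several boundary terms that are bounded by orthogonality between $\vNO,\vNT$ and the components of $p$ (equations \eqref{11In56}--\eqref{13In56}). Your route is shorter: by observing at the outset that $\pbar g^\pm=0$ (support on the contact set), you reduce directly to the two interface integrals, and your $\pe^{avg}$ trick for $\partial\Leta$ replaces the paper's piecewise bookkeeping. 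The gain is conceptual cleanliness; the paper's approach, on the other hand, keeps everything at the level of $p$ and never needs to introduce $\pe^{avg}$ or worry about pole fluxes.

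There is a genuine error in your pole-flux justification. The claim that ``both $\pe^{avg}(\pe-\pe^{avg})_\nu$ and $(\pe-\pe^{avg})(\pe^{avg})_\nu$ vanish to order $r^2$ near the poles'' is false: for instance the $\vNT$--$\vNT$ contribution behaves like $r^{-3/2}\cdot r^{-5/2}=r^{-4}$. What saves you is the Wronskian structure: in the difference $\pe^{avg}(\pe-\pe^{avg})_\nu-(\pe-\pe^{avg})(\pe^{avg})_\nu$ the self-pairings cancel, and cross-pairings with distinct angular frequencies (such as $\vNO$--$\vNT$) integrate to zero over $\partial\mathcal{C}_\rho$ by orthogonality of $\cos((n+\tfrac12)\theta)$. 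However, the flux does \emph{not} vanish: the pairings $\vT$--$\vNT$ and $\vO$--$\vNO$ share angular frequency and give a finite nonzero limit as $\rho\to0$ (exactly the mechanism exploited in the paper's Lemma~\ref{NonOrthC}). These contribute $O(a_2\eps^2\cdot(b_2^+-b_2^-)\eps^5)+O(a_3\eps^3\cdot(b_1^+-b_1^-)\eps^4)=O(\eps^7)$. So the correct statement is $\bigl|\int_{\partial\Leta}\pe^{avg}f_{1,p}\bigr|\le C\eps^7$, not $=0$. With this fix your argument goes through unchanged and yields $W_\ST(\pbar;1)\le C\eps^6$.
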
 

\begin{proof}
Similar to Lemma \ref{WeissControlReplacementB}, it suffices to prove the  upper bound for the following quantity
\begin{align}\label{FirstIn56}
\int_{\Sph}(|\SphGrad\pbar|^2-\lambda_\ST \pbar^2)&=\int_{\Sph}(|\SphGrad\pbar|^2-\lambda_\ST \pbar^2)-(|\SphGrad p|^2-\lambda_\ST p^2) \nonumber\\
&=\int_{\Sph}-(\SphLap+\lambda_\ST)\pbar\cdot (\pbar-p)-\int_{\Sph}(\SphLap+\lambda_\ST)p\cdot(\pbar-p)\nonumber\\
&=-\int_{\Sph}(f_1+f_2^\pm+g^\pm)(\pbar-p)-\int_{\Sph}(\SphLap+\lambda_\ST)p\cdot(\pbar-p).
\end{align} 
 
 For the second term, we note that $(\SphLap+\lambda_\ST)p\cdot(\pbar-p)$ is supported in $\{-M\eps\le x_2\le 0\}\cap\{x_3=0\}$ by Lemma \ref{RoughBoundC}. On this segment, we have $(\SphLap+\lambda_\ST)p\sim r^{\frac 52}+\eps r^{\frac 32}+\eps^2 r^{\frac 12}+\eps^3 r^{-\frac12 }$, thus 
 \begin{equation}\label{SecondIn56}
 |\int_{\Sph}(\SphLap+\lambda_\ST)p\cdot(\pbar-p)|\le C \| \pbar-p  \|_{L^\infty(\Ceta^\pm )}\cdot \int_0^\eps r^{\frac 52}+\eps r^{\frac 32}+\eps^2 r^{\frac 12}+\eps^3 r^{-\frac12 }\le C\eps^7
 \end{equation} 
 since $ \| \pbar-p  \|_{L^\infty(\Ceta^\pm )}\le C\eps^{\ST}$ by Lemma \ref{RoughBoundC}.
 
 For $\int_{\Sph}(f_1+f_2^\pm)(\pbar-p)$ from \eqref{FirstIn56}, we notice that on the support of $f_j$, we have $\pbar=\pe$ and $|\pe-p|=O(\eps^4)$ by definition. With $|f_j|\le C\eps^4$ from Lemma \ref{NonOrthC}, we have 
 \begin{equation}\label{001In56}
| \int_{\Sph}(f_1+f_2^\pm)(\pbar-p)|\le C\eps^8.
 \end{equation}
 
 It remains to control $-\int_{\Sph} g^\pm\cdot(\pbar-p)$. To this end, we have
 \begin{align}\label{01In56}
 -\int_{\Sph}g^\pm\cdot(\pbar-p)&=-\int_{\Ceta^\pm}(\SphLap+\lambda_\ST)(\pbar-p)\cdot(-p)\nonumber\\
 &=\int_{\Ceta^\pm}(\pbar-p)(\SphLap+\lambda_\ST)p-\int_{\partial\Ceta^\pm}(\pbar-p)_{\nu}\cdot p+\int_{\partial\Ceta^\pm}(\pbar-p)\cdot p_\nu\nonumber\\
& \le -\int_{\partial\Ceta^\pm}(\pbar-p)_{\nu}\cdot p+\int_{\partial\Ceta^\pm}(\pbar-p)\cdot p_\nu+C\eps^7\nonumber\\
&=-\int_{\partial\Ceta^\pm}(\pbar-\pe)_{\nu}\cdot p-\int_{\partial\Ceta^\pm}(\pe-p)_{\nu}\cdot p\nonumber\\&+\int_{\partial\Ceta^\pm}(\pbar-\pe)\cdot p_\nu+\int_{\partial\Ceta^\pm}(\pe-p)\cdot p_\nu+C\eps^7\nonumber
 \end{align}
where we used \eqref{SecondIn56} for the second to last line. 

Note that $(\pbar-\pe)_\nu=O(\eps^6)$ along $\partial\Ceta^\pm$ by Lemma \ref{NonOrthC}, this implies
\begin{equation}\label{11In56}
- \int_{\Sph}g^\pm\cdot(\pbar-p)\le -\int_{\partial\Ceta^\pm}(\pe-p)_{\nu}\cdot p+\int_{\partial\Ceta^\pm}(\pe-p)\cdot p_\nu+C\eps^6.
\end{equation}
On the other hand, using orthogonality, we have 
\begin{equation}\label{12In56}
\int_{\partial\Ceta^\pm}(\pe-p)_{\nu}\cdot p=\int_{\partial\Ceta^\pm}(b_1^\pm\eps^4\vNO)_\nu\cdot a_3\eps^3\vO+(b_2^\pm\eps^5\vNT)_\nu\cdot a_2\eps^2\vT=O(\eps^7).
\end{equation}
Similarly, 
\begin{equation}\label{13In56}
\int_{\partial\Ceta^\pm}(\pe-p)\cdot p_\nu=O(\eps^7).
\end{equation}

Putting \eqref{12In56} and \eqref{13In56} into \eqref{11In56}, we have 
$$
 -\int_{\Sph}g^\pm\cdot(\pbar-p)\le C\eps^6.
$$

Together with \eqref{FirstIn56}, \eqref{SecondIn56} and \eqref{001In56}, this implies the desired control.
\end{proof} 

The following lemma explains the main reason why it is preferable to work with $\pbar$ instead of $p$ or $\pe$. 
\begin{lem}\label{OneToInftyC}
Suppose that $p$ satisfies \eqref{CondPext}. Let $u$ be a solution to \eqref{IntroTOP} in $B_1$, then 
$$
\|u-\pbar\|_{L^\infty(B_{1/2})}+\|u-\pbar\|_{H^1(B_{1/2})}\le C(\|u-\pbar\|_{L^1(B_1)}+\kappa_{p})
$$
and
$$
W_\ST(u;\frac 12)\le C(\|u-\pbar\|^2_{L^1(B_1)}+\kappa_{p}^2+\eps^6).
$$
\end{lem}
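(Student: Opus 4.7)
The plan is to follow the template of Lemma~\ref{OneToInftyB}, with $\pbar$ in the role of $\ptil$ and with $\Phi_{1,p}+\Phi_{2,p}$ (of combined $L^\infty$-norm at most $\kappa_p=\kappa_{1,p}+\kappa_{2,p}$, by \eqref{Kappa1C}, \eqref{Kappa2C} and \eqref{SumOfKappaC}) absorbing the gluing errors created on $\partial\Ceta^\pm$ and on $\partial\Leta$.

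First I would check that $\pbar$ satisfies the thin-obstacle constraints $\pbar|_{\Hpp}\ge 0$ and $\Delta\pbar|_{\Hpp}\le 0$ off the contact set at every point away from the two gluing interfaces. Inside $\Ceta^\pm$ this is built into Definition~\ref{ReplacementC}. In the region $\Sph\setminus(\Ceta^\pm\cup\Leta)$, where $\pbar=\pe$ and one is bounded away from $\{r=0\}$, the singular perturbations in $\pe$ are of size $O(\eps^4)$, so both constraints inherit strictly from the dominant $\uS$-term for small $\eps$. In the layer $\Leta$, $\pbar=h_p$ is harmonic in $\widehat{\Leta}$ and vanishes on $\widetilde{\Leta}$; the maximum principle (using that $\pe\ge 0$ on $\partial\Leta\cap\Hpp$, which sits away from $\{r=0\}$) then gives $h_p\ge 0$ on $\Hpp\cap\Leta$.

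Next, combining the $\ST$-homogeneous extension of \eqref{EqnForReplacementC} with \eqref{EqnForPhi1C} and its analogue for $\Phi_{2,p}$, one obtains, distributionally on $\widehat{\R^3}$,
$$
\Delta\bigl(\pbar-\Phi_{1,p}-\Phi_{2,p}\bigr)=g_p^\pm\, dH^2\big|_{\Hpp\cap\Ceta^\pm},
$$
with $g_p^\pm\le 0$ supported on $\{\pbar=0\}$. Hence, up to an additive $L^\infty$-error of order $\kappa_p$, $\pbar$ is a legitimate super-solution of the thin obstacle problem. The same barrier argument as in Lemma~\ref{OneToInftyB}, applied inside $\{u>\pbar\}$, then yields $u-\pbar\le C(\|u-\pbar\|_{L^1(B_1)}+\kappa_p)$ in $B_{1/2}$; a symmetric argument using $\pbar+\Phi_{1,p}+\Phi_{2,p}$ gives the lower bound, completing the $L^\infty$-estimate. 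The $H^1$-bound then follows by Cacciopoli's inequality exactly as in Lemma~\ref{OneToInftyA}.

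For the Weiss-energy estimate I would mimic the proof of Lemma~\ref{WeissControlA}: expand $W_\ST(u;1/2)$ around $W_\ST(\pbar;1/2)$, control the cross terms by Cauchy--Schwarz using the $L^\infty$ and $H^1$ bounds just established, and invoke the bound $W_\ST(\pbar;1)\le C\eps^6$ from the preceding lemma to absorb $W_\ST(\pbar;1/2)$ into the $\eps^6$ term. The main technical obstacle will be the verification of the sign condition for $\pbar$ in $\Leta$: because $v_{-\frac 12}$ and $v_{-\frac 32}$ change sign on $\Hpp$, one must quantitatively ensure that their $O(\eps^4)$-contribution to $\pe$ on $\partial\Leta\cap\Hpp$ cannot overcome the strictly positive $\uS$-contribution there; once this is in place, maximum-principle propagation into $\Leta$ is routine.
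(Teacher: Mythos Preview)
Your approach is correct and is precisely the adaptation of Lemma~\ref{OneToInftyB} that the paper has in mind (the paper states Lemma~\ref{OneToInftyC} without proof, relying on the reader to carry over the argument). One small correction: your maximum-principle step for $h_p\ge 0$ on $\{\theta=0\}\cap\Leta$ is not quite the right mechanism, since that set lies in the \emph{interior} of the slit domain $\widehat{\Leta}$ and is not part of the Dirichlet boundary. The argument you sketch at the end is the right one: because $\Leta$ sits at $r\ge\sqrt{1-\eta^2}$, the maximum principle applied to $h_p$ against each half of $\pe$ gives $|h_p-\pe|\le C\eps^4$ in $\Leta$, and then $h_p=a_0\uS+O(\eps)$ yields both $h_p>0$ on $\{\theta=0\}$ and $\ddt h_p<0$ on $\{\theta=\pi\}$ directly. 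With this in place the barrier argument, the Caccioppoli step, and the Weiss-energy expansion around $W_\ST(\pbar;1/2)\le C\eps^6$ go through exactly as you describe.
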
 

Similar to Definition \ref{WellApproxSolB}, we define the class of well-approximated solutions:
\begin{defi}\label{WellApproxSolC}
Suppose that  the coefficients of $p$ satisfy \eqref{CondPext}.

For $d,\rho\in(0,1]$, we say that $u$ is a \textit{solution $d$-approximated by $p$ at scale $\rho$} if $u$ solves the thin obstacle problem \eqref{IntroTOP} in $B_\rho$, and
$$|u-\pbar|\le d\rho^\ST \text{ in $B_\rho$.}$$ 
In this case, we write 
$$u\in\mathcal{S}(p,d,\rho).$$
\end{defi} 

Similar to Lemma \ref{ContLocA} and Lemma \ref{ContLocBB}, we can localize the contact set of well-approximated solutions:

\begin{lem}\label{ContLocCC}
Suppose that $u\in\mathcal{S}(p,d,1)$ with $p$ satisfying \eqref{CondPext} and $d\le\eps^3$. 

We have
$$\Delta u=0 \text{ in } \widehat{B_1}\cap\{r>C\eps\}, \text{ and } u=0 \text{ in } \widetilde{B_{\frac 78}}\cap\{r>C\eps^{\frac{2}{5}}\}.$$
\end{lem}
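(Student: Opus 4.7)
Following the template of Lemmas~\ref{ContLocA} and~\ref{ContLocBB}, the idea is to combine the approximation $|u-\pbar|\le d$ with a priori estimates on the replacement $\pbar$, together with an explicit barrier in each case.

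For the first assertion, which amounts to showing $u>0$ on $\{x_3=0,\ r>C\eps\}\cap B_1$, I would establish the lower bound $\pbar\ge c\,x_2^{7/2}$ on this set and then invoke $u\ge\pbar-d$. Evaluating $p=a_0\uS+a_1\eps\vF+a_2\eps^2\vT+a_3\eps^3\vO$ on $\{\theta=0\}$ shows $p\ge \tfrac{a_0}{2}x_2^{7/2}$ whenever $x_2>C\eps$ with $C$ large, since each perturbative term contributes at most $(\eps/x_2)^j\,x_2^{7/2}$. The deviation $\pbar-p$ is then estimated region by region according to Definition~\ref{ReplacementC}: outside $\Ceta^\pm\cup\Leta$ one has $\pbar=\pe$ and the singular corrections $b_j^\pm\eps^{j+3}v_{-j/2}$ are bounded by $C\eps^{7/2}$ on $\{r>C\eps\}$; inside the caps $\Ceta^\pm$, Lemma~\ref{RoughBoundC} provides $|v_p^\pm-p|\le C\eps^{7/2}$; and in the layer $\Leta\setminus\Ceta^\pm$, applying the maximum principle to $(\SphLap+\lambda_\ST)(h_p-p)=0$ in $\widehat{\Leta}$ with $O(\eps^4)$ boundary data on $\partial\Leta$ and zero data on $\widetilde{\Leta}$ yields $|h_p-p|\le C\eps^4$ on the sphere, hence $|h_p-p|\le C\eps^4|x|^{7/2}$ as a cone in $B_1$. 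Combining these with $d\le\eps^3$ and the smallness of $\eps$ gives $u>0$ on the desired set.

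For the second assertion I would adapt the quadratic-barrier argument at the end of the proof of Lemma~\ref{ContLocA}, rescaled appropriately. Fix $x_0=(x_{0,1},x_{0,2},0)\in\widetilde{B_{7/8}}$ with $r_0:=|x_{0,2}|>C\eps^{2/5}$. A direct polar computation gives
\[
\partial_{x_3}p(x_0)=-\tfrac{7}{2}\,a_0\,r_0^{5/2}+O\!\bigl(\eps r_0^{3/2}+\eps^2 r_0^{1/2}+\eps^3 r_0^{-1/2}\bigr),
\]
and the assumption $r_0>C\eps^{2/5}$ absorbs all lower-order terms into the leading one once $C$ is large, producing $\partial_{x_3}p(x_0)\le -c\,r_0^{5/2}\le -c\,C^{5/2}\eps$. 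On the box $\Omega=\{|x'-x_0'|<\eps^2,\ |x_3|<\eps^2\}$ the $C^{1,\frac{1}{2}}$-regularity of $p$ together with $\eps^2\ll r_0^{5}$ propagates this bound to all of $\Omega^+$. The harmonic barrier $\varphi(x)=(|x'-x_0'|^2-2x_3^2)/\eps$ is nonnegative on $\{x_3=0\}$ and satisfies, in $\Omega^+$,
\[
\varphi-p\ \ge\ \frac{|x'-x_0'|^2}{\eps}+\frac{c\,C^{5/2}}{4}\,\eps\,x_3;
\]
on $\partial\Omega$ this gives $\varphi\ge p+|\pbar-p|+d\ge u$ for $C$ large, using the bound $|\pbar-p|\le C\eps^{7/2}$ around the slit and $d\le\eps^3$. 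The maximum principle for the thin obstacle problem (with $\varphi$ a supersolution, nonnegative on the thin set) then yields $u\le\varphi$ in $\Omega$, and evaluating at $x_0$ gives $u(x_0)\le 0$, hence $u(x_0)=0$.

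The main technical obstacle is the first assertion, specifically the piecewise control of $\pbar-p$ across the three regions $\Ceta^\pm$, $\Leta$, and their complement, and the careful choice of the universal constant $C$ so that the perturbative errors are absorbed uniformly in $\eps$. The second assertion is essentially a rescaled reprise of the box-barrier argument in Lemma~\ref{ContLocA}, with scales $R=\eps^2$ and barrier factor $\eps$ chosen to match the degeneracy of $\partial_{x_3}p$ on the slit for profiles near $\uS$.
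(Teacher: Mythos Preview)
Your overall strategy matches what the paper intends: it explicitly refers the reader to the arguments of Lemmas~\ref{ContLocA} and~\ref{ContLocBB}, i.e., a direct lower bound on the approximating profile for the first assertion and the quadratic box-barrier for the second. Your treatment of the second assertion is essentially correct; the scales $R=\eps^{2}$ and $F=\eps$ are exactly what forces $r_0\gtrsim\eps^{2/5}$, though on the lateral face you only obtain $\varphi-p\ge\eps^{3}-O(\eps^{4})$ while you need $\ge d+|\pbar-p|\le\eps^{3}+C\eps^{7/2}$. This is fixed simply by replacing $\varphi$ with $A\varphi$ for a fixed $A>1$ and enlarging $C$ accordingly, and you should also appeal to the smoothness of $p$ in $\{x_3>0\}$ away from $\{r=0\}$ (giving a Taylor remainder $O(r_0^{3/2}\eps^{4})$) rather than only $C^{1,1/2}$ regularity, which would produce an $O(\eps^{3})$ error of the same order as the main term.

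For the first assertion, however, your final sentence ``combining these with $d\le\eps^{3}$ and the smallness of $\eps$ gives $u>0$'' hides a genuine quantitative gap. From $\pbar\ge c\,x_2^{7/2}$ on $\{\theta=0,\ x_2>C\eps\}$ and $u\ge\pbar-d$ you only get $u>0$ once $c\,x_2^{7/2}>d$, i.e.\ $x_2>C d^{2/7}\le C\eps^{6/7}$, not $x_2>C\eps$. The $\eps^{7/2}$ error from $|\pbar-p|$ is harmless, but the $d\le\eps^{3}$ term is not absorbed by $x_2^{7/2}$ when $x_2\sim\eps$. The direct method of Lemma~\ref{ContLocA} therefore yields the weaker threshold $r>C\eps^{6/7}$; to reach $r>C\eps$ one would instead apply a global rotated half-space barrier $\Rot_{\tau\eps}(q)$ as in the proof of Lemma~\ref{RoughBoundC} directly to $u$ in $B_1$, using that the Taylor gap between $\pe$ and such a barrier on $\Sph$ is already of order $\eps^{3}$ and can absorb $d$. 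Either exponent suffices for the compactness step in the proof of Lemma~\ref{Trichotomy}, but you should be explicit about which one your argument actually delivers.
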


\subsection{The trichotomy near $\mathcal{A}_3$} 
With all these preparations, we prove the trichotomy as stated in Lemma \ref{Trichotomy}. Steps similar to those in the proofs of Lemma \ref{DichotomyA} and Lemma \ref{DichotomyB} are omitted.  

\begin{proof}[Proof of Lemma \ref{Trichotomy}]
For $c_0$ and $\rho_0$ to be chosen, depending on $\sigma$, suppose that the lemma fails, we find a sequence $(u_n,p_n,d_n)$ satisfying the assumptions from the lemma with $d_n\to 0$ and $\eps_{p_n}\to0$, but none of the three possibilities happens, namely, 
\begin{equation}\label{AlmostHomC}
W_\ST(u_n;1)-W_\ST(u_n;\rho_0)\le c_0^2d_n^2 \text{ for all } n, 
\end{equation}
\begin{equation}\label{ToBeContraC}
u_n\not\in\mathcal{S}(p',\frac14 d_n,\rho_0)
\end{equation} for any $p'$ satisfying the properties as in alternative (2) from the lemma, and
\begin{equation}\label{dgeeps5}
d_n\ge\eps_{p_n}^5.
\end{equation} 
A direct consequence of Lemma \ref{NonOrthC} is that 
\begin{equation}\label{VanishingKappa2C}\kappa_{2,p_n}\le Cd_n\cdot\eps_{p_n}=d_n o(1).\end{equation}

With the same reasoning as in Remark \ref{dleeps3}, it suffices to consider the case when 
\begin{equation*}\label{FirstIn51}
d_n\le\eps^3_{p_n}.
\end{equation*} 

We omit the subscript in the remaining of the proof.

Define $\hu=\frac{1}{d+\kappa_{1}+\kappa_{2}}(u-\pbar+\Phi_{1}+\Phi_{2})$, with the parameters and auxiliary functions from Subsection \ref{SubsectionBoundaryLayerC}. Similar to the proofs for Lemma \ref{DichotomyA} and Lemma \ref{DichotomyB}, for each $m=0,1,2,3$, we find an $(m+\frac12)$-homogeneous harmonic function $h_{m+\frac12}$ such that 
\begin{equation}\label{SecondIn51}
\|\hu-(h_{\frac12}+h_{\frac32}+h_{\frac52}+h_{\frac 72})\|_{L^1(B_{2\rho_0})}=C\rho_0^{\frac{15}{2}}+o(1).
\end{equation}
We also have
$$
\|[(\hu)_{(1/2)}-\hu]-(7h_{\frac12}+3h_{\frac32}+h_{\frac 52})\|_{L^2(\partial B_{\rho_0})}\le C\rho_0^{\frac{11}{2}}+o(1).
$$Recall that $f_{(1/2)}$ denotes the rescaling of $f$ as in \eqref{Rescaling}. With the definitions of $\hu$, $\Phi_1$ and \eqref{AlmostHomC}, this implies
\begin{align*}
&\|\frac{1}{d+\kappa_1+\kappa_2}[c \varphi_1(\frac{x}{|x|})|x|^{\ST}+(\Phi_2)_{(1/2)}-\Phi_2]-(7h_{\frac12}+3h_{\frac32}+h_{\frac 52})\|_{L^2(\partial B_{\rho_0})}\\&\le C\rho_0^{\frac{9}{2}}(c_0+\rho_0)+o(1).
\end{align*}
Now with \eqref{VanishingKappa2C}, we have $\| \Phi_2  \|_{L^\infty( B_1)}=d\cdot o(1)$. The previous estimate leads to 
$$
\|\frac{c}{d+\kappa_1+\kappa_2} \varphi_1(\frac{x}{|x|})|x|^{\ST}-(7h_{\frac12}+3h_{\frac32}+h_{\frac 52})\|_{L^2(\partial B_{\rho_0})}\le C\rho_0^{\frac{9}{2}}(c_0+\rho_0)+o(1).
$$

From here, we apply orthogonality and Lemma \ref{NonOrthC} to conclude
\begin{equation}\label{ThirdIn51}
\kappa_1\le Cd(c_0+\rho_0+o(1)).
\end{equation}
Consequently, $$\|\Phi_1\|_{L^1(B_{2\rho_0})}\le Cd[(c_0+\rho_0)\rho_0^{\frac{13}{2}}|\log\rho_0|+o(1)].$$Putting this into \eqref{SecondIn51}, we have 
$$\|u-\pbar-(d+\kappa_1+\kappa_2)h_{\ST}\|_{L^1(B_{2\rho_0})}\le Cd[(c_0+\rho_0)\rho_0^{\frac{13}{2}}|\log\rho_0|+o(1)].$$

With Lemma \ref{CommutatorC} and Lemma \ref{OneToInftyC}, if we take $p'=p+(d+\kappa_1+\kappa_2)h_{\ST}$, then 
$$
\|u-\overline{p}'\|_{L^\infty(B_{\rho_0})}\le Cd[(c_0+\rho_0)\rho_0^{\frac{7}{2}}|\log\rho_0|+o(1)].
$$Note that we used \eqref{dgeeps5} to absorb the $O(\eps^6)$ error from Lemma \ref{CommutatorC}.

Choosing $c_0,\rho_0$ universally small and $n$ large,  we have $$u\in\mathcal{S}(p',\frac14 d,\rho_0).$$

With \eqref{ThirdIn51}, we apply Corollary \ref{ChangeInRHSC} with $a=\frac14\sigma$ to get
$$
\kappa_{1,p'}\le\kappa_1+\frac14\sigma d+C_{\sigma}\eps d\le \frac14\sigma d+Cd(c_0+\rho_0+o(1)).
$$
Choosing now $c_0,\rho_0$ small, depending on $\sigma$, we have
$\kappa_{1,p'}<\frac12\sigma d,$ which implies 
$$
\kappa_{p'}<\sigma d
$$for large $n$ by \eqref{VanishingKappa2C}, contradicting \eqref{ToBeContraC}.
\end{proof}

\subsection{Almost symmetric solutions}
Based on Lemma \ref{NonOrthC}, the term $f_1dH^1|_{\partial\Leta}$ from \eqref{EqnForReplacementC} has a significant projection into $\mathcal{H}_\ST$. When $\kappa_1\ge\eps^5$, we can apply Lemma \ref{NonOrthC} to see that $\kappa_1\gg\kappa_2$. In this case, $\kappa_2$ is negligible, and the lower bound on the projection is what leads to possibilities (1) and (2) in Lemma \ref{Trichotomy}.

When $\kappa_1\ll\eps^5$, this argument no longer works. In this case, the extended profile $\pe$ from Definition \ref{ExtendedP} is `almost continuous' long the big circle $\{x_1=0\}\cap\Sph$ since $\kappa_1\sim |b_1^+-b_1^-|\eps^4+|b_2^+-b_2^-|\eps^5$ by Lemma \ref{NonOrthC}. This leads to information about the profile by our analysis of the problem in $\R^2$.

\begin{lem}\label{AlmostSymmetric3D}
For $a_j,\eps$ satisfying \eqref{CondPext}, we set $\pe$ be as in Definition \ref{ExtendedP},  $b_j^\pm$ as in Remark \ref{BjSph}, and $\pbar$ as in Definition \ref{ReplacementC}.

Given any $\gamma>0$, we can find two small constants, $\sigma_\gamma$ and $\eps_\gamma$, depending on $\gamma$, such that 

if $$\eps<\eps_\gamma \text{ and }\hem |b_1^+-b_1^-|+|b_2^+-b_2^-|\eps<\sigma_\gamma\cdot\eps,$$ 
then 
\begin{equation}\label{then59}
\| \pbar-\Rot_{\tau\eps}(\bar{q})  \|_{L^\infty(B_1 )}\le\gamma\eps^5 
\end{equation}
for $$q=a_0'\uS+a_1'\eps\vF+a_2'\eps^2\vT+a_3'\eps^3\vO$$ satisfying 
$$|a_0'-a_0|<C\eps^2, \hem |a_j'|\le C, \hem |\tau|\le C \text{ for a universal $C$},$$ and 
\begin{equation}\label{AlmostSolution59}
a_2'>-\gamma, \hem |a_3'|<\gamma, \text{ and } (a_1'/a_0')^2<\frac{85}{24}\cdot a_2'/a_0'+\gamma.
\end{equation}
\end{lem}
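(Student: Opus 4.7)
The plan is a compactness and contradiction argument that uses the 2D rigidity from Appendix \ref{2DProblem} together with the stability estimates of Subsection \ref{SubsectionBoundaryLayerC}. The first step is to reduce \eqref{then59} to a coefficient-level statement. By Lemma \ref{NonOrthC}, the almost-symmetry hypothesis forces $\kappa_{1,p}\le C\sigma_\gamma\eps^5$ and $\kappa_{2,p}\le C\eps^6$, so $\bar p$ agrees with $\pe$ up to $O(\sigma_\gamma\eps^5+\eps^6)$. Hence \eqref{then59} will follow if I can find $(q,\tau)$ such that $\pe$ matches $\Rot_{\tau\eps}(q_{ext})$ within $\tfrac14\gamma\eps^5$; by Definition \ref{ExtendedP} and the Lipschitz dependence of the coefficients $b_j^{\pm,\Sph}$ on $a_j$ (through Lemma \ref{ChoosingTheRightCoeff}) this further reduces to matching $p$ and $\Rot_{\tau\eps}(q)$ as homogeneous polynomials while also matching their associated 2D coefficients $b_j^{\pm}$.

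Assume, for contradiction, that the lemma fails for some $\gamma_0>0$. Then there exist sequences $\eps_n\to 0$, $\sigma_n\to 0$, and profiles $p^n$ with $a_j^n$ satisfying \eqref{CondPext} and the almost-symmetry bound with $\sigma_n$, but admitting no admissible $(q,\tau)$. Up to a subsequence, $a_j^n\to a_j^\infty$. By Lemma \ref{ChoosingTheRightCoeff} together with the $x_1\to -x_1$ symmetry relating the constructions in $\Ceta^+$ and $\Ceta^-$, the limits of $b_j^{\pm,\Sph}[a^n;\eps_n]$ are $a_0^\infty$ times the 2D coefficients $b_j^{\R^2}$ evaluated at $(a_1^\infty/a_0^\infty,a_2^\infty/a_0^\infty,a_3^\infty/a_0^\infty)$ and at its reflection $(-a_1^\infty/a_0^\infty,a_2^\infty/a_0^\infty,-a_3^\infty/a_0^\infty)$ (the sign flips coming from the parities of $\vF$, $\vO$, $v_{-\frac12}$, $v_{-\frac32}$ in $x_1$). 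Passing to the limit, the almost-symmetry hypothesis forces the 2D coefficients at the original and reflected parameters to agree, which encodes the rigidity statement that the 2D profile $p^\ast=a_0^\infty u_\ST+a_1^\infty u_{\frac{5}{2}}+a_2^\infty u_{\frac{3}{2}}+a_3^\infty u_{\frac{1}{2}}$ is, after a suitable small rotation $\tau^\infty$, an actual solution to the 2D thin obstacle problem.

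From the 2D analogue of \eqref{NormFam}--\eqref{GammaFunction} (available in Appendix \ref{2DProblem}), the rotated coefficients $(a_0',a_1',a_2',a_3')$ then satisfy $a_3'=0$, $a_2'\ge 0$, and a 2D $\Gamma^{\R^2}$-type inequality whose leading-order form as $\eps\to 0$ is exactly $(a_1'/a_0')^2\le\tfrac{85}{24}a_2'/a_0'$. For large $n$ I would take $q^n$ to be a small perturbation of this embedded 2D limit (buffered by $\gamma/2$ to make \eqref{AlmostSolution59} strict), with $\tau^n=\tau^\infty$; combining the continuous dependence of $b_j^{\R^2}$ on the data (Appendix \ref{2DProblem}) with Lemma \ref{CommutatorC} then yields $\|\bar p^n-\Rot_{\tau^n\eps_n}(\bar q^n)\|_{L^\infty(B_1)}\le \tfrac{\gamma_0}{2}\eps_n^5$ for large $n$, contradicting the failure of \eqref{then59}. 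The main obstacle is the second step: pinning down the exact 2D rigidity that promotes agreement of $b_j^{\R^2}$ at reflected parameters into the statement that $p^\ast$ is a rotated 2D thin-obstacle solution, and in particular extracting the explicit constant $\tfrac{85}{24}$ from the leading-order expansion of the 2D $\Gamma^{\R^2}$ near $a_2=0$; the rotation $\Rot_{\tau\eps}$ must be tracked at the right order because by Remark \ref{COCNear0} it produces corrections of size $\eps$ in the $\vF$-direction that interact with the $\eps^2$ and $\eps^3$ terms surviving in the 2D limit.
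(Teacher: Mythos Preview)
Your overall strategy---connect $b_j^{\pm,\Sph}$ to the 2D coefficients $b_j^{\R^2}$ via Lemma \ref{ChoosingTheRightCoeff}, invoke the 2D rigidity, and lift back via stability estimates---is precisely the paper's approach. The paper, however, does not run the compactness in 3D; it packages the compactness as the already-proved Corollary \ref{AlmostSymmetric2D} (the quantitative 2D statement that if the reflected $b_j^{\R^2}$ nearly agree then the 2D profile is close to a translated half-space solution with $\alpha_1^2\le\frac{12}{5}\alpha_2$ in the $w$-basis), and then performs a direct lift using the $\{\uS,\wF,\wT,\wO,w_{-\frac12},w_{-\frac32}\}$ basis and the argument of Step 1 of Lemma \ref{CommutatorC}. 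So the ``main obstacle'' you flag is exactly Lemma \ref{SymSolIn2D}/Corollary \ref{AlmostSymmetric2D}, already available.

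There is a genuine gap in your first reduction. You claim $\bar p$ agrees with $\pe$ up to $O(\sigma_\gamma\eps^5+\eps^6)$, but inside $\mathcal{C}^\pm_{M\eps}$ one only has $|\bar p-\pe|\le C\eps^{7/2}$ by Lemma \ref{RoughBoundC}, which is far larger than $\eps^5$. Thus you cannot reduce \eqref{then59} to a pointwise comparison of $\pe$ and $\Rot_{\tau\eps}(q_{ext})$ on all of $B_1$. The paper circumvents this by comparing $\bar p$ and $\Rot_{\tau\eps}(\bar q)$ directly inside the caps: both solve the thin obstacle problem \eqref{AnotherTOP} in the common cap $\mathcal{C}^+_{\eta-C|\tau|\eps}$, so the maximum principle transfers the $O(\delta\eps^5)$ boundary control (available only in $\{r>\rho_0\}$) inward. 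One still needs the Step-1 argument from Lemma \ref{CommutatorC} to match the singular coefficients $\gamma_j$ of $q_{ext}$ to the $\beta_j$ produced by Corollary \ref{AlmostSymmetric2D} before the maximum principle can be applied.

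Finally, your worry about the constant $\tfrac{85}{24}$ is misplaced: the sharp 2D constraint is $(a_1'/a_0')^2\le\tfrac{84}{25}(a_2'/a_0')$ (equivalently $\alpha_1^2\le\tfrac{12}{5}\alpha_2$ in the $w$-basis, from \eqref{Cond2D}), and $\tfrac{85}{24}>\tfrac{84}{25}$ is simply a convenient weakening that absorbs the $O(\delta+\eps^2)$ errors coming from the basis change in Remark \ref{RelatingBases} and the $|a_0'-a_0|\le C\eps^2$ shift.
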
 

Recall the rotation operator $\Rot$ from \eqref{Rotation}. 

Compare with Remark \ref{ConditionForSolving}, we see from \eqref{AlmostSolution59} that $q$ `almost solves' the thin obstacle problem up to an error of size $\gamma$.

\begin{proof}
Let $\delta>0$ be a small constant to be chosen, depending on $\gamma.$

With Lemma \ref{ChoosingTheRightCoeff},  we have
$$
|b_1^+-a_0\cdot b_1^{\R^2}[\frac{a_1}{a_0},\frac{a_2}{a_0},\frac{a_3}{a_0}]|<\omega(\eps), \text{ and }
|b_1^--a_0\cdot b_1^{\R^2}[-\frac{a_1}{a_0},\frac{a_2}{a_0},-\frac{a_3}{a_0}]|<\omega(\eps).
$$
With our assumption on $|b_1^+-b_1^-|$ and $a_0\in[\frac12,2]$, this implies 
$$
|b_1^{\R^2}[\frac{a_1}{a_0},\frac{a_2}{a_0},\frac{a_3}{a_0}]-b_1^{\R^2}[-\frac{a_1}{a_0},\frac{a_2}{a_0},-\frac{a_3}{a_0}]|<4(\omega(\eps)+\sigma).
$$
Similarly, we have 
$$
|b_2^{\R^2}[\frac{a_1}{a_0},\frac{a_2}{a_0},\frac{a_3}{a_0}]+b_2^{\R^2}[-\frac{a_1}{a_0},\frac{a_2}{a_0},-\frac{a_3}{a_0}]|<4(\omega(\eps)+\sigma).
$$
The change of sign in front of $b_2^{\R^2}$ is due to the odd symmetry of $\vNT$ with respect to $\{x_1=0\}.$

We perform a change of basis (see \eqref{BasisB} and \eqref{SingularHarmFuncB})
\begin{align*}
\pe&=a_0\uS+\dots+b_1^\pm\eps^4\chi_{\{\pm x_1>0\}}\vNO+b_2^\pm\eps^5\chi_{\{\pm x_1>0\}}\vNT\\
&=\tilde{a}_0\uS+\tilde{a}_1\eps\wF+\tilde{a}_2\eps^2\wT+\tilde{a}_3\eps^3\wO+\tilde{b}_1^\pm\eps^4\chi_{\{\pm x_1>0\}}w_{-\frac 12}+\tilde{b}_2^\pm\eps^5\chi_{\{\pm x_1>0\}}w_{-\frac32}.
\end{align*}

With Corollary \ref{AlmostSymmetric2D}, if $\eps$ and $\sigma$ are small, depending on $\delta$, then we find universally  bounded $\tau$, $\alpha_j$ and $\beta_j$ satisfying 
\begin{equation}\label{SecondIn59}
|\alpha_0-\tilde{a}_0|\le C\eps^2, \hem \alpha_2\ge-\delta, \hem |\alpha_3|<\delta, \text{ and }\alpha_1^2\le\frac{12}{5}\alpha_2+\delta,
\end{equation}
and 
\begin{equation}\label{FirstIn59}
|\pe-\Rot_{-\tau\eps}(f)|\le C_{\rho_0}\delta\eps^5 \text{ in }\{r>\rho_0\}\cap\{x_1>C|\tau|\eps\},
\end{equation}
where $\rho_0>0$ is a small parameter to be chosen, and 
$$f=\alpha_0\uS+\alpha_1\eps\wF+\alpha_2\eps^2\wT+\alpha_3\eps^3\wO+\beta_1\eps^4w_{-\frac12}+\beta_2\eps^5w_{-\frac32}.$$

If we take $q=\alpha_0\uS+\alpha_1\eps\wF+\alpha_2\eps^2\wT+\alpha_3\eps^3\wO$, and suppose 
$$q_{ext}=q+\gamma_1\eps^4w_{-\frac12}+\gamma_2\eps^5w_{-\frac32},$$
 then $\pbar$ and $\Rot_{-\tau\eps}(\bar{q})$ both solve \eqref{AnotherTOP} in $\mathcal{C}^+_{\eta-C|\tau|\eps}$.  With \eqref{FirstIn59} and $$\pbar-\Rot_{-\tau\eps}(\bar{q})=(\pbar-\pe)+(\pe-\Rot_{-\tau\eps}(f))+(\Rot_{-\tau\eps}(f-q_{ext}))+(\Rot_{-\tau\eps}(q_{ext}-\bar{q})),$$ we can apply the same argument as in Step 1 from the proof for Lemma \ref{CommutatorC}
to conclude
$$|\gamma_1-\beta_1|\eps^4+|\gamma_2-\beta_2|\eps^5\le C_{\rho_0}\delta\eps^5$$ 
if $\rho_0$ is fixed small enough.  

With \eqref{FirstIn59}, this implies 
$$|\pe-\Rot_{-\tau\eps}(q_{ext})|\le C_{\rho_0}\delta\eps^5 \text{ in } \{r>\rho_0\}\cap\{x_1>C|\tau|\eps\}.$$
An application of the maximum principle leads to 
$$|\pbar-\Rot_{-\tau\eps}(\bar{q})|\le C_{\rho_0}\delta\eps^5 \text{ in } \{x_1>C|\tau|\eps\}.$$

A similar argument gives $|\pbar-\Rot_{-\tau\eps}(\bar{q})|\le C_{\rho_0}\delta\eps^5 \text{ in } \{x_1<-C|\tau|\eps\}.$ Together with the estimate in $\{x_1>C|\tau|\eps\}$, we can apply the maximum principle in $\Leta$ to get 
$$
|\pbar-\Rot_{-\tau\eps}(\bar{q})|<C_{\rho_0}\delta\eps^5 \text{ in }\Sph.
$$

Choosing $\delta$ small, depending on $\gamma$, we have \eqref{then59}. The constraint \eqref{AlmostSolution59} follows from \eqref{SecondIn59}.
\end{proof}

\begin{rem}\label{PointingToOneSided}
For $q=a_0'\uS+a_1'\eps\vF+a_2'\eps^2\vT+a_3'\eps^3\vO$ as in the statement of Lemma \ref{AlmostSymmetric3D}, if we assume 
$$
a_2'\ge\frac{1}{24},
$$
then we can use the same argument as in Step 3 of the proof for Lemma \ref{DichotomyA} to find $\tau$ and $\hat{a}_j$ with $|\tau|=O(\gamma), \text{ and } |\hat{a}_j-a_j'|=O(\gamma)$  such that 
$$
\hat{q}=a_0'\uS+\hat{a}_1\eps\vF+\hat{a}_2\eps^2\vT
$$
satisfies 
$$\hat{q}|_{\Hpp}\ge 0, \hem \Delta\hat{q}|_{\Hpp}\le 0,$$ and 
$$
|q-\Rot_{\tau\eps}(\hat{q})|\le C\gamma\eps^3 \text{ in }\{r>\eta/2\}\cap B_1.
$$

Moreover, if $\gamma$ and $\eps$ are universally small, then we have 
$\hat{a}_2\ge \frac{1}{24}-O(\gamma)\ge0,$ and
\begin{align*}
(\frac{\hat{a}_1}{a_0'})^2-4\frac{\hat{a}_2}{a_0'}(1-\frac15\frac{\hat{a}_2\eps^2}{a_0'})&\le (\frac{a'_1}{a_0'})^2-4\frac{a'_2}{a_0'}+O(\gamma+\eps)\\
&\le \frac{-11}{24}\frac{a'_2}{a_0'}+O(\gamma+\eps)\\
&\le -(\frac{1}{24})^2.
\end{align*}
Note that we used $a_2'\ge\frac{1}{24}$ together with \eqref{AlmostSolution59}.

Consequently, if $\hat{a}_1\ge0$, then \textit{small perturbations} from $\hat{q}$ solve the thin obstacle problem outside a cone of $O(\eps)$-opening near $\{r=0,x_1>0\}$ (See Remark \ref{ConditionForSolving}).
\end{rem}

\subsection{One-sided replacement}
For profile of the form $\hat{q}$ as in Remark \ref{PointingToOneSided}, that is, 
$$p=a_0\uS+a_1\eps\vF+a_2\eps^2\vT$$ 
with
 \begin{equation}\label{CondOneSided}
a_0\in[\frac12,2], \hem a_1\ge0, \hem a_2\ge\frac{1}{24}, \text{ and }(\frac{a_1}{a_0})^2-4\frac{a_2}{a_0}(1-\frac15\frac{a_2\eps^2}{a_0})\le-(\frac{1}{24})^2,
\end{equation} 
we only need to replace it in a small spherical cap near $\{r=0,x_1>0\}.$

To this end, we solve in $\Ceta^+$ from \eqref{SmallCapC} the following
\begin{equation}\label{LastTOP}
\begin{cases}
(\SphLap+\lambda_\ST)v_p\le 0 &\text{ in }\Ceta^+,\\
v_p\ge 0 &\text{ in }\Ceta^+\cap\Hpp,\\
(\SphLap+\lambda_\ST)v_p= 0 &\text{ in }\Ceta^+\cap(\{v_p>0\}\cup\{x_3\neq0\}),\\
v_p=p &\text{ along }\partial\Ceta^+.
\end{cases}
\end{equation} 

With this, we define the replacement of $p$ as:
\begin{defi}\label{ReplacementCC}
For $p$ satisfying \eqref{CondOneSided}, its \textit{replacement}, $\ptil$, is defined as 
$$
\ptil=\begin{cases}
v_p &\text{ in }\Ceta^+,\\
p &\text{ in }\Sph\backslash\Ceta^+.
\end{cases}$$

Equivalently, the replacement $\ptil$ is the minimizer of the energy
$$v\mapsto\int_{\Sph}|\SphGrad v|^2-\lambda_\ST v^2$$
over $\{v: v=p \text{ outside }\Ceta^+, \text{ and }v\ge 0 \text{ on } \Hpp\}.$
\end{defi} 

This replacement satisfies
\begin{equation}\label{EqnForReplacementCC}\begin{cases}
(\SphLap+\lambda_\ST)\ptil=f_{p} dH^1|_{\partial\Ceta^+}+g_p dH^1|_{\Ceta^+\cap\Hpp} &\text{ in }\widehat{\Sph}\cup\Ceta^+,\\
\ptil=0 &\text{ in }\widetilde{\Sph}\backslash\Ceta^+.
\end{cases}\end{equation}

Similar to Subsection \ref{SubsectionBoundaryLayerB}, we define some auxiliary functions. 

The projection of $f_{p}$ into $\mathcal{H}_\ST$ (see \eqref{7/2HarmFunc}) is denoted by $\varphi_{p}$.

The function $H_{p}:\Sph\to\R$ is the unique solution to
$$\begin{cases}
(\SphLap+\lambda_\ST)H_{p}=f_{p}dH^1|_{\partial\Ceta^+}-\varphi_{p} &\text{ on $\widehat{\Sph},$}\\
H_{p}=0 &\text{ on $\widetilde{\Sph}$.}
\end{cases}$$

Corresponding to these,  we define 
\begin{equation*}
\Phi_{p}:=H_{1,p}(\frac{x}{|x|})|x|^\ST+\frac 18\varphi_{1,p}(\frac{x}{|x|})|x|^\ST\log(|x|),
\end{equation*}
which satisfies
\begin{equation*}
\begin{cases}
\Delta\Phi_{p}=f_{p}dH^2|_{\partial\Ceta^+} &\text{ in $\widehat{\R^3}$,}\\
\Phi_{p}=0 &\text{ on $\widetilde{\R^3}.$}
\end{cases}\end{equation*}

Finally, let's denote 
\begin{equation*}
\kappa_{p}:=\|\Phi_{p}\|_{L^\infty(B_1)}.
\end{equation*}

With similar argument as in Subsection  \ref{SubsectionBoundaryLayerB}, we have the following properties:

\begin{lem}\label{ContLocCCC}
For $p$ satisfying \eqref{CondOneSided}, we have
$$
\ptil=0 \text{ in } (\Ceta^+\backslash\mathcal{C}_{C\eps}^+)',
$$
where $C$ is a universal constant.
\end{lem}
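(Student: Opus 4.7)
The plan is to mimic the barrier argument used for the second statement of Lemma~\ref{ContLocA}, as adapted for the second conclusion of Lemma~\ref{ContLocB}, working with the $\tfrac72$-homogeneous extension of $\ptil$ to the cone $\widehat{\Ceta^+}$ so that one is back in the Euclidean setting. The goal is to show, for each slit point $x_0=(x_1^0,x_2^0,0)$ with $x_1^0>0$ and $r_0:=|x_2^0|>C\eps$, that $\ptil(x_0)=0$, via an upper barrier $\varphi$ with $\varphi(x_0)=0$ that dominates $\ptil$ on the boundary of a small box around $x_0$; combined with the obstacle condition $\ptil\ge0$ on $\Hpp$ this forces equality. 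As a preliminary I would establish the rough bound $|\ptil-p|\le C\eps^{7/2}$ in $\Ceta^+$, obtained exactly as in Lemma~\ref{RoughBoundC}: sandwich $\ptil$ between $\Rot_{\mp\tau\eps}(q)$, where $\tau$ is universally large and $q=a_0\uS+\alpha_1\eps\wF+\alpha_2\eps^2\wT$ is a bona fide thin obstacle solution by Remark~\ref{ConditionForSolving}; the maximum principle for the thin obstacle problem does the rest.

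The crucial ingredient is the slope of $p$ on the slit. Direct calculation gives
$$\ddt p\big|_{\mathcal{S}} = -\tfrac{7}{2}a_0\,r^{5/2} + \tfrac{5}{2}a_1\eps\,x_1\,r^{3/2} - \tfrac{3}{2}a_2\eps^2\,(x_1^2-r^2/5)\,r^{1/2},$$
and the universal bounds on $a_j$ from \eqref{CondOneSided}, together with $x_1\le1$ on $\Sph$, imply that for $C$ universally large the leading term dominates on $\mathcal{S}\cap\{r>C\eps\}$, giving $\ddt p\le -c\,r^{5/2}$ there for a universal $c>0$. By the $C^{1,1/2}$-regularity of $p$ this slope persists in a box $\Omega$ of side $\sigma$ around $x_0$, provided $\sigma$ is small compared to a suitable power of $r_0$. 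The standard parabolic barrier $\varphi(x)=(|x'-x_0'|^2-2x_3^2)/\eta$ is harmonic, non-negative on $\Hpp$, and hence a valid thin-obstacle super-solution; combining the Taylor expansion $p(x)\le -(c/2)\,r_0^{5/2}\,x_3+O(\sigma^{3/2})$ near $x_0$ (using $p(x_0)=0$ and $\nabla_{x'}p(x_0)=0$) with the rough bound $\ptil\le p+C\eps^{7/2}$, the parameter $\eta$ can be chosen so that $\varphi\ge\ptil$ on $\partial\Omega$. The maximum principle then gives $\ptil\le\varphi$ in $\Omega$, and $\varphi(x_0)=0$ closes the argument.

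The delicate technical point is the balance of the three scales $r_0$, $\sigma$, $\eta$ against $\eps$. A naive insertion of the $\eps^{7/2}$-error from the rough bound into the Lemma~\ref{ContLocA}-style barrier yields only the weaker threshold $r>C\eps^{7/15}$. Attaining the sharp threshold $r>C\eps$ asserted in the lemma requires exploiting the fact that the defect $\ptil-p$ is localized inside the tiny cap $\mathcal{C}^+_{C\eps}$ where $p$ fails the constraint $\Delta p|_{\Hpp}\le0$, and decays rapidly outside by standard estimates for the thin obstacle problem with a small source. Quantifying this decay so that at a slit point with $r_0\gg\eps$ the effective error entering the barrier argument is much smaller than $\eps^{7/2}$ is the main technical input that allows the scales $\sigma\asymp r_0^5$ and $\eta$ to be balanced against $\eps$ itself.
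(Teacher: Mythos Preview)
Your proposal can likely be pushed through, but it takes a detour around the direct argument, and the ``delicate technical point'' you flag is self-inflicted. The rotated-profile barrier $\Rot_{-\tau\eps}(q)$ that you build as a \emph{preliminary} step already yields the full conclusion, with the sharp threshold $r>C\eps$, without any parabolic barrier or decay estimate. This is exactly how the paper obtains the analogous statement in Lemma~\ref{RoughBoundC} (the geometry here---a cap around the pole $(1,0,0)$---is identical, only simpler since $a_3=b_1=b_2=0$).

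Here is the point you are missing. Once you have $\ptil\le\Rot_{-\tau\eps}(q)$ in $\Ceta^+$ by the maximum principle, observe that $q$ vanishes on the slit $\widetilde{\R^3}$, so $\Rot_{-\tau\eps}(q)(x)=q(\Rot_{\tau\eps}x)$ vanishes whenever $\Rot_{\tau\eps}x$ lies on the slit. For $x=(x_1,-r,0)\in\widetilde{\Ceta^+}$ with $x_1\approx 1$, the $x_2$-coordinate of $\Rot_{\tau\eps}x$ is $x_1\sin(\tau\eps)-r\cos(\tau\eps)\approx\tau\eps-r$, which is negative precisely when $r>\tau\eps$. Hence $\Rot_{-\tau\eps}(q)=0$ on $\widetilde{\Ceta^+}\cap\{r>C\eps\}$ with $C\sim\tau$, and since $0\le\ptil\le\Rot_{-\tau\eps}(q)$ there, you get $\ptil=0$ immediately.

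By contrast, your route extracts from this same barrier only the rough bound $|\ptil-p|\le C\eps^{7/2}$ and then runs a second barrier of parabolic type, which as you correctly compute gives only $r>C\eps^{7/15}$ without the extra decay input. That extra input is not needed: the first barrier already does the job.
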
 

\begin{lem}\label{CommutatorCC}
Suppose that $p$ satisfies \eqref{CondOneSided}, and take $q=\alpha_0\uS+\alpha_1\vF+\alpha_2\vT+\alpha_3\vO$ with $|\alpha_j|\le 1$.

Then we can find a modulus of continuity, $\omega(\cdot)$, such that 
$$
\|\widetilde{p+dq}-(\ptil+dq)\|_{L^\infty(\Sph)}\le\omega(\eps+d)\cdot d.
$$
\end{lem}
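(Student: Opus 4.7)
The strategy is to follow the approach of Lemma \ref{CommutatorB} and Lemma \ref{CommutatorC} in the simpler one-sided setting. First, outside $\Ceta^+$ both replacements coincide with their profiles, $\widetilde{p+dq}=p+dq$ and $\ptil=p$ by Definition \ref{ReplacementCC}, so $\widetilde{p+dq}-(\ptil+dq)\equiv 0$ on $\Sph\setminus\Ceta^+$, and the estimate reduces to bounding $W:=\widetilde{p+dq}-\ptil-dq$ inside $\Ceta^+$. Inside $\Ceta^+$, since $q$ is $\ST$-homogeneous and harmonic in $\widehat{\R^3}$ and both $\ptil,\widetilde{p+dq}$ are $(\SphLap+\lambda_\ST)$-harmonic on $\widehat{\Ceta^+}$ away from their respective contact sets, the function $W$ is $(\SphLap+\lambda_\ST)$-harmonic on $\widehat{\Ceta^+}$ outside the union of the two contact sets; it vanishes on $\partial\Ceta^+$ and on the slit $\widetilde{\Sph}\cap\Ceta^+$, and the maximum principle applied to $\widetilde{p+dq}-\ptil$ gives the crude bound $\|W\|_{L^\infty(\Ceta^+)}\le Cd$.

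To upgrade this crude bound to a vanishing modulus of continuity, I would argue by contradiction and compactness. Suppose no such $\omega$ exists: one finds $\gamma>0$ and a sequence $(p_n,q_n,d_n,\eps_n)$ with $d_n+\eps_n\to 0$ (and, by a rescaling reduction analogous to Remark \ref{dleeps3}, one may assume $d_n\le\eps_n^3$, so that $p_n+d_nq_n$ has the same effective $\eps$ as $p_n$) such that $\|W_n\|_{L^\infty(\Sph)}\ge\gamma d_n$. Normalize $w_n:=W_n/d_n$; these are uniformly bounded by the crude estimate. Passing to a subsequence, $a_{0,n}\to a_0\in[\tfrac12,2]$, $p_n\to a_0\uS$ uniformly, and $q_n\to q_\infty$. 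Lemma \ref{ContLocCCC}, applied to $p_n$ and to the perturbed profile $p_n+d_nq_n$ (which satisfies a relaxed version of \eqref{CondOneSided} for $d_n$ small relative to the universal margin $\tfrac{1}{24}$), confines both contact sets to $\mathcal{C}^+_{C\eps_n}$ and forces them to collapse to the pole. Standard elliptic estimates for $(\SphLap+\lambda_\ST)$-harmonic functions in slit domains then yield locally uniform convergence $w_n\to w_\infty$ on compact subsets of $\widehat{\Ceta^+}\setminus\{\text{pole}\}$.

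The limit $w_\infty$ is bounded, $(\SphLap+\lambda_\ST)$-harmonic on $\widehat{\Ceta^+}\setminus\{\text{pole}\}$, and vanishes on $\partial\Ceta^+$ and on $\widetilde{\Sph}\cap\Ceta^+$. Since the pole is isolated and $w_\infty$ is bounded, the singularity is removable, so $w_\infty$ extends to a $(\SphLap+\lambda_\ST)$-harmonic function on $\widehat{\Ceta^+}$ with zero boundary data, whence $w_\infty\equiv 0$. The main obstacle is that $\|w_n\|_{L^\infty}\ge\gamma$ might be realized along points approaching the pole, in which case away-from-pole convergence does not directly supply the contradiction; this is handled by a blow-up at scale $\eps_n$ about the pole, reducing the analysis to the two-dimensional thin obstacle problem of Appendix \ref{2DProblem}, and invoking the uniqueness statement of Proposition \ref{StartingPtIn2d} to force the rescaled limits to vanish as well. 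Combining the two compactness analyses contradicts the assumed lower bound and produces the desired modulus $\omega$.
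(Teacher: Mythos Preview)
Your compactness framework is a valid repackaging of the argument, and the away-from-pole part is fine: the contact sets collapse to the pole by Lemma~\ref{ContLocCCC}, $w_n$ is $(\SphLap+\lambda_\ST)$-harmonic in the slit domain away from them, and the bounded limit $w_\infty$ vanishes by removability plus the maximum principle.

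The gap is in your near-pole step. Invoking the blow-up at scale $\eps_n$ and the uniqueness in Proposition~\ref{StartingPtIn2d} only tells you that $\ptil_n(1,\eps_n\cdot)/\eps_n^{7/2}$ and $\widetilde{p_n+d_nq_n}(1,\eps_n\cdot)/\eps_n^{7/2}$ converge to the \emph{same} two-dimensional solution; it says nothing about their difference at scale $d_n$. The modulus $\omega_R$ in Proposition~\ref{Flattening} is not quantified, so when $d_n\ll\eps_n^{7/2}$ you cannot conclude that the difference is $o(d_n)$. The right near-pole argument is simpler and does not use Appendix~\ref{2DProblem} at all: for any fixed small $\ell>0$, both $\widetilde{p_n+d_nq_n}$ and $\ptil_n$ solve the thin obstacle problem in $\mathcal{C}^+_\ell$, so by the comparison principle
\[
\sup_{\mathcal{C}^+_\ell}\bigl|\widetilde{p_n+d_nq_n}-\ptil_n\bigr|\le \sup_{\partial\mathcal{C}^+_\ell}\bigl|W_n+d_nq_n\bigr|\le d_n\bigl(o(1)+C\ell^{1/2}\bigr),
\]
using your away-from-pole convergence for $W_n/d_n$ on $\partial\mathcal{C}^+_\ell$ and the crucial bound $|q|\le Cr^{1/2}$ near the pole (since every element of $\mathcal{H}_\ST$ vanishes to order $r^{1/2}$ there). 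Adding back $d_n q_n$ gives $|W_n|/d_n\le o(1)+2C\ell^{1/2}$ in $\mathcal{C}^+_\ell$, and choosing $\ell$ small yields the contradiction. Note that once this step is written out, your argument is essentially the barrier proof of Lemma~\ref{CommutatorB} (Step~1) that the paper refers to, recast in contradiction form.
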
 

\begin{cor}\label{ChangeInRHSCC}
Under the same assumption as in Lemma \ref{CommutatorCC}, we have
$$\kappa_{p+dq}\le\kappa_p+\omega(\eps+d)\cdot d.$$
\end{cor}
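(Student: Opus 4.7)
The strategy is to mirror exactly the proof of Corollary \ref{ChangeInRHSB}, adapted to the one-sided setting in which the replacement happens only in the single cap $\Ceta^+$ that now contains the pole $\{r=0,x_1>0\}$. First, I would set $W:=\widetilde{p+dq}-\ptil-dq$. Since Definition \ref{ReplacementCC} gives $\widetilde{p+dq}=p+dq$ and $\ptil=p$ outside $\Ceta^+$, the function $W$ vanishes identically outside $\Ceta^+$; in particular $W=0$ along $\partial\Ceta^+$.

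Next I would invoke Lemma \ref{ContLocCCC} for both $\ptil$ and $\widetilde{p+dq}$: both vanish on $\Hpp\cap(\Ceta^+\setminus\mathcal{C}^+_{C\eps})$ once $d$ is small, so the same holds for $W$. Subtracting the equations \eqref{EqnForReplacementCC} written for $p$ and for $p+dq$, we get $(\SphLap+\lambda_\ST)W=0$ in $(\Ceta^+\setminus\mathcal{C}^+_{\eta/2})\cap\{x_3>0\}$, with zero Dirichlet data on $\partial\Ceta^+\cup\bigl(\Hpp\cap(\Ceta^+\setminus\mathcal{C}^+_{\eta/2})\bigr)$. Classical boundary regularity at the smooth piece $\partial\Ceta^+$ yields
$$|W_\nu|\le C\,\|W\|_{L^\infty(\Ceta^+\setminus\mathcal{C}^+_{\eta/2})}\quad\text{along }\partial\Ceta^+,$$
and Lemma \ref{CommutatorCC} bounds the right-hand side by $\omega(\eps+d)\cdot d$.

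The analogue of \eqref{RHSisDer} identifies $f_{p+dq}-f_p=W_\nu$ along $\partial\Ceta^+$, so the previous estimate gives $\|f_{p+dq}-f_p\|_{L^\infty(\partial\Ceta^+)}\le\omega(\eps+d)\cdot d$. By linearity of the construction leading from $f_p$ to $\Phi_p$ (splitting off the $\mathcal{H}_\ST$-projection $\varphi_p$, solving for $H_p$ by Fredholm alternative on the slit sphere, then forming the $\ST$-homogeneous log-corrected extension), the difference $\Phi_{p+dq}-\Phi_p$ is exactly the function associated to the source $f_{p+dq}-f_p$; standard estimates then give $\|\Phi_{p+dq}-\Phi_p\|_{L^\infty(B_1)}\le C\,\omega(\eps+d)\cdot d$. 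Reabsorbing $C$ into $\omega$ produces $\kappa_{p+dq}\le\kappa_p+\omega(\eps+d)\cdot d$, as required.

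The only point worth double-checking is that the presence of the pole inside $\Ceta^+$ does not spoil the derivative estimate on $\partial\Ceta^+$; but since that estimate is purely local near the outer circle $\partial\Ceta^+$, which sits at distance $\sim\eta$ from the pole and from the contact region $\mathcal{C}^+_{C\eps}$, the standard slit-domain boundary regularity applies and the potential singularity at the pole plays no role. I therefore expect the proof to be a near-verbatim copy of the argument for Corollary \ref{ChangeInRHSB}, with the two caps $\Ceta^\pm$ there replaced by the single cap $\Ceta^+$ here.
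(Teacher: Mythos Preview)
Your proposal is correct and matches the paper's approach: Corollary \ref{ChangeInRHSCC} is stated there without proof, the paper simply indicating that the argument is the same as for Corollary \ref{ChangeInRHSB}, which is exactly the adaptation you carry out. One small refinement: since $\Ceta^+$ here straddles the edge of the slit, $W$ vanishes only on the slit portion of $(\Ceta^+\setminus\mathcal{C}^+_{\eta/2})\cap\{x_3=0\}$, while on the non-slit portion the equation $(\SphLap+\lambda_\ST)W=0$ extends across $\{x_3=0\}$ by even symmetry---in either case the boundary gradient estimate along $\partial\Ceta^+$ goes through unchanged.
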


By directly computing the inner product of $f$ and $\vO$, we have
\begin{lem}\label{NonOrthCC}
Suppose that $p$ satisfies \eqref{CondOneSided}. Then 
$$\|\varphi\|_{L^\infty(\Sph)}\ge c\kappa$$
for a universal $c>0$.
\end{lem}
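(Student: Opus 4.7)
The plan is to establish $\|\varphi\|_{L^\infty(\Sph)} \ge c\kappa$ by a direct computation of the inner products $\int_{\partial\Ceta^+} f\vO\, dH^1$ and $\int_{\partial\Ceta^+} f\vT\, dH^1$, which control the $\vO$- and $\vT$-components of $\varphi$ respectively. The key input is the asymptotic expansion of $\ptil - p$ away from the pole $A^+ = (1,0,0)$, supplied by the two-dimensional analysis.

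First, an analogue of Lemma \ref{ChoosingTheRightCoeff} in the one-sided setting yields
\[
\ptil - p = b_1^+ \eps^4 \vNO + b_2^+ \eps^5 \vNT + O(\eps^6)
\]
uniformly on $\Ceta^+ \setminus \mathcal{C}_{r_0}^+$ for any fixed small $r_0 > 0$, where $b_j^+ = a_0\, b_j^{\R^2}[a_1/a_0,\, a_2/a_0,\, 0]$. The derivation uses that the rescaled solution around $A^+$ converges to the 2D thin obstacle solution by Proposition \ref{Flattening}, with $b_j^+$ pinned down (as in Lemma \ref{ChoosingTheRightCoeff}) by requiring the two leading Fourier modes $\cos(\theta/2), \cos(3\theta/2)$ of $v_p - (p + b_1^+\eps^4\vNO + b_2^+\eps^5\vNT)$ to vanish at the scale $\eps$.

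Second, I would apply Green's identity on $\Sph \setminus \mathcal{C}_{r_0}^+$ with the test function $\vO$ and let $r_0 \to 0$. Since $\vO \in \mathcal{H}_\ST$ is harmonic in $\widehat{\Sph}$ and vanishes on $\widetilde{\Sph}$ (in particular, on the slit portion of $\Hpp \cap \Ceta^+$, which is the only place where $g$ can be nonzero under \eqref{CondOneSided}), and since $\ptil = p$ on $\partial\Ceta^+$, the only surviving term is the integral on $\partial\mathcal{C}_{r_0}^+$. Substituting the expansion from the previous step and executing the residue-type limit as $r_0 \to 0$ exactly as in the proof of Lemma \ref{NonOrthC} (where orthogonality of $\cos(k\theta/2)$ modes kills every cross-pairing except $\vNO$ with $\vO$) yields
\[
\int_{\partial\Ceta^+} f\vO\, dH^1 = 2A_1 b_1^+ \eps^4 + O(\eps^6)
\]
for a positive universal constant $A_1$. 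The analogous computation with $\vT$ in place of $\vO$ gives $\int_{\partial\Ceta^+} f\vT\, dH^1 = 2A_2 b_2^+ \eps^5 + O(\eps^6)$.

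Finally, since the Fourier content of $f$ on $\partial\Ceta^+$ is captured to leading order by the $\cos(\theta/2)$ and $\cos(3\theta/2)$ modes associated with $\vO$ and $\vT$, the orthogonal complement $f\, dH^1|_{\partial\Ceta^+} - \varphi$ has amplitude $O(\eps^6)$; through the defining PDE this yields $\|H_p\|_{L^\infty(\Sph)} \le C\eps^6$. Combining,
\[
\|\varphi\|_{L^\infty(\Sph)} \ge c(|b_1^+|\eps^4 + |b_2^+|\eps^5), \qquad \kappa \le C(|b_1^+|\eps^4 + |b_2^+|\eps^5 + \eps^6),
\]
from which $\|\varphi\|_{L^\infty(\Sph)} \ge c\kappa$ follows. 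The main obstacle is the borderline regime where $|b_1^+|\eps^4$ and $|b_2^+|\eps^5$ are both $\le \eps^6$; in that case both sides of the claimed inequality are driven by the next-order term in the expansion of $\ptil - p$, and the argument bootstraps to the subsequent Fourier modes using the same residue computation.
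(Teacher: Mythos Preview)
Your approach is modeled on Lemma~\ref{NonOrthC}: derive an asymptotic expansion for $\ptil - p$ near the pole and read off the projection of $f$ into $\mathcal{H}_{\ST}$ via a residue computation. The paper's argument is instead modeled on Lemma~\ref{NonOrthB}; the one-sided subsection is explicitly introduced with ``With similar argument as in Subsection~\ref{SubsectionBoundaryLayerB}'', and the proof rests on a sign observation rather than on any expansion.

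The problem in your route is precisely the borderline regime you flag at the end. Your computation yields
\[
\|\varphi\|_{L^\infty(\Sph)} \ge c\bigl(|b_1^+|\eps^4 + |b_2^+|\eps^5\bigr) - C\eps^6,
\qquad
\kappa \le C\bigl(|b_1^+|\eps^4 + |b_2^+|\eps^5 + \eps^6\bigr),
\]
which gives $\|\varphi\| \ge c\kappa$ only when the leading coefficients dominate the $\eps^6$ remainder. The bootstrap you propose does not close: the residue pairing against $\vF$ and $\uS$ picks up the next two coefficients of the expansion, but $\mathcal{H}_{\ST}$ is four-dimensional, so after four test functions you are out, and nothing prevents the first four coefficients from being simultaneously small while $\kappa$ is driven by still-higher terms. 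This regime genuinely occurs under \eqref{CondOneSided}: when $(a_1/a_0)^2$ lies in the $O(\eps^2)$-window just below $\tfrac{84}{25}(a_2/a_0)$, the limiting $2$D profile already solves the $2$D problem, so $b_j^{\R^2}=0$ and hence $b_j^+ = o(1)$, yet $\ptil \ne p$.

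The paper avoids this entirely by exploiting a sign. Since $p$ vanishes on the slit while $\ptil \ge 0$ there, and both are harmonic in $\widehat{\Ceta^+}$ with equal boundary values on $\partial\Ceta^+$, the maximum principle in the slit domain gives $\ptil \ge p$ in $\Ceta^+$; hence $f = (\ptil - p)_\nu \ge 0$ on $\partial\Ceta^+$. One then checks directly that $\vO = (x_1^3 - x_1 r^2)\uO \ge 0$ on $\partial\Ceta^+$ (both factors are nonnegative there, with the same order of vanishing as $f$ at the slit point). Boundary Harnack in $\widehat{\Ceta^+ \setminus \mathcal{C}^+_{\eta/2}}$ then shows $f$ is comparable to a fixed profile times $\|f\|_{L^\infty}$, so $\int_{\partial\Ceta^+} f\,\vO \ge c\|f\|_{L^\infty} \ge c\kappa$, giving the $\vO$-component of $\varphi$ a definite lower bound without any case analysis.
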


Note that $p\ge0$ in $\Ceta^+\cap\Hpp$, thus $p$ is admissible in the minimization problem in Definition \ref{ReplacementCC}, we have
\begin{lem}\label{WeissControlReplacementCC}
Suppose that $p$ satisfies \eqref{CondOneSided},  then 
$W_\ST(\ptil;1)\le 0.$
\end{lem}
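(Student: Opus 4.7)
The plan is to exploit the variational characterization of $\ptil$ from Definition \ref{ReplacementCC} and compare its energy with that of $p$ itself, which will turn out to be an admissible competitor. Since $\ptil$ is $\ST$-homogeneous, the standard identity relating ball and spherical integrals for homogeneous functions in $\R^3$ gives
$$W_\ST(\ptil;1) = \frac{1}{8}\int_{\Sph}\bigl(|\SphGrad\ptil|^2 - \lambda_\ST \ptil^2\bigr),$$
so it suffices to show this spherical Dirichlet-type functional is non-positive at $\ptil$.

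First I would verify that $p$ is admissible, i.e., that $p \ge 0$ on $\Hpp$. On the ray $\{x_2 \le 0,\, x_3 = 0\}$ (i.e., $\theta = \pi$ in the polar coordinates of the $(x_2,x_3)$-plane), each of $\uS$, $\uF$, $\uT$ vanishes because $\cos\bigl((k+\tfrac12)\pi\bigr) = 0$ for $k=1,2,3$; hence $p\equiv 0$ there. On $\{x_2 > 0,\, x_3 = 0\}$, factoring $x_2^{3/2}$ out of $p$ reduces the inequality $p \ge 0$ to the pointwise positivity of the quadratic form
$$a_0\bigl(1 - \tfrac{a_2\eps^2}{5a_0}\bigr)\, x_2^2 + a_1\eps\, x_1 x_2 + a_2\eps^2\, x_1^2 \ge 0.$$
The bound $(a_1/a_0)^2 \le 4(a_2/a_0)(1 - a_2\eps^2/(5a_0)) - (1/24)^2$ in \eqref{CondOneSided} makes the discriminant of this form strictly negative, so $p \ge 0$ on $\Hpp$ with a margin. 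Hence $p$ lies in the admissible class of the variational problem defining $\ptil$.

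With $p$ admissible, the minimality of $\ptil$ in Definition \ref{ReplacementCC} yields
$$\int_{\Sph}\bigl(|\SphGrad\ptil|^2 - \lambda_\ST \ptil^2\bigr) \;\le\; \int_{\Sph}\bigl(|\SphGrad p|^2 - \lambda_\ST p^2\bigr).$$
On the other hand, $p$ is a linear combination of elements of $\mathcal{H}_\ST$, hence $\ST$-homogeneous and harmonic in $\widehat{\R^3}$. The integration by parts carried out in \eqref{NoWForp}, now applied to $p$ and using that the singular set $\{r=0\}$ has zero capacity so no concentrated boundary term is produced, gives $W_\ST(p;1) = 0$, equivalently $\int_{\Sph}(|\SphGrad p|^2 - \lambda_\ST p^2) = 0$. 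Combining this with the previous inequality and the homogeneity identity from the first paragraph delivers $W_\ST(\ptil;1) \le 0$.

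No serious obstacle arises: the lemma is a direct consequence of the admissibility of $p$ combined with the harmonicity identity \eqref{NoWForp}, the same mechanism that drives Lemma \ref{WeissControlReplacementB}. The simplification here is that only the single cap $\Ceta^+$ needs to be modified and $p$ itself is already non-negative on the entire $\Hpp$, so no residual contribution of the form $\int g^-(\ptil - p)$ appears in the energy comparison, which is why the bound $W_\ST(\ptil;1)\le 0$ is cleaner than the $W_\ST(\ptil;1)\le C\kappa^2$ of Lemma \ref{WeissControlReplacementB}.
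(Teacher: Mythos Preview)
Your proof is correct and follows the same approach as the paper, which simply observes that $p\ge 0$ on $\Hpp$ (so $p$ is admissible in the minimization of Definition \ref{ReplacementCC}) and then uses minimality together with $W_\ST(p;1)=0$ from \eqref{NoWForp}. Your version is more detailed, explicitly verifying the non-negativity via the discriminant and writing out the homogeneity identity, but the underlying argument is identical.
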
 

This implies
\begin{lem}\label{OneToInftyCC}
Suppose that $p$ satisfies \eqref{CondOneSided}. Let $u$ be a solution to \eqref{IntroTOP} in $B_1$, then 
$$
\|u-\ptil\|_{L^\infty(B_{1/2})}+\|u-\ptil\|_{H^1(B_{1/2})}\le C(\|u-\ptil\|_{L^1(B_1)}+\kappa)
$$
and
$$
W_\ST(u;\frac 12)\le C(\|u-\ptil\|^2_{L^1(B_1)}+\kappa^2).
$$
\end{lem}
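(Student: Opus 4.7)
The plan is to follow the blueprint of Lemma \ref{OneToInftyB}, with $\ptil$ from Definition \ref{ReplacementCC} playing the role of the earlier replacement. The key preliminary observation is that $\ptil$ satisfies both thin-obstacle constraints $\ptil|_{\Hpp}\ge 0$ and $\Delta\ptil|_{\Hpp}\le 0$ on all of $\Sph$, not only inside $\Ceta^+$. Inside $\Ceta^+$ this is immediate from \eqref{LastTOP}. Outside $\Ceta^+$ we have $\ptil=p$, and condition \eqref{CondOneSided}, together with $a_1\ge 0$ and the absence of a $\vO$-component in $p$, forces all the points where $p|_{\Hpp}$ or $\Delta p|_{\Hpp}$ could degenerate to sit inside $\Ceta^+$ (they cluster along the ray $\{r=0,\,x_1>0\}$, which is precisely what motivated the location of $\Ceta^+$). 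Consequently the only ``non-admissible'' term in \eqref{EqnForReplacementCC} is the surface measure $f_p\,dH^2|_{\partial\Ceta^+}$, whose density is controlled by $C\kappa$ via boundary regularity for harmonic functions applied to $\ptil-p$ in $\Ceta^+$, together with the definition of $\kappa$ through $\Phi_p$.

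With this in hand, the $L^\infty$ and $H^1$ estimates follow by comparison. On the open set $\{u>\ptil\}$, the difference $w:=u-\ptil$ satisfies $\Delta w\ge -f_p\,dH^2|_{\partial\Ceta^+}$. Subtracting the harmonic extension of this defect (which is bounded by $C\kappa$ in $B_{1/2}$) reduces the problem to a genuinely superharmonic function, so the mean-value / maximum principle gives $w\le C(\|u-\ptil\|_{L^1(B_1)}+\kappa)$ in $B_{1/2}$. The reverse inequality is symmetric, using that $\ptil$ itself is an admissible competitor in the thin obstacle problem to compare against $u$ from below. The $H^1$ bound then falls out of Cacciopolli's inequality applied to $w$, with the same $\kappa$-contribution entering through the boundary-layer source.

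For the Weiss-energy bound I would use Lemma \ref{WeissControlReplacementCC} as the backbone. Expanding
\begin{equation*}
W_\ST(u;\tfrac12)=W_\ST(\ptil;\tfrac12)+2\!\int_{B_{1/2}}\!\nabla\ptil\cdot\nabla w-\frac{7}{(1/2)^9}\!\int_{\partial B_{1/2}}\!\ptil\,w+Q(w),
\end{equation*}
where $Q(w)$ is the quadratic Weiss expression in $w$ alone, the first term is $\le 0$ by Lemma \ref{WeissControlReplacementCC}, and $Q(w)$ is controlled by $C(\|u-\ptil\|_{L^1(B_1)}^2+\kappa^2)$ via the $H^1$ estimate just obtained. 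The mixed terms, after integration by parts, reduce to an interior integral $\int w\,\Delta\ptil$ modulo boundary contributions on $\partial B_{1/2}$: the $g_p$-piece of $\Delta\ptil$ has a favorable sign since $w=u\ge 0$ on $\{\ptil=0\}$, while the $f_p$-piece is bounded by $C\kappa\|w\|_{L^\infty(B_{1/2})}$. The main bookkeeping step, and the only place that needs care, is to verify that every unfavorable contribution carries a factor of either $\kappa$ or $\|u-\ptil\|_{L^\infty}$, so that Cauchy--Schwarz and the already-established $L^\infty$ estimate close the inequality at the claimed quadratic order.
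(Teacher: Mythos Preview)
Your $L^\infty$/$H^1$ argument is correct and is exactly what the paper has in mind: the one line ``This implies'' after Lemma~\ref{WeissControlReplacementCC} is shorthand for the same maximum-principle / Cacciopolli scheme you describe, and your observation that $\ptil$ satisfies both constraints on all of $\Hpp$ (not just in $\Ceta^+$) is the right starting point.

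The Weiss-energy step, however, contains a sign error. After integrating by parts and using the $\tfrac72$-homogeneity of $\ptil$, the mixed term is $-2\int_{B_{1/2}} w\,\Delta\ptil$ (not $+$). On the support of $g_p$ and of $\Delta p|_{\widetilde{\Sph}\setminus\Ceta^+}$ you have $\ptil=0$, hence $w=u\ge0$, while $g_p\le0$ and $\Delta p\le0$; therefore $-\int w\,g_p\ge0$ and $-\int_{\text{slit}} w\,\Delta p\ge0$. These are \emph{unfavourable} for an upper bound on $W_\ST(u;\tfrac12)$, not favourable as you claim. You cannot simply drop them.

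The fix is to bound these nonnegative pieces from above. One writes $-\int w\,\Delta\ptil=-\int u\,\Delta\ptil+\int\ptil\,\Delta\ptil$; the second summand equals $-W_\ST(\ptil;1)\ge0$ but is $O(\kappa)$ via the $f_p$-part alone (since $\ptil$ vanishes on the support of $g_p$ and on the slit). For $-\int u\,g_p$ one needs $\int|g_p|\le C\kappa$, obtained by testing against an auxiliary function in $\mathcal H_{\frac72}$ exactly as in the proof of Lemma~\ref{WeissControlReplacementB} (equation~\eqref{413}). For $-\int_{\text{slit}\setminus\Ceta^+} u\,\Delta p$ one uses the contact-set localization so that $u=0$ on all but an $O((\|w\|_{L^\infty}+\eps)^\alpha)$-neighbourhood of $\{r=0\}$; the remaining integral is then controlled by $\|w\|_{L^\infty}$ times a small power, which is enough to close at quadratic order after Cauchy--Schwarz. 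This is the bookkeeping the paper suppresses; your outline is otherwise on target.
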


Similar to Subsection \ref{SubsectionWellApproxSolB}, we have

\begin{defi}\label{WellApproxSolCC}
Suppose that  the coefficients of $p$ satisfy \eqref{CondOneSided}.

For $d,\rho\in(0,1]$, we say that $u$ is a \textit{solution $d$-approximated by $p$ at scale $\rho$} if $u$ solves the thin obstacle problem \eqref{IntroTOP} in $B_\rho$, and
$$|u-\ptil|\le d\rho^\ST \text{ in $B_\rho$.}$$ 
In this case, we write 
$$u\in\mathcal{S}(p,d,\rho).$$
\end{defi} 

We can localize the contact set of well-approximated solutions
\begin{lem}
Suppose that $u\in\mathcal{S}(p,d,1)$ with $d$ small, and that $p$ satisfies \eqref{CondOneSided}. 

We have
$$\Delta u=0 \text{ in } \widehat{B_1}\cap\{r>Cd^{\frac{2}{7}}\},$$
and 
$$ u=0 \text{ in } \widetilde{B_{\frac 78}}\cap\{r>C(d+\eps)^\alpha\}$$ for  universal small $\alpha>0$ and big $C>0$.
\end{lem}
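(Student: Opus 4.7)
My plan is to imitate the proof of Lemma~\ref{ContLocA}, with $\ptil$ playing the role of $p$. The first conclusion is a positivity statement for $u$ on the non-slit side of $\Hpp$, while the second is a barrier argument on the slit side.

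For the first conclusion, I first establish a universal lower bound $p \ge c_0 r^{\ST}$ on $\{\theta=0\}=\{x_2>0,x_3=0\}$. Evaluating the basis there gives
$$
p(x_1, x_2, 0) = r^{3/2}\bigl[(a_0 - \tfrac{a_2 \eps^2}{5}) r^2 + a_1 \eps x_1 r + a_2 \eps^2 x_1^2\bigr],
$$
a quadratic in $x_1$ with positive leading coefficient $a_2 \eps^2$. Its discriminant equals $\eps^2 r^2 [a_1^2 - 4 a_2(a_0 - a_2 \eps^2/5)]$, which by the strict inequality in \eqref{CondOneSided} is bounded above by $-\eps^2 r^2 \cdot a_0^2/576$. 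Hence the minimum over $x_1$ is $\ge c_0 r^2$, yielding $p \ge c_0 r^{\ST}$ on $\{\theta=0\}$. Since $\ptil=p$ outside $\Ceta^+$ and $\ptil\ge 0$ on $\Hpp$ by the obstacle condition, combining with $u \ge \ptil - d$ gives $u>0$ on $\{\theta=0\}\cap(B_1\setminus\Ceta^+)$ whenever $r>Cd^{2/7}$. Inside $\Ceta^+$, Lemma~\ref{ContLocCCC} pins down the contact set of $\ptil$, and combining this with $u\ge0$ on $\Hpp$ yields $\Delta u = 0$ on the full open set $\widehat{B_1}\cap\{r>Cd^{2/7}\}$.

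For the second conclusion, I compute
$$
\ddt p\bigl|_{\mathcal{S},\,x_3=0^+} = r^{1/2}\bigl[\bigl(-\tfrac{7}{2} a_0 + \tfrac{3}{10} a_2 \eps^2\bigr) r^2 + \tfrac{5}{2} a_1 \eps x_1 r - \tfrac{3}{2} a_2 \eps^2 x_1^2\bigr].
$$
For $|x_1|\le 1$ and $r > A\eps$ with a large universal $A$, the leading $-\tfrac{7}{2} a_0 r^2$ term dominates the mixed term $\tfrac{5}{2} a_1 \eps x_1 r$ (of size $\le C\eps r$), giving $\ddt p|_{\mathcal{S}} \le -c r^{5/2}$ for a universal $c>0$. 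Outside $\Ceta^+$, $\ptil = p$ near $\mathcal{S}$, so the same bound holds for $\ptil$. At a slit point $x_0$ with $r(x_0) > A(d+\eps)^\alpha$, I build the standard thin-obstacle barrier $\varphi(x',x_3) = (|x'-x_0|^2 - 2x_3^2)/d^{1/3}$ on the box $\Omega = \{|x'-x_0|<d^{2/3},\,|x_3|<d^{2/3}\}$; the one-sided derivative estimate together with the $C^{1,1/2}$-regularity of $\ptil$ yields $\varphi \ge \ptil+d$ on $\partial\Omega$. Since $u\le\ptil+d$ and both $u,\varphi$ solve the thin obstacle problem, comparison in $\Omega$ gives $u\le\varphi$, so $u(x_0)\le \varphi(x_0)=0$; combined with $u\ge 0$, this forces $u(x_0)=0$.

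The main obstacle is the boundary layer near $\{r=0,\,x_1>0\}$, precisely where $p$ fails the complementarity condition $\Delta p|_{\Hpp}\le 0$ (the very reason $\ptil$ is introduced) and where the derivative estimate $\ddt p|_{\mathcal{S}}\le -cr^{5/2}$ breaks down once $r\lesssim\eps$. This forces the threshold $r>C(d+\eps)^\alpha$ in the second conclusion rather than the cleaner $r>Cd^{2/15}$ appearing in Lemma~\ref{ContLocA}. For slit points inside $\Ceta^+$ with $r\gtrsim\eps$, Lemma~\ref{ContLocCCC} places them already in the contact set $\{\ptil=0\}$, and the boundary regularity of $\ptil$ as a thin-obstacle solution supplies a derivative bound along $\mathcal{S}$ sufficient for the same barrier argument. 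Balancing the thresholds $r>A\eps$ and $r>Ad^{2/15}$ then produces a universal small $\alpha>0$.
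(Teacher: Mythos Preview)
Your approach is correct in outline and matches the template the paper uses throughout (Lemmas~\ref{ContLocA}, \ref{ContLocBB}, \ref{ContLocCC}), but there are two genuine gaps in the handling of the cap $\Ceta^+$.

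\textbf{First conclusion, inside $\Ceta^+$.} Lemma~\ref{ContLocCCC} only says $\ptil=0$ on the \emph{slit} side of $\Hpp$; it says nothing about $\{\theta=0\}$. And ``$u\ge 0$ on $\Hpp$'' does not give $u>0$, which is what you need for $\Delta u=0$. The fix is to observe that $\ptil\ge p$ in $\Ceta^+$: the difference $\ptil-p$ satisfies $(\SphLap+\lambda_{\ST})(\ptil-p)\le 0$ in $\Ceta^+\setminus\mathcal{S}$ (since $\ptil$ is a supersolution and $p$ is harmonic there), vanishes on $\partial\Ceta^+$, and is $\ge 0$ on $\mathcal{S}\cap\Ceta^+$ (where $p=0\le\ptil$); the maximum principle in the small cap then gives $\ptil\ge p$. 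Combined with your discriminant bound $p\ge c_0 r^{\ST}$ on $\{\theta=0\}$, this yields $u\ge\ptil-d\ge c_0 r^{\ST}-d>0$ for $r>Cd^{2/7}$, uniformly across $\Ceta^+$.

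\textbf{Second conclusion, inside $\Ceta^+$.} The phrase ``boundary regularity of $\ptil$ as a thin-obstacle solution'' does not produce the quantitative bound $\ddt\ptil\le -cr^{5/2}$ that the barrier needs. In fact $\ptil\ge p$ with equality on the slit gives $\ddt\ptil\ge\ddt p$ there --- the wrong direction. The clean route is to use the analogue of Lemma~\ref{RoughBoundC}: the same upper and lower barrier construction shows $|\ptil-p|\le C\eps^{\ST}$ in $\Ceta^+$. Then $u\le\ptil+d\le p+d+C\eps^{\ST}$ globally, and you can run the barrier argument of Lemma~\ref{ContLocA} directly against $p$ (for which you already have $\ddt p\le -cr^{5/2}$ once $r>A\eps$), with $d$ replaced by $d+C\eps^{\ST}$. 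Balancing the thresholds $r>A\eps$ and $r>A(d+C\eps^{\ST})^{2/15}$ gives $r>C(d+\eps)^{\alpha}$ with a concrete universal $\alpha$ (for instance $\alpha=2/15$).
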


With these preparations, we have the following dichotomy similar to the one in Subsection \ref{SubsectionDichotomyB}.

\begin{lem}\label{DichotomyCC}
Suppose that $$u\in\SPDO$$ for some  $p=a_0\uS+a_1\eps\vF+a_2\eps^2\vT$ satisfying \eqref{CondOneSided}.

There is a universal small constant $\tilde{\eps}>0$, such that if $\eps<\tilde{\eps}$ and $d\le\eps^3$, then we have the following dichotomy:
\begin{enumerate}
\item{either 
$$W_\ST(u;1)-W_\ST(u;\rho_0)\ge c_0^2d^2$$ 
and 
$$u\in\mathcal{S}(p,Cd,\rho_0);$$
}
\item{or $$u\in\mathcal{S}(p',\frac{1}{2}d,\rho_0)$$ 
for some 
$$p'=\Rot_{\tau\eps}[a_0'\uS+a_1'\eps\vF+a_2'\eps^2\vT]$$ 
with $\kappa_{p'}<d, $ and $$|a_0'-a_0|\le Cd,\hem  |\tau|+|a_1'-a_1|+|a_2'-a_2|\le Cd/\eps^3.$$}
\end{enumerate}
The constants $c_0$, $\rho_0$ and $C$ are universal. 
\end{lem}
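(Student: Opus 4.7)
The plan is to run the compactness-and-contradiction scheme of Lemma \ref{DichotomyA} and Lemma \ref{DichotomyB}, now using the one-sided replacement $\ptil$ of Definition \ref{ReplacementCC} and the auxiliary estimates of Lemmas \ref{ContLocCCC}--\ref{OneToInftyCC}. Suppose the conclusion fails for some $c_0,\rho_0$ to be fixed later. Then there is a sequence $(u_n,p_n,d_n,\eps_n)$ with $d_n\to 0$, $\eps_n<\tilde{\eps}$, $d_n\le\eps_n^3$, each $p_n$ satisfying \eqref{CondOneSided}, each $u_n\in\mathcal{S}(p_n,d_n,1)$,
$$
W_\ST(u_n;1)-W_\ST(u_n;\rho_0)\le c_0^2 d_n^2,
$$
yet $u_n\notin\mathcal{S}(p',\tfrac12 d_n,\rho_0)$ for any $p'$ of the admissible form.

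I would normalize by $\hu_n=(u_n-\ptil_n+\Phi_{p_n})/(d_n+\kappa_{p_n})$, so $|\hu_n|\le 2$ on $B_1$. Using Lemma \ref{ContLocCCC} together with the localization of $\Lambda(u_n)$ for well-approximated solutions, the relation \eqref{EqnForReplacementCC}, and the defining equation for $\Phi_{p_n}$, the sequence $\hu_n$ satisfies the slit-harmonic equation on domains exhausting $\widehat{B_{7/8}}$. Since $\{r=0\}$ has zero capacity, a subsequence converges in $L^2(B_{7/8})$ to a slit-harmonic limit $\hu_\infty$, and Theorem \ref{TaylorExpansion} produces universally bounded $h_{k+\frac12}\in\mathcal{H}_{k+\frac12}$ for $k=0,1,2,3$ with the standard Taylor remainder.

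Next comes the almost-homogeneity step. The Weiss decay bound combined with \eqref{ChangeInRadial} gives $\|u_{(1/2)}-u\|_{L^\infty(B_{1/2})}\le Cc_0 d$; plugging this into the $L^2(\partial B_\rho)$ estimate for $[\hu]_{(1/2)}-\hu$ at some $\rho\in[\rho_0,4\rho_0]$, using the exact homogeneity of $\ptil$ and the $\varphi(\tfrac{x}{|x|})|x|^{7/2}\log|x|$ structure of the one-sided analog of \eqref{PhiB} constructed in Section 5.5, leads to
$$
\Bigl\|7h_{\frac12}+3h_{\frac32}+h_{\frac52}+\tfrac{\log 2}{8(d+\kappa)}\varphi(\tfrac{x}{|x|})|x|^{\frac72}\Bigr\|_{L^2(\partial B_\rho)}\le C\rho^{\frac92}(c_0+\rho)+o(1).
$$
Since $\varphi\in\mathcal{H}_{\frac72}$, Proposition \ref{Orthogonal} separates the four summands; the three lower-homogeneity pieces are universally small, and Lemma \ref{NonOrthCC} applied to the $\varphi$-term yields the key estimate $\kappa\le Cd[(c_0+\rho_0)+o(1)]$. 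Consequently $\|\Phi\|_{L^1(B_{2\rho_0})}=d\cdot o_{\rho_0,c_0}(1)$ and
$$
\|u-[\ptil+(d+\kappa)h_{\frac72}]\|_{L^1(B_{2\rho_0})}\le Cd\bigl[(\rho_0+c_0)\rho_0^{\frac{13}{2}}+o(1)\bigr].
$$

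The final step is to convert $p+(d+\kappa)h_{\frac72}$ into a rotation of an admissible profile. Working in the basis $\{\uS,\wF,\wT,\wO\}$ from Remark \ref{COCNear0}, I would match the $\wF,\wT,\wO$ coefficients of $p+(d+\kappa)h_{\frac72}$ with those of the Taylor expansion of $\Rot_{\tau\eps}[a_0'\uS+a_1'\eps\wF+a_2'\eps^2\wT]$. The resulting triangular system has bottom equation $a_2'\tau\eps^3+O(\tau^2\eps^3)=d\alpha_3$, solvable because \eqref{CondOneSided} forces $a_2\ge 1/24$; this produces $|a_0'-a_0|\le Cd$ and $|\tau|+|a_1'-a_1|+|a_2'-a_2|\le Cd/\eps^3$. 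Lemma \ref{CommutatorCC} then replaces the approximating profile by $\widetilde{\Rot_{\tau\eps}(p')}$ with $L^1$ error $d\cdot o(1)$, Lemma \ref{OneToInftyCC} upgrades to the sup norm on $B_{\rho_0}$, and for $c_0,\rho_0$ chosen universally small we obtain $u\in\mathcal{S}(p',\tfrac12 d,\rho_0)$. Finally Corollary \ref{ChangeInRHSCC} with $\kappa\le Cd[(c_0+\rho_0)+o(1)]$ delivers $\kappa_{p'}<d$, contradicting alternative (2). The main obstacle is exactly this rotation step: the $\eps,\eps^2$ prefactors on $\wF,\wT$ make the matching system degenerate to order $\eps^3$ in the $\wO$-direction, so the sharp $Cd/\eps^3$ bound depends essentially on both the lower bound $a_2\ge 1/24$ and on $d\le\eps^3$ (the latter ensuring $\tau\eps\lesssim d/\eps^2\le\eps$, which keeps the rotation inside the regime where Lemma \ref{ContLocCCC} and the surrounding estimates remain valid).
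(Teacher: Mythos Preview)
Your argument follows the paper's approach and is essentially correct. One small correction to the setup: you need $\eps_n\to 0$ (not merely $\eps_n<\tilde\eps$), since the negation of ``there exists a universal $\tilde\eps$'' yields counterexamples with arbitrarily small $\eps$, and this is what makes the $\omega(\eps+d)$ errors from Lemma~\ref{CommutatorCC} and Corollary~\ref{ChangeInRHSCC} genuinely $o(1)$; also, for the final $\kappa_{p'}<d$ bound note that Corollary~\ref{ChangeInRHSCC} does not apply directly to the pair $(p,p')$ (the coefficient change is $O(d/\eps^2)$, not $O(d)$), so one passes instead through $q=p+(d+\kappa)h_{\frac72}$ and uses the $\|q-\Rot_{-\tau\eps}(p')\|_{L^\infty(\Sph\setminus\mathcal{C}^\pm_{\eta/2})}\le C\tau^2\eps^4\le C\eps d$ bound together with the maximum principle in $\Ceta^+$, exactly as the paper does.
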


\begin{proof}
For $c_0$ and $\rho_0$ to be chosen, suppose the lemma is false, we find a sequence $(u_n,p_n,d_n,\eps_n)$ satisfying the hypothesis of the lemma with $d_n,\eps_n\to0$, but neither of the two alternatives happens. 

This allows us to find 
$$
q=p+d(\alpha_0\uS+\alpha_1\wF+\alpha_2\wT+\alpha_3\wO)
$$ such that 
\begin{equation}\label{FirstIn516}\|u-\tilde{q}\|_{L^1(B_{2\rho_0})}\le Cd\rho_0^{\frac{13}{2}}[(c_0+\rho_0)|\log\rho_0|+o(1)].\end{equation}

With 
$a_2\ge\frac{1}{24}$
 as in \eqref{CondOneSided}, there are $\tau$ and $a_j'$
satisfying
$$
a_0'=a_0+d\alpha_0, \hem |\tau|\le Cd/\eps^3, \text{ and }|a_j'-a_j|\le Cd/\eps^3 \text{ for $j=1,2$}
$$ 
such that 
$$
\|q-\Rot_{-\tau\eps}(a_0'\uS+a_1'\eps\vF+a_2'\eps^2\vT)\|_{L^\infty(\Sph\backslash\mathcal{C}^\pm_{\eta/2})}\le C\tau^2\eps^4\le C\eps d.
$$
Note that we used our assumption that $d\le\eps^3$ for the last comparison.
If we denote by $p'=a_0'\uS+a_1'\eps\vF+a_2'\eps^2\vT$, then the maximum principle gives $ \| \tilde{q}-\tilde{p}'  \|_{L^\infty(\Sph )}\le C\eps d.$ Together with \eqref{FirstIn516}, we get 
$$
\|u-\tilde{p}'\|_{L^1(B_{2\rho_0})}\le Cd\rho_0^{\frac{13}{2}}[(c_0+\rho_0)|\log\rho_0|+o(1)].
$$
From here the remaining of the argument is similar to the proof of Lemma \ref{DichotomyB}.
\end{proof}

\section{Proof of main results}
In this section, we prove our main results, Theorem \ref{Isolation}, Theorem \ref{RateOfConvergence} and Theorem \ref{Stratification}. 

We begin with some preparatory propositions. 

\begin{prop}\label{MainPropA}
For a solution $u$ to the thin obstacle problem \eqref{IntroTOP} in $B_1\subset\R^3$, suppose that its frequency at $0$ is $\ST$, and that 
$$
|u-\uS|\le d \text{ in }B_1.
$$

There is a small universal $\tilde{d}>0$, such that if 
$
d<\tilde{d},
$
then up to a normalization
\begin{enumerate}
\item{either  
$$
|u-\uS|\le O(|x|^{\ST}|\log|x||^{-c_0}),
$$}
\item{ or 
$$
 |u-p|\le O(|x|^{\ST+c_0})
 $$
for some $p\in\mathcal{F}_1\backslash\{\uS\}$.}
\end{enumerate} 
Here $c_0>0$ is universal.
\end{prop}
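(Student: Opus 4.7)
The plan is to iterate the improvement-of-flatness results at geometric scales $\rho_0^n$, tracking profiles $p_n$ and errors $d_n$ with $u_{(\rho_0^n)} \in \mathcal{S}(p_n, d_n, 1)$, and to feed the resulting evolution into the double-sequence Lemma \ref{Sequences}. Starting from $p_0 = \uS$ and $d_0 = d$, I would apply Lemma \ref{Trichotomy} as long as $\eps_{p_n}$ stays small (so $p_n$ remains in the $\mathcal{A}_3$ regime). At each step one of the three alternatives holds: (1) a Weiss-energy drop of order $d_n^2$ with $p_{n+1} = p_n$ and $d_{n+1} = Cd_n$; (2) the quartering $d_{n+1} = \tfrac{1}{4}d_n$ together with a profile change of size at most $Cd_n$ and strict shrinking $\kappa_{p_{n+1}} < \sigma \kappa_{p_n}$; or (3) the degenerate case $d_n \le \eps_{p_n}^5$.

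As long as only alternatives (1) and (2) occur, the pair $(w_n, e_n) := (W_\ST(u;\rho_0^n)/C,\, d_n)$ meets the hypotheses of Lemma \ref{Sequences} with $\gamma = 1$, so $\sum d_n < \infty$ and the coefficient changes are summable, yielding a unique limit profile $p_\infty \in \mathcal{F}_1$ and thereby a unique blow-up. If $p_\infty \in \mathcal{F}_1 \setminus \{\uS\}$, then at some finite $N$ the profile $p_N$ has exited $\mathcal{A}_3$, and from that scale onward Lemma \ref{DichotomyBB} (or Lemma \ref{DichotomyA} once deeper inside $\mathcal{A}_1$) governs the iteration; together with the geometric decay of the combined energy $\alpha_n = w_n + \mu e_n^2$, this produces the sharp $r^{\ST + c_0}$ rate of conclusion (2). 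If instead $p_\infty = \uS$, the best rate the iteration can achieve is driven by the Weiss-energy tail $W_\ST(u;\rho_0^N) - \lim_m W_\ST(u;\rho_0^m)$ via the sharp bound \eqref{OddSum} of Lemma \ref{Sequences}, which, because the total Weiss defect is merely finite, only yields the logarithmic $|\log r|^{-c_0}$ bound of conclusion (1).

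The main obstacle is alternative (3), which a priori stalls the iteration. The plan to handle it is to invoke Lemma \ref{AlmostSymmetric3D}: the condition $d_n \le \eps_{p_n}^5$ together with Lemma \ref{NonOrthC} forces $|b_{1,p_n}^+ - b_{1,p_n}^-|$ and $|b_{2,p_n}^+ - b_{2,p_n}^-|\eps_{p_n}$ to be negligible compared to $\eps_{p_n}$, so the replacement $\overline{p_n}$ is $L^\infty$-close to a rotation of a profile $q_n$ satisfying the almost-solving constraints \eqref{AlmostSolution59}. Remark \ref{PointingToOneSided} then lets me round $q_n$ to a genuine one-sided profile satisfying \eqref{CondOneSided}, whereupon Lemma \ref{DichotomyCC} resumes the iteration with $\kappa_{q_n}$ strictly smaller than $\kappa_{p_n}$, so alternative (3) cannot recur immediately. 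Combining these transitions, absorbing the rotations accumulated along the iteration into the final normalization, and choosing $\tilde d$ small enough for all the lemmas to apply simultaneously completes the proof of the stated dichotomy.
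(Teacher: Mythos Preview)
Your overall scaffold --- iterate an improvement-of-flatness lemma at scales $\rho_0^n$, feed the resulting sequences into Lemma \ref{Sequences}, and distinguish outcomes by the limiting profile --- matches the paper. But two concrete steps are wrong and would make the argument fail.

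First, during the phase where only alternatives (1) and (2) of Lemma \ref{Trichotomy} occur, you cannot take $\gamma = 1$ in Lemma \ref{Sequences}. The Weiss bound from Lemma \ref{OneToInftyC} is $w_n \le C(d_{n-1}^2 + \kappa_{p_{n-1}}^2 + \eps_{p_{n-1}}^6)$, and the $\eps^6$ term is only dominated by $d_{n-1}^{6/5}$ (using $d_{n-1} > \eps_{p_{n-1}}^5$, i.e.\ that (3) has not occurred). Hence $\gamma = 1/5$, and it is precisely the weaker tail estimate \eqref{EvenSum} that produces the $|\log r|^{-c_0}$ rate in conclusion (1). With your claimed $\gamma = 1$ you would obtain a spurious H\"older rate in a regime where none is available.

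Second, your treatment of alternative (3) has a gap. After invoking Lemma \ref{AlmostSymmetric3D} to produce the profile $q$ with coefficients $(a_0',a_1',a_2',a_3')$, Remark \ref{PointingToOneSided} requires $a_2' \ge 1/24$ to round $q$ to a one-sided profile and pass to Lemma \ref{DichotomyCC}. You do not address the case where $a_2'$ is small (the paper uses the threshold $1/16$). In that case the constraints \eqref{AlmostSolution59} force $|a_1'|$ and $|a_3'|$ to be small as well, so $\eps_q \le \tfrac{1}{2}\eps_{p_n}$; replacing $p_n$ by $q$ restores $d_n \gg \eps_q^5$ and one returns to the Trichotomy iteration. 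This branch can recur indefinitely, and together with the $\gamma = 1/5$ bound is what yields conclusion (1). The H\"older conclusion (2) arises only through the complementary branch $a_2' \ge 1/16$, where Lemma \ref{DichotomyCC} (not Lemma \ref{DichotomyA} or \ref{DichotomyBB}) takes over; there one does get $\gamma = 1$, because Lemma \ref{OneToInftyCC} has no $\eps^6$ term and $\kappa_{p_n}\le d_n$ is maintained.
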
 

Recall the notion of normalization from Remark \ref{Normalization}. The family of normalized solutions, $\mathcal{F}_1$, is given in \eqref{NormFam}.

\begin{proof}
The proof is based on iterations of the trichotomy in Lemma \ref{Trichotomy} and the dichotomy in Lemma \ref{DichotomyCC}. 

To begin with, let $\gamma$ denote a small universal constant to be chosen, and let $\sigma_\gamma$ denote the constant from Lemma \ref{AlmostSymmetric3D} corresponding to this $\gamma$. Choose $\sigma$ such that  $\sigma/\sigma_{\gamma}\ll 1$. This choice of $\sigma$ fixes  $\tilde{\delta}$, $c_0$ and $\rho_0$ as in the statement of Lemma \ref{Trichotomy}.

\text{ }

\textit{Step 1: Initiation.}

For $u$ satisfying the conditions in the proposition, let 
$$
u_0:=u_{(1/2)}, \hem p_0:=\uS, \hem d_0:=d, \text{ and }w_0=W_{\ST}(u_0;1).
$$
Recall the notation for rescaling from \eqref{Rescaling}, and the Weiss energy functional from \eqref{Weiss}.

Let $\eps_{p}$ be  as in \eqref{EpsP}, then 
$
\eps_{p_0}=0.
$
According to definitions in Subsection \ref{SubsectionBoundaryLayerC}, we have
$$
\kappa_{p_0}=0,
$$
and 
$$
u_0\in\mathcal{S}(p_0,d_0,1).
$$
Consequently, if $\tilde{d}$ is small, then $d_0<\tilde{\delta}$ and $\eps_{p_0}<\tilde{\delta}$ as in Lemma \ref{Trichotomy}. Moreover, by Lemma \ref{OneToInftyC}, we have
\begin{equation}\label{FirstIn61}
w_0\le Cd_0^2.
\end{equation}

\text{ }

\textit{Step 2: Induction.}

Suppose for $k=0,\dots,n-1$, we have found $(u_k,p_k,d_k)$  such that $u_k\in\mathcal{S}(p_k,d_k,1)$, 
$\eps_{p_k}<\tilde{\delta}$ and $d_k<\tilde{\delta}$. We can apply the trichotomy in Lemma \ref{Trichotomy} to $u_{n-1}$.

If possibility (1) happens, we let 
$$
p_n:=p_{n-1}, \text{ and }d_n=Cd_{n-1}.
$$
If possibility (2) happens, we let
$$
p_n:=p', \text{ and }d_n=\frac12d_{n-1}.
$$
In both cases, we let 
$$
u_n:=(u_{n-1})_{(\rho_0)}, \text{ and }w_n:=W_{\ST}(u_n;1).
$$

We claim that until possibility (3) happens, Lemma \ref{Trichotomy} remains applicable. To be precise, we have
\begin{equation}\label{ClaimIn61}
\text{ Claim: Until $d_n\le\eps_{p_n}^5$, we have }  
\kappa_{p_k}\le d_{k}, \hem w_k\le Cd_k^{6/5}, \text{ and }d_k<\tilde{\delta} \text{ for all $k\le n$.}
\end{equation}

To see this claim, we first notice that all three comparisons are true when $k=0$. It suffices to show that they stay true in the iteration.

Note that each time possibility (1) happens, $\kappa_{p_k}=\kappa_{p_{k-1}}$ and $d_k>d_{k-1}$. Each time possibility (2) happens, we have 
$\kappa_{p_k}\le\sigma d_{k-1}=2\sigma d_{k}.$ 
We see that if $\sigma<\frac12$, then the comparison between $\kappa_{p_k}$ and $d_k$ stays true. 

With this, we can apply Lemma \ref{OneToInftyC} to see that 
$$
w_k\le C(d_{k-1}^2+\kappa_{p_{k-1}}^2+\eps_{p_{k-1}}^6)\le C(d_{k-1}^2+\eps_{p_{k-1}}^6).
$$With $d_{k-1}\ge\eps_{p_{k-1}}^5$, we have
$$
w_k\le Cd_{k-1}^{6/5}\le Cd_{k}^{6/5}.
$$

It remains to see the comparison between $d_k$ and $\tilde{\delta}$. Note that $d_k$ decreases if possibility (2) happens, thus we only need to prove the comparison when possibility (1) happens. In this case, using Lemma \ref{PropertyWeiss} and our assumption that $0$ is a point with frequency $\ST$, we have
$$
w_0\ge w_{k-1}-w_k\ge c_0^2d_{k-1}^2.
$$With \eqref{FirstIn61}, this implies
$$
d_k=Cd_{k-1}\le Cd_0\le C\tilde{d}.
$$Consequently, $d_k$ stays below $\tilde{\delta}$ if $\tilde{d}$ is chosen small. 

In summary, the claim \eqref{ClaimIn61} holds. 

From here we see that until $d_n\le\eps_{p_n}^5$, the double sequence $(w_k,d_k)$ satisfies the conditions in Lemma \ref{Sequences} with $\gamma=\frac15$.  In particular, if $\tilde{d}$ is chosen small, then $\sum d_k$ is small.  

Recall that the deviation in the coefficients of $p_k$ is comparable to $\sum d_k$, and  that $p_0=\uS$. If we denote 
$$
p_k=a_0^k\uS+a_1^k\vF+a_2^k\vT+a_3^k\vO,
$$
then $a_0^k$ stays in $[\frac12,2]$. By choosing $\tilde{d}$ smaller if necessary, we ensure that $\eps_{p_k}$, as defined in \eqref{EpsP}, stays below $\tilde{\delta}$. 

Therefore, until $d_n\le\eps_{p_n}^5$, the conditions in Lemma \ref{Trichotomy} are satisfied, and we can iterate this lemma to continue the sequence  $(u_n,p_n,d_n)$.

In particular, if $d_n$ stays above $\eps_{p_n}^5$  indefinitely, we can apply the same argument in Section 5 of \cite{SY2} to conclude 
$$
|u-\uS|\le O(|x|^{\ST}|\log|x||^{-c_0})
$$up to a normalization. 

We now analyze the case when $d_n$ drops below $\eps_{p_n}^5.$

\text{}

\textit{Step 3: Adjustment when $d_n\le\eps_{p_n}^5$.}

Suppose this happens for the first time at step $n$ in the iteration, then possibility (2) from Lemma \ref{Trichotomy} happens at this step. In particular, we have 
$$
d_n=\frac12 d_{n-1},\hem \kappa_{p_n}\le\sigma d_{n-1}\le 2\sigma d_n,
$$ 
and 
$$
\|u_n-\pbar_n\|_{L^\infty(B_1)}\le\frac14 d_{n-1}.
$$
As a result,  we have
$$
\eps_{p_{n-1}}^5\le d_{n-1}\le 2\eps_{p_n}^5.
$$
Also note that in this case,  the coefficients of $p_n$ deviates from those of $p_{n-1}$ by $O(d_{n-1})$, the definition of $\eps_p$ in \eqref{EpsP} gives 
\begin{equation}\label{SecondIn61}
\eps_{p_n}^5\le \eps_{p_{n-1}}^5+C d_{n-1}^{7/5}\le d_{n-1}(1+C d_{n-1}^{2/5})\le 2d_{n-1}
\end{equation}
if $\tilde{d}$ is small. 

If we denote by
$$
p_{n,ext}=p_n+b_1^{\pm,n}\eps_{p_n}^4\chi_{\{\pm x_1>0\}}\vNO+b_2^{\pm,n}\eps_{p_n}^5\chi_{\{\pm x_1>0\}}\vNT,
$$
where the coefficients $b_j^\pm$ are from Remark \ref{BjSph},
then Lemma \ref{NonOrthC} gives
$$
|b_1^{+,n}-b_1^{-,n}|\eps_{p_n}^4+|b_2^{+,n}-b_2^{-,n}|\eps_{p_n}^5\le C\kappa_{p_n}\le C\sigma d_{n-1}\le C\sigma\eps_{p_n}^5<\sigma_{\gamma}\eps^{5}_{p_n}
$$
by our choice of $\sigma/\sigma_\gamma\ll 1$ before Step 1. 

This allows us to apply Lemma \ref{AlmostSymmetric3D}, leading to 
\begin{equation}\label{GiveAnotherName}
q=a_0'\uS+a_1'\eps_{p_n}\vF+a_2'\eps_{p_n}^2\vT+a_3'\eps_{p_n}^3\vO
\end{equation}
such that 
\begin{equation}\label{ThirdIn61}
\|\pbar_n-\Rot_{\tau\eps_{p_n}}(\bar{q})\|_{L^\infty(B_1)}\le\gamma\eps_{p_n}^5.
\end{equation}

Depending on the size of $a_2'$, we divide the discussion into two cases.

\text{ }

\textit{Step 4: The case when $a_2'\le\frac{1}{16}$ each time the adjustment in Step 3 is made.}

In this case, we define $p_n'=q$, then up to a rotation, we have
$$
\|u_n-\overline{p'}_n\|_{L^\infty(B_1)}\le\frac{1}{4}d_{n-1}+\gamma\eps_{p_n}^5\le \frac14 d_{n-1}+2\gamma  d_{n-1}\le\frac12 d_{n-1}
$$
if $\gamma$ is small. Note that we used \eqref{SecondIn61} and \eqref{ThirdIn61}.

In particular, with $d_n=\frac12 d_{n-1}$ we still have 
$$
u_n\in\mathcal{S}(p_n',d_n,1).
$$

Meanwhile, in this case, Lemma \ref{AlmostSymmetric3D} gives
$$
|a_1'|^2\le \frac{84}{25}|a_2'|+\gamma\le \frac{1}{4}, \hem |a_2'|\le\frac{1}{16}, \text{ and } |a_3'|\le\gamma.
$$
By \eqref{GiveAnotherName} and \eqref{EpsP}, we have
$$
\eps_{p_n'}\le\frac12\eps_{p_n}.
$$ 

Consequently, if $a_2'\le\frac{1}{16}$ each time the adjustment happens, after this adjustment, we have
$$
d_n=\frac12 d_{n-1}\ge\frac14 \eps_{p_n}^5\ge 8\eps_{p_n'}^5,
$$ 
and the induction in Step 2 can be continued, leading to 
$$
|u-\uS|\le O(|x|^{\ST}|\log|x||^{-c_0})
$$ 
with the argument  in Section 5 of \cite{SY2} .

\text{ }

\textit{Step 5: The case when $a_2'\ge\frac{1}{16}$ at one time the adjustment in Step 3 is made.}

Suppose after the adjustment described in Step 3, we have 
$$
q=a_0'\uS+a_1'\eps_{p_n}\vF+a_2'\eps_{p_n}^2\vT+a_3'\eps_{p_n}^3\vO
$$
with $a_2'\ge\frac{1}{16}$.

In this case, with the reasoning in Remark \ref{PointingToOneSided}, we find a solution to the thin obstacle problem $\hat{q}$ such that 
\begin{equation}\label{FourthIn61}
|q-\Rot_{\tau\eps_{p_n}}(\hat{q})|\le C\gamma\eps_{p_n}^3 \text{ in }\{r>\eta/2\}\cap B_1.
\end{equation}
With \eqref{ThirdIn61}, \eqref{FourthIn61}, and $d_n\le\eps_{p_n}^5$, we have, up to a rotation
$$
|u_n-\hat{q}|\le C\gamma\eps_{p_n}^3 \text{ in }B_1.
$$

From here, we \textit{fix the parameter $\eps$} by 
$$\eps=\eps_{p_n},$$ and relabeling our sequence 
$$
u_0:=u_n, \hem p_0:=\hat{q},\text{ and }d_0:=C\gamma\eps^3.
$$
Then $$u_0\in\mathcal{S}(p_0,d_0,1)$$ where the class $\mathcal{S}$ is defined in Definition \ref{WellApproxSolCC}. Moreover, with $p_0$ solving the thin obstacle problem, we can apply Lemma \ref{OneToInftyCC} to get 
\begin{equation}\label{ANewEquation}
w_0:=W(u_0;1)\le Cd_0^2\le C\gamma^2\eps^6.
\end{equation} 

If $\tilde{d}$ is chosen small enough, then $\eps<\tilde{\eps}$ from Lemma \ref{DichotomyCC}. Similar to Step 2, we apply Lemma \ref{DichotomyCC} iteratively and obtain a sequence $(u_n,p_n,d_n)$ with 
$$u_n\in\mathcal{S}(p_n,d_n,1).$$

Note that if alternative (1)  in Lemma \ref{DichotomyCC} happens, we apply \eqref{ANewEquation} to get
$$
c_0^2d_n^2\le w_{n-1}-w_n\le w_0\le C\gamma^2\eps^6.
$$
With a similar argument for comparison between $d$ and $\tilde{\delta}$ in claim \eqref{ClaimIn61}, this implies
$$d_n\le C\gamma\eps^3 \text{ for all }n.$$ 

With a similar argument for comparison between $\kappa$ and $d$ in claim \eqref{ClaimIn61}, we  have
$$\kappa_{p_n}\le d_n \text{ for all }n.$$ Consequently, with Lemma \ref{OneToInftyCC}, we have
$$
w_n=W_{\ST}(u_n;1)\le Cd_n^2.
$$
Therefore, the double sequence $(w_n,d_n)$ satisfies the condition in Lemma \ref{Sequences} for $\gamma=1$, and we have
$$
\sum d_n\le C(d_0^2+w_0)^{1/2}\le C\gamma\eps^3.
$$

In particular, the deviation for coefficients of $p_n$ is of  order $C\gamma$. Consequently, if $\gamma$ is universally small, the conditions on the coefficients from Lemma \ref{DichotomyCC} are satisfied, and Lemma \ref{DichotomyCC} can be applied indefinitely. With  the same argument in Section 5 of \cite{SY2}, we conclude 
$$
|u-p|\le O(|x|^{\ST+c_0})
$$
up to a normalization for some $p\in\mathcal{F}_1\backslash\{\uS\}.$
\end{proof} 

With a similar argument, we can apply Lemma \ref{DichotomyA} and Lemma \ref{DichotomyBB} to establish the following:

\begin{prop}\label{MainPropB}
For a solution $u$ to the thin obstacle problem \eqref{IntroTOP} in $B_1\subset\R^3$, suppose that its frequency at $0$ is $\ST$, and that 
$$
|u-p|\le d \text{ in }B_1
$$
for some $p\in\mathcal{F}_1$ and $|p-\uS|\ge\frac12\tilde{d}$, where $\tilde{d}$ is the universal constant from Proposition \ref{MainPropA}.

There is a small universal $\bar{d}>0$, such that if 
$
d<\bar{d},
$
then up to a normalization
$$
 |u-p'|\le O(|x|^{\ST+c_0})
 $$
for some $p'\in\mathcal{F}_1\backslash\{\uS\}$.

Here $c_0>0$ is universal.
\end{prop}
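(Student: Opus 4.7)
The strategy mirrors the proof of Proposition \ref{MainPropA} but is substantially simpler: since $|p-u_\ST|\ge\tilde d/2$, for $\bar d$ small enough every profile appearing in the iteration stays uniformly away from $u_\ST$, so the degeneracy of Section~5 never occurs and the only tools required are Lemma~\ref{DichotomyA} and Lemma~\ref{DichotomyBB}. In particular, since the coefficients of $p$ keep $a_2/a_0$ bounded away from $0$, the parameter $\mu_p$ from \eqref{MuP} is bounded below by a universal constant depending only on $\tilde d$, so all constants appearing in Lemma~\ref{DichotomyA} become universal.

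Following the scheme of Section~6, set $u_0:=u_{(1/2)}$, $p_0:=p$, $d_0:=d$, and $w_0:=W_\ST(u_0;1)$. Given $(u_n,p_n,d_n)$ with $u_n\in\mathcal S(p_n,d_n,1)$, apply Lemma~\ref{DichotomyA} if $p_n\in\mathcal A_1$ and Lemma~\ref{DichotomyBB} if $p_n$ is near $\mathcal A_2$ (the threshold between the two is chosen so that the hypotheses of the appropriate lemma hold). In either case the dichotomy yields:
\begin{itemize}
\item alternative (1): $w_n-w_{n+1}\ge c_0^2 d_n^2$ and $d_{n+1}=Cd_n$; or
\item alternative (2): $u_{n+1}\in\mathcal S(p_{n+1},\tfrac12 d_n,1)$ with $p_{n+1}$ satisfying $\sum|a_j^{n+1}-a_j^n|\le Cd_n$, and either $\kappa_{p_{n+1}}=0$ (pure $\mathcal A_1$ case) or $\kappa_{p_{n+1}}<d_n$;
\end{itemize}
in both cases we rescale $u_{n+1}:=(u_n)_{(\rho_0)}$ and set $w_{n+1}:=W_\ST(u_{n+1};1)$. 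As in claim \eqref{ClaimIn61}, one checks inductively that $\kappa_{p_n}\le 2d_n$, and Lemma~\ref{OneToInftyA}/\ref{OneToInftyB} then gives $w_n\le C(d_n^2+\kappa_{p_n}^2)\le Cd_n^2$, verifying the hypotheses of Lemma~\ref{Sequences} with $\gamma=1$.

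Lemma~\ref{Sequences} then yields $\sum d_n\le C(w_0+d_0^2)^{1/2}\le Cd$, so the sequence $p_n$ converges to some $p'$ with $|p'-p|\le Cd$, which for $\bar d$ small keeps $p'$ in $\mathcal F_1\setminus\{u_\ST\}$ (the decay $\kappa_{p_n}\to 0$ forces the limit profile to solve the thin obstacle problem genuinely, i.e.\ $p'\in\mathcal F_1$). Moreover \eqref{OddSum} gives $\sum_{k\ge n}d_k\le C(w_n+d_n^2)^{1/2}\le Cd_n$, and since each dichotomy either shrinks $d_n$ by $1/2$ or is followed by a pigeonhole-style accumulation of Weiss-energy drops, a standard counting argument (identical to Section~5 of \cite{SY2}) produces $c_0>0$ universal with $d_n\le C\rho_0^{c_0 n}$. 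Unscaling, this yields $|u-\tilde p_n|(x)\le C\rho_0^{(7/2+c_0)n}$ on $B_{\rho_0^n}$, which together with $|p_n-p'|\le C\rho_0^{c_0 n}$ and $\tilde p_n=p_n$ (since $\kappa_{p_n}\to 0$ and for large $n$ the profile lies in the interior $\mathcal A_1$) gives the claimed $|u-p'|\le O(|x|^{7/2+c_0})$ up to a normalization.

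The only potential obstacle is bookkeeping across the $\mathcal A_1$/near-$\mathcal A_2$ boundary: as the profile drifts we must ensure that at each step the correct lemma is applicable and that the constants do not blow up. This is resolved by fixing a universal threshold (e.g.\ distance to $\partial\mathcal A$) separating the two regimes, noting that Lemma~\ref{DichotomyBB} covers a full neighborhood of $\mathcal A_2$ so the two regimes overlap; because $\sum|a_j^{n+1}-a_j^n|\le Cd_n$ is summable and small, the profile never leaves the combined region of validity, nor does it approach $\mathcal A_3$. Hence the iteration continues indefinitely and the argument concludes exactly as in \cite{SY2}.
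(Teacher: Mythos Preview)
Your approach is correct and matches the paper's own treatment, which merely states that the proposition follows ``with a similar argument'' by applying Lemma~\ref{DichotomyA} and Lemma~\ref{DichotomyBB}. Two minor inaccuracies are worth flagging. First, the assertion that $\mu_p$ from \eqref{MuP} is bounded below because $a_2/a_0$ is bounded away from $0$ is false: profiles on $\mathcal{A}_2$ have $\mu_p=0$ regardless of $a_2$. What is true (and what you actually use) is that $a_2/a_0$ stays bounded below, so the parameter $\mu$ in \eqref{SubRegions} is fixed universally and the constants $c_0,\rho_0,C$ in both dichotomy lemmas are universal; the shrinking threshold $\tilde\delta=\tilde\delta(\mu_p)$ in Lemma~\ref{DichotomyA} is precisely why you must switch to Lemma~\ref{DichotomyBB} near $\mathcal{A}_2$. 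Second, the claim that ``for large $n$ the profile lies in the interior $\mathcal{A}_1$'' is unjustified and false when $p'\in\mathcal{A}_2$. The fix is straightforward: if $p'\in\mathcal{A}_2$ then $\operatorname{dist}(p_n,\mathcal{A}_2)\le|p_n-p'|\le Cd_n$, and the argument of Lemma~\ref{ContLocB} gives $|\tilde p_n-p_n|\le C\operatorname{dist}(p_n,\mathcal{A}_2)\le Cd_n$ on $\Sph$, which suffices to pass from $|u_n-\tilde p_n|\le d_n$ to $|u-p'|\le O(|x|^{7/2+c_0})$.
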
 

With these two propositions in hand, we sketch the proofs for the main results.

\begin{proof}[Proof of Theorem \ref{Isolation}]
For a $\ST$-homogeneous solution $u$ as in Theorem \ref{Isolation}, we see that if $d<\min\{\tilde{d},\bar{d}\}$ with $\tilde{d}$ and $\bar{d}$ from Proposition \ref{MainPropA} and Proposition \ref{MainPropB} respectively, then 
$$
|u-p|=o(|x|^{\ST}) \text{ as }x\to 0
$$for some $p\in\mathcal{F}_1$.

From here the homogeneity of $u$ leads to $u=p.$
\end{proof} 

\begin{proof}[Proof of Theorem \ref{RateOfConvergence}]
Suppose that $p\in\mathcal{F}_1$ is a blow-up profile of $u$ at $0$. Then up to a rescaling we have
$$
|u-p|\le\min\{\tilde{d},\bar{d}\} \text{ in }B_1
$$for $\tilde{d}$ and $\bar{d}$ from Proposition \ref{MainPropA} and Proposition \ref{MainPropB}, which leads to 
$$
|u-p'|=o(|x|^{\ST}) \text{ as }x\to 0
$$
for some $p'\in\mathcal{F}_1$ up to a normalization.

However, with $p$ being a blow-up profile, this forces $p'=p$. From here we either apply Proposition \ref{MainPropA} or Proposition \ref{MainPropB} to get the desired rate of convergence. 
\end{proof} 

With this rate of convergence, the stratification in Theorem \ref{Stratification} follows from the Whitney extension lemma. See, for instance, \cite{GP, CSV}. We sketch the proof for the more precise Remark \ref{MorePreciseStrat}:
\begin{proof}[Proof of Remark \ref{MorePreciseStrat}]

\textit{Case 1: the solution blows up to $\uS$  at $0$.}

With the rate of convergence in Theorem \ref{RateOfConvergence} and Lemma \ref{ContLocCC}, we see that in a ball of radius $r$, the free boundary $\partial_{\R^{n-1}}\Lambda(u)$ is trapped between two parallel lines with distance $r|\log(r)|^{-c_0}$. This is the desired $C^{1,\log}$-regularity of the free boundary at a point where $\uS$ is the blow-up profile. 

\text{ }

\textit{Case 2: the solution blows up to $p\in\mathcal{F}_1\backslash\{\uS\}$ at $0$.}

Now suppose $p\in\mathcal{F}_1\backslash\{\uS\}$ is a blow-up profile at $0$, we need to find $\rho>0$ such that $\Lambda_{\ST}(u)\cap B_{\rho}=\{0\}.$ If the conditions $p|_{\Hpp}\ge0$ and $\Delta p|_{\Hpp}\le 0$ are not degenerate, the conclusion follows from a standard blow-up argument. 

We give the proof when $p=p_{dc}$, the doubly critical profile from \eqref{DoublyCritical}. 

Suppose, on the contrary, that there is a sequence $x_k\in\Lambda_{\ST}(u)\to 0.$ With the notation from \eqref{Rescaling}, we define 
$$u_k:=u_{(|x_k|)}.$$

With the H\"older rate of convergence from Theorem \ref{RateOfConvergence} and Lemma \ref{ContLocBB}, we have that $x_k/|x_k|$ converges to the two rays of degeneracies  $R^\pm$ from \eqref{RaysOfDeg} or $\{r=0\}$. On the other hand, for $p_{dc}$, points on $R^\pm\cup\{r=0\}$ have frequencies in $\{1,\frac32,2\}$, all  bounded away from $\ST$. This implies that $x_k/|x_k|$ has frequency  bounded away from $\ST$, a contradiction.
\end{proof}


\appendix
\section{Fourier expansion in spherical caps}
In this appendix, we study the decay of a harmonic function in a slit domain near the boundary of a spherical cap if some of its Fourier coefficients vanish along a smaller cap.

Recall that we use $(x_1,r,\theta)$ as the coordinate system for $\R^3$, where $r\ge 0$ and $\theta\in(-\pi,\pi]$ are the polar coordinates for the $(x_2,x_3)$-plane.  For small $r_0>0$, the $r_0$-spherical cap is defined as 
$$
\mathcal{C}_{r_0}:=\{r<r_0, \hspace{0.5em} x_1>0\}\cap\Sph.
$$

The main result of this appendix is
\begin{lem}\label{FourierExpansionLemma}
For two small parameters $\eta,\eps$ with $\eps\ll\eta$, suppose that $v$ is a  bounded solution to 
$$
\begin{cases}
(\SphLap+\lambda_\ST)v=0 &\text{ in } \widehat{\Ceta\backslash\mathcal{C}_\eps},\\
v=0 &\text{ on } \partial\Ceta\cup\widetilde{\Ceta\backslash\mathcal{C}_\eps}.
\end{cases}
$$

If we have, for $n=0,1,\dots,m-1$,
$$
\int_{\partial\mathcal{C}_\eps}v\cdot\cos((n+\frac 12)\theta)=0,
$$
then 
$$
\sup_{\Ceta\backslash\mathcal{C}_{\eta/2}}|v|\le C^m\eps^{m+\frac 12}\cdot \sup_{\partial\mathcal{C}_\eps}|v|
$$ for a constant $C$ depending only on $\eta.$
\end{lem}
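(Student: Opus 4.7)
The natural approach is separation of variables in spherical polar coordinates centered at the pole $P^+=(1,0,0)$. Parametrize $\Sph\cap\{x_1>0\}$ by $(\phi,\theta)$ with $x_1=\cos\phi$ and $(x_2,x_3)=(\sin\phi\cos\theta,\sin\phi\sin\theta)$, so that $\mathcal{C}_{r_0}\leftrightarrow\{\phi<\arcsin r_0\}$ and the slit corresponds to $\theta=\pi$. Since $v$ is even in $\theta$ about $0$ and vanishes at $\theta=\pi$, expand
\begin{equation*}
v(\phi,\theta)=\sum_{n\ge 0}v_n(\phi)\cos\!\bigl((n+\tfrac12)\theta\bigr),
\end{equation*}
where each $v_n$ solves the radial ODE
\begin{equation*}
\sin\phi\,(\sin\phi\,v_n')'+\bigl[\lambda_\ST\sin^2\phi-(n+\tfrac12)^2\bigr]v_n=0.
\end{equation*}

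For small universal $\eta$, this ODE on $(0,\eta)$ is a controlled perturbation of the Euler equation $\phi^2R''+\phi R'-(n+\tfrac12)^2R=0$, because $\lambda_\ST\sin^2\phi\le\lambda_\ST\eta^2\ll(n+\tfrac12)^2$. Standard ODE comparison (or explicit Bessel/Legendre asymptotics) produces two fundamental solutions $P_n,Q_n$ with
\begin{equation*}
P_n(\phi)\asymp\phi^{n+\frac12},\qquad Q_n(\phi)\asymp\phi^{-(n+\frac12)},\qquad \phi\in(0,\eta],
\end{equation*}
uniformly in $n\ge 0$. Write $v_n=A_nP_n+B_nQ_n$. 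The outer Dirichlet condition at $\phi_\eta:=\arcsin\eta$ gives $A_nP_n(\phi_\eta)+B_nQ_n(\phi_\eta)=0$. For $n<m$, the hypothesis also forces $v_n(\phi_\eps)=0$ (here $\phi_\eps:=\arcsin\eps$), and the resulting $2\times 2$ Wronskian is nonzero because $\phi_\eps\ll\phi_\eta$; hence $v_n\equiv 0$.

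For $n\ge m$, let $M:=\sup_{\partial\mathcal{C}_\eps}|v|$. Parseval gives $|v_n(\phi_\eps)|\le 2M$, and combined with the outer Dirichlet condition the asymptotics yield
\begin{equation*}
|B_n|\le CM\phi_\eps^{n+\frac12},\qquad |A_n|\le CM\left(\phi_\eps/\phi_\eta\right)^{n+\frac12}.
\end{equation*}
On $[\phi_{\eta/2},\phi_\eta]$ we then estimate $|A_nP_n(\phi)|+|B_nQ_n(\phi)|\le CM(\phi_\eps/\phi_{\eta/2})^{n+\frac12}$, and summing the resulting geometric series gives
\begin{equation*}
\sup_{\Ceta\setminus\mathcal{C}_{\eta/2}}|v|\le CM\sum_{n\ge m}\left(\phi_\eps/\phi_{\eta/2}\right)^{n+\frac12}\le C(\eta)^{\,m}\eps^{m+\frac12}M,
\end{equation*}
as desired.

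\textbf{Main obstacle.} The technical heart is item~(3): establishing the asymptotics $P_n\asymp\phi^{n+\frac12}$, $Q_n\asymp\phi^{-(n+\frac12)}$ \emph{with constants independent of $n$} on $[0,\eta]$. Since the perturbation term $\lambda_\ST\sin^2\phi$ is uniformly dominated by the centrifugal term $(n+\frac12)^2$ in this range, this follows either from a Picard-type iteration around the Euler solutions or from classical bounds on $J_{n+1/2},Y_{n+1/2}$ rescaled by $\sqrt{\lambda_\ST}\,\phi$; both are routine but require a short verification. Everything else, including the $L^\infty$-to-Fourier bound and the geometric summation, is standard.
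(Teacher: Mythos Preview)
Your approach is correct and follows the same overall strategy as the paper: expand $v$ in the angular Fourier basis $\{\cos((n+\tfrac12)\theta)\}$, control each radial mode, and sum the resulting geometric series for $n\ge m$. The difference lies in how the radial part is handled. You work abstractly with the radial ODE, invoking perturbation of the Euler equation to obtain fundamental solutions $P_n,Q_n$ with asymptotics uniform in $n$; this is the step you correctly flag as the main technical point. The paper instead constructs the decaying radial solution explicitly: for each $n$ it exhibits a $\tfrac72$-homogeneous harmonic function $f_n=u_{-(n+\frac12)}\cdot\sum_k a_k x_1^{n+4-2k}r^{2k}$ on $\widehat{\R^3}$ (the coefficients $a_k$ determined by a simple two-term recursion), so that the restriction of $f_n$ to $\Sph$ solves $(\SphLap+\lambda_\ST)f_n=0$ with angular dependence exactly $\cos((n+\tfrac12)\theta)$. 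The needed size bounds $|f_n|\le r^{-n-\frac12}(1+M^nr^2)$ are then purely algebraic, and the mode functions $\varphi_n$ (solving the BVP with data $\cos((n+\tfrac12)\theta)$ on $\partial\mathcal{C}_\eps$) are estimated by a one-line maximum principle comparison with $f_n/f_n(\eps,0)$. This sidesteps ODE perturbation theory entirely at the cost of an ad hoc construction tied to the specific eigenvalue $\lambda_\ST$; your route is more systematic and would transfer more readily to other homogeneities, but you should actually carry out the uniform-in-$n$ Picard estimate rather than leave it as ``routine.''
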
 
Recall  the notations for slit domains and homogeneous harmonic functions in slit domains from \eqref{SlitSet} and \eqref{7/2HarmFuncSph}.

\begin{proof}
With the functions from \eqref{2dSingularHarmFunc}, we define, for $n=0,1,\dots$,
$$
f_n(x_1,r,\theta):=u_{-(n+\frac 12)}\cdot\sum_{0\le k\le k^*}a_kx_1^{n+4-2k}r^{2k}
$$ 
where $k^*$ satisfies $(n-2k^*+4)(n-2k^*+3)=0$, $a_0=1$, and
\begin{equation}\label{IterativeRelation}
a_k(n-2k+4)(n-2k+3)=a_{k+1}(2n-2k-1)(2k+2).
\end{equation}
It is elementary to verify that $f_n$ is $\ST$-homogeneous and harmonic in $\widehat{\R^3}$. 

By the iterative relation \eqref{IterativeRelation}, we can find a universal large constant $M$ such that 
$$|a_k|\le M^n \text{ for }k=0,1,\dots, k^*.$$
As a result, by taking $M$ larger if necessary, we have
$$|f_n|\le r^{-n-\frac12}[1+r^2M^n] \text{ in }\Ceta.$$
On the other hand, we have $f_n(x_1,r,0)\ge r^{-n-\frac 12}[1-r^2M^n]$ in $\Ceta$, which gives
$$f_n(x_1,\eps,0)\ge\frac{1}{2}\eps^{-n-\frac 12}$$ if $\eps$ is small and $n\le 5.$ For $n\ge 6,$ the same comparison follows directly from the fact that $a_k\ge0$ for all $k$ if $n\ge 6. $

Consequently, the ratio $f_n(r,\theta)/f_n(\eps,0)$ satisfies  $$f_n(\eps,\theta)/f_n(\eps,0)=\cos((n+\frac 12)\theta)$$
 and
$$|f_n(r,\theta)/f_n(\eps,0)|\le  \eps^{n+\frac 12}r^{-n-\frac 12}M^n \text{ in }\Ceta$$ by choosing $M$ larger if necessary.

For each $n$, let $\varphi_n$ denote the solution to 
\begin{equation*}
\begin{cases}
(\SphLap+\lambda_\ST)\varphi_n=0 &\text{ in }\widehat{\Ceta\backslash\mathcal{C}_\eps,}\\
\varphi_n=\cos((n+\frac 12)\theta) &\text{ along $\partial\mathcal{C}_\eps$,}\\
\varphi_n=0 &\text{ along } \partial\Ceta\cup\widetilde{\Ceta\backslash\mathcal{C}_\eps}.
\end{cases}\end{equation*} 
With the maximum principle, we have 
$$
|\varphi_n-\frac{f_n(r,\theta)}{f_n(\eps,0)}|\le \eps^{n+\frac 12}\eta^{-n-\frac 12}M^n \text{ in }\Ceta\backslash\mathcal{C}_\eps,
$$which implies
$$
|\varphi_n|\le C\eps^{n+\frac 12}r^{-n-\frac 12}M^n \text{ in }\Ceta\backslash\mathcal{C}_\eps.
$$

Now with $\{\cos((n+\frac 12)\theta)\}$ being a basis for $L^2(\partial\mathcal{C}_\eps)$, for $v$ as in the statement of the lemma, we can write 
$v=\sum c_n\varphi_n$
where 
$$c_n=\frac{\int_{\partial\mathcal{C}_\eps}v\cdot\cos((n+\frac 12)\theta)}{\int_{\partial\mathcal{C}_\eps}\cos^2((n+\frac 12)\theta)}.$$
For $r\ge\eta/2$, this implies 
$$
|v(r,\theta)|\le (\sum c_n^2)^{\frac 12}(\sum_{c_n\neq 0}\varphi_n(r,\theta)^2)^{\frac 12}\le C\sup_{\partial\mathcal{C}_\eps}|v|\cdot(\sum_{c_n\neq 0}\eps^{2n+1}M^{2n})^{1/2}
$$
for a constant $C$ depending on $\eta$. 

With our assumption on $v$, we have $c_n=0$ for $n\le m-1$. The conclusion follows by observing
$$\sum_{n\ge m}\eps^{2n+1}M^{2n}\le C\eps^{2m+1}M^{2m}.$$
\end{proof}

\section{The thin obstacle problem in $\R^2$}\label{2DProblem}
Our treatment of solutions near $\uS=r^{\ST}\cos(\ST \theta)$ relies on a fine analysis of the thin obstacle problem in tiny spherical caps around $\Sph\cap\{r=0\}$. In the limit, this problem leads to the thin obstacle problem in $\R^2$ with prescribed expansion at infinity. 

In this section, we use $(r,\theta)$ to denote the polar coordinates of $\R^2=\{(x_1,x_2)\}$. The notations for slit domains from \eqref{Slit} and \eqref{SlitSet} carry over with straightforward modifications. We will also take advantage of the functions from \eqref{HalfIntegerSolutionIn2D} and \eqref{2dSingularHarmFunc}.
Similar to the functions in \eqref{BasisB}, in this appendix, we denote the derivatives of $\uS$ by the following\footnote{The two bases $\{\uS,\uF,\uT,\uO\}$ and $\{\uS,\wF,\wT,\wO\}$ are related by 
$$
\wF=\ST\uF, \hspace{0.5em}\wT=\frac{35}{4}\uT, \text{ and }\wO=\frac{105}{8}\uO.
$$}:
$$\wF:=\frac{\partial}{\partial x_1}\uS,\hspace{0.5em} \wT:=\frac{\partial}{\partial x_1}\wF, \text{ and }  \wO:=\frac{\partial}{\partial x_1}\wT.$$The following two derivatives are singular near $\{r=0\}$:
\begin{equation}\label{TwoSingularDer}
w_{-\frac 12}:=\frac{\partial}{\partial x_1}\wO, \text{ and }w_{-\frac32}:=\frac{\partial}{\partial x_1}w_{-\frac12}.
\end{equation}

Let $p=\uS+a_1\uF+a_2\uT+a_3\uO=\uS+\tilde{a}_1\wF+\tilde{a}_2\wT+\tilde{a}_3\wO$, then $p$ solves the thin obstacle problem in $\R^2$ if and only if 
\begin{equation}\label{Cond2D}
a_2\ge 0, \hspace{0.5em} a_3=0, \text{ and } a_1^2\le\frac{84}{25}a_2; \text{ equivalently, } \tilde{a}_2\ge 0,  \hspace{0.5em} \tilde{a}_3=0, \text{ and }\tilde{a}_1^2\le\frac{12}{5}\tilde{a}_2.
\end{equation} 

For $\tau\in\R$, the \textit{translation operator} $\Rot_\tau$ is defined by its action on points, sets, and functions in the following manner:
\begin{equation*}
\Rot_\tau(x_1,x_2)=(x_1+\tau,x_2), \quad
\Rot_\tau (E)=\{x:\Rot_{-\tau}x\in E\}, \quad
\Rot_\tau(f)(x)=f(\Rot_{-\tau}x).
\end{equation*}

In this appendix, for $p=\uS+a_1\uF+a_2\uT+a_3\uO$, we study solutions to the \textit{thin obstacle problem in $\R^2$ with data $p$ at infinity}:
\begin{equation}\label{TOPIn2D}
\begin{cases}
&u \text{ solves \eqref{IntroTOP} in } \R^2,\\ 
&\sup_{\R^2}|u-p|<+\infty.
\end{cases}
\end{equation} 

The starting point is the following proposition:
\begin{prop}\label{StartingPtIn2d}
For $|a_j|\le1$,  there is a unique solution to \eqref{TOPIn2D}.

For this solution, there is a universal constant $A>0$ such that 
$$
\sup_{\R^2}|u-p|\le A; \hspace{0.5em} \Delta u=0 \text{ in }\widehat{\{r>A\}}; \text{ and }u=0 \text{ in  }\widetilde{\{r>A\}}.
$$
Moreover, we can find $b_1,b_2$ satisfying $|b_j|\le A$ such that 
$$|u-(p+b_1u_{-\frac 12}+b_2u_{-\frac 32})|\le A|x|^{-2}u_{-\frac12} \text{ for all } x\in\R^2.$$
\end{prop}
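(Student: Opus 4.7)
The plan is to establish the proposition in three steps: construct a global admissible profile differing from $p$ by a bounded function (a barrier), use it together with comparison in large balls to get existence and the sup bound, and then extract the asymptotic expansion by Fourier analysis in the slit domain.

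\textbf{Barrier construction.} Consider the family $q_\tau := \Rot_{-\tau}(\uS+\alpha_1\wF+\alpha_2\wT)$; since $x_1$-translation preserves the slit $\{x_2=0\}$, this solves the thin obstacle problem globally whenever $(\alpha_1,\alpha_2)$ lies in the admissible region \eqref{Cond2D}. Expanding each summand in the basis $\{\uS,\wF,\wT,\wO,w_{-\frac 12},w_{-\frac 32},\ldots\}$ and matching the coefficients of $\uS,\wF,\wT$ to those of $p$ forces $\alpha_1=\tilde a_1-\tau$ and $\alpha_2=\tilde a_2-\tilde a_1\tau+\tau^2/2$; a direct computation then gives the $\wO$-coefficient of $q_\tau-p$ equal to the cubic
$$f(\tau)=\tau^3/6-\tilde a_1\tau^2/2+\tilde a_2\tau-\tilde a_3.$$
All remaining terms of $q_\tau-p$ lie in the span $\{w_{-\frac 12},w_{-\frac 32},\ldots\}$ and are globally bounded, so any real root $\tau_*$ of $f$ at which $(\alpha_1(\tau_*),\alpha_2(\tau_*))$ is admissible produces $\sup_{\R^2}|q_{\tau_*}-p|\le C$. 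After simplification the admissibility condition $\alpha_1^2\le (12/5)\alpha_2$ reduces to $g(\tau):=\tau^2-2\tilde a_1\tau+12\tilde a_2-5\tilde a_1^2\ge 0$; when $\tilde a_1^2\le 2\tilde a_2$ this holds for every $\tau$ and any real root of $f$ works, and otherwise one compares the values of $f$ at the endpoints of the ``bad'' interval $\{g<0\}$ with its local extrema (which lie strictly inside that interval) to locate a real root of $f$ outside it. For the narrow range of parameters where this direct construction degenerates, one enlarges the ansatz to a convex combination $q=\sum_k\lambda_k\Rot_{-\tau_k}(\text{admissible profile})$ with $\sum\lambda_k=1$, which is always a globally defined super-solution of the obstacle problem and carries enough additional freedom to absorb the obstruction while keeping $\sup|q-p|\le C$. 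Compactness in $\{|a_j|\le 1\}$ promotes $C$ to a universal constant.

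\textbf{Existence and sup bound.} Let $u_R$ denote the solution in $B_R$ with Dirichlet data $p$. Since $|q_{\tau_*}-p|\le C$, on $\partial B_R$ we have $q_{\tau_*}+C\ge p=u_R$, and $q_{\tau_*}+C$ is a super-solution on $B_R$ (super-harmonic as a distribution, non-negative on the obstacle), so comparison of obstacle solutions with super-solutions gives $u_R\le q_{\tau_*}+C$ in $B_R$. Conversely, $u_R+C$ is a super-solution dominating $q_{\tau_*}$ on $\partial B_R$, so $u_R+C\ge q_{\tau_*}$ in $B_R$ by the same principle. Combining, $|u_R-q_{\tau_*}|\le C$ and hence $|u_R-p|\le 2C$ uniformly in $R$. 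Interior $C^{1,\frac 12}$ estimates produce a subsequential limit $u$ solving \eqref{TOPIn2D} with $\sup|u-p|\le A:=2C$. The statements $\Delta u=0$ in $\widehat{\{r>A\}}$ and $u=0$ in $\widetilde{\{r>A\}}$ follow (after possibly enlarging $A$) from a barrier argument as in the proof of Lemma \ref{RoughBoundC}: on the positive $x_1$-direction $p\gtrsim r^{\frac 72}$ forces $u>0$ for $r$ large, while on the negative $x_1$-direction $\partial_{x_2}p\lesssim -r^{\frac 52}$ together with a quadratic upper barrier $\varphi=(|x-x_0|^2-2x_2^2)/\delta$ forces $u=0$ on the slit.

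\textbf{Uniqueness and expansion.} For uniqueness, if $u_1,u_2$ both solve \eqref{TOPIn2D}, then $w:=u_1-u_2$ is bounded on $\R^2$, and $\Delta w\ge 0$ in $\{u_1>u_2\}$ because $\Delta u_1=0$ there while $\Delta u_2\le 0$ on the slit distributionally; a Phragm\'en--Lindel\"of argument at infinity for the slit Laplacian (whose lowest growth rate is $r^{\frac 12}$) then forces $w_+\equiv 0$, and symmetrically $w_-\equiv 0$. For the asymptotic expansion, the localization statement makes $u-p$ a bounded harmonic function on the slit domain $\widehat{\{r>A\}}$ vanishing on $\widetilde{\{r>A\}}$, so expanding in polar coordinates against the eigenfunctions $u_{\pm(n+\frac 12)}(r,\theta)$ of the slit Laplacian and using boundedness to kill all growing coefficients yields $(u-p)(r,\theta)=\sum_{n\ge 0}c_n\,u_{-(n+\frac 12)}(r,\theta)$ for $r>A$; the bounds $|c_n|\le A$ follow from the sup bound by $L^2$-orthogonality, and bounding the tail $\sum_{n\ge 2}c_n\,u_{-(n+\frac 12)}$ by $C|x|^{-\frac 52}=C|x|^{-2}u_{-\frac 12}$ yields the claimed expansion with $b_1:=c_0$ and $b_2:=c_1$ (the inequality being automatic near the origin, where the right-hand side blows up). The main obstacle throughout is the first step: verifying uniformly in $\{|a_j|\le 1\}$ that one can find a globally admissible profile (or, when needed, a convex-combination super-solution) with $\sup|q-p|\le C$ for a universal $C$, which requires the case analysis of the cubic $f$ against the admissibility region $\{g\ge 0\}$ described above.
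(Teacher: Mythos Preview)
Your barrier step contains a genuine gap. Requiring a single admissible $q_{\tau_*}$ with $\sup_{\R^2}|q_{\tau_*}-p|\le C$ is the same as asserting that the (unique) solution to \eqref{TOPIn2D} is \emph{always} a translated half-space solution; but this is precisely what Lemma~\ref{SymSolIn2D} works hard to prove under an extra symmetry hypothesis, and it is not true in general. Concretely, take $(\tilde a_1,\tilde a_2,\tilde a_3)=(0,-4/35,0)$ (i.e.\ $(a_1,a_2,a_3)=(0,-1,0)$): the cubic $f(\tau)=\tau^3/6+\tilde a_2\tau$ has roots $0,\pm\sqrt{24/35}$, and one checks that at $\tau=0$ one gets $\alpha_2=\tilde a_2<0$, while at $\tau=\pm\sqrt{24/35}$ one gets $\alpha_1^2=24/35>96/175=\tfrac{12}{5}\alpha_2$; so \emph{no} root is admissible. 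Your fallback to convex combinations $q=\sum\lambda_k\Rot_{-\tau_k}(\cdot)$ produces only super-solutions, and then your lower bound ``$u_R+C$ is a super-solution dominating $q$ on $\partial B_R$, so $u_R+C\ge q$'' fails: two super-solutions do not compare. So the two-sided bound $|u_R-p|\le 2C$ is not established.

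The paper sidesteps all of this. For the lower bound it simply observes $u_n\ge p$: the set $\{u_n<p\}$ cannot meet the slit (there $p=0\le u_n$), and off the slit $p$ is harmonic while $\Delta u_n\le 0$, so the minimum principle empties $\{u_n<p\}$. For the upper bound the paper does \emph{not} try to match the $\wO$-coefficient; it takes $\tau$ large so that the $\wO$-coefficient of $\Rot_{-\tau}q-p$ is $\tfrac16\tau^3-\tfrac12\tilde a_1\tau^2+\tilde a_2\tau-\tilde a_3\gg 0$, which makes $Q:=\Rot_{-\tau}q\ge p$ on $\{r\ge A\}$ (not $\sup|Q-p|<\infty$). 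Then $p\le u_n\le Q$ gives local bounds, hence a limit $u_\infty$; the global bound $|u_\infty-p|\le A$ is obtained \emph{a posteriori}, from $0\le u_\infty-p\le Q-p\le C$ on $\{r=A\}$ and the maximum principle in the exterior slit domain once the contact set has been localized. Your uniqueness (Phragm\'en--Lindel\"of) and expansion (Fourier on the slit) arguments are essentially equivalent to the paper's Kelvin-transform route and are fine, but note that both implicitly require first localizing the contact sets of the competing solutions, which the paper does explicitly via the barrier argument.
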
 

Recall the harmonic functions with negative homogeneities from \eqref{2dSingularHarmFunc}.
\begin{rem}\label{AppenBj}
For simplicity, we will denote the coefficients $b_j$ by $b_j^{\R^2}[a_1,a_2,a_3]$ or simply $b_j[a_1,a_2,a_3]$ when there is no ambiguity.
\end{rem} 

\begin{proof}
\textit{Step 1: Uniqueness.}

Suppose that $u_1$ and $u_2$ are two solutions to \eqref{IntroTOP} in $\R^2$ with $\sup|u_j-p|<+\infty.$ With a similar argument as in Lemma \ref{ContLocA}, we find $R>0$ such that 
$$
\Delta u_j=0 \text{ in }\widehat{\{r>R\}}, \text{ and } u_j=0 \text{ in }\widetilde{\{r>R\}}.
$$

Let $w(x):=(u_1-u_2)(R^2x/|x|^2)$ be the Kelvin transform of $(u_1-u_2)$ with respect to $\partial B_R$. 
Then $w$ is a harmonic function in the slit domain $\widehat{B_R}$, as defined in \eqref{HarmFuncInSlit}. Applying Theorem \ref{TaylorExpansion}, we have 
$|w|\le C\uO \text{ in } B_R,$ which implies
$$|u_1-u_2|\le Cu_{-\frac 12} \text{ in }\R^2.$$
From here we have $u_1=u_2$ bythe maximum principle. 

\text{ }

\textit{Step 2: A barrier function.}

Rewrite $p$ in the basis $\{\uS,\wF,\wT,\wO\}$ as $p=\uS+\tilde{a}_1\wF+\tilde{a}_2\wT+\tilde{a}_3\wO$. 

For $\tau>0$ to be chosen, if we let $(\alpha_1,\alpha_2)$  denote the solution to 
$$\alpha_1+\tau=\tilde{a}_1, \text{ and }\alpha_2+\alpha_1\tau+\frac{1}{2}\tau^2=\tilde{a}_2,$$ and define
$$q=\uS+\alpha_1\wF+\alpha_2\wT,$$ 
then Taylor's Theorem gives
$$
\Rot_{-\tau}(q)-p\ge (\frac 16\tau^3-\frac 12\tilde{a}_1\tau^2+\tilde{a}_2\tau )\uO-C\tau^4u_{-\frac 12}.
$$
Choosing $\tau$ large universally, then 
$$
\Rot_{-\tau}(q)-p\ge 0 \text{ on } \{r\ge A\}
$$ for a universal large $A$.

By choosing $\tau$ larger, if necessary, it is elementary to verify that $(\alpha_1,\alpha_2)$ satisfies  condition  \eqref{Cond2D}, and consequently, $Q:=\Rot_{-\tau}q$ solves the thin obstacle problem in $\R^2.$

\text{ }

\textit{Step 3: Existence, universal  boundedness, and localization of contact set.}

For large $n\in\N$, let $u_n$ be the solution to the thin obstacle problem \eqref{IntroTOP} in $B_n$ with $u_n=p$ along $\partial B_n$. 

By the maximum principle, we have 
\begin{equation}\label{B2} u_n\ge p \text{ in }B_n,
\text{ and } 
u_n\le Q \text{ in }B_n\end{equation} if $n$ is large. 
Consequently, this family $\{u_n\}$ is locally uniformly  bounded. Therefore, we can extract a subsequence converging to some $u_\infty$ locally uniformly on $\R^2$. This limit $u_\infty$ solves the thin obstacle problem in $\R^2$.

With \eqref{B2}, we have  $u_n=0 \text{ in }B_n\cap\{x_1\le -A, x_2=0\}$ and $u_n\ge 1 \text{ in }B_n\cap\{x_1\ge A, x_2=0\}$ for a universal $A>0$. Thus we have 
$$
\Delta u_\infty=0 \text{ in }\widehat{\{r>A\}}; \text{ and }u_\infty=0 \text{ in  }\widetilde{\{r>A\}}.
$$
Along $\{r=A\}$, we have $0\le u_\infty-p\le Q-p\le C$. Thus the maximum principle, applied in the domain$\{r>A\}$, gives
$$
|u_\infty-p|\le C
$$ for a universal constant $C$.
In particular, $u_\infty$ is the unique solution to \eqref{TOPIn2D}, according to Step 1. 

\text{ }

\textit{Step 4: Finer expansion.}

Let $w(x):=(u-p)(A^2x/|x|^2)$ be the Kelvin transform of $(u-p)$ with respect to $\partial B_A$. Results from the previous step implies that $w$ is a harmonic function in the slit domain $\widehat{B_A}$. An application of Theorem \ref{TaylorExpansion} gives  universally  bounded $b_1$ and $b_2$ such that 
$$
|w-(b_1\uO+b_2\uT)|\le C|x|^2\uO \text{ in }B_A.
$$
Inverting the Kelvin transform, we have
$$
|u-(p+b_1u_{-\frac 12}+b_2u_{-\frac 32})|\le C|x|^{-2}u_{-\frac12} \text{ in }\R^2.
$$
\end{proof} 

For the solution from the previous proposition, we have precise information on its first two Fourier coefficients along big circles:
\begin{cor}\label{VanishingFourier2D}
With the same assumptions and notations from  Proposition \ref{StartingPtIn2d},  we have
$$\int_{\partial B_R}[u-(p+b_1u_{-\frac 12}+b_2u_{-\frac 32})]\cdot\cos(\frac 12\theta)=0$$ 
and 
$$\int_{\partial B_R}[u-(p+b_1u_{-\frac 12}+b_2u_{-\frac 32})]\cdot\cos(\frac 32\theta)=0$$ 
for all $R\ge A.$
\end{cor}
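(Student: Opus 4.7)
\medskip

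\noindent\textbf{Proof proposal.} Set $w := u - (p + b_1 u_{-\frac12} + b_2 u_{-\frac32})$. The plan is to exploit the fact that outside a large ball $w$ is a harmonic function in a slit domain with a prescribed rate of decay, and to read off the vanishing integrals from its separation-of-variables expansion. First I would note that on $\{r > A\}$ each of $u$, $p$, $u_{-\frac12}$ and $u_{-\frac32}$ is harmonic in $\widehat{\{r>A\}}$ and vanishes on $\widetilde{\{r>A\}}$ (the first by the localization of the contact set in Proposition~\ref{StartingPtIn2d}, the others by direct inspection), so $w$ inherits these properties. The pointwise bound from Proposition~\ref{StartingPtIn2d} combined with $|u_{-\frac12}(x)| \le r^{-1/2}$ gives
\begin{equation*}
|w(x)| \le A\, r^{-5/2} \quad \text{for } r \ge A.
\end{equation*}

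Next I would write the Fourier expansion on each circle $\partial B_R$, $R > A$. Since $w(R,\cdot)$ is even in $\theta$ and vanishes at $\theta = \pm\pi$, it lies in the span of $\{\cos((k+\tfrac12)\theta)\}_{k\ge 0}$, and separation of variables for $\Delta$ in the slit annulus yields
\begin{equation*}
w(r,\theta) = \sum_{k\ge 0} \bigl[\alpha_k r^{k+\frac12} + \beta_k r^{-(k+\frac12)}\bigr]\cos\bigl((k+\tfrac12)\theta\bigr), \quad r > A.
\end{equation*}
Testing against $\cos((k+\tfrac12)\theta)$ and using orthogonality,
\begin{equation*}
\int_{\partial B_R} w\cdot \cos\bigl((k+\tfrac12)\theta\bigr)\,d\mathcal{H}^1 = \pi R\bigl[\alpha_k R^{k+\frac12} + \beta_k R^{-(k+\frac12)}\bigr].
\end{equation*}
The bound $|w|\le AR^{-5/2}$ forces the left-hand side to be $O(R^{-3/2})$, so for $k=0$ we must have $\alpha_0 = \beta_0 = 0$ and for $k=1$ we must have $\alpha_1=\beta_1 = 0$; in particular both stated integrals vanish for every $R \ge A$.

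There is essentially no obstacle here: the only care required is to verify that $w$ is genuinely a harmonic function in the slit annulus (so that the separated form above is valid), and that the growth/decay analysis rules out both the $r^{k+\frac12}$ and $r^{-(k+\frac12)}$ modes for $k=0,1$. Both points follow directly from the ingredients already established in Proposition~\ref{StartingPtIn2d}, so the corollary is a short formal consequence of that proposition.
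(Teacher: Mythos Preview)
Your argument is correct. The route differs slightly from the paper's: instead of writing out the Fourier--Laurent expansion of $w$ in the slit exterior and killing the first two modes by growth/decay, the paper picks, for each mode, an explicit harmonic test function in the slit (for the $\cos(\tfrac12\theta)$ mode it takes $v=(r^{1/2}-Rr^{-1/2})\cos(\tfrac12\theta)$, which vanishes on $\partial B_R$), applies Green's identity on $B_L\setminus B_R$, and sends $L\to\infty$ using the same $O(r^{-5/2})$ decay. The two approaches are essentially equivalent---your expansion argument is the separation-of-variables rephrasing of the paper's integration by parts---and yours is arguably more transparent, while the paper's avoids having to invoke the validity of the Laurent-type series in the slit annulus. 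Either way the content is the decay bound from Proposition~\ref{StartingPtIn2d} plus orthogonality.
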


\begin{proof}
For simplicity, let's denote
$$\pe:=p+b_1u_{-\frac 12}+b_2u_{-\frac 32}.$$
With Proposition \ref{StartingPtIn2d}, we have
$
\Delta(u-\pe)=0 \text{ in } \widehat{\{r>A\}}, \text{ and } u-\pe=0 \text{ in } \widetilde{\{r>A\}}.
$

For $R>A$, define $v:=(r^{\frac 12}-Rr^{-\frac 12})\cos(\frac 12\theta)$. Then 
$$
\Delta v=0 \text{ in }\widehat{\R^2}, \text{ and } v=0 \text{ along } \widetilde{\{r>0\}}.
$$

With these properties, we have, for $L>R$, 
$$0=\int_{B_L\backslash B_R}(u-\pe)\cdot\Delta v-\Delta(u-\pe)\cdot v=\int_{\partial(B_L\backslash B_R)}(u-\pe)_\nu\cdot v-(u-\pe)\cdot v_\nu.$$

Along $\partial B_L$, we have $|u-\pe|=O(L^{-\frac 52})$, $|(u-\pe)_\nu|=O(L^{-\frac 72})$, $|v|=O(L^{\frac 12})$ and $|v_\nu|=O(L^{-\frac 12})$, thus 
$$\int_{\partial B_L}(u-\pe)_\nu\cdot v-(u-\pe)\cdot v_\nu=O(L^{-2}).$$
Along $\partial B_R$, we have $v=0$ and $v_\nu=-R^{-\frac 12}\cos(\frac12\theta).$ Combining all these we have
$$\int_{\partial B_R}(u-\pe)\cdot\cos(\frac{1}{2}\theta)=O(R^{\frac12}L^{-2}).$$

Sending $L\to\infty$ gives the first conclusion. The second follows from a similar argument. 
\end{proof}

The following lemma is one of the main reasons for the restriction to 3d in the main part of this work:
\begin{lem}\label{SymSolIn2D}
Given  functions
$$p=\uS+a_1\uF+a_2\uT+a_3\uO, \text{ and } q=\uS-a_1\uF+a_2\uT-a_3\uO$$ with $|a_j|\le 1,$
suppose that $u$ and $v$ are solutions to \eqref{TOPIn2D} with $p$ and $q$ as data at infinity, respectively.

Assume $b_1[a_1,a_2,a_3]=b_1[-a_1,a_2,-a_3]$ and $b_2[a_1,a_2,a_3]=-b_2[-a_1,a_2,-a_3]$, then we can find universally bounded constants $\alpha_1$, $\alpha_2$ and $\tau$ such that 
$$u=\Rot_\tau(\uS+\alpha_1\uF+\alpha_2\uT), \text{ and }v=\Rot_{-\tau}(\uS-\alpha_1\uF+\alpha_2\uT).$$
\end{lem}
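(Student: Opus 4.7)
The plan is to construct explicit candidates of the asserted form and invoke the uniqueness part of Proposition~\ref{StartingPtIn2d}. I seek $(\tau,\alpha_1,\alpha_2)$ such that $q^*:=\Rot_\tau(\uS+\alpha_1\uF+\alpha_2\uT)$ has the same leading asymptotic coefficients as $u$. Using $\partial_{x_1}u_{k+\frac12}=(k+\tfrac12)u_{k-\frac12}$ and Taylor expansion at infinity, the coefficients of $\uF,\uT,\uO,u_{-\frac12},u_{-\frac32}$ in $q^*$ come out to be
\begin{align*}
\tilde a_1 &=\alpha_1-\tfrac{7\tau}{2}, \quad \tilde a_2=\alpha_2-\tfrac{5\tau\alpha_1}{2}+\tfrac{35\tau^2}{8}, \quad \tilde a_3=-\tfrac{3\tau\alpha_2}{2}+\tfrac{15\tau^2\alpha_1}{8}-\tfrac{35\tau^3}{16},\\
\tilde b_1 &=\tfrac{3\tau^2\alpha_2}{8}-\tfrac{5\tau^3\alpha_1}{16}+\tfrac{35\tau^4}{128}, \quad \tilde b_2=\tfrac{\tau^3\alpha_2}{16}-\tfrac{5\tau^4\alpha_1}{128}+\tfrac{7\tau^5}{256}.
\end{align*}
Imposing $\tilde a_j=a_j$ eliminates $\alpha_1,\alpha_2$ linearly in terms of $\tau$ and reduces the remaining equation to
\[
P(\tau;a):=\tfrac{35}{16}\tau^3+\tfrac{15a_1}{8}\tau^2+\tfrac{3a_2}{2}\tau+a_3=0,
\]
which always admits a real root $\tau$ of universally bounded modulus since $|a_j|\le 1$.

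A direct inspection of the explicit polynomials yields the identities $\tilde b_1(-\tau,-\alpha_1,\alpha_2)=\tilde b_1(\tau,\alpha_1,\alpha_2)$ and $\tilde b_2(-\tau,-\alpha_1,\alpha_2)=-\tilde b_2(\tau,\alpha_1,\alpha_2)$, together with $P(-\tau;a)=-P(\tau;\bar a)$ for $\bar a=(-a_1,a_2,-a_3)$. Consequently, whenever $\tau$ is a real root of $P(\cdot;a)$ producing $(\alpha_1,\alpha_2)$, its negative $-\tau$ is a real root of $P(\cdot;\bar a)$ producing $(-\alpha_1,\alpha_2)$, and the corresponding singular coefficients of the two candidates are related by $(\tilde b_1,\tilde b_2)\leftrightarrow(\tilde b_1,-\tilde b_2)$. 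This dovetails exactly with the structural hypothesis $b_1[a]=b_1[\bar a]$, $b_2[a]=-b_2[\bar a]$ and is what permits a simultaneous ansatz for $u$ and $v$.

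The heart of the argument, and the main obstacle, is to select a real root of $P(\cdot;a)$ for which the resulting $(\alpha_1,\alpha_2)$ is \emph{admissible} in the sense of \eqref{Cond2D}, namely $\alpha_2\ge0$ and $\alpha_1^2\le\tfrac{84}{25}\alpha_2$; only then does $\uS+\alpha_1\uF+\alpha_2\uT$ actually solve the thin obstacle problem in $\R^2$, so that $q^*$ is a legitimate competitor. The hypothesis on $(b_1,b_2)$ is expected to enter precisely here: combined with Proposition~\ref{StartingPtIn2d} applied to both $u$ and $v$, and with the symmetries of $\tilde b_j$ above, it should force the ``truly singular'' part of $u-q^*$ to vanish and thereby rule out the non-admissible roots. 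A natural implementation is a continuity argument along the subvariety of the $(a_1,a_2,a_3)$-space cut out by the hypothesis, with base point $a=0$ where the trivial triple $\tau=\alpha_1=\alpha_2=0$ is admissible.

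Once admissibility is secured, $q^*$ is a solution of \eqref{TOPIn2D} with data $p$ and has universally bounded parameters, so Proposition~\ref{StartingPtIn2d} yields $u=q^*$. The mirror triple $(-\tau,-\alpha_1,\alpha_2)$ gives $\Rot_{-\tau}(\uS-\alpha_1\uF+\alpha_2\uT)$, whose leading asymptotic data is $q$ by the coefficient identities above; uniqueness identifies it with $v$, completing the proof.
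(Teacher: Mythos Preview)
Your approach has a genuine gap at precisely the point you flag as the ``main obstacle'': you never actually prove that some real root of your cubic $P(\tau;a)$ yields an admissible pair $(\alpha_1,\alpha_2)$, i.e.\ one satisfying $\alpha_2\ge 0$ and $\alpha_1^2\le\tfrac{84}{25}\alpha_2$. Without admissibility, $q^*$ is not a solution of the thin obstacle problem, uniqueness cannot be invoked, and the whole scheme collapses. Your proposed continuity argument along the subvariety $\{b_1[a]=b_1[\bar a],\ b_2[a]=-b_2[\bar a]\}$ is not a proof: the functions $b_j[\cdot]$ are defined only implicitly through Proposition~\ref{StartingPtIn2d}, you have no description of this subvariety, no reason to believe it is connected, and no mechanism preventing the admissible root from colliding with a non-admissible one along a path. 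The symmetry identities you record for $\tilde b_j$ are properties of your \emph{candidates} $q^*$, not of the actual coefficients $b_j[a]$ of $u$; they do not by themselves rule out inadmissible roots.

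The paper's proof takes a completely different route that bypasses this difficulty. It exploits the two-dimensional complex structure: since $u$ is a global solution of order $|x|^{7/2}$, the function $(\partial_{x_1}u-i\partial_{x_2}u)^2$ is a degree-five polynomial $\mathcal P$, and its restriction $P(t)=\operatorname{Re}\mathcal P(t)$ to the real axis encodes the contact set. The hypothesis on the $b_j$'s translates, after a direct computation with $p_{ext}$ and $q_{ext}$, into the symmetry $P(t)=-Q(-t)$ between the polynomials associated to $u$ and $v$. A root-counting argument then shows that if $\operatorname{Spt}(\Delta u)$ had more than one component, $P$ would be forced to have too many roots (counted with multiplicity), and the symmetry would force $Q$ to have an extra root it cannot accommodate. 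Hence $\operatorname{Spt}(\Delta u)$ is a single half-line, which \emph{is} the statement that $u$ is a translate of a half-space solution; admissibility then comes for free from Theorem~\ref{TaylorExpansion} and \eqref{Cond2D}. This is the missing idea your proposal needs.
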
 
Recall the definition of $b_j$'s from Remark \ref{AppenBj}.

\begin{proof}
\textit{Step 1: Two auxiliary polynomials.}

For simplicity, let us define $b_j=b_j[a_1,a_2,a_3]$ for $j=1,2$, and
$$p_{ext}:=p+b_1u_{-\frac 12}+b_2u_{-\frac32}, \text{ and }
q_{ext}:=q+b_1u_{-\frac 12}-b_2u_{-\frac32}.
$$ 
With Proposition \ref{StartingPtIn2d}, we have
$$|u-p_{ext}|+|v-q_{ext}|\le A|x|^{-\frac 52} \text{ in $\R^2$,}$$
which implies
\begin{equation}\label{GradientDecay}
|\nabla u-\nabla p_{ext}|+|\nabla v-\nabla q_{ext}|\le C|x|^{-\ST} \text{ for }|x|\ge 1.\end{equation}

Since $u$ is an entire solution to the thin obstacle problem of order $O(|x|^\ST)$ at infinity, we see that $(\partial_{x_1}u-i\partial_{x_2}u)^2$ is a polynomial of degree 5. Meanwhile, a direct computation gives that 
$$
(\partial_{x_1}p_{ext}-i\partial_{x_2}p_{ext})^2=\mathcal{P}(x_1+ix_2)+\sum_{k=1}^{5}\mathcal{R}_k(x_1+ix_2),
$$
where $\mathcal{P}$ is a polynomial of degree $5$, and $\mathcal{R}_k$ is a $(-k)$-homogeneous rational function for $k=1,2,\dots,5$.

With \eqref{GradientDecay}, it follows that  
$$
(\partial_{x_1}u-i\partial_{x_2}u)^2=\mathcal{P} \text{ in }\R^2.
$$ 
If we define 
$$
P(t):=\operatorname{Re}(\partial_{x_1}u-i\partial_{x_2}u)^2(t,0)=[(\partial_{x_1}u)^2-(\partial_{x_2}u)^2](t,0)=\operatorname{Re}\mathcal{P}(t),
$$
then $P$ is a real polynomial of degree 5. 

Similarly, corresponding to $v$ and $q_{ext}$, we have 
$$
(\partial_{x_1}q_{ext}-i\partial_{x_2}q_{ext})^2=\mathcal{Q}(x_1+ix_2)+\sum_{k=1}^{5}\mathcal{S}_k(x_1+ix_2),
$$
where $\mathcal{Q}$ is a polynomial of degree $5$, and $\mathcal{S}_k$ is a $(-k)$-homogeneous rational function for $k=1,2,\dots,5$. Moreover, we have
$$Q(t):=\operatorname{Re}(\partial_{x_1}v-i\partial_{x_2}v)^2(t,0)=[(\partial_{x_1}v)^2-(\partial_{x_2}v)^2](t,0)=\operatorname{Re}\mathcal{Q}(t),
$$ also a real polynomial of degree 5.

With $b_1[a_1,a_2,a_3]=b_1[-a_1,a_2,-a_3]$ and $b_2[a_1,a_2,a_3]=-b_2[-a_1,a_2,-a_3]$, a direct computation gives 
\begin{equation}\label{SymmetricPolynomial}
P(t)=-Q(-t).
\end{equation}

\text{ }

\textit{Step 2: Half-space solutions.}

With \eqref{SymmetricPolynomial}, we  show that up to a translation,  $u$ must be a half-space solution. Since $u=0$ in $\widetilde{\{r>A\}}$ according to Proposition \ref{StartingPtIn2d}, it suffices to show that $\Spt(\Delta u)$ has only one component.

Suppose, on the contrary, that 
$$
(-\infty,a]\cup [b,+\infty)\supset \Spt(\Delta u)\supset(-\infty,a]\cup [b,c] \text{ with }b>a,
$$
Note that the second component has to terminate in finite length since $\Delta u=0$ in $\widehat{\{r>A\}}$.
 
 On $(-\infty,a]\cup [b,c]$, we have $\partial_{x_1}u=0$. Thus $P(t)=-(\partial_{x_2}u)^2\le 0$ for $t\in(-\infty,a]\cup[b,c]$. On the contrary, on $(a,b)$, $\partial_{x_2}u=0$ and $P(t)=(\partial_{x_1}u)^2\ge 0$. Moreover, since $u(a)=u(b)=0$ and $u>0$ on $(a,b)$, we must have $\partial_{x_1}u(d)=0$ at some point $d\in(a,b)$. Thus $P(d)=0$. Note that $d$ is a root of multiplicity at least 2. Together with the roots $a,b,c$, this implies that $P$ cannot have other roots. See Figure \ref{Polynomial}.
 
 \begin{figure}[h]
\includegraphics[width=1\linewidth]{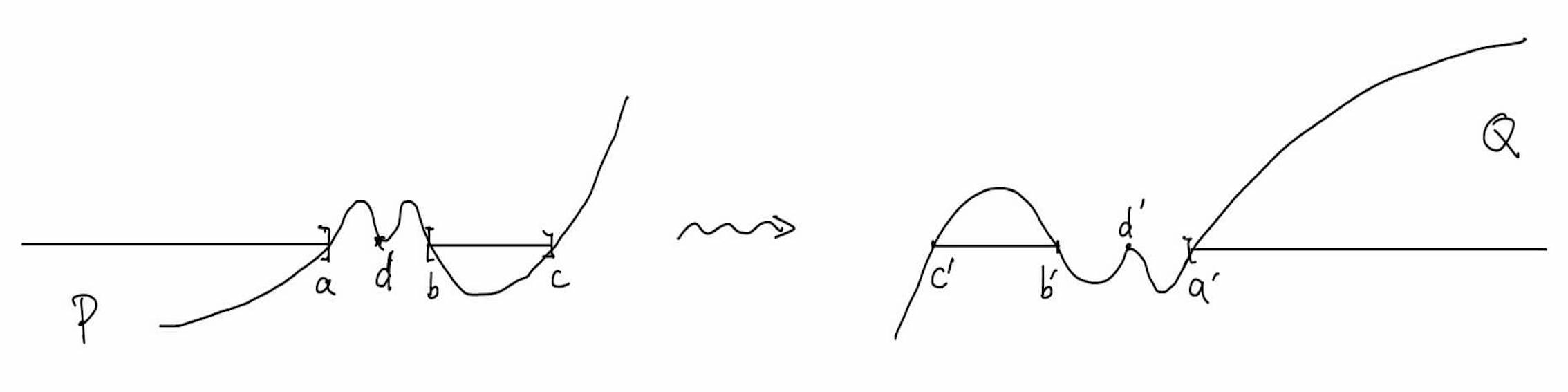}
\caption{$P$ and $Q$ along the $x_1$-axis.}
\label{Polynomial}
\end{figure}

With the symmetry described in \eqref{SymmetricPolynomial}, if we let $b'=-b$ and $c'=-c$, then $Q(b')=Q(c')=0$ while $Q>0$ on $(b',c')$. This implies $v>0$ on $(b',c')$ while $v(b')=v(c')=0$.  However, this implies that $\partial_{x_1}v$ must vanish at some point on $(b',c'),$ and so does $Q$. This is a contradiction.

As a result, $\Spt(\Delta u)$ must be a half line. A similar result holds for $\Spt(\Delta v).$  With \eqref{SymmetricPolynomial}, we see that if $\Spt(\Delta u)=(-\infty,a]$, then $\Spt(\Delta v)=(-\infty, -a].$

\text{ }

\textit{Step 4: Conclusion.}

After the previous step, we can apply Theorem \ref{TaylorExpansion} to get
$$\Rot_{-a}u=a_0'\uS+\alpha_1\uF+\alpha_2\uT+a_3'\uO.$$
We must have $a_3'=0$ by \eqref{Cond2D}. With $|u-(\uS+a_1\uF+a_2\uT)|$  being bounded in $\R^2$, we conclude $a_0'=1.$ Therefore, 
$$\Rot_{-a}u=\uS+\alpha_1\uF+\alpha_2\uT.$$

Similarly, we have $$\Rot_{a}v=\uS+\beta_1\uF+\beta_2\uT.$$ From here, we use \eqref{SymmetricPolynomial} to conclude $\alpha_1=\beta_1$ and $\alpha_2=-\beta_2$. The conclusion follows.
\end{proof} 

A perturbation of the previous lemma leads to the following corollary.  Recall notations from \eqref{TwoSingularDer} and  Remark \ref{AppenBj}.
\begin{cor}\label{AlmostSymmetric2D}
Given $p=\uS+a_1\uF+a_2\uT+a_3\uO=\uS+\tilde{a}_1\wF+\tilde{a}_2\wT+\tilde{a}_3\wO$ with $|a_j|\le 1$, we set 
$$b_j^+:=b_j[a_1,a_2,a_3], \hem b_j^-:=b_j[-a_1,a_2,-a_3] \text{ for }j=1,2,$$ 
and
$$\pe=p+b_1^+u_{-\frac12}+b_2^+u_{-\frac32}=p+\tilde{b}_1^+w_{-\frac12}+\tilde{b}_2^+w_{-\frac32}.$$ 

Then there is a universal modulus of continuity, $\omega$, such that 
\begin{align*}
&|\tilde{a}_1-(\alpha_1+\tau)|+|\tilde{a}_2-(\alpha_2+\alpha_1\tau+\frac{1}{2}\tau^2)|+|\tilde{a}_3-(\alpha_2\tau+\frac12\alpha_1\tau^2+\frac16\tau^3)|\\&+|\tilde{b}_1^+-(\frac12\alpha_2\tau^2+\frac16\alpha_1\tau^3+\frac{1}{24}\tau^4)|+|\tilde{b}_2^+-(\frac16\alpha_2\tau^3+\frac{1}{24}\alpha_1\tau^4+\frac{1}{120}\tau^5)|
\\&\le\omega(|b_1^+-b_1^-|+|b_2^++b_2^-|)
\end{align*} for universally  bounded $\alpha_j$ and $\tau$ satisfying 
\begin{equation}\label{FirstB2}
\alpha_2\ge 0 \text{ and }\alpha_1^2\le\frac{12}{5}\alpha_2.
\end{equation}
\end{cor}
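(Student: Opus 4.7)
The plan is a compactness-contradiction argument that upgrades the rigid statement of Lemma~\ref{SymSolIn2D} to an approximate one, combined with a Taylor expansion at infinity to read off the five coefficient identities.

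Suppose the corollary fails. Then there exist $\varepsilon_0>0$ and a sequence $(a_{1,n},a_{2,n},a_{3,n})$ with $|a_{j,n}|\le 1$ such that the corresponding $b_{j,n}^\pm:=b_j^{\R^2}[\pm a_{1,n},a_{2,n},\pm a_{3,n}]$ obey $|b_{1,n}^+-b_{1,n}^-|+|b_{2,n}^++b_{2,n}^-|\to 0$, yet no triple $(\alpha_1,\alpha_2,\tau)$ obeying \eqref{FirstB2} with a fixed universal bound makes all five coefficient errors in the statement smaller than $\varepsilon_0$. Passing to a subsequence, $a_{j,n}\to a_j^*$, and the solutions $u_n$ to \eqref{TOPIn2D} with data $p_n$ at infinity converge locally uniformly to the unique solution $u^*$ with data $p^*=\uS+a_1^*\uF+a_2^*\uT+a_3^*\uO$: precompactness comes from the uniform bound $|u_n-p_{n,\text{ext}}|\le A|x|^{-5/2}u_{-1/2}$ in Proposition~\ref{StartingPtIn2d}, and the uniqueness assertion there identifies the limit. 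Passing to the limit in the Fourier characterization of $b_j^\pm$ implicit in Corollary~\ref{VanishingFourier2D} (as projections of $u_n-p_n$ onto $\cos((k+\tfrac12)\theta)$ on any fixed circle $\partial B_R$ with $R\ge A$) yields $b_{j,n}^\pm\to b_j^{*,\pm}$. In the limit the symmetry hypothesis of Lemma~\ref{SymSolIn2D} holds: $b_1^{*,+}=b_1^{*,-}$ and $b_2^{*,+}=-b_2^{*,-}$.

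Lemma~\ref{SymSolIn2D} then delivers universally bounded $(\alpha_1^*,\alpha_2^*,\tau^*)$ satisfying \eqref{FirstB2} (after converting the constraint in $(u_{k+1/2})$-basis to the $(w_{k+1/2})$-basis via the footnote) such that $u^*(x)=g(x+\tau^* e_1)$ with $g:=u_{7/2}+\alpha_1^*\wF+\alpha_2^*\wT$. Since $\partial_{x_1}u_{7/2}=\wF$, $\partial_{x_1}^2 u_{7/2}=\wT$, $\partial_{x_1}^3 u_{7/2}=\wO$, $\partial_{x_1}^4 u_{7/2}=w_{-\frac12}$, $\partial_{x_1}^5 u_{7/2}=w_{-\frac32}$, the Taylor expansion $g(x+\tau^* e_1)=\sum_{k\ge 0}\frac{(\tau^*)^k}{k!}\partial_{x_1}^k g(x)$, after collecting like terms, has coefficients of $u_{7/2},\wF,\wT,\wO,w_{-\frac12},w_{-\frac32}$ equal to $1$, $\alpha_1^*+\tau^*$, $\alpha_2^*+\alpha_1^*\tau^*+\tfrac12(\tau^*)^2$, $\alpha_2^*\tau^*+\tfrac12\alpha_1^*(\tau^*)^2+\tfrac16(\tau^*)^3$, $\tfrac12\alpha_2^*(\tau^*)^2+\tfrac16\alpha_1^*(\tau^*)^3+\tfrac{1}{24}(\tau^*)^4$, $\tfrac16\alpha_2^*(\tau^*)^3+\tfrac{1}{24}\alpha_1^*(\tau^*)^4+\tfrac{1}{120}(\tau^*)^5$, plus a remainder involving $\partial_{x_1}^6 g$ that is integrable against every fixed Fourier mode as $|x|\to\infty$. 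On the other hand, Proposition~\ref{StartingPtIn2d} gives $u^*=p^*_{\text{ext}}+O(|x|^{-5/2}u_{-1/2})$, whose $(w_{k+1/2})$-coefficients are $1,\tilde a_1^*,\tilde a_2^*,\tilde a_3^*,\tilde b_1^{*,+},\tilde b_2^{*,+}$. The six homogeneous building blocks correspond to six distinct Fourier modes $\cos((k+\tfrac12)\theta)$ weighted by distinct powers of $R$; projecting the identity $u^*-p^*_{\text{ext}}=O(|x|^{-5/2}u_{-1/2})$ onto each mode and sending $R\to\infty$ forces the five identities in the statement to hold with equality in the limit.

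Using the admissible triple $(\alpha_1^*,\alpha_2^*,\tau^*)$ for every large $n$, continuity then makes all five coefficient errors tend to $0$, contradicting $\varepsilon_0>0$; a standard extraction over $\varepsilon_0$ produces the modulus $\omega$. The main technical point, and the only place the argument is not wholly formal, is the convergence $b_{j,n}^\pm\to b_j^{*,\pm}$: without a quantitative rate, one must combine the uniform asymptotic bound of Proposition~\ref{StartingPtIn2d} with the Fourier-integral characterization of $b_j^\pm$ on a fixed large circle to rule out loss of mass at infinity when passing to the limit, so that $b_j$ is a continuous functional of $(a_j)$ under locally uniform convergence of the corresponding solutions.
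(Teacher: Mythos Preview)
Your proposal is correct and follows essentially the same compactness-contradiction route as the paper: assume failure, extract a convergent subsequence of coefficients and solutions, use Corollary~\ref{VanishingFourier2D} to pass the $b_j^\pm$ to the limit, apply Lemma~\ref{SymSolIn2D} to the limit solution, and read off the five Taylor identities to reach a contradiction. Your write-up is in fact more explicit than the paper's in two places---the derivation of the five polynomial expressions via the Taylor expansion of $g(\cdot+\tau^*e_1)$ in the $w$-basis, and the justification of the continuity of $b_j^\pm$ in the data---while the paper simply asserts both.
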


\begin{proof}
Suppose there is no such $\omega$, we find a sequence $(a_j^n)$ such that the corresponding $(b_j^{\pm,n})$  satisfy 
\begin{equation}\label{ThirdB2}
|b_1^{+,n}-b_1^{-,n}|+|b_2^{+,n}+b_2^{-,n}|\to 0,
\end{equation}
but for any  bounded $\alpha_j$ and $\tau$ satisfying \eqref{FirstB2}, we have
\begin{align}\label{SecondB2}
&|\tilde{a}^n_1-(\alpha_1+\tau)|+|\tilde{a}^n_2-(\alpha_2+\alpha_1\tau+\frac{1}{2}\tau^2)|+|\tilde{a}^n_3-(\alpha_2\tau+\frac12\alpha_1\tau^2+\frac16\tau^3)|\\
&+|\tilde{b}_1^{+,n}-(\frac12\alpha_2\tau^2+\frac16\alpha_1\tau^3+\frac{1}{24}\tau^4)|+|\tilde{b}_2^{+,n}-(\frac16\alpha_2\tau^3+\frac{1}{24}\alpha_1\tau^4+\frac{1}{120}\tau^5)|\nonumber
\\&\ge\eps>0\nonumber
\end{align}
Up to a subsequence, we have 
$$a_j^n\to a_j^\infty \text{ and }b_j^{\pm,n}\to b_j^{\pm,\infty}.$$

If we take 
$$p_n^+=\uS+a^n_1\uF+a^n_2\uT+a^n_3\uO=\uS+\tilde{a}^n_1\wF+\tilde{a}^n_2\wT+\tilde{a}^n_3\wO$$
and denote by $u^+_n$ the solution to \eqref{TOPIn2D} with data $p^+_n$ at infinity,  then by Proposition \ref{StartingPtIn2d}, we have
$$|u^+_n-p^+_n|\le A \text{ in }\R^2.$$ 

Up to a subsequence, we have $u^+_n$ locally uniformly converge to $u^+_\infty$, a solution to the thin obstacle problem in $\R^2$. Moreover, we have 
$$|u^+_\infty-[\uS+a^\infty_1\uF+a^\infty_2\uT+a^\infty_3\uO]|\le A \text{ in $\R^2$.}$$ 
Thus $u^+_\infty$ is the solution to \eqref{TOPIn2D} with data $p^+_\infty=\uS+a^\infty_1\uF+a^\infty_2\uT+a^\infty_3\uO$ at infinity. 

With Corollary \ref{VanishingFourier2D}, we see that $b_j^{+,\infty}:=b_j[a_j^\infty]=\lim b_j^{+,n}.$
A similar argument applied to $p_n^-=\uS-a^n_1\uF+a^n_2\uT-a^n_3\uO$ leads to $b_j^{-,\infty}:=b_j[-a^\infty,a_2^\infty,-a_3^\infty]=\lim b_j^{-,n}.$ With \eqref{ThirdB2}, we conclude 
$$
b_1[a_1^\infty,a_2^\infty, a_3^\infty]=b_1[-a_1^\infty, a_2^\infty, -a_3^\infty]
\text{ and } 
b_2[a_1^\infty, a_2^\infty, a_3^\infty]=-b_2[-a_1^\infty, a_2^\infty, -a_3^\infty].
$$
Lemma \ref{SymSolIn2D} gives 
$$u_\infty^+=\Rot_\tau(\uS+\alpha_1\wF+\alpha_2\wT)$$
for $\alpha_j$ satisfying \eqref{FirstB2}.

Consequently, we have 
\begin{align*}
&|\tilde{a}_1^\infty-(\alpha_1+\tau)|+|\tilde{a}^\infty_2-(\alpha_2+\alpha_1\tau+\frac{1}{2}\tau^2)|+|\tilde{a}^\infty_3-(\alpha_2\tau+\frac12\alpha_1\tau^2+\frac16\tau^3)|\\&+|\tilde{b}_1^{\infty,+}-(\frac12\alpha_2\tau^2+\frac16\alpha_1\tau^3+\frac{1}{24}\tau^4)|+|\tilde{b}_2^{\infty,+}-(\frac16\alpha_2\tau^3+\frac{1}{24}\alpha_1\tau^4+\frac{1}{120}\tau^5)|=0.
\end{align*}
With convergence of $\tilde{a}_j^n\to\tilde{a}_j^\infty$ and $\tilde{b}_j^{+,n}\to\tilde{b}_j^{+,\infty}$, this contradicts \eqref{SecondB2}.
\end{proof}



\end{document}